\documentclass{amsart}
\usepackage{amsmath}
\usepackage{amssymb}
\usepackage{amsthm}
\usepackage{enumerate}
\usepackage[pdftex]{graphicx}
\usepackage{caption}
\usepackage{amscd}
\theoremstyle{definition}
\newtheorem{definition}{Definition}[section]
\theoremstyle{plain}
\newtheorem{lemma}[definition]{Lemma}
\newtheorem{theorem}[definition]{Theorem}

\newtheorem{proposition}[definition]{Proposition}
\newtheorem{corollary}[definition]{Corollary}
\theoremstyle{remark}
\newtheorem{remark}[definition]{Remark}

\newtheorem{example}[definition]{Example}

\makeatletter
\@namedef{subjclassname@2020}{
  \textup{2020} Mathematics Subject Classification}
\makeatother
\scrollmode
\newcommand{\mycl}{\operatorname{cl}}
\newcommand{\myint}{\operatorname{int}}

\newcommand{\myrank}{\operatorname{rank}}
\newcommand{\myLim}{\operatorname{Lim}}
\newcommand{\myIso}{\operatorname{iso}}
\newcommand{\myLpt}{\operatorname{lpt}}
\newcommand{\myedim}{\operatorname{eRank}}

\begin{document}
\title[Definable quotients in d-minimal structures]{Definable quotients in d-minimal structures}
\author[M. Fujita]{Masato Fujita}
\address{Department of Liberal Arts,
Japan Coast Guard Academy,
5-1 Wakaba-cho, Kure, Hiroshima 737-8512, Japan}
\email{fujita.masato.p34@kyoto-u.jp}

\begin{abstract}
We consider d-minimal expansions of ordered fields.
We demonstrate the existence of definable quotients of definable sets by definable equivalence relations when several technical conditions are satisfied. These conditions are satisfied when there is a definable proper action of a definable group $G$ on a locally closed definable subset $X$ of $F^n$, where $F$ is the universe.
\end{abstract}

\subjclass[2020]{Primary 03C64; Secondary 54B15, 57S99}

\keywords{d-minimality, definable quotients}

\maketitle

\section{Introduction}\label{sec:intro}
Several structures relaxing the constraints of o-minimal structures \cite{D} are proposed, for instance, weakly o-minimal structures \cite{MMS} and locally o-minimal structures \cite{TV}.
D-minimal structures are ones of them.
Miller proposed d-minimality in \cite{Miller-dmin}, but his interest is mainly in the case where the underlying space is the set of reals.
Fornasiero gave another definition of d-minimality applicable in more general setting.
We adopt Fornasiero's definition in this paper.

\begin{definition}
	An expansion $\mathcal F=(F,<,\ldots)$ of a dense linear order without endpoints
	is \textit{definably complete} if every definable subset of $F$ has both a supremum and an infimum in 
	$F \cup \{ \pm \infty\}$ \cite{M}.
	
	The structure $\mathcal F$  is \textit{d-minimal} if it is definably complete, and every definable subset $X$ of $F$ is the union of an open set and finitely many discrete sets, where the number of discrete sets does not depend on the parameters of definition of $X$	
	\cite{Fornasiero}.
\end{definition}

We demonstrate the existence of definable quotients in d-minimal structures.
For instance, the following theorem holds:
\begin{theorem}
	Consider a d-minimal expansion of an ordered field.
	Let $G$ be a definable group and $X$ be a definable $G$-set which is locally closed.
	Assume further that the $G$-action on $X$ is definably proper.
	Then, there exists a definable quotient $X \to X/G$.
\end{theorem}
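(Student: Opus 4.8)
The plan is to present the orbit equivalence relation as a definable equivalence relation meeting the technical hypotheses of the general quotient theorem proved earlier, and then to invoke that theorem. Writing the action as a definable map $a\colon G\times X\to X$, $(g,x)\mapsto g\cdot x$, I set
\[
E=\{(x,y)\in X\times X:\exists g\in G,\ g\cdot x=y\}.
\]
Definability of $G$, of $X$, and of $a$ makes $E$ a definable subset of $X\times X$, and it is plainly an equivalence relation whose classes are exactly the $G$-orbits. Since $F$ is an ordered field, $X\subseteq F^n$ carries the order topology and ``definably proper'' is read through definable compactness, i.e.\ closedness and boundedness of definable sets. Thus the task reduces to checking that $E$ satisfies the hypotheses under which the earlier theorem produces a quotient.

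The decisive structural input comes from properness. Consider the definable map
\[
\theta\colon G\times X\to X\times X,\qquad \theta(g,x)=(g\cdot x,\,x),
\]
whose image is precisely $E$. Definable properness of the action is exactly the assertion that $\theta$ is definably proper, and a definably proper map into a locally closed definable set (here $X\times X$, locally closed in $F^{2n}$) is definably closed. As $G\times X$ is closed in itself, its image $E=\theta(G\times X)$ is a closed definable subset of $X\times X$; applying the same closedness to the definable closed set $G\times\{x\}$ shows that each orbit $G\cdot x$ is closed in $X$. This closedness of $E$ is the separation property, the definable analogue of Hausdorffness of the orbit space, that a proper action is designed to furnish, and it is what lets the quotient topology behave well.

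It then remains to verify the remaining technical conditions of the general theorem for this $E$. Here d-minimality contributes the uniform finiteness of the discrete data attached to $X$ and its definable families, while local closedness of $X$ supplies a definable local model transverse to each orbit. I expect the genuine obstacle to lie exactly at this point: converting the topological content of properness into the finitary separation-and-uniformity hypotheses of the general theorem, and in particular producing, definably and uniformly in $x$, enough orbit-invariant data to tell distinct orbits apart while remaining constant along each orbit. Once these conditions are confirmed, the general theorem applies to the closed definable equivalence relation $E$ and yields a definable set $X/G$ together with a definable surjection $\pi\colon X\to X/G$ satisfying $\pi(x)=\pi(y)$ if and only if $(x,y)\in E$, which is the desired definable quotient.
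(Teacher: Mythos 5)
Your opening step is exactly the paper's: the orbit relation $E$ is definable, and definable properness of $(g,x)\mapsto(x,gx)$ together with the fact that definably proper maps are definably closed (Lemma \ref{lem:proper_closed}) shows $E$ is closed in $X\times X$; this is precisely the proof of Theorem \ref{thm:definable_action2}. The problem is everything after that. Closedness of $E$ is \emph{not} a sufficient hypothesis for the paper's general quotient theorem: Theorem \ref{thm:quotinet} is an equivalence, and the direction that produces a quotient requires exhibiting a definable \emph{closed} subset $K\subseteq X$ such that $p_1|_{p_2^{-1}(K)}$ is surjective and definably proper, where $p_1,p_2:E\to X$ are the two projections. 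You explicitly defer this ("I expect the genuine obstacle to lie exactly at this point") and never carry it out, so the proof has a genuine gap at its load-bearing step. Note also that the other general theorem available, Theorem \ref{thm:quotient-mae} (quotients by equivalence relations definably proper over $X$), cannot be invoked here: properness of the \emph{action} does not make $p_1:E\to X$ definably proper, since that would force every orbit $p_2(p_1^{-1}(x))$ to be definably compact, which already fails for $G$ acting on itself by translation when $G$ is unbounded.

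What fills the gap in the paper is Theorem \ref{thm:definable_action} via the key Lemma \ref{lem:basic}. After reducing to $X$ closed (replace $X$ by the graph of $x\mapsto 1/\operatorname{dist}(x,\partial X)$, using local closedness), one checks conditions (1)--(4) of Lemma \ref{lem:basic} for $Z=E$; the point you flag as the obstacle is actually dispatched by two one-line uses of the group structure: condition (1) ($p_1$ definably identifying) via the diagonal lift $t\mapsto(\gamma(t),\gamma(t))$ of a pseudo-curve $\gamma$, and condition (4) (lifting of pseudo-curves) via translation, $t\mapsto(g\beta(t),\beta(t))$ when $(y,x)\in Z$ with $y=gx$. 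Lemma \ref{lem:basic} then produces the required $K$ (its proof, selecting points of minimal norm in each orbit and controlling them by a locally bounded function, is where the real work lies), and Theorem \ref{thm:quotinet} yields the quotient. Finally, your proposal silently assumes the ambient technical hypotheses; for the stated theorem one must also cite Theorem \ref{thm:summary}, i.e.\ that a d-minimal expansion of an ordered field has the definable choice property, admits a good $(0+)$-pseudo-filter, and has a good extended rank function -- a substantial part of the paper, not a formality.
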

\begin{proof}
	The theorem follows from Theorem \ref{thm:definable_action2} and Theorem \ref{thm:summary}. 
\end{proof}

We recall previous studies on definable quotients.
In real algebraic geometry, quotients of semialgebraic spaces are studied by Brumfiel \cite{B} and Scheiderer \cite{Sch}.
In o-minimal setting, definable quotients are introduced in \cite[Chapter 10, Section 2]{D}.
We have already demonstrated that almost the same assertions in this paper hold when the structure is a definably complete locally o-minimal expansion of an ordered field in the previous paper \cite{FK}.
The previous paper contains new results even when the structure is o-minimal, but many of them are already known in the o-minimal case such as in \cite[Chapter 10, Section 2]{D}.
Therefore, many results in the previous paper are not new when the universe is the set of reals, which is often assumed in applications, because locally o-minimal expansions of the ordered real field are o-minimal.
On the other hand, non-o-minimal examples of d-minimal expansion of the ordered real field are known \cite{D2, FM, MT}.
This is the reason why we treat d-minimal expansions of ordered fields in this paper.

We develop several new technical notions which are not used in the previous paper \cite{FK}.
We give equivalence conditions for definable maps to be definably proper and definably identifying in both the previous paper and this paper.
These conditions are described in term of definable curves in the previous paper which is also used in the o-minimal case \cite{D}.
However, in the d-minimal case, we have to develop a new notion called \textit{$(0+)$-sets} instead of definable curves.
The definition of {$(0+)$-sets} is given in Section \ref{sec:d-minimal}.
We also need to develop the notion of \textit{extended rank} though the traditional dimension function \cite{Fuji-dimension} meets our needs in the previous paper.
The main contribution of this paper is the development of these notions.

The whole copy of the proofs in the previous paper \cite{FK} does not work in the d-minimal case because we introduce new notions described above.
In fact, we need to modify the proofs here and there.
However, the modifications are all technical, and the strategy employed for the proofs in the d-minimal case is the same as that in the previous paper.
We expect that the same proof works for some unknown structure satisfying several technical conditions though we failed to find structures which satisfy the above conditions other than definably complete locally o-minimal expansions of ordered fields and d-minimal expansions of ordered fields. 
We clarify the sufficient conditions for structures to admit definable quotients.
The conditions are as follows:
\begin{itemize}
\item The structure enjoys the definable choice property;
\item It admits a good $(0+)$-pseudo-filter $\mathfrak D$;
\item It has a good extended rank function $\myedim$.
\end{itemize}
The definitions of above notions are found in Section \ref{sec:preliminary} and Section \ref{sec:proper}.
We prove that structures satisfying the above conditions admit definable quotients in Section \ref{sec:preliminary} through Section \ref{sec:proper_action}.
We finally prove that a d-minimal expansion of an ordered field satisfies these conditions in Section \ref{sec:d-minimal}.
In fact, the set of $(0+)$-sets becomes a good $(0+)$-pseudo-filter and the extended rank satisfies the requirements of good extended rank functions.
We use definable Tietze extension theorem in the proof of Section \ref{sec:def_prop_quo}.
It is proved in \cite[Lemma 6.6]{AF} for a special case, and the details of the proof is omitted.
We generalize it in Appendix \ref{sec:extension_thm} with the additional assumption that the function is bounded.

We clarify our basic notations.
Let $\mathcal F=(F,<,\ldots)$ be an expansion of a dense linear order without endpoints.
An open interval is denoted by $(a,b):=\{x \in F\;|\;a<x<b\}$, where $a,b \in F \cup\{\pm \infty\}$. 
A closed interval is denoted by $[a,b]$ and we naturally interpret $(a,b]$ and $[a,b)$.
The term `definable' means `definable in the given structure with parameters' in this paper.
The set $F$ has the topology induced from the order $<$.
The Cartesian product $F^n$ equips the product topology.
The topology of a subset of $F^n$ is the relative topology.
When $\mathcal F$ is an expansion of an ordered group, for $x=(x_1, \dots, x_n) \in F^n$, we denote $|x|=\max_{1 \le i \le n}|x_i|$.
We also set $x-y:=(x_1-y_1,\ldots, x_n-y_n) \in F^n$ for $x=(x_1,\ldots,x_n), y=(y_1,\ldots, y_n) \in F^n$.
Let $X$ be a topological space and $A$ be a subset of $X$.
We denote the frontier, the interior and the closure of $A$ by $\partial_XA$, $\myint_X(A)$ and $\mycl_X(A)$, respectively.
We omit the subscript $X$ when it is clear from the context.
Consider a definable subset of $F^n$.
We say that it is open/closed/locally closed when it is open/closed/locally closed in $F^n$.
Otherwise, we clearly describe in which space it is open/closed/locally closed.

\section{Preliminary and Extended dimension}\label{sec:preliminary}

We first introduce the notion of definable choice property.
\begin{definition}
	Consider a structure.
	A \textit{definable equivalence relation} $E$ on a definable set $X$ is a definable subset of $X \times X$ such that the binary relation $\sim_E$ defined by $x \sim_E y \Leftrightarrow (x,y) \in E$ is an equivalence relation defined on $X$.
	
	A structure $\mathcal F=(F,\ldots)$ enjoys the \textit{definable choice property} if the following condition is satisfied:
	Let $\pi:F^{m+n} \rightarrow F^m$ be a coordinate projection.
	Let $X$ and $Y$ be definable subsets of $F^m$ and $F^{m+n}$, respectively,  satisfying the equality $\pi(Y)=X$.
	There exists a definable map $\varphi:X \rightarrow Y$ such that $\pi(\varphi(x))=x$ for all $x \in X$.
	Furthermore, if $E$ is a definable equivalence relation defined on a definable set $X$, there exists a definable subset $S$ of $X$ such that $S$ intersects at exactly one point with each equivalence class of $E$. 
\end{definition}
For instance, definably complete locally o-minimal expansions of ordered groups \cite[Lemma 2.8]{Fuji_decomp} and d-minimal expansions of ordered groups \cite{Miller-choice} enjoy the definable choice property.

We next recall several basic assertions on definably complete structures.
\begin{definition}
	A definable subset $X$ of $F^n$ is \textit{definably compact} if it is closed and bounded in $F^n$.
\end{definition}

\begin{lemma}\label{lem:image}
	Consider a definably complete structure.
	The image of a definably compact set under a definable continuous map is again definably compact.
\end{lemma}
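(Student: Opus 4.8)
The plan is to prove that the image of a definably compact set $K \subseteq F^m$ under a definable continuous map $f : K \to F^n$ is again definably compact, i.e.\ closed and bounded in $F^n$. Since definable compactness here is defined purely as closed-and-bounded (not via a covering property), I will establish these two properties separately, and the bulk of the work is in closedness.

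\emph{Boundedness.} First I would show $f(K)$ is bounded. The natural tool is definable completeness applied to the definable function $g(x) = |f(x)|$ on $K$. Since $g$ is definable, the set $g(K) \subseteq F$ has a supremum $s$ in $F \cup \{+\infty\}$. The task is to rule out $s = +\infty$. I would argue by a pinching/curve-selection argument adapted to the definably complete setting: if $g$ were unbounded on the closed bounded set $K$, then for each $t \in F$ the definable set $\{x \in K : g(x) > t\}$ is nonempty, and one extracts a definable point-selection (or, failing that, uses the fact that a definable continuous function on a closed bounded set attains a maximum, which is itself the content one wants). To avoid circularity I expect the cleanest route is to use that in a definably complete structure a definable continuous function on a definably compact set is bounded, a standard consequence of definable completeness proved by bisection: subdivide the bounding box of $K$ and use completeness to locate a point at which $g$ fails to be locally bounded, contradicting continuity at that point (the point lies in $K$ because $K$ is closed).

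\emph{Closedness.} This is the main obstacle, since the usual sequential argument (``a convergent sequence in the image lifts to a convergent subsequence in the compact domain'') is unavailable without sequences or a covering form of compactness. I would instead argue via definable curve-selection / frontier analysis. Let $y_0 \in \mycl_{F^n}(f(K))$; I must show $y_0 \in f(K)$. Suppose not. Then the definable set $Z = f^{-1}(\text{small ball around } y_0) \subseteq K$ accumulates toward the frontier in a controlled way, and one applies a definable choice/selection to produce a definable map from a half-open interval (or a definable curve, or in the d-minimal framework a $(0+)$-set) into $K$ whose image under $f$ approaches $y_0$. Because $K$ is closed and bounded, the limiting behavior of this curve stays inside $K$: its closure meets $K$ at some point $x_0$ by definable completeness (each coordinate is a bounded monotone-on-a-tail definable function, hence has a limit in $F$, and the limit point lies in $K$ by closedness). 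Then continuity of $f$ at $x_0$ forces $f(x_0) = y_0$, so $y_0 \in f(K)$, the desired contradiction.

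The delicate point throughout is replacing sequential compactness by the definable-completeness substitute, and ensuring the selection producing the curve is genuinely definable; here I would invoke the definable choice property assumed for the structure, together with the characterization of limiting behavior of definable functions on intervals that definable completeness provides. I expect the frontier-point existence step --- showing the definable curve in $K$ actually converges, and that its limit stays in $K$ --- to be where the argument needs the most care, but since $K$ is closed and bounded and each coordinate function is definable and bounded, definable completeness yields a limit in $F^m$ lying in $\mycl_{F^m}(K) = K$, which closes the argument.
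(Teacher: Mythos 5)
The paper does not actually prove this lemma; it cites \cite[Proposition 1.10]{M}, so any self-contained argument necessarily departs from the paper. Your proposal, however, has genuine gaps, both traceable to the same misconception: treating definable completeness as if it provided sequential (countable) completeness. In the boundedness step, the bisection argument needs the intersection of a countably infinite nested sequence of boxes chosen by recursion on $\mathbb{N}$; that family is not a definable family, so definable completeness says nothing about it, and the countable nested-interval property genuinely fails in definably complete structures (in a non-archimedean o-minimal field, nested closed intervals whose endpoints are partial sums of a series with no sum in the field have empty intersection). The correct substitute is Proposition \ref{prop:definably_compact}: if $g=|f|$ were unbounded on $K$, the definable family $C_t=\{x \in K \;|\; g(x)\geq t\}$, $t\in F$, would consist of nonempty closed subsets of the definably compact set $K$, decreasing in $t$, so $\bigcap_{t}C_t\neq\emptyset$, which is absurd since $g$ is $F$-valued.

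In the closedness step the decisive error is the clause ``each coordinate is a bounded monotone-on-a-tail definable function, hence has a limit in $F$.'' Monotonicity on a tail is the monotonicity theorem, an o-minimal (or locally o-minimal) phenomenon that is not available in definably complete structures; the paper introduces $(0+)$-sets precisely because definable curves fail to behave this way even in the d-minimal case. Concretely, every expansion of the ordered real field is definably complete, and $t \mapsto \sin(1/t)$ is a bounded definable continuous curve that is monotone on no tail and has no limit at $0^{+}$. What is true, and what suffices, is that the limit set $\bigcap_{c>0}\mycl\left(\gamma((0,c))\right)$ is nonempty --- again by Proposition \ref{prop:definably_compact}, since these are nested nonempty closed subsets of the definably compact set $\mycl(K)=K$; any point $x_0$ of this set lies in $K$, and continuity of $f$ at $x_0$ together with $f(\gamma(t))\to y_0$ forces $f(x_0)=y_0$. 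Finally, note that your argument uses the field operations (to form $|f(x)|$ and balls) and the definable choice property, neither of which appears in the hypotheses of Lemma \ref{lem:image}: the lemma is asserted for arbitrary definably complete expansions of dense linear orders, which is exactly why the paper defers to Miller's proof, which needs neither.
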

\begin{proof}
	See \cite[Proposition 1.10]{M}.
\end{proof}

\begin{proposition}\label{prop:definably_compact}
	Consider a definably complete structure and a definably compact set $X$.
	Let $\{C_c\}_{c \in I}$ be the definable family of decreasing definable closed subsets of $X$, where $I$ is an open interval.
	In other word, the set $\{(x,c) \in X \times I\;|\; x \in C_c\}$ is definable.
	Then the intersection $\bigcap_{c \in I}C_c$ is not empty.
\end{proposition}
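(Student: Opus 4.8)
The plan is to argue by induction on the ambient dimension $n$, using definable completeness to manufacture a point of the intersection in the base case and Lemma~\ref{lem:image} to push the induction through coordinate projections. Throughout I take each $C_c$ to be nonempty, which is the implicit hypothesis of the statement: since the family is decreasing, a single empty member would force the whole intersection to be empty. I also fix the convention that $c<c'$ implies $C_{c'}\subseteq C_c$; the reverse convention is handled symmetrically by reversing the order on $I$.

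For the base case $n=1$, each $C_c$ is a nonempty closed bounded definable subset of $F$, so $g(c):=\min C_c$ exists by definable completeness: the infimum of a nonempty bounded-below definable subset of $F$ lies in $F$, and closedness makes it an actual minimum. Because the family decreases in $c$, the function $g$ is nondecreasing, and it is bounded since $X$ is bounded; hence $a:=\sup_{c\in I}g(c)$ exists in $F$. I then claim $a\in C_{c_0}$ for every $c_0\in I$. Fixing $c_0$ and $\varepsilon>0$, the definition of the supremum gives some $c_1$ with $g(c_1)>a-\varepsilon$; taking $c=\max(c_0,c_1)$, monotonicity yields $a-\varepsilon<g(c)\le a$ together with $g(c)\in C_{c_0}$, the latter because $c\ge c_0$. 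Thus every neighborhood of $a$ meets $C_{c_0}$, so $a\in\mycl(C_{c_0})=C_{c_0}$, and therefore $a\in\bigcap_{c\in I}C_c$.

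For the inductive step, let $\pi:F^n\to F^{n-1}$ be the projection onto the first $n-1$ coordinates and set $C_c':=\pi(C_c)$. Each $C_c'$ is definably compact by Lemma~\ref{lem:image}, being the image of a definably compact set under the continuous definable map $\pi$, and the family $\{C_c'\}$ is again decreasing and nonempty; so by the induction hypothesis there is a point $b\in\bigcap_{c\in I}C_c'$. For this fixed $b$ the fibers $F_c:=C_c\cap\pi^{-1}(b)$ are nonempty (as $b\in\pi(C_c)$), closed, bounded, and decreasing. Identifying $\pi^{-1}(b)$ with $F$ via the last coordinate, the base case produces a common point of all the $F_c$. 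Since $\bigcap_c F_c=\bigl(\bigcap_c C_c\bigr)\cap\pi^{-1}(b)\subseteq\bigcap_c C_c$, the intersection is nonempty, which completes the induction.

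The only delicate step is the base case, where one must convert the purely order-theoretic supremum furnished by definable completeness into a genuine member of each $C_{c_0}$; this is precisely where closedness of the $C_{c_0}$ enters, and it is the conceptual heart of the argument. Everything else is bookkeeping, the single external input being that projections preserve definable compactness (Lemma~\ref{lem:image}), which is what guarantees that the decreasing family of nonempty closed sets survives the passage from $F^n$ to $F^{n-1}$.
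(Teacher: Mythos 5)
Your proof is correct, and it is genuinely different from the paper's treatment of this proposition: the paper gives no argument at all, deferring entirely to the literature (\cite[Section 8.4]{J} and \cite[Remark 5.6]{FKK}). Your induction on the ambient dimension, with the base case handled by the supremum of the monotone definable function $g(c)=\min C_c$ and the inductive step handled by projecting and then working in a single fiber, is a clean, self-contained argument whose only external input is Lemma \ref{lem:image}; that is exactly what a reader of this paper would want, since both cited sources are outside the o-minimal/d-minimal canon (one is a thesis, one a remark). Your explicit observation that nonemptiness of each $C_c$ must be read as an implicit hypothesis is also correct --- without it the statement is false --- and the paper leaves this unsaid. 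One caveat: the proposition is stated for an arbitrary definably complete structure, i.e.\ an expansion of a dense linear order with no group operation, so expressions like $a-\varepsilon$ in your base case are not literally available. This is cosmetic: replace ``there exists $c_1$ with $g(c_1)>a-\varepsilon$'' by ``for every $b<a$ there exists $c_1$ with $g(c_1)>b$'', and conclude that every open interval containing $a$ meets $C_{c_0}$, whence $a\in\mycl(C_{c_0})=C_{c_0}$; the rest of your argument (including the inductive step, which uses only projections, fibers, and Lemma \ref{lem:image}) goes through verbatim in the pure order setting.
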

\begin{proof}
	See \cite[Section 8.4]{J} and \cite[Remark 5.6]{FKK}.
\end{proof}

\begin{theorem}[Definable Tietze extension theorem for bounded functions]\label{thm:tietze}
	Consider a definably complete expansion of an ordered field $\mathcal F=(F,<, +,\cdot,0,1,\ldots)$.
	Let $A \subseteq X$ be definable subsets of $F^m$ such that $A$ is closed in $X$.
	Any bounded definable continuous function $\varphi:A \rightarrow F$ has a definable continuous extension $\Phi:X \rightarrow F$.
\end{theorem}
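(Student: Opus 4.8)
The plan is to prove the definable Tietze extension theorem for bounded functions by adapting the classical analytic proof of the Tietze extension theorem, which constructs the extension as the uniform limit of a series of approximating functions built via a definable Urysohn-type lemma. The key point is that in a definably complete expansion of an ordered field, the required suprema and infima exist in $F$, so the analytic series converges pointwise (and the partial sums are uniformly Cauchy), yielding a genuine definable continuous function.

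First I would establish a definable Urysohn lemma: given two disjoint definable closed subsets $C_0, C_1$ of $X$, produce a definable continuous function $X \to [0,1]$ taking value $0$ on $C_0$ and $1$ on $C_1$. Since $\mathcal F$ is an expansion of an ordered field, I can use the explicit formula
\begin{equation}
u(x) = \frac{d(x,C_0)}{d(x,C_0)+d(x,C_1)},
\end{equation}
where $d(x,C) = \inf_{y \in C}|x-y|$ is the definable distance function. The infimum exists in $F$ by definable completeness, $d(\cdot,C)$ is definable and continuous, and the denominator never vanishes because $C_0$ and $C_1$ are disjoint and closed. This gives the separating function directly, without needing a separate normality argument.

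Next I would run the standard iterative approximation. Assuming after rescaling that $|\varphi| \le 1$ on $A$, I split $A$ into the definable closed sets where $\varphi \le -\tfrac13$ and where $\varphi \ge \tfrac13$; applying the Urysohn function I obtain a definable continuous $g_1 : X \to F$ with $|g_1| \le \tfrac13$ and $|\varphi - g_1| \le \tfrac23$ on $A$. Iterating, I build definable continuous functions $g_k$ with $|g_k| \le \tfrac13\bigl(\tfrac23\bigr)^{k-1}$ and $|\varphi - \sum_{i=1}^k g_i| \le \bigl(\tfrac23\bigr)^k$ on $A$. The partial sums $\Phi_k = \sum_{i=1}^k g_i$ are definable, and since $\sum \tfrac13\bigl(\tfrac23\bigr)^{k-1}$ is a convergent geometric series, the sequence $\Phi_k$ is uniformly Cauchy with a controlled geometric tail. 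By definable completeness the pointwise limit $\Phi(x) = \sup_k \Phi_k(x)$ (or, more precisely, the value pinned down by the Cauchy estimate) exists in $F$ for each $x$, and $\Phi$ is definable because it is defined by a uniform formula in the definable family $\{g_k\}$.

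The main obstacle is the transition from the classical sequential/limit argument to one that is both definable and justified purely by definable completeness rather than by real-analytic completeness. In particular, I must verify that the family $\{g_k\}_k$ is uniformly definable — that is, that the construction can be indexed so that $(x,k)\mapsto g_k(x)$ is a single definable function — and that the limit $\Phi$ inherits continuity from the uniform (geometric) convergence via a definably-checkable $\varepsilon$-argument. I expect this uniform-definability of the iteration and the definable uniform-convergence estimate to be the delicate steps, whereas the boundedness hypothesis is exactly what makes them tractable, since it provides the explicit geometric bounds that force convergence inside $F$.
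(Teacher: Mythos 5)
Your definable Urysohn step is fine as far as it goes: the formula $u(x)=d(x,C_0)/(d(x,C_0)+d(x,C_1))$ is definable, and the infima exist by definable completeness. The genuine gap is the iteration. The classical scheme builds an infinite sequence $g_1,g_2,\ldots$ by recursion on $k\in\mathbb N$, and that is exactly what a definably complete expansion of an ordered field cannot support. Definability is a first-order notion: there is no recursion theorem turning the step-by-step construction into a single definable function $(x,k)\mapsto g_k(x)$, and in many structures covered by the theorem (for instance o-minimal ones, which are definably complete) there is not even an infinite definable discrete set to serve as index set. Consequently the family $\{g_k\}_k$ and the set of partial sums $\{\Phi_k(x)\;|\;k\in\mathbb N\}$ are not definable objects at all, and definable completeness --- which yields suprema of \emph{definable} sets only --- cannot be invoked to produce the limit.

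Worse, the convergence itself fails in non-archimedean models, which the hypotheses allow: any o-minimal expansion of a non-archimedean real closed field is definably complete. In such a field $(2/3)^k$ is a non-infinitesimal rational for every $k$, so it never drops below a positive infinitesimal $\varepsilon$; the partial sums $\Phi_k(x)$ are therefore not Cauchy in the order topology and have no limit. Uniform geometric convergence is not a delicate step to be verified here --- it is false. This is precisely why the paper (following Aschenbrenner and Fischer) avoids any countable limit process and uses a one-shot explicit formula instead: after composing with a definable homeomorphism $F\to(1,2)$ so that $\varphi(A)\subseteq[R_1,R_2]$ with $1<R_1<R_2<2$, it defines $\Phi(x)=\inf_{a\in A}\varphi(a)\cdot d(x,a)/d(x,A)$ off $A$. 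That infimum is taken over a definable set, so it exists by definable completeness alone, and continuity is then checked by a direct $\varepsilon$-$\delta$ estimate (the boundedness hypothesis entering through the two-sided bound $R_1\leq\Phi\leq R_2$). To repair your argument you would have to abandon the series entirely and pass to some such single-formula construction.
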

\begin{proof}
	The space $F^m$ is a definable metric space in the canonical way and $X$ is so.
	The theorem holds without the assumption that $f$ is bounded by \cite[Lemma 6.6]{AF} when $X=F^n$.
	The general case follows from Theorem \ref{thm:tietze_abstract}.
\end{proof}

We next introduce the notion of good extended rank function.

\begin{definition}\label{def:extended_rank}
	Let $\mathcal F=(F,<,\ldots)$ be an expansion of a dense linear order without endpoints.
	Let $\operatorname{Def}(F^n)$ be the set of definable subsets of $F^n$.
	The structure $\mathcal F$ has a \textit{good extended rank function} if, for each nonnegative integer $n$, there exists a map $\myedim_n: \operatorname{Def}(F^n) \to \mathcal E_n \cup \{-\infty\}$ satisfying the following conditions:
	\begin{enumerate}
		\item[(a)] The target space $\mathcal E_n$ is a linearly ordered set having a smallest element and $-\infty$ is smaller than any element in $\mathcal E_n$.
		\item[(b)] The equality $\myedim_n(X)=-\infty$ holds if and only if $X$ is an empty set.
		\item[(c)] There is no infinite decreasing sequence $e_1 > e_2 > \cdots$ of elements in $\mathcal E_n$.
		Here, an infinite decreasing sequence means a map $e:\mathbb N \to \mathcal E_n$ such that $e(m_1)>e(m_2)$ for each $m_1<m_2 \in \mathbb N$.
		\item[(d)] The equality $\myedim_n(A \cup B)=\max\{\myedim_n(A ),\myedim_n(B) \}$ holds;
		\item[(e)] Let $X, Y \in \operatorname{Def}(F^n)$ with $X \subseteq Y$.
		If $\myedim_n(X)=\myedim_n(Y)$, there exists a definable subset $U$ of $X$ such that $U$ is open in $Y$ and $\myedim_n(X \setminus U)<\myedim_n(Y)$. 
		\item[(f)] A nonempty subset $X$ of $F^n$ belonging to $\operatorname{Def}(F^n)$ is discrete whenever $\myedim_n(X)$ is the smallest element in $\mathcal E_n$.
	\end{enumerate}
	We simply denote $\myedim_n(X)$ by $\myedim(X)$ when $n$ is clear from the context.
\end{definition}

\begin{remark}
	Property (c) of Definition \ref{def:extended_rank} allows a special type of induction argument.
	Let $P$ be a property of definable sets.
	Every nonempty definable set has property $P$ if the following two conditions are satisfied:
	\begin{enumerate}
		\item[(1)] Property $P$ holds for every definable set $X$ whose extended rank $\myrank_n(X)$ is the smallest element in $\mathcal E_n$;
		\item[(2)] For any $d \in \mathcal E_n$ which is not the smallest in $\mathcal E_n$ and any definable set $X$ of $\myrank_n(X)=d$, either $X$ has property $P$ or there exists a definable set $Y$ with $\myrank_n(Y)<d$ such that $X$ enjoys property $P$ whenever $Y$ enjoys property $P$.
	\end{enumerate}
	If $X$ does not possess property $P$, we can construct infinitely many definable sets $Y_1,Y_2,\ldots$ satisfying $\myrank_n(Y_1)>\myrank_n(Y_2)>\ldots$. 
	It is a contradiction to property (c) of Definition \ref{def:extended_rank}.
\end{remark}

\begin{definition}[\cite{TV}]
	An expansion $\mathcal F=(F,<,\ldots)$ of a dense linear order without endpoints
	is \textit{locally o-minimal} if, for every definable subset $X$ of $F$ and for every point $a\in F$, there exists an open interval $I$ such that $a \in I$ and $X \cap I$ is a finite union of points and 
	open intervals.
\end{definition}

\begin{example}\label{ex:local1}
Consider a definably complete locally o-minimal structure $\mathcal F=(F,<,\ldots)$.
	We consider that $F^0$ is a singleton with the trivial topology.
Let $X$ be a nonempty definable subset of $F^n$.
The dimension of $X$ is the maximal nonnegative integer $d$ such that $\pi(X)$ has a nonempty interior for some coordinate projection $\pi:F^n \rightarrow F^d$.
We set $\dim(X)=-\infty$ when $X$ is an empty set.

The dimension function $\dim$ defined above possesses properties (a) through (f) in Definition \ref{def:extended_rank}.
\end{example}
\begin{proof}
	The dimension function $\dim$ obviously has properties (a) through (c).
	Property (d) follows from \cite[Proposition 2.8(5)]{FKK}.
	Property (e) is \cite[Lemma 2.16]{FK}.
	Property (f) follows from \cite[Proposition 2.8(1)]{FKK}.
\end{proof}

\begin{lemma}\label{lem:contained}
	Let $\mathcal F=(F,<,\ldots)$ be an expansion of a dense linear order without endpoints having a good extended rank function $\myedim$.
	Let $S \subseteq X$ be definable subsets of $F^n$.
	Then, the inequality $\myedim \partial_X S<\myedim S$ holds.
\end{lemma}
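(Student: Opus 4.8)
The plan is to argue by contradiction, playing property (e) of the good extended rank function against the elementary fact that $S$ is dense in its own closure. Throughout I treat the nontrivial case $S \neq \emptyset$. Set $C := \mycl_X(S)$, so that $\partial_X S = C \setminus S$ and $C = S \cup \partial_X S$; both $C$ and $\partial_X S$ are definable subsets of $X$, since membership in the closure is a first-order condition on the order-induced topology of $F^n$, and $\partial_X S$ is obtained from $C$ by a definable set difference.

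Suppose, toward a contradiction, that $\myedim(\partial_X S) \ge \myedim(S)$. Applying property (d) to $C = S \cup \partial_X S$ gives $\myedim(C) = \max\{\myedim(S), \myedim(\partial_X S)\} = \myedim(\partial_X S)$. Since $S \neq \emptyset$, property (b) gives $\myedim(S) > -\infty$, hence $\myedim(\partial_X S) > -\infty$ and $\partial_X S \neq \emptyset$. I would then apply property (e) to the inclusion $\partial_X S \subseteq C$, whose two members now share the same extended rank. This produces a definable $U \subseteq \partial_X S$ that is open in $C$ and satisfies $\myedim(\partial_X S \setminus U) < \myedim(C) = \myedim(\partial_X S)$. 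By property (d) again, $\myedim(\partial_X S) = \max\{\myedim(\partial_X S \setminus U), \myedim(U)\}$, so the strict drop forces $\myedim(U) = \myedim(\partial_X S) > -\infty$; in particular $U \neq \emptyset$.

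To finish, I would observe that $U \subseteq \partial_X S = C \setminus S$ is disjoint from $S$, while $S$ is dense in $C = \mycl_X(S)$ by the very definition of the closure. A nonempty subset of $C$ that is open in $C$ necessarily meets every dense subset of $C$, so $U \cap S \neq \emptyset$, contradicting $U \cap S = \emptyset$. This contradiction yields $\myedim(\partial_X S) < \myedim(S)$.

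The proof is short, and its one subtle point is choosing the right pair for property (e): applying (e) to $(\partial_X S, \mycl_X(S))$ rather than to $(S, X)$ is what lets the openness of the witness $U$ be confronted with density. This is exactly the mechanism that makes the statement hold even when $S$ has the smallest dimension but nonzero extended rank — for instance a definable discrete set accumulating at a point — a case where an ordinary dimension argument gives no strict inequality but where the frontier still lies in the deleted, lower-rank part of the closure. I do not expect a serious obstacle beyond verifying the definability of $C$ and recognizing this density argument.
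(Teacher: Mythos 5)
Your proof is correct and takes essentially the same approach as the paper's: both apply property (e) to the pair $(\partial_X S,\ \mycl_X(S))$ and derive a contradiction from the fact that a nonempty definable subset of $\mycl_X(S)$ that is open in $\mycl_X(S)$ must meet the dense subset $S$. The differences are cosmetic — you arrange it as a global proof by contradiction and make explicit, via properties (b) and (d), the nonemptiness of the open witness $U$, a point the paper's proof leaves implicit.
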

\begin{proof}
	Set $Y=\mycl_X(S)$ and $T=\partial_XS$.
	It is obvious that $\mycl_X(S)=\mycl_Y(S)$.
	Therefore, we may assume that $X=\mycl_X(S)$ without loss of generality.
	We have $X=S \cup T$ under this assumption.
	If $\myedim T = \myedim X$, there is a definable subset $V$ of $T$ such that it is open in $X$ by property (e) in Definition \ref{def:extended_rank}.
	It is a contradiction to the definition of $T$.
	We have $\myedim T<\myedim X$.
	On the other hand, we have $\myedim X  =\myedim S $ by property (d) in Definition \ref{def:extended_rank} because $X=S \cup T$ and $\myedim T <\myedim X$.
	We get the inequality $\myedim T  < \myedim S$.
\end{proof}

\begin{lemma}\label{lem:constructible}
	Let $\mathcal F=(F,<,\ldots)$ be an expansion of a dense linear order without endpoints having a good extended rank function $\myedim$.
	Definable sets are constructible; that is, it is the union of finitely many locally closed sets.
\end{lemma}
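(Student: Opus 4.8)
The plan is to prove that every definable set $X \subseteq F^n$ is constructible by induction on the extended rank $\myedim(X)$, using the special induction principle established in the Remark following Definition \ref{def:extended_rank}. The base case is when $\myedim(X)$ is the smallest element of $\mathcal E_n$: by property (f), such a nonempty $X$ is discrete. A discrete definable set is locally closed (around each of its points there is an open box meeting $X$ in that single point, so $X$ is open in its own closure), hence trivially constructible as a single locally closed piece. The empty set is vacuously constructible.

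For the inductive step, I would take a definable $X$ whose rank $d = \myedim(X)$ is not the smallest, and produce a definable set $Y$ of strictly smaller rank such that constructibility of $Y$ forces constructibility of $X$. The natural choice is to apply property (e) of Definition \ref{def:extended_rank} to the pair $X \subseteq X$ itself: since $\myedim(X) = \myedim(X)$ trivially, there is a definable $U \subseteq X$ that is open in $X$ with $\myedim(X \setminus U) < d$. A more useful variant is to locate the part of $X$ that is locally closed. The key observation is that $X$ is locally closed in $\mycl(X)$ precisely on an open (in $\mycl(X)$) subset; writing $Y = \mycl(X)$, the frontier $\partial_Y X$ has $\myedim(\partial_Y X) < \myedim(X)$ by Lemma \ref{lem:contained}. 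Setting $U = X \setminus \mycl_Y(\partial_Y X)$, this $U$ is open in $Y$ and hence locally closed in $F^n$, and $X \setminus U \subseteq \mycl_Y(\partial_Y X)$ has rank strictly below $d$ by property (d) together with Lemma \ref{lem:contained}. Thus $X = U \cup (X \setminus U)$, where $U$ is locally closed and $X \setminus U =: Y'$ is a definable set of strictly smaller rank.

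By the induction hypothesis applied to $Y'$ (legitimate because $\myedim(Y') < d$), the set $Y'$ is a finite union of locally closed sets, and adjoining the single locally closed piece $U$ shows $X$ is a finite union of locally closed sets, completing the induction. The finiteness of the number of pieces is automatic here because the induction terminates after finitely many steps: property (c) guarantees no infinite strictly decreasing sequence in $\mathcal E_n$, so the process of peeling off a locally closed open part and descending in rank cannot continue indefinitely.

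The main obstacle I anticipate is verifying cleanly that the locally closed open part $U$ can be chosen definably and that $X \setminus U$ genuinely drops in rank. The delicate point is identifying the right open-in-closure subset: one must argue that $X$ fails to be locally closed only along $\partial_Y X$, and that the ``bad'' locus $X \cap \mycl_Y(\partial_Y X)$ is controlled in rank. This requires combining Lemma \ref{lem:contained} (which bounds the rank of the frontier) with the monotonicity and additivity furnished by properties (d) and (e), and checking that taking closures within $Y$ does not inflate the rank back up to $d$ — a subtlety that the additivity property (d) resolves, since the closure of a lower-rank set need not itself have low rank, so the argument must be arranged so that the peeled-off remainder is contained in a frontier-type set whose rank is provably smaller.
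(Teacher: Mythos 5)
Your proof is correct, but its mechanics differ from the paper's in both steps, so a comparison is worthwhile. Both arguments are inductions on $\myedim$ (justified by property (c)) with Lemma \ref{lem:contained} as the engine. For the base case, the paper does not use property (f) at all: when $\myedim(X)$ is the smallest element of $\mathcal E_n$, Lemma \ref{lem:contained} forces $\myedim(\partial_{F^n}X)=-\infty$, i.e.\ $\partial_{F^n}X=\emptyset$, so $X$ is closed. Your route via discreteness is valid (in a Hausdorff space a discrete set is indeed open in its closure), but it makes the lemma depend on property (f), which the paper deliberately avoids here --- compare Remark \ref{rem:comment}, which asserts that (f) enters only in the proof of Theorem \ref{thm:quotient-mae}; with your base case, this lemma and its downstream consequences (e.g.\ the corollary that definable subgroups admit quotients, which uses constructibility to see that $H$ is closed in $G$) would inherit that dependence. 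In the inductive step, the paper applies the induction hypothesis to $\partial_{F^n}X$ and then writes $X=\mycl_{F^n}(X)\setminus\partial_{F^n}X$, implicitly using that the difference of a closed set and a constructible set is constructible; you instead exhibit the locally closed piece $U=\mycl_{F^n}(X)\setminus\mycl_{F^n}(\partial_{F^n}X)$ directly and apply the induction hypothesis to the remainder $X\cap\mycl_{F^n}(\partial_{F^n}X)$, which avoids any Boolean-algebra facts about constructible sets at the price of one extra rank estimate. That estimate --- the ``delicate point'' you flag at the end --- is in fact immediate rather than subtle: since $X\cap\partial_{F^n}X=\emptyset$, the remainder equals $X\cap\partial_{F^n}(\partial_{F^n}X)$, and two applications of Lemma \ref{lem:contained} together with the monotonicity furnished by property (d) give it rank strictly below $d$; equivalently, closure never inflates rank for a good extended rank function, since $\mycl_{F^n}(S)=S\cup\partial_{F^n}S$ and property (d) with Lemma \ref{lem:contained} yield $\myedim(\mycl_{F^n}(S))=\myedim(S)$, so your worry that ``the closure of a lower-rank set need not itself have low rank'' never materializes in this setting.
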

\begin{proof}
	Let $X$ be a definable subset of $F^n$.
	We prove that $X$ is constructible by induction on $d=\myedim_n X$.
	When $d$ is the smallest element in $\mathfrak E_n$, it is closed by Lemma \ref{lem:contained}.
	We have nothing to prove in this case.
	In the other case,  we have $\myedim_n(\partial _{F^n}X)<d$ by Lemma \ref{lem:contained}.
	The frontier $\partial _{F^n}X$ is constructible by the induction hypothesis.
	Since $X=\mycl_{F^n}(X) \setminus \partial _{F^n}X$, $X$ is also constructible.
\end{proof}

\section{Definably proper and definably identifying maps}\label{sec:proper}

We want to find equivalent conditions for definable maps to be definably proper and definably identifying.
We introduce the notion of good $(0+)$-pseudo-filters for that purpose.

\begin{definition}\label{def:filter}
	Consider an expansion of an ordered additive group $\mathcal F=(F,<,+,0,\ldots)$.
	Let $\mathfrak D$ be a family of definable subsets of $(0,\infty)$ whose closure in $F$ contains the origin $0$.
	For any $D \in \mathfrak D$ and a definable map $f: D \to F^n$, the \textit{limit set} $\myLim(f)$ of $f$ is the set of points $p \in F^n$ such that the intersection $f^{-1}(B) \cap I$ is not empty for any open box $B$ in $F^n$ containing the point $p$ and any open interval $I$ containing the point $0$.
	We say that a nonempty family $\mathfrak D$ of definable subsets of $(0,\infty)$ is a \textit{good $(0+)$-pseudo-filter} if the following conditions are satisfied:
	\begin{enumerate}
		\item[(a)] For each $D \in \mathfrak D$, the set $D$ is bounded in $F$ and the equality $\mycl_F(D) = D \cup \{0\}$ holds.
		\item[(b)] For each $D \in \mathfrak D$ and definable sets $D_1$ and $D_2$ with $D=D_1 \cup D_2$, either $D_1$ or $D_2$ contains an element of $\mathfrak D$.
		\item[(c)] Let $D \in \mathfrak D$ and $\gamma:D \to F^n$ be a definable map.
		There exists $D' \in \mathfrak D$ such that $D' \subseteq D$ and the restriction of $\gamma$ to $D'$ is continuous.
		Furthermore, when $\myLim(\gamma) \neq \emptyset$, we can take $D'$ so that $\myLim(\gamma|_{D'})$ is a singleton, where $\gamma|_{D'}$ is the restriction of $\gamma$ to $D'$.
	\end{enumerate}
	We say that the structure $\mathcal F$ admits a \textit{good $(0+)$-pseudo-filter} if there exists a good $(0+)$-pseudo-filter.
	
	When a good $(0+)$-pseudo-filter $\mathfrak D$ is given, a $\mathfrak D$-\textit{pseudo-curve} is a definable continuous map $f:D \to F^n$ such that $D \in \mathfrak D$.
	We simply call it a \textit{pseudo-curve} when $\mathfrak D$ is clear from the context. 
\end{definition}

\begin{example}\label{ex:local2}
Consider a definably complete locally o-minimal expansion of an ordered group $\mathcal F=(F,<,+,0,\ldots)$.
Let $\mathfrak D$ be the family of intervals of the form $(0,\varepsilon]$ with $\varepsilon >0$.
It is a good $(0+)$-pseudo-filter.
\end{example}
\begin{proof}
It is obvious that $\mathfrak D$ enjoys property (a) in Definition \ref{def:filter}.
Property (b) immediately follows from local o-minimality.
	We finally check that $\mathfrak D$ enjoys property (c).
	Let $\gamma:(0,\varepsilon]  \to  F^n$ be a definable map.
	We may assume that $\gamma$ is continuous by the local monotonicity theorem \cite[Theorem 2.3]{FKK} by taking smaller $\varepsilon >0$ if necessary.
	We consider the case in which $\myLim(\gamma)$ is not empty.
	The set $\myLim(\gamma)$ is contained in the frontier of $\gamma((0,\varepsilon])$.
	It implies that $\myLim(\gamma)$ is discrete and closed by \cite[Proposition 2.8(1)]{FKK}.
	Take a point $x \in \myLim(\gamma)$ and definable open neighborhood $U$ of $x$ so that $U \cap \myLim(\gamma)=\{x\}$.
	By local o-minimality, if we take smaller $\varepsilon>0$ again, we may assume that $\gamma((0,\varepsilon]) \subseteq U$.
	It implies $\myLim(\gamma)=\{x\}$.
\end{proof}

We next investigate basic properties of $\mathfrak D$.

\begin{lemma}\label{lem:basic_o+1}
	Consider a definably complete expansion $\mathcal F=(F,+,<,0,\ldots)$ of an ordered group.
	Let $D$ be a definable subset of $(0,\infty)$ with $0 \in \mycl_F(D)$ and $\gamma:D \to F^n$ be a definable map.
	The following assertions hold:
	\begin{enumerate}
		\item[(1)] The equality $\myLim(\gamma)	= \bigcap_{c>0} \mycl_{F^n}(\gamma(D_{<c}))$ holds, where $D_{<c}$ denotes the set $\{x \in D\;|\; x<c\}$.
		\item[(2)] Let $K$ be a definably compact subset of $F^n$. If $\gamma(D) \subseteq K$, the limit set $\myLim(\gamma)$ is not empty and $\myLim(\gamma) \subseteq K$.
		\item[(3)] Let $D'$ be a subset of $D$ with $0 \in \mycl_F(D')$. Let $\gamma'$ be the restriction of $\gamma$ to $D'$.
		Then, we have $\myLim(\gamma') \subseteq \myLim(\gamma)$. 
		\item[(4)]
		Assume that $\mathcal F$ enjoys the definable choice property and admits a good $(0+)$-pseudo-filter $\mathfrak D$.
		Assume further that $\myLim(\gamma) \neq \emptyset$ and let $y \in \myLim(\gamma)$.
		There exist $D' \in \mathfrak D$ and a definable map $\delta:D' \to F^n$ such that $\myLim(\delta)=\{y\}$ and $\delta(D') \subseteq \gamma(D)$.
	\end{enumerate}
\end{lemma}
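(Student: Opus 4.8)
My plan is to read off parts (1)--(3) directly from the definition of $\myLim$ and then to concentrate on part (4), which carries the only genuine content. For part (1), I would unwind the definition: fixing an open interval $I=(a,b)$ with $a<0<b$ and using that $\gamma^{-1}(B)\subseteq D\subseteq(0,\infty)$, the trace $\gamma^{-1}(B)\cap I$ is exactly $\{x\in D_{<b}\;|\;\gamma(x)\in B\}$, hence nonempty iff the box $B$ meets $\gamma(D_{<b})$. Since the open boxes around $p$ form a neighborhood basis, the clause ``$B\cap\gamma(D_{<b})\neq\emptyset$ for every box $B\ni p$'' is precisely $p\in\mycl_{F^n}(\gamma(D_{<b}))$; letting $b$ range over $(0,\infty)$ yields the stated intersection. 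Part (3) is then immediate monotonicity: restricting $\gamma$ to $D'\subseteq D$ only shrinks preimages, so $(\gamma|_{D'})^{-1}(B)\cap I\subseteq\gamma^{-1}(B)\cap I$, and therefore $\myLim(\gamma|_{D'})\subseteq\myLim(\gamma)$.

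For part (2) I would exploit the description obtained in part (1). Each $C_c:=\mycl_{F^n}(\gamma(D_{<c}))$ is closed, nonempty (because $0\in\mycl_F(D)$ forces $D_{<c}\neq\emptyset$), and contained in $K$ (as $K$ is closed and contains $\gamma(D)$), and the family is nested, shrinking as $c\downarrow 0$. After reparametrizing via $c\mapsto -c$ to present it as a genuinely decreasing definable family indexed by the open interval $(-\infty,0)$—only the group operation is available in this lemma, so I deliberately avoid inversion—I would invoke Proposition \ref{prop:definably_compact} to conclude that $\myLim(\gamma)=\bigcap_{c>0}C_c$ is nonempty, and it is contained in $K$ because every $C_c$ is.

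The heart of the matter is part (4). Fix any $D'\in\mathfrak D$ (the family is nonempty), which accumulates at $0$ by property (a) of Definition \ref{def:filter}. The key point is that, since $y\in\myLim(\gamma)$, part (1) gives $y\in\mycl_{F^n}(\gamma(D_{<s}))$ for every $s>0$, so for each $s\in D'$ there is an $x\in D$ with $x<s$ and $|\gamma(x)-y|<s$. Consequently the definable set
\[
W=\{(s,x)\in D'\times D\;|\;x<s,\ |\gamma(x)-y|<s\}
\]
projects onto $D'$, and the definable choice property produces a definable $\sigma:D'\to D$ with $(s,\sigma(s))\in W$. Putting $\delta:=\gamma\circ\sigma$ gives a definable map with $\delta(D')\subseteq\gamma(D)$ and the decisive estimate $|\delta(s)-y|<s$ for all $s\in D'$. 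From this estimate I would verify $\myLim(\delta)=\{y\}$: the inclusion $y\in\myLim(\delta)$ holds because $0\in\mycl_F(D')$ supplies arbitrarily small $s\in D'$, for which $\delta(s)$ lies in the box of radius $s$ about $y$; conversely, for $p\neq y$ with $\eta=|p-y|$, the estimate traps $\delta(s)$ inside the box of radius $\eta/2$ about $y$ whenever $s\in D'$ and $s<\eta/2$, and this box is disjoint from the box of radius $\eta/2$ about $p$, so that box together with the interval $(-\eta/2,\eta/2)$ witnesses $p\notin\myLim(\delta)$.

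I expect the main obstacle to be isolating the correct definable relation $W$ and confirming that its projection exhausts $D'$: this is exactly where $y\in\myLim(\gamma)$ enters, through part (1), and the built-in comparison $|\gamma(x)-y|<s$ is what forces the limit set of $\delta$ to collapse to the single point $y$. I would also emphasize that neither property (c) of Definition \ref{def:filter} nor any continuity of $\delta$ is needed here, since the statement asks only for a definable map; if one later wanted a continuous representative (a genuine $\mathfrak D$-pseudo-curve), property (c) would furnish $D''\in\mathfrak D$ with $D''\subseteq D'$ on which $\delta$ is continuous, and part (3) together with the singleton clause of (c) would keep its limit set equal to $\{y\}$.
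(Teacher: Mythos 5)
Your proposal is correct and follows essentially the same route as the paper: (1) by unwinding the definition of $\myLim$, (2) by combining (1) with Proposition \ref{prop:definably_compact}, (3) by monotonicity of preimages, and (4) by using definable choice to select $\sigma(s)\in D_{<s}$ with $|\gamma(\sigma(s))-y|<s$ and then checking that this estimate forces $\myLim(\gamma\circ\sigma)=\{y\}$, exactly as in the paper's construction of $f$ and $\delta=\gamma\circ f$. Your added care in (2) about presenting the nested family as a decreasing one indexed by an open interval using only the group operation, and your remark that neither continuity nor property (c) of Definition \ref{def:filter} is needed in (4), are correct refinements of details the paper leaves implicit.
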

\begin{proof}
	(1) We first prove the inclusion $\myLim(\gamma)	\subseteq \bigcap_{c>0} \mycl(\gamma(D_{<c}))$.
	The inclusion is obvious when $\myLim(\gamma)=\emptyset$.
	We consider the other case.
	Take an arbitrary point $y \in \myLim(\gamma)$.
	Fix an arbitrary $c>0$.
	For any open box $B$ containing the point $y$, we have $B \cap \gamma(D_{<c})$ is not empty by the definition of $\myLim(\gamma)$.
	It means that $y \in \mycl(\gamma(D_{<c}))$.
	
	We next show the opposite inclusion $\bigcap_{c>0} \mycl(\gamma(D_{<c})) \subseteq \myLim(\gamma)$.
	Take an arbitrary point $y \in \bigcap_{c>0} \mycl(\gamma(D_{<c}))$.
	We fix an arbitrary open box $B$ containing the point $y$ and an arbitrary open interval $I$ containing the point $0$. 
	Since $(0,c) \subseteq I$ for sufficiently small $c>0$, we have $\gamma(D_{<c}) \subseteq \gamma(D \cap I)$ for sufficiently small $c>0$.
	It implies that $y \in \mycl(\gamma(D_{<c})) \subseteq \mycl(\gamma(D \cap I))$ and we have $B \cap \gamma(D \cap I) \neq \emptyset$.
	It means that $\gamma^{-1}(B) \cap I \neq \emptyset$ and we have proven that $y \in \myLim(\gamma)$.
	
	(2) The assertion immediately follows from assertion (1) and Proposition \ref{prop:definably_compact}.
	
	(3) It is obvious by the definition of $\myLim(\gamma)$.
	
	(4) Take an element $D' \in \mathfrak D$.
	By the definition of $\myLim(\gamma)$, for each $c \in F$, the intersection $\gamma^{-1}(\mathcal B(y,c)) \cap D_{<c} $ is not empty, where $\mathcal B(y,c)=\{x \in F^n\;|\; |x-y|<c\}$.
	Since $\mathcal F$ enjoys the definable choice property, there exists a definable map $f:D' \to D$ such that $f(c) \in \gamma^{-1}(\mathcal B(y,c)) \cap D_{<c} $ for each $c \in D'$.
	Set $\delta = \gamma \circ f$.
	It is obvious that $\delta(D') \subseteq \gamma(D)$.
	It is also a routine to show that $y \in \myLim(\delta)$.
	We omit the details.
	
	We next show that $\myLim(\delta)$ is a singleton.
	Let $z \in F^n$ be an arbitrary point with $z \neq y$.
	We can take $\varepsilon>0$ such that $|z-y| > \varepsilon$.
	When $x \in D'$ and $x<\varepsilon/2$, the definition of $f$ implies that $|\gamma\circ f(x)-y|<\varepsilon/2$.
	It means that $|\delta(x)-y|<\varepsilon/2$ and we get $|\delta(x)-z| > \varepsilon/2$.
	It implies that $z \notin \mycl_F(\delta(D'_{<\varepsilon/2}))$, where $D'_{<\varepsilon/2} = \{t \in D'\;|\; t < \varepsilon/2\}$.
	It means that $z \notin \myLim(\delta)$ by assertion (1).    
\end{proof}

\begin{lemma}\label{lem:basic_o+2}
	Consider a definably complete expansion $\mathcal F=(F,+,<,0,\ldots)$ of an ordered group admitting a good $(0+)$-pseudo-filter $\mathfrak D$.
	Let $\gamma:D \to F^n$ be a pseudo-curve.
	The following assertions hold:
	\begin{enumerate}
		\item[(1)] We have $\mycl_{F^n}(\gamma(D))=\gamma(D) \cup \myLim(\gamma)$.
		\item[(2)] Assume that $\myLim(\gamma)$ is a singleton.
		Then $\gamma(D) \cup \myLim(\gamma)$ is definably compact.
		\item[(3)] Assume that $\myLim(\gamma)$ is a singleton.
		For any subset $D'$ of $D$ with $0 \in \mycl_F(D')$, we have $\myLim(\gamma')=\myLim(\gamma)$,  where $\gamma'$ is the restriction of $\gamma$ to $D'$.
	\end{enumerate}
\end{lemma}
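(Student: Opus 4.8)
The plan is to handle the three assertions in turn, leaning throughout on Lemma~\ref{lem:basic_o+1} and on the definable compactness of the ``tails'' $D_{\ge c}:=\{x\in D\mid x\ge c\}$. The recurring observation is that for every $c>0$ the set $D_{\ge c}=D\cap[c,\infty)$ is closed in $F$ (its $F$-closure lies in $\mycl_F(D)\cap[c,\infty)=(D\cup\{0\})\cap[c,\infty)=D_{\ge c}$ by property~(a) of Definition~\ref{def:filter}) and bounded, hence definably compact, so that $\gamma(D_{\ge c})$ is definably compact, in particular closed, by Lemma~\ref{lem:image}. For assertion~(1) the inclusion $\supseteq$ is immediate, since $\gamma(D)\subseteq\mycl_{F^n}(\gamma(D))$ and $\myLim(\gamma)=\bigcap_{c>0}\mycl_{F^n}(\gamma(D_{<c}))\subseteq\mycl_{F^n}(\gamma(D))$ by Lemma~\ref{lem:basic_o+1}(1). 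For $\subseteq$ I would take $y\in\mycl_{F^n}(\gamma(D))\setminus\gamma(D)$ and show $y\in\myLim(\gamma)$: otherwise Lemma~\ref{lem:basic_o+1}(1) yields $c>0$ with $y\notin\mycl_{F^n}(\gamma(D_{<c}))$, and then from $\gamma(D)=\gamma(D_{<c})\cup\gamma(D_{\ge c})$ one gets $y\in\mycl_{F^n}(\gamma(D_{\ge c}))=\gamma(D_{\ge c})\subseteq\gamma(D)$, contradicting the choice of $y$.

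For assertion~(2), write $\myLim(\gamma)=\{p\}$. By~(1) the set $\gamma(D)\cup\{p\}=\mycl_{F^n}(\gamma(D))$ is closed, so it suffices to prove boundedness, and since each $\gamma(D_{\ge c})$ is bounded it even suffices to find a single $c>0$ with $\gamma(D_{<c})$ bounded. I would argue by contradiction: if no such $c$ exists then the definable function $g:=|\gamma-p|$ is unbounded on every $D_{<c}$, whence for each $r>0$ the ``far'' set $W_r:=\{x\in D\mid g(x)\ge r\}$ satisfies $0\in\mycl_F(W_r)$, while $p\in\myLim(\gamma)$ forces $0\in\mycl_F(\{x\in D\mid g(x)<r\})$ as well. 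The aim is to extract from this escape-to-infinity a pseudo-curve $\delta:D'\to F^n$ with $D'\in\mathfrak D$ whose image lies in a definably compact shell $\{z\mid r_1\le|z-p|\le r_2\}$ with $r_1>0$; for such a $\delta$, Lemma~\ref{lem:basic_o+1}(2) gives $\myLim(\delta)\ne\emptyset$, while Lemma~\ref{lem:basic_o+1}(3) gives $\myLim(\delta)\subseteq\myLim(\gamma)=\{p\}$, and since the shell omits $p$ this forces $\myLim(\delta)=\emptyset$, a contradiction. Producing this $\delta$ is the step I expect to be the main obstacle: a priori $g$ could tend to $\infty$ only along a ``thin'' subset accumulating at $0$ while a ``fat'' subset keeps converging to $p$, so one must use property~(b) (applied to decompositions of $D$ according to the size of $g$) to prevent the pseudo-filter piece from dodging every bounded shell, and property~(c) to pass to a restriction on which $g$ has a genuine singleton scalar limit $s$. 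I would then split into cases on $s$: a positive $s$ confines the image to a compact shell and gives the contradiction just described, whereas $s=0$ must be excluded as incompatible with $g$ being unbounded near $0$ on the chosen pseudo-filter element. Once boundedness holds, $\gamma(D)\cup\{p\}$ is closed and bounded, hence definably compact.

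Finally, for assertion~(3), let $D'\subseteq D$ with $0\in\mycl_F(D')$ and put $\gamma'=\gamma|_{D'}$. Lemma~\ref{lem:basic_o+1}(3) already yields $\myLim(\gamma')\subseteq\myLim(\gamma)=\{p\}$, so only $\myLim(\gamma')\ne\emptyset$ remains, and here I would invoke assertion~(2): since $\gamma'(D')\subseteq\gamma(D)\subseteq\gamma(D)\cup\{p\}$ and the latter set is definably compact, the image of $\gamma'$ lies in a definably compact set, so $\myLim(\gamma')\ne\emptyset$ by Lemma~\ref{lem:basic_o+1}(2). Combining the two facts gives $\myLim(\gamma')=\{p\}=\myLim(\gamma)$, as required.
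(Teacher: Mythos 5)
Your treatments of (1) and (3) are correct and essentially the paper's own: for (1) the paper likewise picks a point of $\mycl_{F^n}(\gamma(D))\setminus\myLim(\gamma)$, produces a box $B$ and an interval $I$ with $\gamma^{-1}(B)\cap I=\emptyset$, and uses that $D\setminus I$ is definably compact (property (a) of Definition \ref{def:filter} plus Lemma \ref{lem:image}) so that $\gamma(D)\cap B\subseteq\gamma(D\setminus I)$ is closed; for (3) it uses exactly your combination of assertion (2) with Lemma \ref{lem:basic_o+1}(2),(3).

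The problem is (2), and you have put your finger on the right spot without being able to close it. The reduction to finding $c>0$ with $\gamma(D_{<c})$ bounded is fine, but the shell-trapping plan cannot be completed. Property (b) only says that at least one piece of a decomposition $D=\{g\le r\}\cup\{g>r\}$ contains a member of $\mathfrak D$; it never lets you force that member into the bounded piece, and if the escaping piece $\{g>r\}$ is always the one containing a member, then $g$ restricted to that member has empty limit set, so the singleton clause of property (c) (which is conditional on $\myLim\neq\emptyset$) gives nothing, and no definably compact shell ever contains the image. Moreover, this is not a technicality that a cleverer decomposition could fix: the statement is not a consequence of properties (a)--(c) at all. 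In the d-minimal structure $(\mathbb R,<,+,\cdot,2^{\mathbb Z})$, let $\mathfrak D$ be the family of $(0+)$-sets (a good $(0+)$-pseudo-filter by Proposition \ref{prop:filter_gmin}), let $D=\{2^{-n}\;|\;n\ge 1\}$, and define $\gamma:D\to F$ by $\gamma(x)=0$ for $x\in 4^{-\mathbb N}$ and $\gamma(x)=1/x$ otherwise (the set $4^{-\mathbb N}$ is definable using the square root). Since $D$ is discrete, $\gamma$ is a pseudo-curve; a direct check from the definition gives $\myLim(\gamma)=\{0\}$, a singleton, yet $\gamma(D)\supseteq\{2^{n}\;|\;n\text{ odd}\}$ is unbounded, so $\gamma(D)\cup\myLim(\gamma)$ is not definably compact.

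For comparison, the paper proves (2) by a different and much shorter route: it extends $\gamma$ to $D\cup\{0\}$ by sending $0$ to the unique limit point $y$, asserts that this extension is continuous ``by the definition of $\myLim(\gamma)$'', and applies Lemma \ref{lem:image} to the definably compact set $D\cup\{0\}$. But continuity of the extension at $0$ is precisely the no-escape-to-infinity statement you could not prove (given $\myLim(\gamma)=\{y\}$, it is equivalent to boundedness of $\gamma(D_{<c})$ for small $c$), so the paper's proof passes silently over the same gap that stopped you, and the example above shows the assertion genuinely fails for the discrete pseudo-filter of a d-minimal structure. It does hold when the members of $\mathfrak D$ are intervals, as in the locally o-minimal case of Example \ref{ex:local2}, where definable connectedness of $D$ rules out the oscillation between $y$ and infinity. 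So: your (1) and (3) stand; your (2) is incomplete, but the obstacle you identified is real rather than a missing trick --- both your argument and the paper's become correct if one adds the hypothesis that $\gamma(D_{<c})$ is bounded for some $c>0$, and some such repair appears to be needed for the lemma itself (and for its later uses, e.g.\ in Lemma \ref{lem:proper_eq}).
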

\begin{proof}
	(1) The inclusion $\gamma(D) \cup \myLim(\gamma) \subseteq \mycl(\gamma(D))$ is obvious.
	We show the opposite inclusion.
	Take an arbitrary point $x \in \mycl(\gamma(D)) \setminus \myLim(\gamma)$.
	By the definition of $\myLim(\gamma)$, there exist an open box $B$ containing the point $x$ and an open interval containing the point $0$ such that $\gamma^{-1}(B) \cap I$ is an empty set.
	It implies that $\gamma^{-1}(B) \subseteq D \setminus I$.
	We get $\gamma(D) \cap B \subseteq \gamma(D \setminus I)$.
	Note that $D \setminus I$ is definably compact by property (a) in Definition \ref{def:filter}.
	The image $\gamma(D \setminus I)$ is also definably compact by Lemma \ref{lem:image}.
	In particular, it is closed.
	We finally get $x \in \mycl(\gamma(D) \cap B)  \subseteq \gamma(D \setminus I) \subseteq \gamma(D)$.
	We have proven the inclusion $\mycl(\gamma(D)) \subseteq \gamma(D) \cup \myLim(\gamma)$.
	
	(2) Let $y$ be the unique point belonging to $\myLim(\gamma)$.
	Recall that $\gamma$ is continuous by the definition of pseudo-curves.
	Consider the map $f:D \cup \{0\} \to F^n$ given by $f(x)=\gamma(x)$ if $x \in D$ and $f(0)=y$.
	It is a continuous map by the definition of $\myLim(\gamma)$.
	The set $D \cup \{0\}$ is definably compact by property (a) of Definition \ref{def:filter}.
	The image $\gamma(D) \cup \myLim(\gamma)=f(D \cup \{0\})$ is also definably compact by Lemma \ref{lem:image}.
	
	(3) The set $\gamma(D) \cup \myLim(\gamma)$ is definably compact by assertion (2).
	Since $\gamma'(D')$ is contained in the definably compact set $\gamma(D) \cup \myLim(\gamma)$, the set $\myLim(\gamma')$ is not empty by Lemma \ref{lem:basic_o+1}(2).
	We have $\myLim(\gamma')=\myLim(\gamma)$ because $\myLim(\gamma)$ is a singleton and $\myLim(\gamma') \subseteq \myLim(\gamma)$ by Lemma \ref{lem:basic_o+1}(3).
\end{proof}

The following proposition is a counterpart of the curve selection lemma for locally o-minimal structures.
\begin{proposition}\label{prop:curve_selection}
	Consider a definably complete expansion of an ordered group $\mathcal F=(F,<,+,0,\ldots)$ enjoying the definable choice property and admitting a good $(0+)$-pseudo-filter $\mathfrak D$.
	Let $X$ be a definable subset of $F^n$ which is not closed.
	Take a point $p \in \partial_{F^n} X$.
	There exist a pseudo-curve $\gamma:D \to X$ such that $\myLim(\gamma)=\{p\}$.
\end{proposition}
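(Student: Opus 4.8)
The plan is to build the required pseudo-curve by a definable-choice argument that drives points of $X$ towards $p$ at a controlled rate, and then to restore continuity by shrinking the domain within $\mathfrak D$. First I would record that $p \in \mycl_{F^n}(X)$, since $p \in \partial_{F^n}X = \mycl_{F^n}(X) \setminus X$; in particular, for every $t>0$ the ball $\mathcal B(p,t)=\{x \in F^n\;|\;|x-p|<t\}$ meets $X$. Fix any $D_0 \in \mathfrak D$. Because $D_0 \subseteq (0,\infty)$, the definable set $W=\{(t,x) \in D_0 \times X\;|\;|x-p|<t\} \subseteq F^{1+n}$ projects onto $D_0$ under the coordinate projection $\pi:F^{1+n} \to F^1$. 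Applying the definable choice property to $\pi$ and $W$ produces a definable map $f:D_0 \to X$ with $|f(t)-p|<t$ for all $t \in D_0$.

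Next I would invoke property (c) of Definition \ref{def:filter} to choose $D' \in \mathfrak D$ with $D' \subseteq D_0$ such that the restriction $\gamma:=f|_{D'}$ is continuous; then $\gamma:D' \to X$ is a pseudo-curve, as its range lies in $X$ and $D' \in \mathfrak D$. It remains to verify $\myLim(\gamma)=\{p\}$. For $p \in \myLim(\gamma)$, given an open box $B \ni p$ and an open interval $I \ni 0$, I would pick $\varepsilon>0$ with $\mathcal B(p,\varepsilon) \subseteq B$ and $\delta>0$ with $(0,\delta) \subseteq I$; since $0 \in \mycl_F(D')$ by property (a) of Definition \ref{def:filter}, there is $t \in D'$ with $t<\min\{\varepsilon,\delta\}$, whence $\gamma(t) \in \mathcal B(p,\varepsilon) \subseteq B$ and $t \in I$, so $t \in \gamma^{-1}(B) \cap I$. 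For the reverse inclusion, given $z \neq p$ I would fix $\varepsilon>0$ with $|z-p|>\varepsilon$; for every $t \in D'$ with $t<\varepsilon/2$ the bound $|\gamma(t)-p|<t<\varepsilon/2$ forces $|\gamma(t)-z|>\varepsilon/2$, so $z \notin \mycl_{F^n}(\gamma(D'_{<\varepsilon/2}))$ and hence $z \notin \myLim(\gamma)$ by Lemma \ref{lem:basic_o+1}(1). This yields $\myLim(\gamma)=\{p\}$, as desired.

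The main obstacle is the tension between the two tools: the definable choice function $f$ need not be continuous, so it is not yet a pseudo-curve, and it is exactly property (c) of the good $(0+)$-pseudo-filter---permitting passage to a smaller member $D' \in \mathfrak D$ on which a definable map becomes continuous---that repairs this. Once continuity is secured, the explicit rate bound $|f(t)-p|<t$ makes the limit-set computation essentially automatic, so that the finer \emph{furthermore} clause of (c) is not even needed here. I would only need to double-check that $W$ is genuinely definable (it is, being cut out by the order, the group operation, and the parameter $p$) and that the choice property applies with $m=1$ to the projection $\pi:F^{1+n}\to F^1$.
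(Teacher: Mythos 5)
Your proposal is correct and follows essentially the same route as the paper: the paper likewise uses definable choice on the set $Z=\{(x,r)\in X\times F\;|\;r>0,\ |p-x|<r\}$ to obtain a selection $f$ with $|f(r)-p|<r$, restricts to a member of $\mathfrak D$, and then invokes property (c) of Definition \ref{def:filter} to pass to a smaller $D'\in\mathfrak D$ on which the restriction is a pseudo-curve with limit set $\{p\}$. The only (harmless) differences are that you apply choice directly over a fixed $D_0\in\mathfrak D$ rather than over all of $(0,\infty)$, and you spell out the limit-set computation the paper calls obvious, which also lets you note that the ``furthermore'' clause of (c) is not needed.
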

\begin{proof}
	Consider the definable set $Z=\{(x,r) \in X \times F\;|\; r>0, |p-x|<r\}$.
	Let $\pi$ be the restriction of the projection on to the second coordinate to $Z$.
	It is obvious that $\pi(Z)=(0,\infty)$.
	Apply the definable choice property to $\pi$, there exists a definable map $f:(0,\infty) \to X$ such that $|f(r)-p|<r$ for any $r>0$.
	Take $D \in \mathfrak D$.
	It is obvious that $\myLim(f|_D)=\{p\}$.
	By property (c) of Definition \ref{def:filter}, we can take $D' \in \mathfrak D$ such that $D' \subseteq D$, $\gamma:=f|_{D'}$ is a pseudo-curve and $\myLim(\gamma)=\myLim(f|_D)=\{p\}$.
\end{proof}

Preparation has been done.
We begin to give equivalent conditions for a definable map to be continuous/definably proper/definably identifying.
\begin{lemma}\label{lem:cont}
	Let $\mathcal F=(F,<,+,0,\ldots)$ and $\mathfrak D$ be as in Proposition \ref{prop:curve_selection}.
	Let $f:X \to Y$ be a definable map and  $p \in X$.
	The following are equivalent:
	\begin{enumerate}
		\item[(1)] $f$ is continuous at $p$.
		\item[(2)] For any definable subset $D$ of $(0,\infty)$ with $0 \in \mycl_F(D)$ and a definable map $\gamma:D \to X$ with $p \in \myLim(\gamma)$, we have $f(p) \in \myLim(f \circ \gamma)$.
		\item[(3)] For every pseudo-curve $\gamma:D \to X$ with $\myLim(\gamma)=\{p\}$, we have $f(p) \in \myLim(f \circ \gamma)$.
	\end{enumerate}
\end{lemma}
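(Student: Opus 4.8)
The plan is to prove the cyclic chain of implications $(1) \Rightarrow (2) \Rightarrow (3) \Rightarrow (1)$. The implication $(2) \Rightarrow (3)$ is immediate, since condition $(3)$ is simply a special case of condition $(2)$: every pseudo-curve $\gamma$ with $\myLim(\gamma)=\{p\}$ is in particular a definable map from some $D \in \mathfrak D \subseteq \operatorname{Def}((0,\infty))$ with $0 \in \mycl_F(D)$ and $p \in \myLim(\gamma)$, so the hypothesis of $(2)$ applies verbatim and yields $f(p) \in \myLim(f\circ\gamma)$.

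For $(1) \Rightarrow (2)$, I would argue directly from the definition of $\myLim$. Assume $f$ is continuous at $p$, and let $\gamma:D \to X$ be a definable map with $p \in \myLim(\gamma)$. To show $f(p) \in \myLim(f\circ\gamma)$, I fix an arbitrary open box $B$ containing $f(p)$ and an arbitrary open interval $I$ containing $0$, and must produce a point of $(f\circ\gamma)^{-1}(B) \cap I$. By continuity of $f$ at $p$, there is an open box $B'$ containing $p$ such that $f(X \cap B') \subseteq B$. Since $p \in \myLim(\gamma)$, the intersection $\gamma^{-1}(B') \cap I$ is nonempty; any $t$ in this set satisfies $\gamma(t) \in B' \cap X$, hence $f(\gamma(t)) \in B$, so $t \in (f\circ\gamma)^{-1}(B) \cap I$. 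As $B$ and $I$ were arbitrary, this gives $f(p) \in \myLim(f\circ\gamma)$.

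The substantive direction is $(3) \Rightarrow (1)$, which I expect to be the main obstacle; I would prove the contrapositive. Suppose $f$ is not continuous at $p$. Then there is an open box $B$ containing $f(p)$ such that for every open box $B'$ containing $p$, the image $f(X \cap B')$ is not contained in $B$; equivalently, the definable set $A := \{x \in X \mid f(x) \notin B\}$ has $p$ in its closure, i.e. $p \in \mycl_{F^n}(A)$. Here I must be careful: if $p \in A$ itself then $f(p) \notin B$, contradicting that $B$ contains $f(p)$, so in fact $p \in \partial_{F^n} A = \mycl_{F^n}(A) \setminus A$ and $A$ is not closed. The goal is now to run the curve selection lemma (Proposition \ref{prop:curve_selection}) on $A$ at the frontier point $p$ to obtain a pseudo-curve $\gamma:D \to A \subseteq X$ with $\myLim(\gamma)=\{p\}$, and then to show $f(p) \notin \myLim(f\circ\gamma)$, which will contradict $(3)$.

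The remaining point is to verify $f(p) \notin \myLim(f\circ\gamma)$ for this $\gamma$. Since $\gamma(D) \subseteq A$, we have $f(\gamma(t)) \notin B$ for every $t \in D$, so $(f\circ\gamma)(D) \cap B = \emptyset$. Choosing this very box $B$ (which contains $f(p)$) together with any open interval $I$ containing $0$, the set $(f\circ\gamma)^{-1}(B) \cap I$ is empty, witnessing directly that $f(p) \notin \myLim(f\circ\gamma)$ by the definition of the limit set. This contradicts condition $(3)$, and therefore $f$ must be continuous at $p$, completing the cycle. The only delicacy worth flagging is the application of Proposition \ref{prop:curve_selection}, which requires the ambient structure to enjoy the definable choice property and to admit the good $(0+)$-pseudo-filter $\mathfrak D$; both are in force by the standing hypothesis that $\mathcal F$ and $\mathfrak D$ are as in that proposition.
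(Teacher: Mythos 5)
Your proof is correct and follows essentially the same route as the paper: the direct verification of $(1)\Rightarrow(2)$ via the continuity box $B'$, the trivial specialization $(2)\Rightarrow(3)$, and the contrapositive of $(3)\Rightarrow(1)$ using Proposition \ref{prop:curve_selection} on the set $A=X\setminus f^{-1}(B)$ (which is exactly the paper's set $Z$), with $f(p)\notin\myLim(f\circ\gamma)$ witnessed by the box $B$ itself. The only cosmetic difference is that the paper phrases the last step through the inclusion $\myLim(f\circ\gamma)\subseteq\mycl(f\circ\gamma(D))\subseteq\mycl(Y)\setminus B$, while you argue directly from the definition of the limit set; both are equally valid.
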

\begin{proof}
	We first show the implication $(1) \Rightarrow (2)$.
	Take an arbitrary open box $B$ containing $f(p)$ and an arbitrary open interval $I$ containing the point $0$.
	Since $f$ is continuous at $p$, there exists an open box $B'$ containing the point $p$ such that $B' \subseteq f^{-1}(B)$.
	Since $p \in \myLim(\gamma)$, we have $\gamma^{-1}(B') \cap I  \neq \emptyset$.
	Since $\gamma^{-1}(B') \subseteq (f \circ \gamma)^{-1}(B)$, we have $(f \circ \gamma)^{-1}(B) \cap I \neq \emptyset$.
	It implies that $f(p) \in \myLim(f \circ \gamma)$.
	
	The implication $(2) \Rightarrow (3)$ is obvious.
	
	We finally show the implication $(3) \Rightarrow (1)$.
	Assume that $f$ is not continuous at $p$.
	There exists an open box $B$ such that $f(p) \in B$ and $f^{-1}(B)$ is not a  neighborhood of $p$.
	Set $Z=X \setminus f^{-1}(B)$.
	We have $p \in \partial Z$.
	There exists a pseudo-curve $\gamma:D \to Z$ such that $\myLim(\gamma)=\{p\}$ by Proposition \ref{prop:curve_selection}.
	Since $f(\gamma(D)) \cap B$ is an empty set, we get $\myLim(f \circ \gamma) \subseteq \mycl(f \circ \gamma(D)) \subseteq \mycl(Y) \setminus B$.
	In particular, we get $f(p) \notin \myLim(f \circ \gamma)$.
\end{proof}

The notion of local boundedness is used in Section \ref{sec:def_quo}.
\begin{definition}
	Consider an expansion of a dense linear order without endpoints $\mathcal F=(F,<,\ldots)$.
	Let $X \subseteq F^m$ and $Y \subseteq F^n$ be definable sets and $f:X \to Y$ be a definable map which is not necessarily continuous.
	We say that $f$ is \textit{locally bounded} if, for any point $x \in X$, there exists a definable open neighborhood $U$ of $x$ in $X$ such that $f(U)$ is bounded.
\end{definition}

\begin{lemma}\label{lem:local_bound}
	Consider a definably complete expansion of an ordered field $\mathcal F=(F,<,+,\cdot,0,1,\ldots)$ enjoying the definable choice property and admitting a good $(0+)$-pseudo-filter $\mathfrak D$.
	Let $K$ be a definably compact set and $f:K \to F$ be a locally bounded definable function.
	Then, $f$ is bounded.
\end{lemma}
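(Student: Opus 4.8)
The plan is to argue by contradiction, reducing the unboundedness of $f$ to the existence of a single point near which $f$ blows up, which contradicts local boundedness directly. Assume $f$ is not bounded. Since $\mathcal F$ is definably complete, $\sup\{|f(x)|\;|\;x \in K\}=+\infty$; equivalently, for every $r>0$ the definable set $A_r=\{x \in K\;|\;|f(x)|>r\}$ is nonempty.

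The first key step is to repair the failure of continuity of $f$. The sets $A_r$ need not be closed, so I would replace them by their closures $C_r=\mycl_{F^n}(A_r)$. Because $K$ is closed, each $C_r$ is a closed (hence definably compact) subset of $K$, and the set $\{(x,r) \in K \times (0,\infty)\;|\;x \in C_r\}$ is definable, since taking closures fiberwise is a definable operation in a definably complete structure. Moreover $A_{r'} \subseteq A_r$, and hence $C_{r'} \subseteq C_r$, whenever $r<r'$, so $\{C_r\}_{r \in (0,\infty)}$ is a definable family of decreasing nonempty definable closed subsets of the definably compact set $K$, indexed by the open interval $(0,\infty)$.

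Then I would apply Proposition \ref{prop:definably_compact} to conclude $\bigcap_{r>0}C_r \neq \emptyset$ and fix a point $p$ in this intersection; note $p \in K$ automatically, as each $C_r \subseteq K$. By local boundedness there is a definable open neighborhood $U$ of $p$ in $K$ and some $M \ge 0$ with $|f(x)| \le M$ for all $x \in U$. On the other hand $p \in C_{M+1}=\mycl_{F^n}(A_{M+1})$, so the neighborhood $U$ of $p$ must meet $A_{M+1}$; any $x \in U \cap A_{M+1}$ satisfies $|f(x)|>M+1>M$, contradicting the bound on $U$. Hence $f$ is bounded.

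The step I expect to require the most care is the passage from $A_r$ to $C_r$: one must verify that the fibered closure yields a genuinely \emph{definable} decreasing family, so that Proposition \ref{prop:definably_compact} applies as stated. I note that an alternative proof, more in the spirit of the pseudo-curve machinery developed above, would instead use the definable choice property to select $s(t) \in K$ with $|f(s(t))|>1/t$, refine it via property (c) of Definition \ref{def:filter} to a pseudo-curve $\gamma:D' \to K$ with $\myLim(\gamma)=\{p\}$ (nonempty by Lemma \ref{lem:basic_o+1}(2), since $\gamma(D') \subseteq K$), and then derive the same contradiction at $p$ using that $\gamma(t) \in U$ for all sufficiently small $t$ while $|f(\gamma(t))|>1/t \to +\infty$. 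The argument via Proposition \ref{prop:definably_compact} is shorter and avoids the pseudo-filter entirely, so I would present that one.
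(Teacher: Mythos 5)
Your main argument is correct, but it follows a genuinely different route from the paper. The paper's proof proceeds by contraposition through the pseudo-curve machinery: it uses the definable choice property to construct $\rho:(0,\infty)\to K$ with $f(\rho(t))>1/t$, then invokes Lemma \ref{lem:basic_o+1}(2) to get $\emptyset \neq \myLim(\rho)\subseteq K$, and observes that $f$ fails to be locally bounded at any point of $\myLim(\rho)$ (no refinement to a singleton limit and no continuity of $\rho$ are needed, since the definition of $\myLim$ already supplies points of $K$ arbitrarily close to a limit point with arbitrarily large $f$-values). Your proof instead applies Proposition \ref{prop:definably_compact} directly to the decreasing definable family $C_r=\mycl_{F^n}(A_r)$ of closures of superlevel sets; your worry about definability of this family is unfounded, as $x\in\mycl_{F^n}(A_r)$ is expressed by the first-order formula $\forall \varepsilon>0\,\exists y\,(y\in A_r \wedge |x-y|<\varepsilon)$, so the family is definable whenever $f$ is. The two proofs rest on the same kernel --- Proposition \ref{prop:definably_compact} --- since the paper's Lemma \ref{lem:basic_o+1}(2) is itself proved from it; but your route buys something real: it uses neither the definable choice property nor the good $(0+)$-pseudo-filter, so it shows the lemma already holds for every definably complete expansion of an ordered field, whereas the paper's proof formally invokes both hypotheses. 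What the paper's formulation buys in exchange is uniformity with the surrounding development, where limit sets of definable maps on $(0,\infty)$ are the standard tool. Your sketched alternative is essentially the paper's proof, except that the refinement via property (c) of Definition \ref{def:filter} to a pseudo-curve with singleton limit is superfluous (and your claim that $\gamma(t)\in U$ for all small $t$ would itself need a justification that the direct argument avoids).
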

\begin{proof}
	We demonstrate the contraposition.
	Assume that $f$ is not bounded.
	We may assume that $f(K)$ is unbounded above by considering $-f$ if necessary.
	By the definable choice property, we can construct a definable map $\rho:(0,\infty) \to K$ such that $f(\rho(t))>1/t$.
	Since $K$ is definably compact, we have $\emptyset \neq \myLim(\rho) \subseteq K$ by Lemma \ref{lem:basic_o+1}(2).
	Take $x \in \myLim(\rho)$.
	The function $f$ is not locally bounded at $x$. 
\end{proof}

The following definitions are found in \cite[Chapter 6, Definition 4.4]{D}.
We study basic properties of definably proper and definably identifying maps in the rest of this section.
\begin{definition}
	We consider a definably complete expansion of a dense linear order without endpoints.
	Let $X$ and  $Y$ be definable sets and $f:X \to Y$ be a definable continuous map.
	The map $f$ is \textit{definably proper} if for any definably compact subset $K$ of $Y$,	the inverse image $f^{-1}(K)$ is definably compact.
	It is \textit{definably identifying} if it is surjective and, for any definable subset $K$ in $Y$, 
	$K$ is closed in $Y$ whenever $f^{-1}(K)$ is closed in $X$.
\end{definition}

\begin{lemma}\label{lem:via_homeo}
	Consider a definably complete structure.
	The following assertions hold true:
	\begin{enumerate}
		\item[(i)] A definable homeomorphism is definably proper.
		\item[(ii)] The composition of definably proper maps is definably proper. 
		\item[(iii)] Let $f:X \to Y$ be a definable proper definable map between definable sets and $S$ be a definable subset of $Y$.
		The restriction $f|_{f^{-1}(S)}$ of $f$ to $f^{-1}(S)$ is also definably proper.
		\item[(iv)] Let $f:X \to Y$ and $g:Y \to Z$ be definable continuous maps.
		Assume that $f$ is surjective and  the composition $g \circ f$ is definably proper.
		Then $g$ is definably proper.
	\end{enumerate}
\end{lemma}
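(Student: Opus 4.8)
The plan is to reduce every assertion to Lemma \ref{lem:image}, exploiting the fact that definable compactness is an absolute notion: a set is definably compact precisely when it is closed in the ambient $F^n$ and bounded, so whether a set is definably compact does not depend on which definable overset we regard it as sitting inside. With this in hand, each of the four claims becomes a short manipulation of images and preimages.

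For assertion (i), I would observe that a definable homeomorphism $f:X \to Y$ has a definable continuous inverse $f^{-1}:Y \to X$. Given a definably compact $K \subseteq Y$, the set $f^{-1}(K)$ is exactly the image of $K$ under the continuous map $f^{-1}$, so Lemma \ref{lem:image} immediately gives that $f^{-1}(K)$ is definably compact. Assertion (ii) is a direct diagram chase: if $f:X \to Y$ and $g:Y \to Z$ are definably proper and $K \subseteq Z$ is definably compact, then $g^{-1}(K)$ is definably compact by properness of $g$, and hence $(g \circ f)^{-1}(K) = f^{-1}(g^{-1}(K))$ is definably compact by properness of $f$.

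For assertion (iii), the key identity is that for any $K \subseteq S$ one has $(f|_{f^{-1}(S)})^{-1}(K) = f^{-1}(K)$, since $K \cap S = K$. A definably compact $K \subseteq S$ is in particular definably compact as a subset of $Y$ (closedness and boundedness being absolute), so $f^{-1}(K)$ is definably compact by properness of $f$, which is exactly what is needed. Assertion (iv) is the one place where surjectivity of $f$ enters. Given a definably compact $K \subseteq Z$, the preimage $(g \circ f)^{-1}(K) = f^{-1}(g^{-1}(K))$ is definably compact because $g \circ f$ is definably proper. Since $f$ is surjective and $g^{-1}(K) \subseteq Y = f(X)$, we have $f(f^{-1}(g^{-1}(K))) = g^{-1}(K)$, and applying Lemma \ref{lem:image} to the continuous map $f$ shows that $g^{-1}(K)$ is definably compact.

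I expect no serious obstacle here; the only points requiring care are the consistent use of the absolute characterization of definable compactness in (i) and (iii) and the surjectivity bookkeeping $f(f^{-1}(A)) = A$ for $A \subseteq f(X)$ in (iv). The structural asymmetry worth flagging is that (i) and (iv) push a definably compact set forward through a continuous map via Lemma \ref{lem:image}, whereas (ii) and (iii) only pull back along the given definably proper maps.
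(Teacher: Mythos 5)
Your proposal is correct and takes essentially the same route as the paper, which disposes of (i) via Lemma \ref{lem:image}, treats (ii) and (iii) as immediate from the definition of definable properness, and again invokes Lemma \ref{lem:image} to push $f^{-1}(g^{-1}(K))$ forward onto $g^{-1}(K)$ in (iv). Your write-up merely supplies the routine details (the absoluteness of definable compactness and the identity $f(f^{-1}(A))=A$ for $A\subseteq f(X)$) that the paper leaves implicit.
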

\begin{proof}
	The assertion (i) immediately follows from Lemma \ref{lem:image}.
	The assertions (ii) through (iv) are obvious from the definition of definably proper maps.
	We use Lemma \ref{lem:image} in the proof of (iv).
\end{proof}

\begin{lemma}\label{lem:proper_closed}
	Let $\mathcal F=(F,<,+,\cdot,0,1,\ldots)$ and $\mathfrak D$ be as in Lemma \ref{lem:local_bound}.
A definably proper map $f:X \to Y$ is a definably closed map; that is, $f(C)$ is closed for any definable closed subset $C$ of $X$.
\end{lemma}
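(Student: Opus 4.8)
The plan is to argue by contradiction, combining the curve selection lemma (Proposition \ref{prop:curve_selection}) with the continuity characterization of Lemma \ref{lem:cont}. Write $X \subseteq F^m$ and $Y \subseteq F^n$, and recall that $f$ is continuous by the definition of definably proper maps and that $f(C)$ is definable. Closedness of $f(C)$ in $Y$ means $\mycl_{F^n}(f(C)) \cap Y = f(C)$, so if it fails there is a point $p \in (\mycl_{F^n}(f(C)) \cap Y) \setminus f(C)$. In particular $f(C)$ is not closed in $F^n$ and $p \in \partial_{F^n}(f(C))$, so Proposition \ref{prop:curve_selection} applied to the definable set $f(C)$ and the point $p$ produces a pseudo-curve $\gamma : D \to f(C)$ with $D \in \mathfrak D$ and $\myLim(\gamma) = \{p\}$.

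Next I would lift $\gamma$ through $f$ using the definable choice property. Since $\gamma(t) \in f(C)$ for every $t \in D$, the definable set $W = \{(t,x) \in D \times C \mid f(x) = \gamma(t)\}$ surjects onto $D$ under the coordinate projection onto the first factor. Definable choice then furnishes a definable section, that is, a definable map $\delta : D \to C$ with $f \circ \delta = \gamma$.

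The properness of $f$ enters here. By Lemma \ref{lem:basic_o+2}(2) the set $K := \gamma(D) \cup \{p\}$ is definably compact, so $f^{-1}(K)$ is definably compact because $f$ is definably proper. As $\delta(D) \subseteq f^{-1}(\gamma(D)) \subseteq f^{-1}(K)$, Lemma \ref{lem:basic_o+1}(2) shows that $\myLim(\delta)$ is nonempty and contained in $f^{-1}(K) \subseteq X$. Fix $q \in \myLim(\delta)$. The inclusion $\myLim(\delta) \subseteq \mycl_{F^m}(\delta(D))$ coming from Lemma \ref{lem:basic_o+1}(1), together with $\delta(D) \subseteq C$ and $q \in X$, forces $q \in \mycl_{F^m}(\delta(D)) \cap X = \mycl_X(\delta(D)) \subseteq \mycl_X(C) = C$, using that $C$ is closed in $X$.

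Finally I would transport the limit across $f$. Since $f$ is continuous at $q$ and $q \in \myLim(\delta)$, the implication $(1) \Rightarrow (2)$ of Lemma \ref{lem:cont} gives $f(q) \in \myLim(f \circ \delta) = \myLim(\gamma) = \{p\}$, so $f(q) = p$. As $q \in C$, this yields $p = f(q) \in f(C)$, contradicting $p \notin f(C)$; hence $f(C)$ is closed in $Y$. The step I expect to be the crux is confining the lifted map so that its limit point lands back in $C$: this is exactly where definable properness is needed (to keep $\delta(D)$ inside a definably compact set and thereby force $\myLim(\delta) \neq \emptyset$), after which the closedness of $C$ in $X$ and the continuity characterization combine to give $f(q)=p$.
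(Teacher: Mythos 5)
Your proposal is correct and follows essentially the same route as the paper's proof: curve selection into $f(C)$ at the would-be boundary point, a definable-choice lift of the pseudo-curve into $C$, definable compactness of $\gamma(D)\cup\{p\}$ plus properness to force $\myLim(\delta)\neq\emptyset$, closedness of $C$ to place the limit point in $C$, and Lemma \ref{lem:cont} to conclude $f(q)=p\in f(C)$. The only difference is cosmetic: you phrase it as a contradiction, while the paper argues directly that any point of the closure lies in $f(C)$.
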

\begin{proof}
Let $C$ be a closed definable subset of $X$ and $y$ be a point in the closure of $f(C)$ in $Y$.
We want to show that $y \in f(C)$.

By Proposition \ref{prop:curve_selection}, there exist a pseudo-curve $\gamma:D \to f(C)$ with $\myLim(\gamma)=\{y\}$.
The union $\gamma(D) \cup \{y\}$ is definably compact by Lemma \ref{lem:basic_o+2}(2).
Since $f$ is definably proper, its inverse image $K=f^{-1}(\gamma(D)\cup \{y\}))$ through $f$ is also definably compact.

Consider the definable set $Z=\{(t,x) \in D \times C\;|\; f(x)=\gamma(t)\}$.
By the definition of $\gamma$, the fiber $Z_t:=\{x \in C\;|\;(t,x) \in Z\}$ is not empty for any $t \in D$.
Thanks to the definable choice property, we can choose a definable map $\alpha:D \to C$ such that $f \circ \alpha(t)=\gamma(t)$ for all $t \in D$.
Since the image $\alpha(D)$ is contained in $K$, we have $\myLim(\alpha) \neq \emptyset$ by Lemma \ref{lem:basic_o+1}(2).
We have $\myLim(\alpha) \subseteq C$ because $C$ is closed and $\alpha(D)$  is contained in $C$.
Take $x' \in \myLim(\alpha)$.
We have $f(x') \in \myLim(f \circ \alpha)=\myLim(\gamma)=\{y\}$ by Lemma \ref{lem:cont}(2); that is, $y=f(x')$.
It means that $f(C)$ is closed.
\end{proof}

\begin{lemma}\label{lem:proper_eq}
	Let $\mathcal F=(F,<,+,\cdot,0,1,\ldots)$ and $\mathfrak D$ be as in Lemma \ref{lem:local_bound}.
	Let $X $ and $Y$ be definable sets and $f:X \to Y$ be a definable continuous map.
	The following are equivalent:
	\begin{enumerate}
		\item[(i)]  The map $f$ is definably proper;
		\item[(ii)] Let $\gamma:D \to X$ be a pseudo-curve such that $f \circ \gamma$ is a pseudo-curve and $\myLim(f \circ \gamma)$ is a singleton and contained in $Y$.
		Then $\myLim(\gamma)$ is not an empty set and contained in $X$.
		\item[(iii)] Let $\gamma:D \to X$ be a pseudo-curve such that $f \circ \gamma$ is a pseudo-curve and $\myLim(f \circ \gamma)$ is a singleton and contained in $Y$.
		Then $\myLim(\gamma) \cap X$ is not an empty set.
	\end{enumerate}
\end{lemma}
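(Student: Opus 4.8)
The plan is to prove Lemma~\ref{lem:proper_eq} by establishing the cycle of implications $(i) \Rightarrow (ii) \Rightarrow (iii) \Rightarrow (i)$. The equivalence characterizes definable properness through the behavior of pseudo-curves, so the geometric content is an analogue of the classical fact that a continuous map is proper if and only if it has a suitable lifting property for convergent sequences (or, in the o-minimal setting, for definable curves).

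First I would prove $(i) \Rightarrow (ii)$. Assume $f$ is definably proper and let $\gamma : D \to X$ be a pseudo-curve with $f \circ \gamma$ a pseudo-curve whose limit set is a singleton $\{y\} \subseteq Y$. By Lemma~\ref{lem:basic_o+2}(2) the set $K := (f\circ\gamma)(D) \cup \{y\} = \mycl_{F^n}((f\circ\gamma)(D))$ is definably compact. Since $\gamma(D) \subseteq f^{-1}(K)$ and $f^{-1}(K)$ is definably compact by definable properness, Lemma~\ref{lem:basic_o+1}(2) gives that $\myLim(\gamma)$ is nonempty and contained in $f^{-1}(K) \subseteq X$. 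This settles $(ii)$. The implication $(ii) \Rightarrow (iii)$ is immediate, since $\myLim(\gamma) \subseteq X$ already forces $\myLim(\gamma) \cap X = \myLim(\gamma) \neq \emptyset$.

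The substantive direction is $(iii) \Rightarrow (i)$, which I expect to be the main obstacle. Here I would argue by contraposition: assume $f$ is not definably proper and produce a pseudo-curve witnessing the failure of $(iii)$. Unwinding the definition, there is a definably compact $K \subseteq Y$ with $f^{-1}(K)$ not definably compact. Since $f$ is continuous and $K$ is closed, $f^{-1}(K)$ is closed in $X$; the issue is either unboundedness or failure to be closed in $F^m$. I would show that in either case $f^{-1}(K)$ fails to be closed in $F^m$: if $f^{-1}(K)$ were closed and bounded in $F^m$ it would be definably compact, so some point $p \in \mycl_{F^m}(f^{-1}(K)) \setminus f^{-1}(K)$ exists (handling unboundedness by a standard compactification or bounding trick, e.g. replacing coordinates via a definable homeomorphism of $F$ onto a bounded interval so that the pathology becomes a genuine frontier point). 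Applying Proposition~\ref{prop:curve_selection} to $f^{-1}(K)$ at the frontier point $p$, I obtain a pseudo-curve $\gamma : D \to f^{-1}(K)$ with $\myLim(\gamma) = \{p\}$. After passing to a smaller $D' \in \mathfrak{D}$ using property (c) of Definition~\ref{def:filter}, I can arrange that $f \circ \gamma$ is a pseudo-curve with $\myLim(f \circ \gamma)$ a singleton; since $(f \circ \gamma)(D) \subseteq K$ and $K$ is definably compact, Lemma~\ref{lem:basic_o+1}(2) guarantees $\myLim(f\circ\gamma) \subseteq K \subseteq Y$ is nonempty, and shrinking further makes it a singleton. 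By construction $\myLim(\gamma) = \{p\}$ with $p \notin f^{-1}(K)$, and more care is needed to ensure $\myLim(\gamma) \cap X = \emptyset$ so that $(iii)$ genuinely fails.

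The delicate point, and the reason I flag $(iii) \Rightarrow (i)$ as the hard part, is reconciling the two distinct ways definable properness can fail—boundedness versus closedness in the ambient $F^m$—so that the selected pseudo-curve has $\myLim(\gamma)$ lying entirely outside $X$ rather than merely outside $f^{-1}(K)$. The cleanest route is likely to reduce to a bounded ambient space first, so that the only obstruction is a frontier point $p \in \partial_{F^m}(f^{-1}(K))$; then $p \notin X$ must be arranged by choosing the compact set and the frontier point so that the limit escapes $X$, using that $\myLim(f \circ \gamma) \subseteq K$ together with continuity of $f$ via Lemma~\ref{lem:cont} to control where $p$ can sit relative to $X$. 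I would verify that the subtle interplay between $\myLim(\gamma)$ and the image limit $\myLim(f\circ\gamma)$ forces exactly the configuration needed to contradict $(iii)$, closing the cycle.
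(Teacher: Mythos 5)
Your cycle of implications matches the paper's: the arguments you give for $(i) \Rightarrow (ii)$ and $(ii) \Rightarrow (iii)$ are exactly the paper's arguments (definable compactness of $f\circ\gamma(D)\cup\{y\}$ via Lemma \ref{lem:basic_o+2}(2), then Lemma \ref{lem:basic_o+1}(2) applied inside $f^{-1}(K)$), and the paper also proves $(iii) \Rightarrow (i)$ by contraposition starting from a definably compact $K \subseteq Y$ with $f^{-1}(K)$ not definably compact. Where you diverge is the treatment of the unbounded case: you propose to compactify the ambient space by a definable homeomorphism onto a bounded box so that non-properness always manifests as a frontier point, whereas the paper keeps the two cases separate and handles unboundedness directly, using definable choice and the map $t \mapsto 1/t$ to build a pseudo-curve $\gamma$ into $f^{-1}(K)$ whose first coordinate tends to $\infty$, so that $\myLim(\gamma) = \emptyset$ and $(iii)$ fails vacuously. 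Your reduction is legitimate (boundedness and relative closedness transfer through the homeomorphism by Lemma \ref{lem:image}, and limit sets transfer by Lemma \ref{lem:cont}), and it buys uniformity of the two cases at the cost of having to pull the constructed pseudo-curve back through the homeomorphism at the end; the paper's route avoids any transfer argument.

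However, you leave the proof genuinely unfinished at the point you flag as ``delicate,'' and the flag is misplaced: nothing needs to be ``arranged by choosing the compact set and the frontier point.'' The observation that closes the argument is one you already wrote down. Since $f$ is continuous and $K$ is closed, $f^{-1}(K)$ is closed \emph{in $X$}; therefore every point of $\partial_{F^m}(f^{-1}(K))$ lies outside $X$ (if $p \in X$ lay in $\mycl_{F^m}(f^{-1}(K))$, then $p \in \mycl_X(f^{-1}(K)) = f^{-1}(K)$, contradicting $p \notin f^{-1}(K)$). So the frontier point $p$ produced by Proposition \ref{prop:curve_selection} automatically satisfies $p \notin X$, and after shrinking $D$ by property (c) of Definition \ref{def:filter} to make $\myLim(f\circ\gamma)$ a singleton, Lemma \ref{lem:basic_o+1}(3) gives $\myLim(\gamma) \subseteq \{p\}$, hence $\myLim(\gamma) \cap X = \emptyset$ and $(iii)$ fails. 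This is exactly how the paper disposes of the point (``Note that $x \notin X$ because $f^{-1}(K)$ is closed in $X$''). In the compactified picture the same one-line argument applies verbatim to the image of $f^{-1}(K)$, which is closed in the image of $X$; you then only need the routine check, via Lemma \ref{lem:cont} applied to the homeomorphism and its inverse, that a pseudo-curve violating $(iii)$ downstairs pulls back to one violating $(iii)$ for $f$ itself.
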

\begin{proof}
	We first show the implication (i) $\Rightarrow$ (ii).
	Assume that $f$ is definably proper.
	Let $D \in \mathfrak D$ and $\gamma:D \to X$ be a pseudo-curve such that $f \circ \gamma$ is a pseudo-curve and $\myLim(f \circ \gamma)$ is a singleton and contained in $Y$.
	Let $y$ be the unique point belonging to $\myLim(f \circ \gamma)$.
	The definable set $K=f \circ \gamma(D) \cup \{y\}$ is definably compact by Lemma \ref{lem:basic_o+2}(2).
	The inverse image $f^{-1}(K)$ is definably compact because $f$ is definably proper.
	We have $\emptyset \neq \myLim(\gamma) \subseteq f^{-1}(K) \subseteq X$ by Lemma \ref{lem:basic_o+1}(2).
	It implies that $\myLim(\gamma)$ is not an empty set and it is contained in $X$.
	
	The implication (ii) $\Rightarrow$ (iii) is obvious.
	
	The final task is to prove the contraposition of the implication (iii) $\Rightarrow$ (i). 
	We assume that $f$ is not definably proper.
	Let $F^m$ be the ambient space of $X$.
	There is a definably compact subset $K$ of $Y$, but $f^{-1}(K)$ is not closed in $F^m$ or not bounded.
	Let $\pi_i:F^m \to F$ be the coordinate projection onto the $i$-th coordinate for $1 \leq i \leq m$.
	If $f^{-1}(K)$ is not bounded, the image $\pi_i(f^{-1}(K))$ is unbounded for some $1 \leq i \leq m$.
	We may assume that $\sup\pi_1(f^{-1}(K))=\infty$ without loss of generality.
	Set $C=(0,\infty) \cap \pi_1(f^{-1}(K))$.
	Let $\rho:(0,\infty) \to (0,\infty)$ be the map defined by $\rho(t)=1/t$.
	The image $V=\rho(C)$ satisfies the conditions that $0 \in \mycl(V)$ and $V \subseteq (0,\infty)$.
	Take an arbitrary element $D \in \mathfrak D$.
	There exists a definable map $\gamma:D \to f^{-1}(K)$ such that $\rho(\pi_1(\gamma(t))) \in \mathcal B(0,t)$ for each $t \in D$ by the definable choice property because $0 \in \mycl(V)$.
	We may assume that $\gamma$ is continuous by property (c) in Definition \ref{def:filter}.
	Since $K$ is definably compact, $\myLim(f \circ \gamma)$ is not empty and contained in $K$ by Lemma \ref{lem:basic_o+1}(2).
	By property (c) in Definition \ref{def:filter}, we may assume that $f \circ \gamma$ is a pseudo-curve and $\myLim(f \circ \gamma)$ is a singleton by replacing a subset of $D$ belonging to $\mathfrak D$ with $D$.
	On the other hand, we have $$\lim_{D \ni t \to 0}\pi_1(\gamma(t))=\infty.$$
	It means that $\myLim(\gamma)$ is an empty set.
	
	We next consider the case in which $f^{-1}(K)$ is not closed in $F^m$. Take a point $x \in \partial_{F^m} (f^{-1}(K))$.
	Note that $x \notin X$ because $f^{-1}(K)$ is closed in $X$. 
	There exist a pseudo-curve $\gamma:D \to f^{-1}(K)$ such that $\myLim(\gamma)=\{x\}$ by Proposition \ref{prop:curve_selection}.
	The set $\myLim(f \circ \gamma)$ is not empty and contained in $K$ by Lemma \ref{lem:basic_o+1}(2) because $K$ is definably compact and $f \circ \gamma(D) \subseteq K$.  
	Replacing a subset of $D$ belonging to $\mathfrak D$ with $D$, we may assume that $f \circ \gamma$ is a pseudo-curve and $\myLim(f \circ \gamma)$ is a singleton by property (c) in Definition \ref{def:filter}.
	By this replacement, we have $\myLim(\gamma) \subseteq \{x\}$ by Lemma \ref{lem:basic_o+1}(3).
	In summary, $\myLim(f \circ \gamma)$ is a singleton and contained in $Y$, but $\myLim(\gamma) \cap X$ is empty.
	We have shown that condition (iii) is not satisfied in this case.
\end{proof}

\begin{lemma}\label{lem:identifying_eq}
	Let $\mathcal F=(F,<,+,\cdot,0,1,\ldots)$ and $\mathfrak D$ be as in Lemma \ref{lem:local_bound}.
	Let $X$ and $Y$ be definable sets and $f:X \to Y$ be a definable continuous map.
	The following are equivalent:
	\begin{enumerate}
		\item[(i)]  The map $f$ is definably identifying;
		\item[(ii)] Let $\beta:E \to Y$ be a pseudo-curve such that $\myLim(\beta)$ is a singleton and contained in $Y$.
		There exist a pseudo-curve $\alpha:D \to X$ and a point $x \in X$ such that $f \circ \alpha(D) \subseteq \beta(E)$, $\myLim(\alpha)=\{x\}$ and $f(x) \in \myLim(\beta)$.
		\item[(iii)] Let $\beta:E \to Y$ be a pseudo-curve such that $\myLim(\beta)$ is a singleton and contained in $Y$.
		There exist $D \in \mathfrak D$, a definable map $\alpha:D \to X$ and a point $x \in \myLim(\alpha) \cap X$ such that $f \circ \alpha(D) \subseteq \beta(E)$ and $f(x) \in \myLim(\beta)$.
	\end{enumerate}
\end{lemma}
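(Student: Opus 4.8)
The plan is to follow the same cyclic strategy as in Lemma \ref{lem:proper_eq}: prove $(i) \Rightarrow (ii)$, observe that $(ii) \Rightarrow (iii)$ is immediate, and establish $(iii) \Rightarrow (i)$ by contraposition. Throughout, let $F^m$ and $F^n$ be the ambient spaces of $X$ and $Y$.

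For $(i) \Rightarrow (ii)$, assume $f$ is definably identifying and let $\beta:E \to Y$ be a pseudo-curve with $\myLim(\beta)=\{y\}$ for some $y \in Y$. I distinguish two cases. If $y \in \beta(E)$, then surjectivity of $f$ gives a point $x \in f^{-1}(y)$, and the constant map $\alpha:D \to X$ with value $x$ (for any $D \in \mathfrak D$) is a pseudo-curve with $\myLim(\alpha)=\{x\}$, $f \circ \alpha(D)=\{y\} \subseteq \beta(E)$ and $f(x)=y \in \myLim(\beta)$. If $y \notin \beta(E)$, then by Lemma \ref{lem:basic_o+2}(1) we have $\mycl_{F^n}(\beta(E))=\beta(E) \cup \{y\}$, so $\beta(E)$ is not closed in $Y$; consequently its preimage $W:=f^{-1}(\beta(E))$ cannot be closed in $X$, for otherwise the definably identifying property would force $\beta(E)$ to be closed in $Y$. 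Hence there is a point $x \in \mycl_X(W) \setminus W \subseteq X$, which also lies in $\partial_{F^m}W$, and Proposition \ref{prop:curve_selection} yields a pseudo-curve $\alpha:D \to W$ with $\myLim(\alpha)=\{x\}$.

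The crux of this direction is to verify $f(x)=y$. Since $f$ is continuous and surjective, $f(\mycl_X W) \subseteq \mycl_Y(f(W)) = \mycl_Y(\beta(E)) = \beta(E) \cup \{y\}$; as $x \notin W=f^{-1}(\beta(E))$ we have $f(x) \notin \beta(E)$, and therefore $f(x)=y$. Because $\alpha(D) \subseteq W$ gives $f \circ \alpha(D) \subseteq \beta(E)$, all the requirements of (ii) are met. The implication $(ii) \Rightarrow (iii)$ is clear, since a pseudo-curve $\alpha$ with $\myLim(\alpha)=\{x\}$ and $x \in X$ is in particular a definable map with $x \in \myLim(\alpha) \cap X$.

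For $(iii) \Rightarrow (i)$, I argue the contrapositive, splitting according to how definable identifiability can fail. If $f$ is not surjective, pick $y_0 \in Y \setminus f(X)$ and let $\beta$ be the constant pseudo-curve at $y_0$; then $f \circ \alpha(D) \subseteq \beta(E)=\{y_0\}$ forces $\alpha(D) \subseteq f^{-1}(y_0)=\emptyset$, which is impossible for $D \in \mathfrak D$, so (iii) fails. Otherwise $f$ is surjective but there is a definable $K \subseteq Y$ with $f^{-1}(K)$ closed in $X$ and $K$ not closed in $Y$; choose $y \in \mycl_Y(K) \setminus K$ and use Proposition \ref{prop:curve_selection} to obtain a pseudo-curve $\beta:E \to K$ with $\myLim(\beta)=\{y\}$. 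Were (iii) to hold for this $\beta$, it would produce $\alpha:D \to X$ and $x \in \myLim(\alpha) \cap X$ with $f \circ \alpha(D) \subseteq \beta(E) \subseteq K$ and $f(x)=y$. Then $\alpha(D) \subseteq f^{-1}(K)$, and by Lemma \ref{lem:basic_o+1}(1) we get $x \in \myLim(\alpha) \subseteq \mycl_{F^m}(\alpha(D)) \subseteq \mycl_{F^m}(f^{-1}(K))$; together with $x \in X$ and the closedness of $f^{-1}(K)$ in $X$ this forces $x \in f^{-1}(K)$, i.e.\ $f(x) \in K$, contradicting $f(x)=y \notin K$. Thus (iii) fails, which completes the contrapositive. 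I expect the principal obstacle to be the verification $f(x)=y$ in $(i) \Rightarrow (ii)$ together with the careful bookkeeping of closures taken in $X$, in $Y$, and in the ambient spaces $F^m$ and $F^n$.
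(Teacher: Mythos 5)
Your proof is correct and follows the same cyclic strategy as the paper's: (i)$\Rightarrow$(ii) via a case split plus curve selection, (ii)$\Rightarrow$(iii) trivially, and (iii)$\Rightarrow$(i) by contraposition distinguishing the same two failure modes (non-surjectivity, and a non-closed $K$ with closed preimage), finished by the same closure argument. The one place you genuinely deviate is the case split in (i)$\Rightarrow$(ii): the paper splits the \emph{parameter set} $E$ into $V=\{t\in E\;|\;\beta(t)=y\}$ and $W=E\setminus V$ and invokes property (b) of the good $(0+)$-pseudo-filter to find a filter element inside one of them (which then requires Lemma \ref{lem:basic_o+2}(3) to keep the limit set equal to $\{y\}$ after restriction), whereas you split on whether $y\in\beta(E)$: a constant curve handles $y\in\beta(E)$, and when $y\notin\beta(E)$ you apply Proposition \ref{prop:curve_selection} directly to a frontier point of $f^{-1}(\beta(E))$ instead of re-running its construction by definable choice followed by property (c) of Definition \ref{def:filter}. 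Your variant is mildly more economical, since it avoids property (b) and Lemma \ref{lem:basic_o+2}(3) altogether, and your verification of $f(x)=y$ via $f(\mycl_X(W))\subseteq\mycl_Y(f(W))=\beta(E)\cup\{y\}$ together with $f(x)\notin\beta(E)$ is equivalent to the paper's observation that $\mycl_X(f^{-1}(\beta(D)))\subseteq f^{-1}(\beta(D)\cup\{y\})$. What the paper's parameter-set split buys is uniformity: the same $V$/$W$ pattern reappears elsewhere (e.g., in Proposition \ref{prop:properness} and Lemma \ref{lem:attach}), but nothing is lost in your version.
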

\begin{proof}
We first prove the implication (i) $\Rightarrow$ (ii).
Let $\beta:E \to Y$ be a pseudo-curve such that $\myLim(\beta)$ is a singleton and contained in $Y$.
Let $y$ be the unique point belonging to $\myLim(\beta)$.
Set $V=\{t \in E\;|\; \beta(t)=y\}$ and $W=E \setminus V$.
At least one of $V$ and $W$ contains an element of $\mathfrak D$ by property (b) in Definition \ref{def:filter}.
We consider two cases.

We first consider the case in which $V$ contains an element $D$ of $\mathfrak D$.
Take a point $x \in X$ such that $y=f(x)$. 
It is possible because $f$ is surjective.
Define $\alpha:D \to X$ by $\alpha(t)=x$ for all $x \in D$.
It satisfies condition (ii).

We consider the other case in which $W$ contains an element $D$ of $\mathfrak D$.
Note that $\mycl(\beta(D))=\beta(D) \cup \{y\}$ by Lemma \ref{lem:basic_o+2}(1),(3).
In particular, $\beta(D)$  is not closed in $Y$.
Since $f$ is definably identifying, the inverse image $f^{-1}(\beta(D))$ is not closed in $X$.
On the other hand, $f^{-1}(\beta(D) \cup \{y\})$ is closed in $X$ because $f$ is continuous.
Let $x$ be an element of the frontier of $f^{-1}(\beta(D))$ in $X$.
We have $x \in f^{-1}(\beta(D) \cup \{y\}) \setminus f^{-1}(\beta(D))$ and we get $y=f(x)$.
For any $t \in D$, we have $f^{-1}(\beta(D)) \cap \mathcal B(x,t) \neq \emptyset$.
We can choose a definable map $\alpha:D \to f^{-1}(\beta(D))$ such that $\alpha(t) \in \mathcal B(x,t)$ in the same manner as the proof of Proposition \ref{prop:curve_selection}.
We may assume that $\alpha$ is a pseudo-curve by property (c) in Definition \ref{def:filter} by taking a subset of $D$ belonging to $\mathfrak D$ if necessary.
It is obvious that $\myLim(\alpha)=\{x\}$ and $f \circ \alpha(D) \subseteq \beta(D) \subseteq \beta(E)$.

The implication (ii) $\Rightarrow$ (iii) is obvious.

The remaining task is to prove the implication (iii) $\Rightarrow$ (i).
Assume that $f$ is not definably identifying.
When $f$ is not surjective, nonexistence of $\alpha$ satisfying condition (iii) is obvious when we consider a constant map $\beta:E \to Y$ with $\beta(E) \cap f(X) = \emptyset$.
Therefore, we may assume that $f$ is surjective.
There exists a definable subset $K$ of $Y$ such that $K$ is not closed in $Y$ and $f^{-1}(K)$ is closed in $X$.
Take a point $y \in \partial_Y K$.
By Proposition \ref{prop:curve_selection}, there exist a pseudo-curve $\beta:E \to K$ such that $\myLim(\beta)=\{y\}$.
We assume that condition (iii) is satisfied.
By the assumption, there exist a pseudo-curve $\alpha:D \to X$ and $x \in  \myLim(\alpha) \cap X$ such that $f \circ \alpha(D) \subseteq \beta(E)$ and $y=f(x)$.
In particular, we have $\alpha(D) \subseteq f^{-1}(K)$.
Since $x \in \mycl_X(\alpha(D))$ and $f^{-1}(K)$ is closed, we have $x \in f^{-1}(K)$.
It implies that $y=f(x) \in K$.
It is a contradiction.
\end{proof}

\begin{lemma}\label{lem:identifying_basic}
	Let $\mathcal F=(F,<,+,\cdot,0,1,\ldots)$ and $\mathfrak D$ be as in Lemma \ref{lem:local_bound}.
	The following assertions hold true:
	\begin{enumerate}
		\item[(1)] The composition of two definably identifying maps is definably identifying.
		\item[(2)] Let $f:X \to Y$ and $g:Y \to Z$ be definable continuous maps.
		If the composition $g \circ f$ is definably identifying, the map $g$ is so.
	\end{enumerate}
\end{lemma}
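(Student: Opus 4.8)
The plan is to argue directly from the definition of definably identifying maps, using only the elementary facts that preimages respect composition, that preimages of definable sets under definable maps are again definable, and that continuous maps pull definable closed sets back to closed sets. No curve-selection or properness machinery is needed here; both parts are diagram chases on the closed-set condition in the definition.

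For part (1), I would write $f:X \to Y$ and $g:Y \to Z$ for the two definably identifying maps and treat $g \circ f$. First I record that $g \circ f$ is surjective, which is immediate from the surjectivity of $f$ and $g$. Then, given a definable subset $K$ of $Z$ with $(g \circ f)^{-1}(K)$ closed in $X$, I use the identity $(g \circ f)^{-1}(K)=f^{-1}(g^{-1}(K))$. The set $g^{-1}(K)$ is definable since $g$ is definable and $K$ is definable, so $f^{-1}(g^{-1}(K))$ is a definable closed subset of $X$. Because $f$ is definably identifying, this forces $g^{-1}(K)$ to be closed in $Y$; and because $g$ is definably identifying, the closedness of the definable set $g^{-1}(K)$ in $Y$ forces $K$ to be closed in $Z$. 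Hence $g \circ f$ is definably identifying.

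For part (2), I would first note that $g$ is surjective: since $g \circ f$ is surjective, $Z=(g \circ f)(X)=g(f(X)) \subseteq g(Y) \subseteq Z$. Next, let $K$ be a definable subset of $Z$ with $g^{-1}(K)$ closed in $Y$. Since $f$ is continuous, the preimage $f^{-1}(g^{-1}(K))$ is closed in $X$, and it equals $(g \circ f)^{-1}(K)$; it is definable because $f$ and $g$ are definable. As $g \circ f$ is definably identifying and this preimage is closed in $X$, the set $K$ is closed in $Z$. Therefore $g$ is definably identifying.

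The only points requiring care, and the closest thing to an obstacle, are the bookkeeping ones: confirming that every set appearing as a preimage is again definable (so that the defining condition of definably identifying genuinely applies to it) and that surjectivity passes correctly through the composition in each direction. Beyond this the argument is purely formal, so I expect the proof to be short.
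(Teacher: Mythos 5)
Your proposal is correct, and it splits into two cases relative to the paper. For part (1) your argument is exactly the paper's: the same diagram chase $(g \circ f)^{-1}(C)=f^{-1}(g^{-1}(C))$, applying the identifying property of $f$ first and then of $g$. For part (2) you take a genuinely different and more elementary route. The paper proves (2) via its pseudo-curve characterization of definably identifying maps (Lemma \ref{lem:identifying_eq}): it takes a pseudo-curve $\beta$ into $Z$, uses the identifying property of $g\circ f$ to produce a lift $\gamma$ into $X$, and then pushes forward along $f$ with Lemma \ref{lem:cont} to get the curve into $Y$ required by the criterion for $g$. You instead argue directly from the closed-set definition: if $K\subseteq Z$ is definable and $g^{-1}(K)$ is closed in $Y$, then $(g\circ f)^{-1}(K)=f^{-1}(g^{-1}(K))$ is closed in $X$ by continuity of $f$ (and is definable), so $K$ is closed in $Z$ because $g\circ f$ is definably identifying. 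This is the standard topological fact that a right factor of an identifying (quotient) map is identifying, and it uses none of the standing hypotheses of Lemma \ref{lem:local_bound} --- no definable choice, no good $(0+)$-pseudo-filter, not even definable completeness --- whereas the paper's route needs all of them, since Lemma \ref{lem:identifying_eq} does. So your proof of (2) is both shorter and strictly more general; the paper's detour buys nothing here beyond stylistic consistency with its curve-based toolkit. The one bookkeeping point that matters --- that $g^{-1}(K)$ is definable, so the identifying property (which is only quantified over definable sets) genuinely applies --- you address explicitly.
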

\begin{proof}
	We first prove assertion (1).
	Let $f:X \to Y$ and $g:Y \to Z$ be definably identifying maps.
	The composition $g \circ f$ is obviously surjective.
	Let $C$ be a definable subset of $Z$ such that $(g \circ f)^{-1}(C)$ is closed.
	Since $f$ is definably identifying, $g^{-1}(C)$ is closed.
	Then, $C$ is closed because $g$ is definably identifying.
	
	We next prove assertion (2).
	It is obvious that $g$ is surjective.
	We use Lemma \ref{lem:identifying_eq}.
	Let $\beta:E \to Z$ be a pseudo-curve such that $\myLim(\beta)$ is singleton and contained in $Z$.
	Since $g \circ f$ is definably identifying, there exist $D \in \mathfrak D$, a definable map $\gamma:D \to X$ and a point $x \in \myLim(\alpha) \cap X$  such that $g \circ f \circ \gamma (D) \subseteq  \beta(E)$ and $g \circ f(x) \in \myLim(\beta)$ by Lemma \ref{lem:identifying_eq}.
	Consider the pseudo-curve $\alpha = f \circ \gamma:D \to Y$.
	We have $y:=f(x) \in \myLim(\alpha) \cap Y$ by Lemma \ref{lem:cont}.
	We get $g(y)=g\circ f(x) \in \myLim(\beta)$ by Lemma \ref{lem:cont} and we obtain $g \circ \alpha(D)=  g \circ f \circ \gamma (D) \subseteq  \beta(E)$.
	It implies $g$ is definably identifying by Lemma \ref{lem:identifying_eq}.
\end{proof}

\begin{corollary}\label{cor:proper_identifying}
	Let $\mathcal F=(F,<,+,\cdot,0,1,\ldots)$ and $\mathfrak D$ be as in Lemma \ref{lem:local_bound}.
	A definably proper surjective map is definably identifying.
\end{corollary}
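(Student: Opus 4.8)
The plan is to reduce the statement to Lemma \ref{lem:proper_closed}, which already records that a definably proper map is definably closed, and then to read off the definition of a definably identifying map directly. Let $f:X \to Y$ be definably proper and surjective. Surjectivity is part of the definition of a definably identifying map and is given by hypothesis, so the only thing left to verify is the closedness condition: for every definable subset $K$ of $Y$ such that $f^{-1}(K)$ is closed in $X$, the set $K$ is closed in $Y$.

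First I would fix such a $K$ and set $C = f^{-1}(K)$, which is by hypothesis a definable closed subset of $X$. Since $f$ is definably proper, Lemma \ref{lem:proper_closed} applies to $C$ and yields that $f(C) = f(f^{-1}(K))$ is closed in $Y$. Next I would invoke surjectivity to rewrite this image: for an arbitrary map one always has $f(f^{-1}(K)) = K \cap f(X)$, and because $f$ is surjective we have $f(X) = Y$, so $f(f^{-1}(K)) = K$. Combining the two steps, $K$ is closed in $Y$, which is precisely the condition required for $f$ to be definably identifying.

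I expect no real obstacle here; the entire content is carried by Lemma \ref{lem:proper_closed}, and the remaining argument is the elementary set-theoretic identity $f(f^{-1}(K)) = K$ valid for surjective $f$. As an alternative route one could instead verify condition (ii) of Lemma \ref{lem:identifying_eq} by pulling a given pseudo-curve $\beta$ with singleton limit set back through $f$ using the definable choice property and Lemma \ref{lem:proper_eq}, but this is strictly more work than the closed-map argument above and I would only fall back on it if the closedness characterization were unavailable.
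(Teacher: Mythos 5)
Your proof is correct, but it takes a genuinely different route from the paper's. The paper never invokes the closed-map property: it verifies criterion (iii) of Lemma \ref{lem:identifying_eq} directly, lifting a given pseudo-curve $\beta$ with singleton limit set through $f$ by surjectivity and the definable choice property, making the lift $\alpha$ continuous via property (c) of Definition \ref{def:filter}, then applying Lemma \ref{lem:proper_eq} to get a nonempty limit set contained in $X$ and Lemma \ref{lem:cont} to push a limit point forward --- precisely the route you relegate to a fallback in your final sentence. Your reduction to Lemma \ref{lem:proper_closed} together with the identity $f(f^{-1}(K))=K$ is the classical argument that a surjective closed continuous map is identifying; it is shorter and encapsulates all the pseudo-curve machinery inside the closed-map lemma, while the paper's proof stays stylistically uniform with the other characterization arguments of Section \ref{sec:proper}. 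One point you should make explicit: the identifying condition only hands you that $f^{-1}(K)$ is closed \emph{in $X$}, whereas under the paper's stated conventions an unqualified ``closed'' means closed in the ambient space, which is how Lemma \ref{lem:proper_closed} is literally phrased. Applying it to a relatively closed set is legitimate, but deserves a remark that its proof works verbatim in that generality: the lifted curve $\alpha$ there has $\myLim(\alpha)$ contained in the definably compact set $f^{-1}(\gamma(D) \cup \{y\}) \subseteq X$, so $\myLim(\alpha) \subseteq \mycl(C) \cap X = C$ when $C$ is merely closed in $X$. (The paper itself uses the lemma in this relative sense in Theorem \ref{thm:definable_action2}, where it is applied with the full domain $G \times X$, a set that is only locally closed in its ambient space.) With that one observation added, your argument is complete.
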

\begin{proof}
	Let $f:X \to Y$ be a definably proper surjective map.
	Let $\beta:D \to Y$ be a pseudo-curve such that $\myLim(\beta)$ is a singleton and contained in $Y$.
	By the definable choice property and surjectivity of $f$, there exists a definable map $\alpha:D \to X$ such that $\beta=f \circ \alpha$.
	We may assume that $\alpha$ is continuous as usual using property (c) in Definition \ref{def:filter}.
	Since $f$ is definably proper, $\myLim(\alpha)$ is not empty and contained in $X$ by Lemma \ref{lem:proper_eq}.
	Take an arbitrary point $x \in \myLim(\alpha)$.
	We have $f \circ \alpha(D)=\beta(D)$ and $f(x) \in \myLim(\beta)$ by Lemma \ref{lem:cont}.
	It implies that $f$ is definably identifying by Lemma \ref{lem:identifying_eq}.
\end{proof}

In the last of this section, we give another equivalent condition for a definable map to be definably proper.
We do not use the following proposition in this paper, but it is worth to be mentioned. 
\begin{proposition}\label{prop:properness}
	Let $\mathcal F=(F,<,+,\cdot,0,1,\ldots)$ and $\mathfrak D$ be as in Lemma \ref{lem:local_bound}.
	A definable continuous map $f:X \to Y$ is definably proper if and only if it is a definably closed map and $f^{-1}(y)$ is definably compact for any $y \in Y$.
\end{proposition}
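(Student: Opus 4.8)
The statement splits into two implications. For the forward direction, assume $f$ is definably proper. Then $f$ is definably closed by Lemma \ref{lem:proper_closed}, and for each $y \in Y$ the singleton $\{y\}$ is definably compact, so its inverse image $f^{-1}(y)$ is definably compact directly from the definition of definable properness. Hence the nontrivial content is the converse, which is where I would spend the proof. Assume $f$ is definably closed with definably compact fibers and verify condition (iii) of Lemma \ref{lem:proper_eq}. So let $\gamma:D \to X$ be a pseudo-curve such that $f \circ \gamma$ is a pseudo-curve with $\myLim(f\circ\gamma)=\{y\}$ and $y \in Y$; I must produce a point of $\myLim(\gamma) \cap X$. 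First I would record that $f^{-1}(y)$ is nonempty: applying definable closedness to $C=X$ shows $f(X)$ is closed in $Y$, and since $\mycl_{F^n}(f\circ\gamma(D))=f\circ\gamma(D)\cup\{y\}$ by Lemma \ref{lem:basic_o+2}(1), the point $y$ lies in $\mycl_Y(f(X))=f(X)$.

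The key tool is a definable tube lemma coming from closedness. For any definable set $U$ which is open in $X$ and contains $f^{-1}(y)$, the complement $T=X\setminus U$ is closed in $X$, so $f(T)$ is closed in $Y$ and avoids $y$; hence some $\mathcal B(y,\varepsilon)\cap Y$ misses $f(T)$, which yields $f^{-1}(\mathcal B(y,\varepsilon)\cap Y)\subseteq U$. I would apply this to the shrinking neighborhoods $U_\rho=\{x\in X : \operatorname{dist}(x,f^{-1}(y))<\rho\}$, where $\operatorname{dist}(\cdot,f^{-1}(y))$ is the distance to the nonempty definably compact set $f^{-1}(y)$; the infimum defining it is attained by Lemma \ref{lem:image}, so this distance function is definable and continuous on $F^m$. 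This produces, for each $\rho>0$, some $\varepsilon(\rho)>0$ with $f^{-1}(\mathcal B(y,\varepsilon(\rho))\cap Y)\subseteq U_\rho$.

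Next I would exploit $\myLim(f\circ\gamma)=\{y\}$. Extending $f\circ\gamma$ continuously by $y$ at $0$ (as in the proof of Lemma \ref{lem:basic_o+2}(2)), continuity at $0$ gives, for each $\rho$, some $c(\rho)>0$ with $f\circ\gamma(D_{<c(\rho)})\subseteq\mathcal B(y,\varepsilon(\rho))$, whence $\operatorname{dist}(\gamma(t),f^{-1}(y))<\rho$ for all $t\in D_{<c(\rho)}$. In other words the definable function $t\mapsto\operatorname{dist}(\gamma(t),f^{-1}(y))$ has limit set $\{0\}$. Fixing a single $\rho$ makes $\gamma(D_{<c(\rho)})$ bounded, since $f^{-1}(y)$ is bounded, so by Lemma \ref{lem:basic_o+1}(2) the limit set $\myLim(\gamma|_{D_{<c(\rho)}})$ is nonempty; choose a point $p$ in it. Then $p\in\myLim(\gamma)$ by Lemma \ref{lem:basic_o+1}(3), and applying Lemma \ref{lem:cont} to the continuous function $\operatorname{dist}(\cdot,f^{-1}(y))$ shows $\operatorname{dist}(p,f^{-1}(y))$ lies in the limit set of $t\mapsto\operatorname{dist}(\gamma(t),f^{-1}(y))$ restricted to $D_{<c(\rho)}$, which is contained in $\{0\}$. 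Thus $\operatorname{dist}(p,f^{-1}(y))=0$, and since $f^{-1}(y)$ is closed this forces $p\in f^{-1}(y)\subseteq X$. Hence $p\in\myLim(\gamma)\cap X$, which is exactly what condition (iii) of Lemma \ref{lem:proper_eq} demands, and $f$ is definably proper.

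The main obstacle, and the reason the fibers must be definably compact rather than merely closed, is the last step of forcing the limit point $p$ into $X$ rather than onto the frontier. Boundedness of a tube alone only yields a limit point $p$ of $\gamma$ with $|p|$ controlled, and such a $p$ could a priori sit outside $X$. Shrinking the tubes around the compact fiber by means of the distance function is precisely what pins $p$ into $f^{-1}(y)$; definable compactness of the fiber is used both to make the distance function well behaved (attained infimum, so that $\operatorname{dist}=0$ detects membership) and to keep the tubes bounded so that Lemma \ref{lem:basic_o+1}(2) applies.
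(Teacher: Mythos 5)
Your proof is correct, but it takes a genuinely different route from the paper's. The paper verifies the definition of definable properness directly: for an arbitrary definably compact $C \subseteq Y$ it proves closedness of $f^{-1}(C)$ by contradiction (curve selection at a frontier point, refinement so that $\myLim(f\circ\gamma)$ is a singleton $\{q\}$, then a case split according to whether $f\circ\gamma$ eventually equals $q$ --- definable compactness of the fiber kills one case, definable closedness of $f$ the other), and proves boundedness of $f^{-1}(C)$ by a separate argument built on the auxiliary function $\rho(y)=\sup\{|x| \;|\; x \in f^{-1}(y)\}$, local boundedness, and Lemma \ref{lem:local_bound}. You instead reduce everything to criterion (iii) of Lemma \ref{lem:proper_eq} and establish it by a definable tube lemma: applying definable closedness to $X \setminus U_\rho$, where $U_\rho$ is the distance-tube around the (nonempty, definably compact) fiber $f^{-1}(y)$, yields $f^{-1}(\mathcal B(y,\varepsilon)\cap Y)\subseteq U_\rho$, which forces $\operatorname{dist}(\gamma(t),f^{-1}(y))\to 0$ and pins every limit point of $\gamma$ into the closed fiber. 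This is the classical topological argument that a closed map with compact fibers is proper, transplanted to the definable setting; it buys a single unified argument with no separate boundedness step and no appeal to Lemma \ref{lem:local_bound}, at the cost of outsourcing the passage from arbitrary definably compact sets to single points and pseudo-curves to the already-proven Lemma \ref{lem:proper_eq}. The one step you justify by reference rather than by proof --- that $\myLim(f\circ\gamma)=\{y\}$ forces $f\circ\gamma(D_{<c})\subseteq\mathcal B(y,\varepsilon)$ for small $c$ --- is precisely the continuity assertion made inside the paper's own proof of Lemma \ref{lem:basic_o+2}(2), and it can alternatively be derived from the statement of that lemma together with Proposition \ref{prop:definably_compact}, so this is not a gap.
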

\begin{proof}
	The `only if' part follows from the definition of definable properness and Lemma \ref{lem:proper_closed}.
	We concentrate on the `if' part.
	
	Let $F^m$ and $F^n$ be the ambient spaces of $X$ and $Y$, respectively.
	Fix a definably compact subset $C$ of $Y$.
	We have only to demonstrate that the inverse image $f^{-1}(C)$ is closed and bounded in $F^m$.
	
	We first demonstrate that $f^{-1}(C)$ is closed in $F^m$.
	Assume for contradiction that $f^{-1}(C)$ is not closed in $F^m$.
	Take a point $p \in \partial_{F^m}(f^{-1}(C))$.
	Note that $f^{-1}(C)$ is closed in $X$ because $f$ is continuous.
	It implies that $p \not\in X$. 
	There exists a pseudo-curve $\gamma:D\to f^{-1}(C)$ such that $\myLim(\gamma) = \{p\}$ by Proposition \ref{prop:curve_selection}.
	Since $C$ is definably compact, we may assume that $\myLim(f \circ \gamma)$ is a singleton and contained in $C$ by Lemma \ref{lem:basic_o+1}(2) and property (c) in Definition \ref{def:filter} by replacing $D$ with an appropriate subset of $D$ belonging to $\mathfrak D$.
	Let $q$ be a unique point belonging to $\myLim(f \circ \gamma)$.
	Set $V:=\{t \in D\;|\; f \circ \gamma(t)=q\}$ and $W=D \setminus V$.
	At least one of $V$ and $W$ contains an element, say $D'$, of $\mathfrak D$ by property (b) in Definition \ref{def:filter}.
	Let $\gamma'$ be the restriction of $\gamma$ to $D'$.
	We have $\myLim(\gamma')=\{p\}$ and $\myLim(f \circ \gamma')=\{q\}$ by Lemma \ref{lem:basic_o+2}(3).
	We consider two separate cases.
	
	The first case is the case in which $D'$ is contained in $V$.
	Since $f^{-1}(q)$ is definably compact, we have $p \in f^{-1}(q)$.
	In particular, we have $p \in X$.
	It is a contradiction.
	
	We next consider the case in which $D'$ is contained in $W$.
	We have $q \in \myLim(f \circ \gamma')  \setminus f \circ \gamma'(D') \subseteq \mycl(f \circ \gamma'(D')) \setminus f \circ \gamma'(D')$ by Lemma \ref{lem:basic_o+2}(3) and the definition of $D'$.
	We also have $q \in C \subseteq Y$ because $C$ is closed and $f \circ \gamma(D) \subseteq C$.
	They imply that the image $f \circ \gamma'(D')$ is not closed in $Y$.
	On the other hand, $\gamma'(D')$ is closed in $X$ because $p \notin X$ and $\myLim(\gamma')=\{p\}$ by Lemma \ref{lem:basic_o+2}(3).
	The image $f \circ \gamma'(D')$ is closed in $Y$ because $f$ is a definably closed map.
	It is a contradiction.
	
	We next demonstrate that $f^{-1}(C)$ is bounded.
	Consider the definable function $\rho:C \to F$ given by $\rho (y)=\sup \{|x|\;|\; x \in f^{-1}(y)\}$.
	The function $\rho$ is well-defined because $f^{-1}(y)$ is definably compact.
	We prove that $\rho$ is locally bounded.
	Assume for contradiction that $\rho$ is not locally bounded at $\hat{y} \in C$.
	By the definition of $\rho$, for any $t>0$, there exist $y \in C$ and $x \in X$ 
	such that
	$|y-\hat{y}|<t$, $|x|>\frac{1}{t}$, $y=f(x)$ and $y \neq \hat{y}$.
	Put $$Z=\{(t,x, y) \in F \times X \times C\;|\;t>0, |x|>\frac{1}{t}, y=f(x), |y - \hat{y}|<t \text{ and }
	y \neq \hat{y}\}.$$
	Let $p:F \times X \times C \to F$ be the projection.
	We get $p(Z)=\{t>0\}$.
	By the definable choice property,
	there exist definable maps $g:(0, \infty) \to X$ and $h:(0, \infty) \to C$ such that 
	$\Gamma (g \times h)\subseteq Z$, where $\Gamma(g \times h)$ is the graph of the map $g \times h$ defined by $t \mapsto (g(t),h(t))$.
	We obviously have $h=f \circ g$.
	Since $|g(t)|>1/t$ for $t>0$, we get $\myLim(g)=\emptyset$.
	Take $D \in \mathfrak D'$ so that the restriction $g'$ of $g$ to $D$ is a pseudo-curve using property(c) in Definition \ref{def:filter}.
	Set $h'=f \circ g'$.
	We have $\myLim(g')=\emptyset$ by Lemma \ref{lem:basic_o+1}(3).
	The image $g'(D)$ is closed in $F^m$ by Lemma \ref{lem:basic_o+2}(1).
	It means that $g'(D)$ is closed in $X$.
	
	On the other hand, we have $\myLim(f \circ g')=\myLim(h')=\{\hat{y}\}$ and $\hat{y} \notin h'(D) = f \circ g'(D)$ by the definitions of $Z$ and $h'$.
	It implies that $f \circ g'(D)$ is not closed in $Y$ by Lemma \ref{lem:basic_o+2}(1).
	Since $f$ is a definably closed map, $g'(D)$ is not closed in $X$.
	It is a contradiction.
	We have proven that $\rho$ is locally bounded.
	
	By Lemma \ref{lem:local_bound}, 
	$\rho$ is bounded.
	Therefore, $f^{-1}(C)$ is bounded.
\end{proof}

\section{Definable proper quotients}\label{sec:def_prop_quo}

We first introduce the definitions of definable quotient and definable proper quotient.
\begin{definition}
	Consider an expansion of a dense linear order without endpoints.
	Let $X$ be a definable set and $E$ be a definable equivalence relation on it.
	A \textit{definable quotient of $X$ by $E$} is 
	a definably identifying map $f:X \to Y$ such that $f(x)=f(x')$ if and only if $(x,x') \in E$.
	A \textit{definably proper quotient of $X$ by $E$} is a definable proper map $f:X \to Y$  satisfying that $f(x)=f(x') \Leftrightarrow (x,x') \in E$.
	The target space $Y$ is sometimes denoted by $X/E$.
	Note that a definable proper quotient is a definable quotient by Corollary \ref{cor:proper_identifying}.
\end{definition}

We next recall the definition of definably proper definable equivalence relation.
\begin{definition}
	Consider an expansion of a dense linear order without endpoints.
	Let $E \subseteq X \times X$ be a definable equivalence relation on a definable set $X$.
	Let $p_i:E \to X$ be the restriction of the projection $X \times X \to X$ onto the $i$-th factor to $E$ for $i=1,2$.
	The definable equivalence relation $E$ is \textit{definably proper over $X$} if $p_1$ (equivalently, $p_2$) is a definably proper map.
\end{definition}

We introduce several technical definitions.
\begin{definition}
	Consider an expansion of a dense linear order without endpoints $\mathcal F=(F,<,\ldots)$.
	Let $S_1,\ldots, S_k$ be definable sets in $F^{m_1},\ldots, F^{m_k}$ for $k \geq 1$.
	A \textit{disjoint sum} of $S_1,\ldots, S_k$ is a tuple $(h_1,\ldots, h_k,T)$ consisting of a definable set $T \subseteq F^n$ and definable maps $h_i:S_i \to T$ such that
	\begin{enumerate}
		\item[(i)] $h_i$ is a definable homeomorphism onto $h_i(S_i)$ and $h_i(S_i)$ is open in $T$ for $1 \leq i \leq k$;
		\item[(ii)] $T$ is the disjoint union of the sets $h_1(S_1), \ldots, h_k(S_k)$.
	\end{enumerate}
	
	The existence and the uniqueness up to definable homeomorphism of the disjoint sum are easy to be proven. 
	We omit their proofs here.
\end{definition}

\begin{definition}
	Consider an expansion of a dense linear order without endpoints.
	Let $X$, $Y$ and $A$ be definable sets with $A \subseteq X$.
	Let $f:A \to Y$ be a definable continuous map.
	We define the definable equivalence relation $E(f)$ on $X \amalg Y$ by
	\begin{align*}
		E(f) =&\Delta(X) \cup \Delta(Y) \cup \{(a,f(a))\;|\; a \in A\} \cup \{(f(a),a)\;|\; a \in A\}\\
		&\cup\{(a_1,a_2) \in A \times A\;|\; f(a_1)=f(a_2)\},
	\end{align*}
	where $\Delta(X):=\{(x,x) \in X \times X\;|\; x \in X\}$ and $\Delta(Y):=\{(y,y) \in Y \times Y\;|\;y \in Y\}$ are the diagonals of $X$ and $Y$, respectively.
	Let $X \amalg_f Y$ be the definable quotient of $X \amalg Y$ by $E(f)$ if it exists.
\end{definition}

We demonstrate that definable proper quotient exists for the definable equivalence relation defined above.
\begin{lemma}\label{lem:attach}
	Consider a definably complete expansion of an ordered field $\mathcal F=(F,<,+,\cdot,0,1,\ldots)$ enjoying the definable choice property and admitting a good $(0+)$-pseudo-filter $\mathfrak D$.
	Let $X$, $Y$ and $A$ be definable sets with $A \subseteq X$.
	Let $f:A \to Y$ be a definable continuous map.
	Assume that $A$ is definably compact.
	Then $X \amalg_f Y$ exists as a definable proper quotient.
\end{lemma}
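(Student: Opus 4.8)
The plan is to realise the adjunction space $X\amalg_f Y$ explicitly as a definable subset of a Cartesian power and to exhibit a definably proper surjection onto it inducing $E(f)$; by Corollary \ref{cor:proper_identifying} such a map is automatically definably identifying, so it is a definable proper quotient. I may assume $A\neq\emptyset$, since when $A=\emptyset$ the relation $E(f)$ is just $\Delta(X)\cup\Delta(Y)$ and the disjoint sum $T$ of $X$ and $Y$ together with the identity already serves as $X\amalg_f Y$ (the identity is definably proper by Lemma \ref{lem:via_homeo}(i)). Write $X\subseteq F^m$ and $Y\subseteq F^n$, let $(\iota_X,\iota_Y,T)$ be the disjoint sum of $X$ and $Y$, and let $\delta:F^m\to F$ be the distance function $\delta(x):=\inf_{a\in A}|x-a|$, which is definable and continuous with $\delta^{-1}(0)=A$ because $A$ is definably compact.

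First I would build the target space. Since $A$ is definably compact, $f(A)$ is bounded by Lemma \ref{lem:image}, so each coordinate of $f$ is a bounded definable continuous function on the set $A\subseteq X$, which is closed in $X$; by Theorem \ref{thm:tietze} it extends to a definable continuous $\tilde f:X\to F^n$ with $\tilde f|_A=f$. Define definable continuous maps $\Phi:X\to F^n\times F\times F^m$ and $\Psi:Y\to F^n\times F\times F^m$ by $\Phi(x)=(\tilde f(x),\delta(x),\delta(x)\,x)$ and $\Psi(y)=(y,0,0)$, and set $Z=\Phi(X)\cup\Psi(Y)$. The key elementary facts are: $\Phi(a)=(f(a),0,0)=\Psi(f(a))$ for $a\in A$; the map $\Psi$ is a definable homeomorphism onto $Y\times\{0\}\times\{0\}$; and $\Phi$ is injective on $X\setminus A$ with image disjoint from $\Psi(Y)$, because for $x\notin A$ the coordinate $\delta(x)$ is positive and, given $\delta(x)=\delta(x')>0$, the equality $\delta(x)\,x=\delta(x')\,x'$ forces $x=x'$. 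Letting $q:T\to Z$ be the map equal to $\Phi$ on $\iota_X(X)$ and to $\Psi$ on $\iota_Y(Y)$, a direct check over the four position cases shows $q$ is a definable continuous surjection with $q(u)=q(v)$ if and only if $(u,v)\in E(f)$.

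The main obstacle is to prove that $q$ is definably proper; I would do this through condition (iii) of Lemma \ref{lem:proper_eq}. Let $\gamma:D\to T$ be a pseudo-curve such that $q\circ\gamma$ is a pseudo-curve whose limit set is a single point $z_0=(b,s,w)\in Z\subseteq F^n\times F\times F^m$. Since $\iota_X(X)$ and $\iota_Y(Y)$ partition $T$ into two clopen pieces, property (b) of Definition \ref{def:filter} provides $D'\in\mathfrak D$ with $D'\subseteq D$ and $\gamma(D')$ contained in one piece, and restricting preserves the singleton limit by Lemma \ref{lem:basic_o+2}(3); by Lemma \ref{lem:basic_o+1}(3) it then suffices to find a point of $\myLim(\gamma|_{D'})$ lying in $T$. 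If $\gamma(D')\subseteq\iota_Y(Y)$, write $\gamma=\iota_Y\circ\gamma_Y$; then $q\circ\gamma=\Psi\circ\gamma_Y$ forces $s=w=0$, so $z_0=\Psi(b)$ with $b\in Y$ and $\gamma_Y\to b$, giving $\iota_Y(b)\in\myLim(\gamma|_{D'})\cap T$.

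If instead $\gamma(D')\subseteq\iota_X(X)$ with $\gamma=\iota_X\circ\gamma_X$, I split on $s$. When $s>0$ the point $z_0$ cannot lie in $\Psi(Y)$, so $z_0=\Phi(x_0)$ for some $x_0\in X$, and dividing the coordinates $\delta(\gamma_X)$ and $\delta(\gamma_X)\,\gamma_X$ of $q\circ\gamma$ (legitimate since the denominator tends to $s\neq0$, via Lemma \ref{lem:cont}) shows $\gamma_X\to w/s=x_0\in X$. When $s=0$, continuity of $\delta$ together with $z_0\in Z$ forces $w=0$ and $z_0\in\Psi(Y)$, while $\delta(\gamma_X)\to0$ keeps $\gamma_X$ eventually inside the definably compact neighbourhood $\{x:\delta(x)\le1\}$, so $\myLim(\gamma_X)\neq\emptyset$ by Lemma \ref{lem:basic_o+1}(2) and every limit point $p$ satisfies $\delta(p)=0$, whence $p\in A\subseteq X$ since $A$ is closed. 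In either sub-case $\myLim(\gamma_X)$ meets $X$, so $\myLim(\gamma|_{D'})$ meets $T$. Thus condition (iii) of Lemma \ref{lem:proper_eq} holds, $q$ is definably proper, and therefore $X\amalg_f Y=Z$ exists as a definable proper quotient. The essential uses of the hypotheses are visible here: definable compactness of $A$ supplies both the bounded extension $\tilde f$ and the bounded neighbourhood needed in the case $s=0$, while the coordinate $\delta(x)\,x$ is what lets a convergent image be lifted back to a convergent curve in $X$ away from $A$.
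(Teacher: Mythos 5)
Your proposal is correct and follows essentially the same route as the paper's own proof: the same Tietze extension $\tilde f$ and distance function $\delta$ combined into the same embedding $x\mapsto(\tilde f(x),\delta(x),\delta(x)\,x)$ (coordinates merely permuted), with definable properness verified through Lemma \ref{lem:proper_eq} and the same case analysis (curve landing in $Y$, versus in $X$ with the distance coordinate of the limit positive or zero). The only deviations are cosmetic: the paper first splits $\gamma(D)\subseteq A$ from $\gamma(D)\subseteq X\setminus A$ via property (b) and, in the limit-distance-zero case, uses definable choice of nearest points in $A$, where you instead use continuity of $\delta$ together with definable compactness of $\{x\;|\;\delta(x)\le 1\}$.
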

\begin{proof}
	If $A$ is an empty set, the identity map $X \amalg Y \to X \amalg Y$ is a definable proper quotient of $X \amalg Y$ by $E(f)$.
	So we assume that $A$ is not an empty set.
	We identify $X$ and $Y$ with their images in $X \amalg Y \subseteq F^n$.
	Let $d_A:F^n \to F$ be the distance function defined by $d_A(x):= \inf\{|x-a|\;|\; a \in A\}$.
	We also get a definable continuous extension $\widetilde{f}:X \to F^n$ of the definable continuous map $f:A \to Y$.
	It exists by Theorem \ref{thm:tietze} because $f(A)$ is bounded by Lemma \ref{lem:image}.
	Define the definable continuous map $p:X \amalg Y \to F^{2n+1}$ by
	\begin{align*}
		p(x)= \left\{\begin{array}{ll}
			(\widetilde{f}(x),d_A(x)\cdot x, d_A(x)) & \text{ if }x \in X,\\
			(x,0,0) & \text{ if } x \in Y.
		\end{array}
		\right.
	\end{align*} 
	Set $Z=p(X \amalg Y) $.
	We want to show that the induced map $p:X \amalg Y \to Z$ is a definable proper quotient.
	
	It is easy to check that $(z_1,z_2) \in E(f)$ if and only if $p(z_1)=p(z_2)$ for any $z_1,z_2 \in X \amalg Y$.
	We omit the details.
	The remaining task is to demonstrate that $p:X \amalg Y \to Z$ is definably proper.
	Let $\gamma:D \to X \amalg Y$ be a pseudo-curve such that $\myLim(p \circ \gamma)$ is a singleton and contained in $Z$.
	Let $z \in Z$ be the unique point belonging to $\myLim(p \circ \gamma)$.
	We have only to demonstrate that $\myLim(\gamma) \cap (X \amalg Y)$ is not an empty set by Lemma \ref{lem:proper_eq}.
	For latter use, we define the projections $\pi_1:F^n \times F^n \times F \to F^n$, $\pi_2:F^n \times F^n \times F \to F^n$ and $\pi_3:F^n \times F^n \times F \to F$.
	The projection $\pi_i$ is the projection of $F^n \times F^n \times F$ onto the $i$-th factor for each $1 \leq i \leq 3$.
	
	Set $D_X:=\{t \in D\;|\; \gamma(t) \in X\}$ and $D_Y:=\{t \in D\;|\; \gamma(t) \in Y\}$.
	By property (b) in Definition \ref{def:filter}, at least one of $D_X$ and $D_Y$ contains an element $D' \in \mathfrak D$.
	By replacing $D$ with $D'$, we may assume that the image of $\gamma$ is entirely contained in either $X$ or $Y$ by Lemma \ref{lem:basic_o+1}(3).
	We still have $\myLim(p \circ \gamma)=\{z\}$ by Lemma \ref{lem:basic_o+2}(3) even when $D$ is replaced with $D'$.
	The case in which $\gamma(D) \subseteq Y$ is simple.
	We first consider this case.
	By the definition of $p$, the composition $\pi_1 \circ p|_{Y}$ is the identity map on $Y$.
	We have $\pi_1(z) \in \myLim(\gamma)$ by Lemma \ref{lem:cont} because $\pi_1$ is continuous and $\gamma(t)=\pi_1(p(\gamma(t)))$ for $t \in D$ in this case.
	We have $\pi_1(z) \in \pi_1 \circ p(Y)=Y$ because $z \in p(Y)$.
	It means that $\myLim(\gamma) \cap (X \amalg Y)$ is not an empty set.
	
	We next consider the case in which $\gamma(D) \subseteq X$.
	In the same way as above, we may assume that either $\gamma(D) \subseteq A$ or $\gamma(D) \subseteq X \setminus A$.
	In the former case, $\myLim(\gamma)$ is not an empty set and contained in $A$ by Lemma \ref{lem:basic_o+1}(2).
	The remaining case is the case in which $\gamma(D) \subseteq X \setminus A$.
	Set $d=\inf\{d_A(\gamma(t))\;|\;t \in D\}$.
	We consider two separate cases in which $d=0$ and $d>0$.
	We first consider the case in which $d>0$.
	We have $z \in p(X \setminus A)$ by the definition of $d$.
	The map $x \mapsto \pi_2(p(x))/\pi_3(p(x))$ is the identity map when $x \in X \setminus A$.
	 Since the map $p(X \setminus A) \ni y \mapsto \pi_2(y)/\pi_3(y) \in X$ is continuous, we have $\pi_2(z)/\pi_3(z) \in \myLim(\gamma) \cap (X \setminus A)$ in this case by Lemma \ref{lem:cont} and $\myLim(\gamma) \cap (X \amalg Y)$ is not an empty set.
	
	The last case is the case in which $d=0$.
	In this case, we obviously have $\lim_{D \ni t \to 0} d_A(\gamma(t)) = 0$.
	For any $t \in D$, the definable set $\{a \in A\;|\; |\gamma(t)-a|=d_A(\gamma(t))\}$ is not empty because $A$ is definably compact.
	By the definable choice property, we can find a definable map $\rho:D \to A$ such that $d_A(\gamma(t))=|\rho(t)-\gamma(t)|$ for each $t \in D$.
	The set $\myLim(\rho)$ is not an empty set and contained in $A$ by Lemma \ref{lem:basic_o+1}(2).
	It is obvious that $\myLim(\rho)=\myLim(\gamma)$ by the equalities $d_A(\gamma(t))=|\rho(t)-\gamma(t)|$ and $\lim_{D \ni t \to 0} d_A(\gamma(t)) = 0$.
	It implies that $\myLim(\gamma) \cap (X \amalg Y)$ is not an empty set.
\end{proof}

\begin{proposition}\label{prop:attach}
	Let $\mathcal F=(F,<,+,\cdot,0,1,\ldots)$ and $\mathfrak D$ be as in Lemma \ref{lem:attach}.
	Let $X$, $Y$ and $A$ be definable sets with $A \subseteq X$.
	Let $f:A \to Y$ be a definably proper map.
	Assume that $A$ is closed in $X$.
	Then $X \amalg_f Y$ exists as a definable proper quotient.
\end{proposition}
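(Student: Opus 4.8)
The plan is to mimic the construction in the proof of Lemma \ref{lem:attach}, modifying it in exactly two respects: to make Theorem \ref{thm:tietze} applicable when $f(A)$ is unbounded, and to replace the two uses of definable compactness of $A$ by the definable properness of $f$. As in Lemma \ref{lem:attach} I would first dispose of the case $A=\emptyset$ and assume $A\neq\emptyset$, identifying $X$ and $Y$ with their images in $X\amalg Y\subseteq F^n$. The only obstruction to copying the earlier construction verbatim is that $f(A)$ need not be bounded, so Theorem \ref{thm:tietze} cannot be applied to $f$ itself. To repair this I fix a definable homeomorphism $\tau:F^n\to(-1,1)^n$ (for instance the coordinatewise map $y\mapsto y/(1+|y|)$); then $\tau\circ f:A\to(-1,1)^n$ is bounded and definably continuous, and since $A$ is closed in $X$, Theorem \ref{thm:tietze} yields a definable continuous extension $\widetilde g:X\to F^n$ of $\tau\circ f$, which after truncating each coordinate to $[-1,1]$ I may assume bounded. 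With $d_A(x):=\inf\{|x-a|\;|\;a\in A\}$ I then define $p:X\amalg Y\to F^{2n+1}$ by $p(x)=(\widetilde g(x),\,d_A(x)\cdot x,\,d_A(x))$ for $x\in X$ and $p(y)=(\tau(y),0,0)$ for $y\in Y$, and set $Z=p(X\amalg Y)$. Because $\tau$ is injective and, as $A$ is closed in $X$, the function $d_A$ vanishes on $X$ exactly along $A$, the verification that $p(z_1)=p(z_2)$ if and only if $(z_1,z_2)\in E(f)$ is the same routine check as in Lemma \ref{lem:attach}; continuity of $p$ is clear. It then remains to show that $p:X\amalg Y\to Z$ is definably proper, after which Corollary \ref{cor:proper_identifying} finishes the proof.

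For properness I would verify condition (iii) of Lemma \ref{lem:proper_eq}: given a pseudo-curve $\gamma:D\to X\amalg Y$ with $\myLim(p\circ\gamma)=\{z\}\subseteq Z$, I must exhibit a point of $\myLim(\gamma)\cap(X\amalg Y)$. Using property (b) of Definition \ref{def:filter} and Lemma \ref{lem:basic_o+1}(3), I first pass to a subset of $D$ belonging to $\mathfrak D$ on which $\gamma$ maps entirely into $X$ or entirely into $Y$, and in the $X$ case further into $A$ or into $X\setminus A$. Two of the resulting cases go through exactly as in Lemma \ref{lem:attach} and use nothing about $A$: if $\gamma(D)\subseteq Y$, then $z=(\tau(y_0),0,0)$ and, $\tau$ being a homeomorphism onto an open box, $y_0\in\myLim(\gamma)\cap Y$ by Lemma \ref{lem:cont}; and if $\gamma(D)\subseteq X\setminus A$ with third coordinate $\lim d_A(\gamma(t))>0$, then $z=p(x_0)$ for a unique $x_0\in X\setminus A$ and the quotient of the second coordinate of $z$ by its third coordinate equals $x_0\in\myLim(\gamma)$, again by Lemma \ref{lem:cont}.

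The two remaining cases are precisely where Lemma \ref{lem:attach} invoked definable compactness of $A$, and here I substitute the definable properness of $f$ through Lemma \ref{lem:proper_eq}. If $\gamma(D)\subseteq A$, then $p\circ\gamma(t)=(\tau(f(\gamma(t))),0,0)$, so the vanishing third coordinate forces $z=(\tau(y_0),0,0)$ with $y_0\in Y$; since $\tau(y_0)$ is an interior point of the box, $f\circ\gamma$ is a pseudo-curve with $\myLim(f\circ\gamma)=\{y_0\}\subseteq Y$, and applying Lemma \ref{lem:proper_eq}, (i)$\Rightarrow$(ii), to the definably proper map $f$ gives $\emptyset\neq\myLim(\gamma)\subseteq A\subseteq X$. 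The delicate case is $\gamma(D)\subseteq X\setminus A$ with $\lim d_A(\gamma(t))=0$; then again $z=(\tau(y_0),0,0)$ with $y_0\in Y$ and $\widetilde g(\gamma(t))\to\tau(y_0)$. Here I use the definable choice property to select approximate nearest points $a:D\to A$ with $|a(t)-\gamma(t)|<d_A(\gamma(t))+t$, so that $|a(t)-\gamma(t)|\to0$ and hence $\myLim(a)=\myLim(\gamma)$ by Lemma \ref{lem:basic_o+1}(1); after shrinking $D$ via property (c) of Definition \ref{def:filter} I may take $a$ to be a pseudo-curve. The point is that, $\widetilde g$ being the distance-type extension produced by Theorem \ref{thm:tietze}, the value $\widetilde g(\gamma(t))$ is controlled by the values of $\tau\circ f$ at points of $A$ lying within a bounded multiple of $d_A(\gamma(t))$ of $\gamma(t)$; this forces $\tau(f(a(t)))\to\tau(y_0)$, that is, $f\circ a$ is a pseudo-curve with $\myLim(f\circ a)=\{y_0\}\subseteq Y$. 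Lemma \ref{lem:proper_eq} then yields $\emptyset\neq\myLim(a)\subseteq A$, so $\myLim(\gamma)=\myLim(a)$ meets $A\subseteq X$, as required.

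The hard part will be exactly this last step. Definable compactness of $A$ made it automatic in Lemma \ref{lem:attach}, since there the nearest-point curve had image in a definably compact set and Lemma \ref{lem:basic_o+1}(2) applied directly. For merely closed $A$ one must instead rule out the possibility that $\gamma$ hugs $A$ while $\myLim(\gamma)$ lies entirely in $\mycl_{F^m}(A)\setminus A$, i.e.\ in the frontier of $X$, or escapes to infinity; a soft argument from properness alone does not suffice, because the limit of $\widetilde g$ along $a$ need not match its limit along $\gamma$ unless the extension has a controlled modulus relative to $d_A$. Thus the crux is to combine the near-nearest-point estimate with the specific (distance-based) extension so that $f\circ a$ inherits the limit $y_0\in Y$, at which point Lemma \ref{lem:proper_eq} converts the properness of $f$ into the properness of $p$.
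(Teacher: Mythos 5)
Your reduction, the definition of $p$, and three of your four cases are sound, but the proof collapses exactly where you yourself flag ``the hard part'': in the case $\gamma(D)\subseteq X\setminus A$ with $d_A(\gamma(t))\to 0$ you must convert the convergence of $\widetilde g\circ\gamma$ into a pseudo-curve in $A$ whose image under $f$ has a singleton limit set contained in $Y$, and your only argument is the asserted ``controlled modulus'' of the Tietze extension relative to $d_A$. That assertion is not a property of the extension constructed in Theorem \ref{thm:tietze_abstract}. Per coordinate (after the rescaling into $(1,2)$) that extension is $\Phi(x)=\inf_{a\in A}\varphi(a)\,d(x,a)/d(x,A)$, and for a near-nearest point $a(t)$, with $d(\gamma(t),a(t))\le(1+\delta_t)d_A(\gamma(t))$, one gets only the one-sided bound $\varphi(a(t))\ge \Phi(\gamma(t))/(1+\delta_t)$: the infimum may be nearly realized by points of $A$ at distance up to $2d_A(\gamma(t))$ from $\gamma(t)$, not by $a(t)$. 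For instance, if near the curve $A$ has two branches, the nearest carrying values close to $b$ and another, at distance roughly $\lambda d_A(\gamma(t))$ with $1<\lambda<2$, carrying values close to $c$ with $c\lambda<b$, then $\Phi(\gamma(t))\approx c\lambda$ while $\varphi(a(t))\approx b$, so $f\circ a$ does not inherit the limit of the extension. Worse, since the only a priori upper bound on $\varphi(a(t))$ is the constant $R_2$, the unrescaled coordinates of $f(a(t))$ may be unbounded, in which case $\myLim(f\circ a)=\emptyset$ and Lemma \ref{lem:proper_eq} cannot be invoked at all; and because the extension is built coordinate by coordinate, the points nearly realizing the infimum differ from coordinate to coordinate, which blocks the obvious repairs. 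So the step you defer is not a routine verification; it is the missing content of the proof.

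The paper's proof avoids ever asking the Tietze extension to reflect limits of $f$. It first makes $X$ and $Y$ bounded, then replaces $X$ by the graph of a Tietze extension $\widetilde f$ of $f$, so that $f$ becomes the restriction to $A$ of the coordinate projection $\pi_2$; consequently $\mycl(f):=\pi_2|_{\mycl(A)}$ is automatically a definable continuous extension of $f$ to $\mycl(A)$, which is now definably compact, and Lemma \ref{lem:attach} applies verbatim to $\mycl(X)\amalg\mycl(Y)$ and $\mycl(f)$, yielding a definable proper quotient $\mycl(p)$. The definable properness of $f$ and the closedness of $A$ in $X$ are spent on a purely set-theoretic saturation statement, namely $\mycl(f)^{-1}(Y)=A$ (proved with Proposition \ref{prop:curve_selection}, Lemma \ref{lem:cont} and Lemma \ref{lem:proper_eq}), which guarantees $\mycl(p)^{-1}\bigl(\mycl(p)(X\amalg Y)\bigr)=X\amalg Y$ and $E(\mycl(f))\cap((X\amalg Y)\times(X\amalg Y))=E(f)$; then Lemma \ref{lem:via_homeo}(iii) shows the restriction of $\mycl(p)$ to this saturated set is still definably proper. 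If you want to salvage your direct approach, this is the idea to import: use the properness of $f$ to control which points of $\mycl(A)$ are sent into $Y$ (a saturation statement about sets), rather than to control the pointwise behaviour of an extension whose defining formula gives no such control.
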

\begin{proof}
	Let $F^m$ and $F^n$ be the ambient spaces of $X$ and $Y$, respectively.
	We first reduce to the case in which $X$ and $Y$ are bounded, and $f$ is the restriction of the coordinate projection onto the last $n$ coordinates to $A$.
	We have to check that a new $f$ is still definably proper every time we replace $X$, $Y$ and $A$ with other sets using Lemma \ref{lem:via_homeo}.
	We omit the checks in the proof.
	
	Let $\psi:F \to (-1,1)$ be the definable homeomorphism given by $\psi(x)=\dfrac{x}{\sqrt{1+x^2}}$.
	Let $\psi_m:F^m \to (-1,1)^m$ be the map given by $\psi_m(x_1,\ldots,x_m)=(\psi(x_1),\ldots, \psi(x_m))$ and we define $\psi_n$ similarly.
	Replacing $X$ and $Y$ with $\psi_m(X)$ and $\psi_n(Y)$, we may further assume that $X$ and $Y$ are bounded.
	There exists a definable continuous extension $\widetilde{f}:X \to F^n$ of $f$ by Theorem \ref{thm:tietze} because $f$ is bounded.
	Consider the graph $\Gamma(\widetilde{f}) \subseteq F^{m+n}$ of $\widetilde{f}$.
	Let $\pi_1:F^{m+n} \to F^m$ and $\pi_2:F^{m+n} \to F^n$ be the coordinate projections onto the first $m$ coordinates and onto the last $n$ coordinates, respectively.
	Set $X'=\Gamma(\widetilde{f})$ and $A'=\pi_1^{-1}(A) \cap \Gamma(\widetilde{f})$.
	We may assume that $f$ is the restriction of $\pi_2$ to $A$ considering $X'$ and $A'$ instead of $X$ and $A$, respectively.
	
	Let $\mycl(f):\mycl(A) \to \mycl(Y)$ be the restriction of the projection $\pi_2$ to $\mycl(A)$.
	It is a definable continuous extension of $f$ to $\mycl(A)$ because $f$ is the restriction of $\pi_2$ to $A$ by the assumption.
	We demonstrate that 
	\begin{equation}
		\mycl(f)^{-1}(Y)=A. \label{eq:1}
	\end{equation}
	Let $x$ be a point in $\mycl(A)$ with $\mycl(f)(x) \in Y$.
	We have only to show that $x \in A$.
	By Proposition \ref{prop:curve_selection}, there exists a pseudo-curve $\gamma:D \to A$ such that $\myLim(\gamma)=\{x\}$.
	It is obvious that $x \in A$ when $x=\gamma(t)$ for some $t \in D$.
	Therefore, we may assume that $x \notin \gamma(D)$.
	Since $\mycl(f)$ is an extension of $f$, we have $\mycl(f)(\gamma(D))=f(\gamma(D)) \subseteq Y$.
	Since $\mycl(f)$ is continuous and $\gamma(D) \cup \{x\}$ is definably compact by Lemma \ref{lem:basic_o+2}(2), the image $\mycl(f)(\gamma(D) \cup \{x\})=f\circ \gamma(D) \cup \{\mycl(f)(x)\}$ is definably compact by Lemma \ref{lem:image}.
	In particular, $f\circ \gamma(D) \cup \{\mycl(f)(x)\}$ is closed and it contains $\myLim(f \circ \gamma)$.
	In addition, $\myLim(f \circ \gamma)$ is a nonempty set by Lemma \ref{lem:basic_o+1}(2). 
	We may assume that $\myLim(f \circ \gamma)$ is a singleton by property (c) in Definition \ref{def:filter} by replacing $D$ with a subset of $D$ belonging to $\mathfrak D$.
	After the replacement, we still have $\myLim(\gamma)=\{x\}$ by Lemma \ref{lem:basic_o+2}(3).
	We have $\myLim(f \circ \gamma)=\myLim(\mycl(f) \circ \gamma) \ni \mycl(f)(x)$ by Lemma \ref{lem:cont} because $\mycl(f)$ is continuous.
	They imply that $\myLim(f \circ \gamma)=\{ \mycl(f)(x)\}$.
	Since $f$ is definably proper and $A$ is closed in $X$, we have $\myLim(\gamma) \subseteq A$ by Lemma \ref{lem:proper_eq}.
	It implies that $x \in A$.
	We have demonstrated equality (\ref{eq:1}).
	
	There exists a definable proper quotient $\mycl(p):\mycl(X) \amalg \mycl(Y) \to \mycl(X) \amalg_{\mycl(f)} \mycl(Y)$ by Lemma \ref{lem:attach} because $\mycl(A)$ is definably compact.
	We naturally identify $X \amalg Y$ with a subset of $\mycl(X) \amalg \mycl(Y)$.
	Set $Z=\mycl(p)(X \amalg Y)$.
	It is easy to check that, if $x \in X \amalg Y$ and $(x,y) \in E(\mycl(f))$, we have $y \in X \amalg Y$ by equality (\ref{eq:1}).
	It implies that $\mycl(p)^{-1}(Z)=X \amalg Y$ and $E(\mycl(f)) \cap ((X \amalg Y)\times(X \amalg Y))=E(f)$.
	Hence, $p:=\mycl(p)|_{X \amalg Y}:X \amalg Y \to Z$ is a definable proper quotient of $X \amalg Y$ by $E(f)$ by Lemma \ref{lem:via_homeo}(iii). 
\end{proof}

\begin{lemma}\label{lem:complete}
	Let $\mathcal F=(F,<,+,\cdot,0,1,\ldots)$ and $\mathfrak D$ be as in Lemma \ref{lem:attach}.
	Let $X$ be a definable subset of $F^n$ and $E \subseteq X \times X$ be a definable equivalence relation on $X$ which is definably proper over $X$.
	Let $p_i:E \to X$ be the restriction of the projection $X \times X \to X$ onto the $i$-th factor to $E$ for $i=1,2$.
	Let $D \in \mathfrak D$ and $\gamma:D \to E$ be a definable map, and set $\alpha = p_1 \circ \gamma$ and $\beta = p_2 \circ \gamma$.
	If either $\myLim(\alpha)$ or $\myLim(\beta)$ is nonempty and contained in $X$, the intersection of the other with $X$ is nonempty.
	Furthermore, if $\myLim(\alpha)=\{p\} \subseteq X$ there exists $q \in X$  such that $(p,q) \in \myLim(\gamma) \cap E$. 
\end{lemma}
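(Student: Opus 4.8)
The plan is to reduce everything to a single application of the properness criterion Lemma \ref{lem:proper_eq}, after arranging that $\gamma$ is a pseudo-curve and that the relevant coordinate limit set is a singleton. The two hypotheses on $\myLim(\alpha)$ and $\myLim(\beta)$ are interchangeable: since $E$ is an equivalence relation it is symmetric, so the swap map $\sigma\colon E\to E$, $\sigma(x,y)=(y,x)$, is a definable homeomorphism with $p_1\circ\sigma=p_2$ and $p_2\circ\sigma=p_1$, and $p_2$ is definably proper exactly when $p_1$ is. Hence it suffices to treat the case where $\myLim(\alpha)$ is nonempty and contained in $X$, while simultaneously establishing the ``furthermore'' clause.

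First I would shrink $D$. Since $\myLim(\alpha)\neq\emptyset$, property (c) of Definition \ref{def:filter} applied to $\alpha$ yields $D_1\in\mathfrak D$ with $D_1\subseteq D$ such that $\alpha|_{D_1}$ is a pseudo-curve and $\myLim(\alpha|_{D_1})$ is a singleton $\{p\}$; by Lemma \ref{lem:basic_o+1}(3) we have $p\in\myLim(\alpha)\subseteq X$. (In the ``furthermore'' case $\myLim(\alpha)=\{p\}$ is already a singleton, so this step is vacuous.) Next, applying the first part of property (c) to $\gamma|_{D_1}$ gives $D_2\in\mathfrak D$ with $D_2\subseteq D_1$ such that $\gamma|_{D_2}$ is continuous, hence a pseudo-curve into $E$. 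Because $\alpha|_{D_1}$ is a pseudo-curve with singleton limit, Lemma \ref{lem:basic_o+2}(3) guarantees $\myLim(\alpha|_{D_2})=\{p\}$ is unchanged. Now $p_1\circ(\gamma|_{D_2})=\alpha|_{D_2}$ is a pseudo-curve whose limit set is the singleton $\{p\}\subseteq X$, so the implication (i)$\Rightarrow$(ii) of Lemma \ref{lem:proper_eq}, applied to the definably proper map $p_1$, shows that $\myLim(\gamma|_{D_2})$ is nonempty and contained in $E$.

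Finally I would extract the desired point by a componentwise argument. Writing $\gamma=(\alpha,\beta)$ on $D_2$, one checks directly from the definition of $\myLim$ that $\myLim(\gamma|_{D_2})\subseteq\myLim(\alpha|_{D_2})\times\myLim(\beta|_{D_2})=\{p\}\times\myLim(\beta|_{D_2})$, since any open box around a limit point splits as a product of boxes around the two coordinates. Thus every point of $\myLim(\gamma|_{D_2})$ has the form $(p,q)$, and since $\myLim(\gamma|_{D_2})\subseteq E\subseteq X\times X$ we get $q\in\myLim(\beta|_{D_2})\cap X$, which is therefore nonempty. By Lemma \ref{lem:basic_o+1}(3) this intersection sits inside $\myLim(\beta)\cap X$, proving the first assertion. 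For the ``furthermore'' clause, the same point $(p,q)$ lies in $\myLim(\gamma|_{D_2})\cap E\subseteq\myLim(\gamma)\cap E$ by Lemma \ref{lem:basic_o+1}(3), with $q\in X$, which is exactly what is required.

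I expect the only genuine subtlety to be the bookkeeping over which set $D_1,D_2$ is in play, and verifying that the repeated shrinking does not destroy the singleton limit $\{p\}$; this is precisely where Lemma \ref{lem:basic_o+2}(3) (stability of a singleton limit under restriction) and Lemma \ref{lem:basic_o+1}(3) (monotonicity of limit sets under restriction) are used. The conceptual core---that properness of $p_1$ forces the $\gamma$-limit into $E$---is immediate once the hypotheses of Lemma \ref{lem:proper_eq} are in place, so no ideas beyond the machinery already developed in this section should be needed.
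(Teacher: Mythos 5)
Your proof is correct and takes essentially the same route as the paper's: shrink $D$ via property (c) of Definition \ref{def:filter} so that $\gamma$ becomes a pseudo-curve with $\myLim(\alpha)$ a singleton, apply Lemma \ref{lem:proper_eq} to the definably proper map $p_1$ to force $\myLim(\gamma)$ to be nonempty and contained in $E$, and then read off the two coordinates of a limit point. The only differences are cosmetic: you make the $\alpha$/$\beta$ symmetry explicit via the swap map, you extract coordinates by a direct box-splitting argument where the paper invokes Lemma \ref{lem:cont}, and your bookkeeping for the double shrinking (first for $\alpha$, then for $\gamma$, with Lemma \ref{lem:basic_o+2}(3) preserving the singleton limit) is in fact slightly more careful than the paper's single-sentence reduction.
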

\begin{proof}
	We first prove the `furthermore' part.
	We may assume that $\gamma$ is a pseudo-curve by taking an appropriate definable subset of $D$ by property (c)  in Definition \ref{def:filter}.
	Since $p_1$ is definably proper, $\myLim(\gamma)$ is not empty and contained in $E$ by Lemma \ref{lem:proper_eq}.
	Let $z \in \myLim(\gamma)$.
	We have $p=p_1(z)$ by Lemma \ref{lem:cont}.
	It is obvious that $q=p_2(z) \in \myLim(\beta)$ by the same lemma because $p_2$ is continuous.
	We have $z=(p,q) \in \myLim(\gamma) \cap E$. 
	
	We next prove the main part of the lemma.
	Assume that $\myLim(\alpha)$ is not empty and contained in $X$.
	By property (c) in Definition \ref{def:filter}, we can choose a subset $D'$ of $D$ belonging to $\mathfrak D$ so that $\myLim(\alpha')$ is a singleton, where $\alpha'$ is the restriction of $\alpha$ to $D'$.  
	Apply the 'furthermore' part to the restriction $\gamma'$ of $\gamma$ to $D'$.  
	The set $\myLim(\beta') \cap X$ is nonempty, where $\beta'$ is the restriction of $\beta$ to $D'$.
	The set $\myLim(\beta) \cap X$ is also nonempty by Lemma \ref{lem:basic_o+1}(3).
\end{proof}

We are ready to prove the following theorem:
\begin{theorem}\label{thm:quotient-mae}
	Consider a definably complete expansion of an ordered field $\mathcal F=(F,<,+,\cdot,0,1,\ldots)$ enjoying the definable choice property and admitting a good $(0+)$-pseudo-filter $\mathfrak D$.
	Assume further that $\mathcal F$ has a good extended rank function $\myedim$.
	Let $X$ be a nonempty definable set and $E$ be a definable equivalence relation on $X$ which is definably proper over $X$.
	Then there exists a definable proper quotient $\pi:X \to X/E$.
\end{theorem}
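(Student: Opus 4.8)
The plan is to argue by induction on the extended rank $\myedim(X)$, using the induction principle recorded in the Remark after Definition \ref{def:extended_rank}. I will use throughout that, since $E$ is definably proper over $X$, each class $[x]_E = p_1(p_2^{-1}(x))$ is definably compact, and more generally that for definably compact $K \subseteq X$ the saturation $p_1(p_2^{-1}(K))$ is definably compact by properness of $p_2$ together with Lemma \ref{lem:via_homeo} and Lemma \ref{lem:image}. For the base case, where $\myedim(X)$ is the smallest element of $\mathcal E_n$, the set $X$ is discrete and (by the argument in the proof of Lemma \ref{lem:constructible}) closed; applying the definable choice property to $E$ gives a definable $S \subseteq X$ meeting each class in exactly one point, and the map $\pi\colon X \to S$ sending $x$ to its representative is continuous (the domain is discrete) and definably proper, because $\pi^{-1}(K) = p_1(p_2^{-1}(K))$ is definably compact for every definably compact $K \subseteq S$.

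For the inductive step, set $e = \myedim(X)$ and use property (e) of Definition \ref{def:extended_rank} to peel off a closed definable $C \subseteq X$ with $\myedim(C) < e$, letting $U = X \setminus C$ be the open complement. The restriction $E_C = E \cap (C \times C)$ is again definably proper over $C$, since its first projection is the restriction of $p_1$ to the closed definable set $p_1^{-1}(C) \cap p_2^{-1}(C)$; so the induction hypothesis yields a definable proper quotient $\pi_C\colon C \to Q_C$. On $U$ I would aim to construct, directly, a definable proper quotient $\pi_U\colon U \to Q_U$ of $E_U = E \cap (U \times U)$, choosing $C$ large enough that on $U$ the relation trivializes sufficiently (for instance via a definable continuous transversal extracted from the definable choice property).

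It then remains to glue $Q_U$ and $Q_C$ along the cross-equivalences. Put $A_U = \{x \in U : [x]_E \cap C \neq \emptyset\} = U \cap p_1(p_2^{-1}(C))$, which is closed in $U$ because $p_1$ restricted to the closed set $p_2^{-1}(C)$ is definably proper, hence definably closed by Lemma \ref{lem:proper_closed}. As $A_U$ is $E_U$-saturated, $A_U = \pi_U^{-1}(B)$ for $B := \pi_U(A_U)$, so $B$ is closed in $Q_U$ and $\pi_U|_{A_U}\colon A_U \to B$ is a proper surjection by Lemma \ref{lem:via_homeo}(iii). The assignment $q \mapsto \pi_C(y)$, where $y \in C$ satisfies $y \mathrel{E} x$ for $x \in \pi_U^{-1}(q)$, is independent of the choices and descends to a definable continuous map $\varphi\colon B \to Q_C$; factoring through the proper projections of $E \cap (A_U \times C)$ and applying Lemma \ref{lem:via_homeo}(ii),(iv) shows $\varphi$ is definably proper. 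Proposition \ref{prop:attach} then produces $Q_U \amalg_\varphi Q_C$ as a definable proper quotient, and a routine check of the identifications shows that the induced definable map $X \to Q_U \amalg_\varphi Q_C$ (namely $\pi_U$ on $U$ and $\pi_C$ on $C$) has fibers exactly the $E$-classes.

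The hard part will be twofold, and both difficulties sit on the open piece. First, constructing $\pi_U$ is a sub-problem at the same rank $e$, so it cannot be handled by the induction on rank itself; one must genuinely exhibit $U$ on which $E_U$ admits a direct proper quotient, and this is precisely where the lack of an o-minimal cell decomposition forces the use of the $(0+)$-pseudo-curve machinery in place of definable curves. Second, continuity and properness of the glued map at the frontier $\partial_X U \subseteq C$ must be checked: a pseudo-curve in $U$ whose limit set lies in $C$ must have $\pi_U$-image approaching the correct attached point, which is exactly the completeness statement of Lemma \ref{lem:complete} combined with the properness criterion of Lemma \ref{lem:proper_eq}. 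I expect these two verifications, rather than the gluing bookkeeping, to carry the real weight of the proof.
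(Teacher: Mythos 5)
Your base case agrees with the paper's, and several of your supporting ingredients (the gluing via Proposition \ref{prop:attach}, the use of Lemma \ref{lem:complete} and Lemma \ref{lem:proper_eq} for the frontier checks) do appear in the paper's argument. But the inductive step has a genuine gap, which you yourself flag: the construction of $\pi_U$ on the open piece $U=X\setminus C$. This is not merely a deferred verification; the decomposition you propose cannot be made to work. If you want the induction to apply to $C$, you need $\myedim(C)<\myedim(X)$, and if you want a ``direct'' quotient on $U$ you need the relation $E_U$ to trivialize there, i.e.\ $U$ must be a partial transversal of $E$. These two requirements are incompatible in general: take $X=F^2$ and $E$ the relation of having equal first coordinates. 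Any definable partial transversal $U$ meets each vertical line in at most one point, so $\myedim(U)$ is small, and by property (d) of Definition \ref{def:extended_rank} its complement $C=X\setminus U$ has full rank $\myedim(C)=\myedim(X)$. Note also that property (e), which you invoke to produce $C$, only controls $\myedim(S\setminus U)$ for a definable $S\subseteq X$ containing $U$ --- it never gives smallness of $X\setminus U$. So the inductive step, as structured, is circular (quotienting $U$ is a same-rank problem) or impossible (trivializing $E$ on $U$ forces $C$ to have full rank).

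The paper escapes this by inducting on the transversal side rather than on $X$ itself. It first uses definable choice to produce a transversal $S$ and a (generally discontinuous) retraction $\sigma:X\to S$, and --- this is the key extra step --- modifies $S$ so that $B=\sigma(\partial_X S)$ has rank $<d$. Property (e) is then applied \emph{inside} $\mycl_X(S)$: one extracts $S_d\subseteq S\setminus B$ open in $X$ with $\myedim(S\setminus S_d)<d$, and sets $S'=\mycl_X(S)\setminus S_d$, which is closed and of rank $<d$ by Lemma \ref{lem:contained}. The open piece $S_d$ needs no quotient at all, precisely because it lies inside the transversal (distinct points of $S_d$ are never equivalent); the closed piece $S'$ is handled by the rank induction; and Proposition \ref{prop:attach} glues $\mycl_X(S_d)$ to the quotient $Y'$ of $S'$ along $A=\mycl_X(S_d)\cap S'$, yielding $g:\mycl_X(S)\to Y$. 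The quotient of all of $X$ is then $f=g\circ\sigma$: the full-rank part of $X$ off the transversal is never decomposed or quotiented separately, it is collapsed onto $\mycl_X(S)$ by $\sigma$, and the continuity and properness of $f$ (despite $\sigma$ being discontinuous) are established exactly by the pseudo-curve criteria you anticipated --- Lemma \ref{lem:cont}, Lemma \ref{lem:complete}, and Lemma \ref{lem:proper_eq}. In short, your proposal correctly identifies the verification tools but is missing the structural idea that makes the induction well-founded: decompose the closure of a corrected transversal, not the space $X$.
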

\begin{proof}
	Let $d=\myedim(X)$.
	The first task is to prove that there exist a definable subset $S$ of $X$ and a definable map $\sigma:X \to S$ such that 
	\begin{enumerate}
		\item[(i)]  $\sigma(x)=\sigma(y)$ if and only if $(x,y) \in E$ ;
		\item[(ii)] $\sigma$ is an identity map on $S$;
		\item[(iii)] $\myedim(B)<d$, where $B=\sigma(\partial_XS)$.
	\end{enumerate}
	Thanks to the definable choice property, there exists a definable subset $S$ of $X$ such that $S$ intersects at exactly one point with each equivalence class of $E$. 
	Let $\sigma:X \to S$ be the definable map defined by assigning $x$ to the unique point in $S$ to which it is equivalent.
	Note that the definable map $\sigma$ is uniquely determined and satisfies conditions (i) and (ii).
	If $\myedim(B)<d$, we have succeeded in choosing $S$ and $\sigma$ satisfying conditions (i) through (iii).
	
	We consider the case in which $\myedim(B) =d$.
	By Lemma \ref{lem:contained}, we have $\myedim(\partial_XS) < d$.
	By the definition of $B$, for each $x \in B$, there exists $x' \in \partial_XS$ such that $\sigma(x')=x$.
	By the definable choice property, there exists a definable map $\tau:B \to \partial_XS$ such that $\sigma \circ \tau$ is an identity map on $B$.
	Set $S'=\tau(B) \cup (S \setminus B)$ and let $\sigma':X \to S'$ be the surjective definable map defined by $\sigma'(x)=\tau(\sigma(x))$ if $\sigma(x) \in B$ and $\sigma'(x)=\sigma(x)$ otherwise.
	It is obvious that the pair $(S',\sigma')$ satisfies conditions (i) and (ii).
	We have 
	\begin{align*}
		\mycl_X(S') &\subseteq
		\mycl_X(\partial_XS) \cup \mycl_X(S \setminus B) \subseteq  (\mycl_X(S) \setminus \myint_X(S)) \cup (\mycl_X(S) \setminus \myint_X(B))\\
		&=\mycl_X(S) \setminus \myint_X(B)
	\end{align*}
	because $\tau(B) \subseteq \partial_XS$ and $B \subseteq S$.
	Therefore, we get 
	\begin{align*}
		\mycl_X(S') \setminus S' &\subseteq (\mycl_X(S) \setminus \myint_X(B))  \setminus (S \setminus B) \\
		&=   (\mycl_X(S) \setminus S) \cup (B \setminus \myint_X(B)) \subseteq (\mycl_X(S) \setminus S) \cup B.
	\end{align*}
	We obtain 
	\begin{align*}
		\sigma'(\mycl_X(S') \setminus S') &\subseteq \sigma'(\mycl_X(S) \setminus S) \cup \sigma'(B ) =\tau \circ \sigma(\mycl_X(S) \setminus S) \cup \tau \circ \sigma(B)\\
		&=\tau(B) \cup \tau(B)=\tau(B) \subseteq \mycl_X(S) \setminus S.
	\end{align*}
	We have $\myedim(\sigma'(\mycl_X(S') \setminus S')) \leq \myedim(\mycl_X(S) \setminus S) < d$ by property (d) in Definition \ref{def:extended_rank}.
	The pair $(S',\sigma')$ satisfies conditions (i) through (iii).
	Replacing $(S,\sigma)$ with $(S',\sigma')$, we may assume that $(S,\sigma)$ satisfies conditions (i) through (iii).

	We prove the theorem by induction on $d$.
	The induction is possible thanks to property (c) in Definition \ref{def:extended_rank}.
	When $d$ is the smallest element in $\mathcal E_n$, the definable set $X$ is discrete and closed by property (f) in Definition \ref{def:extended_rank} and Lemma \ref{lem:contained}.
	The map $\sigma$ is continuous because any function on a discrete set is continuous.
	Since $E$ is definably proper over $X$, the map $\sigma$ is definably proper.
	In fact, let $T$ be a definably compact definable subset of $S$.
	We have $\sigma^{-1}(T)=p_2 (p_1^{-1}(T))$, where $p_1$ and $p_2$ are maps defined in Lemma \ref{lem:complete}.
	The inverse image $p_1^{-1}(T)$ is definably compact because $E$ is definably proper.
	The image $p_2 (p_1^{-1}(T))$ is definably compact by Lemma \ref{lem:image}.
	We have proven that $\sigma$ is a definable proper quotient of $X$ by $E$.
	The case in which $d$ is the smallest element is completed.
	
	We consider the other case.
	When $\myedim S=d$, we have $\myedim(S \setminus B)=d$ by property (d) in Definition \ref{def:extended_rank} because $\myedim(B)<d$.
	We can take a definable subset $S_d$ of $S$ such that $\myedim (S \setminus S_d)<d$, $S_d \cap  B=\emptyset$ and $S_d$ is open in $X$ by properties (d) and (e) in Definition \ref{def:extended_rank}.
	When $\myedim S<d$, the empty set $S_d=\emptyset$ satisfies the above conditions. 
	Put $S':=\mycl_X(S) \setminus S_d$.
	We get $\myedim(S')<d$ by Lemma \ref{lem:contained} and property (d) in Definition \ref{def:extended_rank}.
	Since $B \cap S_d$ is an empty set, no point of $S_d$ is equivalent to a point of $S'$.
	Moreover, $S'$ is closed in $X$.
	The definable equivalence relation $E':=E \cap (S' \times S')$ is definably proper over $S'$ by Lemma \ref{lem:via_homeo}(iii) because $E$ is definably proper over $X$.
	Apply the induction hypothesis to the definable equivalence relation $E'$ on $S'$.
	There exists a definable proper quotient $f':S' \to Y'$ of $S'$ by $E'$.
	Note that $f'$ maps $S \setminus S_d$ bijectively onto $Y'$.
	The injectivity follows from the definition of $S$ and the surjectivity follows from the equality $B \cap S_d=\emptyset$.
	Put $A:=\mycl_X(S_d) \cap S'$.
	Note that $A$ is closed in $\mycl_X(S_d)$ and the restriction $f'':=f'|_{A}:A \to Y'$ is definably proper.
	We can apply Proposition \ref{prop:attach} to obtain a definable proper quotient $p:\mycl_X(S_d) \amalg Y' \to \mycl_X(S_d) \amalg_{f''} Y'$.
	Set $Y=\mycl_X(S_d) \amalg_{f''} Y'$.
	
	Note that the composed map $p \circ f':S' \to Y' \to Y$ agrees with the restriction $p|_{\mycl_X(S_d)}:\mycl_X(S_d) \to Y$ on the intersection $A$ of their domains.
	Hence these two maps determine a definable continuous map $g:\mycl_X(S)=\mycl_X(S_d) \cup S' \to Y$.	
	Consider the following commutative diagram:
	
	\[
	\begin{CD}
		\mycl_X(S_d) \amalg S' @>{j}>> \mycl_X(S_d) \amalg Y' \\
		@V{h}VV    @VV{p}V \\
		\mycl_X(S)   @>{g}>>  Y=\mycl_X(S_d) \amalg_{f''}Y'
	\end{CD}
	\]
	Here, the map $j$ is the identity on $\mycl_X(S_d)$ and equal to $f'$ on $S'$, 
	and the map $h$ is induced by the inclusion maps $\mycl_X(S_d) \to \mycl_X(S)$ and $S' \to \mycl_X(S)$.
	All four maps are surjective.
	Since $f'$ is definably proper, the map $j$ is so.
	Recall that $p$ is definably proper.
	The map $h$ is continuous.
	These facts imply that $g$ is definably proper by Lemma \ref{lem:via_homeo}.
	Since $f'$ maps $S\setminus S_d$ bijectively onto $Y'$,
	$g|_S:S \to Y$ is a bijection.
	Put $f:=g \circ \sigma:X \to Y$.
	Then $f$ is definable and surjective.
	It is also obvious that $(x_1,x_2) \in E$ if and only if $f(x_1)=f(x_2)$ because $g|_S:S \to Y$ is a bijection.
	
	We want to show that $f:X \to Y$ is a definable proper quotient of $X$ by $E$.
	The remaining task is to demonstrate that $f:X \to Y$ is continuous and definably proper.
	We first prove that $f$ is continuous.
	Take an arbitrary point $p \in X$ and an arbitrary pseudo-curve $\alpha:D \to X$ with $\myLim(\alpha)=\{p\}$.
	We have only to prove that $f(p) \in \myLim(f \circ \alpha)$ by Lemma \ref{lem:cont}.
	Consider the definable map $\gamma:=(\alpha, \sigma \circ \alpha): D \to E$.
	There exists $q \in \myLim(\sigma \circ \alpha)$ such that $(p,q) \in \myLim(\gamma) \cap E$ by Lemma \ref{lem:complete}.
	In particular, we have $f(p)=f(q)$.
	Since $\sigma \circ \alpha(D) \subseteq S$, we have $q \in \mycl_X(S)$.
	We get $g(q) \in \myLim(g \circ \sigma \circ \alpha) = \myLim( f \circ \alpha)$ by Lemma \ref{lem:cont} because $g$ is continuous.
	Since $f(p)=f(q)$, we have only to demonstrate that $g(q)=f(q)$.
	We consider two separate cases.
	\begin{enumerate}
		\item[(1)] If $q \in S$, we have $f(q)=g(q)$ because $\sigma$ is the identity on $S$.
		\item[(2)] If $ q \in \mycl_X(S) \setminus S$, we have $\sigma(q) \in S \setminus S_d$ because $B \cap S_d=\emptyset$.
		Both $q$ and $\sigma(q)$ belong to $S'$.
		We get $f'(q)=f'(\sigma(q))$ because $q$ and $\sigma(q)$ are equivalent in $E'$.
		It implies that $f(q)=g(q)$.
	\end{enumerate}
	We next demonstrate that $f$ is definably proper.
	Take a pseudo-curve $\alpha:D \to X$ and a point $y \in Y$ such that $\myLim(f \circ \alpha)= \myLim(g \circ \sigma  \circ \alpha)=\{y\}$.
	We have only to prove that $\myLim(\alpha) \cap X$ is not empty by Lemma \ref{lem:proper_eq}.
	Since $g$ is definably proper, $\myLim(\sigma  \circ \alpha)$ is not empty and contained in $X$ by Lemma \ref{lem:proper_eq}.
	By Lemma \ref{lem:complete}, $\myLim(\alpha) \cap X$ is also nonempty.
	We have finished the proof.
\end{proof}

\begin{remark}\label{rem:comment}
	Property (f) of Definition \ref{def:extended_rank} is restrictive and we do not know a structure enjoying property (f) other than d-minimal structures.
	Property (f) is only used to prove the existence of a definable proper quotient $X \to X/E$ when $\myedim_n(X)$ is the smallest element in $\mathcal E_n$ in  the proof of Theorem \ref{thm:quotient-mae}.
	It implies that, if we can prove the existence of a definable proper quotient when $\myedim_n(X)$ is the smallest element in $\mathcal E_n$ without using property (f), Theorem \ref{thm:quotient-mae} holds even when $\myedim_n$ does not possess property (f).
	Since property (f) is not used in Section \ref{sec:def_quo} and Section \ref{sec:proper_action}, the assertions in these sections hold without assuming property (f) once Theorem \ref{thm:quotient-mae} is proven without using property (f).
\end{remark}

\section{Definable quotients}\label{sec:def_quo}
We considered the case in which the definable equivalence relation is definably proper in the previous section.
This assumption seems to be too restrictive.
We relax it in this section.
We first prove the following key lemma:

\begin{lemma}\label{lem:basic}
	Consider a definably complete expansion of an ordered field $\mathcal F=(F,<,+,\cdot,0,1,\ldots)$ enjoying the definable choice property and admitting a good $(0+)$-pseudo-filter $\mathfrak D$.
	Let $X \subseteq F^m$ and $Y \subseteq F^n$ be definable sets.
	Let $Z$ be a definable subset of $Y \times X$ satisfying conditions (1) through (4) described below.
	Here, $p_1:Z \to Y$ and $p_2:Z\to X$  denote the restrictions of canonical projections to $Z$.
	\begin{enumerate}
		\item[(1)] The map $p_1$ is definably identifying.
		\item[(2)] For any $y_1,y_2 \in Y$, we have either $p_2(p_1^{-1}(y_1))  = p_2(p_1^{-1}(y_2))$ or $p_2(p_1^{-1}(y_1))  \cap p_2(p_1^{-1}(y_2))=\emptyset$.
		\item[(3)] The definable set $X$ is closed in $F^m$ and $Z$ is closed in $Y \times X$.
		\item[(4)] Let $\beta:E\to X$ be a pseudo-curve and $x$ be a point in $X$ such that $\myLim(\beta)=\{x\}$.
		Let $y \in Y$ be a point with $(y,x) \in Z$.
		Then there exist a subset $D$ of $E$ with $D \in \mathfrak D$ and  a pseudo-curve $\alpha:D \to Z$ such that $p_2 \circ \alpha(t) = \beta(t)$ for each $t \in D$ and $(y,x) \in \myLim(\alpha)$.
	\end{enumerate}
	Then there exists a definable closed subset $K$ of $X$ such that the map $p_1|_{p_2^{-1}(K)}$ is surjective and definably proper.
\end{lemma}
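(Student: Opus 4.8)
The plan is to select in each slice a representative of least norm and take $K$ to be the closure of the resulting set. Write $Z_y=p_2(p_1^{-1}(y))$ for the slice over $y$; by hypothesis (2) the nonempty slices partition $p_2(Z)$ into classes, and by hypothesis (3) each $Z_y$ is closed in $F^m$. For $y\in p_1(Z)$ the value $m(y):=\inf\{|x|\;:\;x\in Z_y\}$ is attained: the sets $Z_y\cap\{|x|\le m(y)+c\}$ are definably compact and decrease, as $c\to 0^+$, to $Z_y\cap\{|x|=m(y)\}$, so Proposition \ref{prop:definably_compact} produces a point of minimal norm. Let $K_0$ be the definable set of all such minimal-norm points and put $K:=\mycl_X(K_0)$; since $X$ is closed, $K$ is closed in $F^m$. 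As $K\supseteq K_0$ meets every class, $p_1|_{p_2^{-1}(K)}$ is surjective, and hypothesis (2) ensures that no two classes are conflated by this selection.

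First I would prove properness through the curve criterion of Lemma \ref{lem:proper_eq}(iii). So let $\gamma=(u,v)\colon D\to p_2^{-1}(K)$ be a pseudo-curve with $u=p_1\circ\gamma$ a pseudo-curve and $\myLim(u)=\{y_0\}\subseteq Y$; the goal is a point of $\myLim(\gamma)\cap p_2^{-1}(K)$. Using property (b) of Definition \ref{def:filter} and Lemma \ref{lem:basic_o+2}(3) I would pass to $D'\in\mathfrak D$ on which either $u\equiv y_0$ (then $v$ lies in the fixed definably compact class and $\myLim(v)\ne\emptyset$ at once) or $u(t)\ne y_0$ for every $t\in D'$. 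In the latter case $u(D')$ is not closed in $Y$, its frontier containing $y_0\in Y$, so the \emph{definably identifying} hypothesis (1) forces $p_1^{-1}(u(D'))$ to be non-closed in $Z$. Its frontier then contains a point $(y_0,x_*)\in Z$ with $x_*$ finite, and Proposition \ref{prop:curve_selection} yields a pseudo-curve $\delta\colon E\to p_1^{-1}(u(D'))$ with $\myLim(\delta)=\{(y_0,x_*)\}$. Choosing definably $\tau(s)\in D'$ with $u(\tau(s))=p_1\delta(s)$, the omission of $y_0$ on $D'$ forces $\tau(s)\to 0$ (otherwise $\myLim(\tau)$ would be an interior parameter of $D'$ at which $u$ attains $y_0$). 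Along $\tau$ we get $|v(\tau(s))|=m(u(\tau(s)))\le|p_2\delta(s)|$, bounded since $\delta(E)\cup\{(y_0,x_*)\}$ is definably compact by Lemma \ref{lem:basic_o+2}(2). Hence $v$ is bounded on $\tau(E)$, which accumulates at $0$, so Lemma \ref{lem:basic_o+1}(2),(3) give $\myLim(v)\ne\emptyset$. Refining once more by property (c) of Definition \ref{def:filter} I may assume $\myLim(v)=\{x_0\}$; then $(y_0,x_0)\in\myLim(\gamma)$, $(y_0,x_0)\in\mycl(Z)=Z$, and $x_0\in\mycl(K)=K$, so $(y_0,x_0)\in\myLim(\gamma)\cap p_2^{-1}(K)$. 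The same no-escape estimate, fed into Lemma \ref{lem:local_bound} through the locally bounded function $y\mapsto\inf\{|x|:x\in K\cap Z_y\}$, shows $p_2^{-1}(K)\cap(C\times X)$ is bounded for every definably compact $C\subseteq Y$; with $Z$ and $K$ closed this makes $(p_1|_{p_2^{-1}(K)})^{-1}(C)$ definably compact, giving properness directly as well.

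It remains to check that passing from $K_0$ to its closure does not disturb the fibrewise picture: every point of $\mycl_X(K_0)\setminus K_0$ either lies off $p_2(Z)$ (and so contributes nothing to $p_2^{-1}(K)$) or is again a minimal-norm point of the class it joins. This is exactly where hypotheses (4) and (2) enter: given $\beta\to x$ with $\beta(t)\in K_0$ and $(y,x)\in Z$, hypothesis (4) lifts $\beta$ to $\alpha=(w,\beta)$ in $Z$ with $w\to y$, so $\beta(t)\in Z_{w(t)}$; since $\beta(t)$ also lies in its own class, hypothesis (2) forces $Z_{w(t)}$ to be that class and $|\beta(t)|=m(w(t))$, and letting $t\to 0$ exhibits $x$ as a minimal-norm representative over $y$.

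The hard part will be the no-escape step of the properness argument, namely that minimal-norm representatives cannot run off to infinity as the base point converges inside $Y$. This does not follow from lower semicontinuity of $m$ (which is automatic from closedness of $Z$) and genuinely requires the identifying hypothesis (1) in its closed-map form, coupled with curve selection to turn the resulting finite frontier point into a bound on $m$ along a curve tending to $0$; at the same time one must use (4) and (2) to guarantee that the captured limit actually lies in the closed selection $K$, so that closedness and properness are secured together rather than separately.
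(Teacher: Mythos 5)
Your selection device (minimal-norm representatives $K_0$, then $K=\mycl_X(K_0)$) and your no-escape argument via the identifying hypothesis are exactly the paper's strategy (its set $L$ and its Claim~3), but your third paragraph contains a genuine error that the rest of the proof depends on. The claim that every point of $\mycl_X(K_0)\setminus K_0$ lying in $p_2(Z)$ ``is again a minimal-norm point of the class it joins'' does not follow from hypotheses (1)--(4), and is in fact false. Your own derivation shows where it breaks: writing $m(y)$ for the minimal norm in the class $Z_y:=p_2(p_1^{-1}(y))$, hypotheses (4) and (2) give $|\beta(t)|=m(w(t))$, so letting $t\to 0$ yields only $|x|=\lim_t m(w(t))$; to conclude that $x$ is minimal over $y$ you need $\lim_t m(w(t))=m(y)$, i.e.\ upper semicontinuity of $m$ across classes, which nothing in (1)--(4) provides (they give only the trivial inequality $|x|\geq m(y)$). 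Concrete counterexample, inside the semialgebraic structure over a real closed field $F$ (which satisfies all ambient hypotheses): let $X=(F\times\{0\})\cup\{(x_1,x_2): x_2\geq 2\}$, which is closed, $Y=F$, and $f:X\to Y$ given by $f(x_1,0)=0$ and $f(x_1,x_2)=x_1$ for $x_2\geq 2$; the two pieces of $X$ are clopen in $X$, so $f$ is continuous, and $f$ is surjective and definably identifying, so $Z=\{(f(x),x): x\in X\}$ satisfies (1)--(4) exactly as in the graph case of Lemma \ref{lem:identifying}. For $0<|w|\leq 2$ the class $Z_w=\{w\}\times[2,\infty)$ has unique minimal point $(w,2)$, so $(0,2)\in\mycl(K_0)\cap p_2(Z)$; but the class of $(0,2)$ is $Z_0=(F\times\{0\})\cup(\{0\}\times[2,\infty))$, whose minimum is attained at the origin, so $(0,2)$ is not a minimal-norm point of its class.

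This gap is fatal as written because it is load-bearing in every branch of your properness argument: the identity $|v(\tau(s))|=m(u(\tau(s)))$ for curves in $p_2^{-1}(K)$, the assertion in the case $u\equiv y_0$ that $v$ lies in a ``definably compact class'' (the class $Z_{y_0}$ itself need not even be bounded; what must be shown bounded is $K\cap Z_{y_0}$), and the final step where you feed $y\mapsto\inf\{|x|: x\in K\cap Z_y\}$ into Lemma \ref{lem:local_bound} --- an infimum over $K\cap Z_y$ bounds nothing unless all points of $K\cap Z_y$ are minimal, which is precisely the false claim. The paper's proof diverges from yours exactly here: it never asserts that closure points are minimal. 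Instead it introduces $\rho(y)=\inf_{\varepsilon>0}\sup\{|x| : x\in K_0\cap Z_{y'},\ |y-y'|<\varepsilon\}$, a local supremum of minimal norms over \emph{nearby} classes; proves $\rho<\infty$ and locally bounded using the no-escape claim (the part you do have, via Lemma \ref{lem:identifying_eq} and definable choice); and then uses hypothesis (4) in a weaker way: for $(\widetilde y,\widetilde x)\in p_2^{-1}(\mycl(K_0))$, curve selection (Proposition \ref{prop:curve_selection}) gives $\beta:E\to K_0$ tending to $\widetilde x$, the lift $\alpha$ from (4) places $\beta(\widetilde t\,)$ in a class over a base point $\varepsilon$-close to $\widetilde y$, and hence $|\widetilde x|\leq|\beta(\widetilde t\,)|+|\beta(\widetilde t\,)-\widetilde x|<3\rho(\widetilde y)$. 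It is this $3\rho$ bound, combined with Lemma \ref{lem:local_bound} applied to $\rho$ (not to an infimum), that makes $(p_1|_{p_2^{-1}(K)})^{-1}(C)$ bounded for definably compact $C$, after which closedness of $K$ and $Z$ finishes the proof. To repair your proof you must replace the minimality claim by such a quantitative bound on all of $K$.
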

\begin{proof}
	In the proof, we use the following notations to clarify the ambient space containing the given point.
	Set $d_m(x)=\max_{1 \leq i \leq m}|x_i|$ and $d_m(x,y)=\max_{1 \leq i \leq m}|x_i-y_i|$ for $x=(x_1,\ldots,x_m)$ and $y=(y_1,\ldots, y_m)$.
	We define $d_n$ and $d_{m+n}$ in the same manner.
	Replacing $X$ with $X \times \{1\} \subseteq F^{m+1}$, we may assume that $d_m(x) \geq 1$ for any $x \in X$.
	Consider the definable set
	$$L(y) :=\{x \in X\;|\; (y,x) \in Z,\ d_m(x) \leq d_m(x') \text{ for any }x' \in X\text{ with }(y,x') \in Z\}$$ for any $y \in Y$ and set $L:=\bigcup_{y \in Y} L(y)$.
	\medskip
	
	\textbf{Claim 1.}
	The restriction $p_1|_{p_2^{-1}(L)}$ is surjective.
	\begin{proof}[Proof of Claim 1]
		Let $y \in Y$ be an arbitrary point.
		Note that $p_1$ is surjective by the assumption (1).
		The definable set $p_1^{-1}(y)$ is a nonempty closed subset of $F^m$ by the assumption (3).
		Take a point $x' \in p_1^{-1}(y)$. 
		The set $\{x \in p_1^{-1}(y)\;|\; d_m(x) \leq d_m(x')\}$ is definably compact.
		The restriction of $d_m$ to this set attains its minimum at a point $c \in X$ by Lemma \ref{lem:image}.
		It is obvious that $c \in L(y)$.
		We get $(y,c) \in p_2^{-1}(L)$ and $p_1(y,c)=y$.
		It means that $p_1|_{p_2^{-1}(L)}$ is surjective.
	\end{proof}

	\textbf{Claim 2.}
	We have either $L(y_1)=L(y_2)$ or $L(y_1) \cap L(y_2)=\emptyset$ for any $y_1,y_2 \in Y$.
	\begin{proof}[Proof of Claim 2]
		The claim follows immediately from the assumption (2).
	\end{proof}
	
	\textbf{Claim 3.}
	Let $y \in Y$.
	There is no definable map $\eta:(0,\delta) \to Z \cap p_2^{-1}(L)$ such that $0 \neq d_n(y,p_1(\eta(t)))<t$ and $d_m(p_2(\eta(t)))>1/t$, where $\delta>0$.
	\begin{proof}[Proof of Claim 3]
		Assume for contradiction that such an $\eta$ exists.
		Let $E$ be an element of $\mathfrak D$.
		Let $\eta'$ be the restriction of $\eta$ to $E \cap (0,\delta)$.
		We may assume that $\eta'$ is continuous by property (c) in Definition \ref{def:filter} by replacing $E$ if necessary.
		It is obvious that $\myLim(p_1 \circ \eta')=\{y\}$.
		By assumption (1) and Lemma \ref{lem:identifying_eq}, there exist a pseudo-curve $\alpha:D \to Z$ and a point $z \in  Z$ such that $p_1 \circ \alpha(D) \subseteq  p_1 \circ \eta'(E)$, $\myLim(\alpha)=\{z\}$ and $p_1(z) \in \myLim(p_1 \circ \eta')$.
		It implies that $y=p_1(z)$.
		We reduce to the case in which $\myLim(p_1 \circ \alpha)=\{y\}$.
		Since $p_1$ is continuous, we have $y=p_1(z) \in \myLim(p_1 \circ \alpha)$ by Lemma \ref{lem:cont}.
		In particular, $\myLim(p_1 \circ \alpha)$ is not empty.
		By replacing an appropriate subset of $D$ belonging $\mathcal D$ with $D$, we may assume that $\myLim(p_1 \circ \alpha)$ is a singleton by property (c) in Definition \ref{def:filter}.
		Even after the replacement, we have $\myLim(\alpha)=\{z\}$ by Lemma \ref{lem:basic_o+2}(3) and $\myLim(p_1 \circ \alpha)=\{y\}$ by Lemma \ref{lem:cont}.
		Therefore, we may assume that $\myLim(p_1 \circ \alpha)=\{y\}$.
		
		By the definable choice property, there exists a definable map $\zeta:D \to E$ such that $p_1 \circ \alpha (t)=p_1 \circ \eta' \circ \zeta(t)$.
		The union $E \cup \{0\}$ is definably compact by the definition of $\mathfrak D$.
		The limit set $\myLim(\zeta)$ is not empty and contained in $E \cup \{0\}$ by Lemma \ref{lem:basic_o+1}(2).
		We may assume that $\zeta$ is a pseudo-curve such that $\myLim(\zeta)$ is singleton by property (c) in Definition \ref{def:filter} by replacing a subset of $D$ belonging to $\mathfrak D$.
		We still have $\myLim(\alpha)=\{z\}$ and $\myLim(p_1 \circ \alpha)=\{y\}$ by Lemma \ref{lem:basic_o+2}(3) even after the replacement.
		We want to show that $\myLim(\zeta)=\{0\}$.
		Assume for contradiction that $\myLim(\zeta)=\{s\}$ for some $s \in E$.
		We have $p_1 \circ \eta'(s) \in \myLim(p_1 \circ \eta' \circ \zeta)=\myLim(p_1 \circ \alpha)=\{y\}$ by Lemma \ref{lem:cont} because $p_1 \circ \eta' $ is continuous.
		We have $p_1(\eta'(s))=y$, which contradicts the assumption that $d_n(y,p_1(\eta'(t))) \neq 0$ for each $t \in E$. 
		We have demonstrated that $\myLim(\zeta)=\{0\}$.
		
		Let us consider the definable functions $q_1,q_2:D \to \{t \in F\;|\; t>0\}$ given by $q_1(t)=1/d_m(p_2(\eta'(\zeta(t))))$ and $q_2(t)= 1/d_m(p_2(\alpha(t)))$ for each $t \in D$.
		Since $0<q_1(t)< \zeta(t)$ by the assumption and $\myLim(\zeta)=\{0\}$, we have $\myLim(q_1)=\{0\}$.
		By the definition of the set $L$, we have $d_m(p_2(\alpha(t))) \geq d_m(p_2(\eta'(\zeta(t))))$ for any $t \in D$.
		This inequality implies that $q_2 \leq q_1$.
		We get $\myLim(q_2)=\{0\}$.
		On the other hand, we have $0 \neq 1/ d_m(p_2(z)) \in \myLim(q_2)$ by Lemma \ref{lem:cont} because the map given by $t \mapsto 1/ d_m(p_2(t))$ is continuous. 
		It is a contradiction.
	\end{proof}
	
	We next define the definable function $\rho:Y \to F$ by $$\rho(y)=\inf_{\varepsilon>0}\sup\{d_m(x)\;|\exists y' \in Y\ \;x \in L(y') \text{ and } d_n(y,y') < \varepsilon\}.$$
	For the sake of well-definedness of the map $\rho$, we need to prove that $\rho(y)<\infty$.
	\medskip
	
	\textbf{Claim 4.} $\rho(y)<\infty$ for each $y \in Y$.
	\begin{proof}[Proof of Claim 4]
		Assume for contradiction that $\rho(y)=\infty$.
		Set $c=d_m(x)>0$ with $x \in L(y)$.
		It is independent of the choice of $x \in L(y)$ by the definition of $L(y)$.
		The equality  $\rho(y)=\infty$ implies that, for any $\varepsilon>0$ with $\varepsilon<1/c$, there exist $y' \in Y$ and $x \in L(y')$ such that $0 \neq d_n(y,y') < \varepsilon$ and $d_m(x) > 1/\varepsilon$.
		By the definable choice property, we can construct a definable map $\eta:(0,1/c) \to Z \cap p_2^{-1}(L)$ such that $d_n(y,p_1(\eta(t)))<t$ and $d_m(p_2(\eta(t)))>1/t$.
		In particular, we get $d_m(p_2(\eta(t)))>c=d_m(x)$.
		The definition of $x$ implies that  $d_n(y,p_1(\eta(t))) \neq 0$.
		They contradict Claim 3.
	\end{proof}
	
	By Claim 4, $\rho$ is a well-defined definable function on $Y$.
	Since $d_m(x) \geq 1$ for all $x \in X$, we have $\rho(y)>0$ for each $y \in Y$.
	We next show that $\rho$ is locally bounded.
	\medskip
	
	\textbf{Claim 5.} The function $\rho$ is locally bounded.
	\begin{proof}[Proof of Claim 5]
		Consider the map $\tau:Y \to F$ given by $$\tau(y)=\inf_{\delta>0}\sup\{\rho(y')\;|\; y' \in Y, d_n(y,y')<\delta\}.$$
		If $\tau$ is a well-defined map, for any $y \in Y$, there exist $\varepsilon>0$ and $\delta>0$ such that $\sup\{\rho(y')\;|\; y' \in Y, d_n(y,y')<\delta\}<\tau(y)+\varepsilon$.
		It implies that $\rho$ is locally bounded.
		Therefore, we have only to show that $\tau(y)<\infty$ for each $y \in Y$.
		
		Assume for contradiction that $\tau(y)=\infty$ for some $y \in Y$.
		Set $c=\rho(y)$.
		For any $\varepsilon>0$ with $\varepsilon<1/c$, there exists $y' \in Y$ such that $0 \neq d_n(y,y') <\varepsilon/2$ and $\rho(y')>1/\varepsilon$.
		Using the definable choice property, we construct a definable map $\eta:(0,1/c) \to Y$ such that $0 \neq d_n(y,\eta(t))<t/2$ and $\rho(\eta(t))>1/t$ for all $0<t<1/c$.
		Using the definable choice property again together with the definition of the function $\rho$, we get a definable map $h:(0,1/c) \to Z \cap p_2^{-1}(L)$ such that $0 \neq d_n(\eta(t),p_1(h(t)))<t/4$ and $d_m(p_2(h(t)))>1/t$.
		We have $d_n(y,p_1(h(t))) \leq d_n(y,\eta(t))+d_n(\eta(t),p_1(h(t)))<t/2+t/4<t$ and $d_m(p_2(h(t)))>1/t$.
		We also have $d_n(y,p_1(h(t))) \neq 0$ because $d_m(p_2(h(t)))>c=\rho(y)$.
		They contradict Claim 3.
	\end{proof}
	
	We set $K=\mycl(L)$.
	We demonstrate that $K$ satisfies the condition of the lemma.
	It is obvious that the restriction $p_1|_{p_2^{-1}(K)}$ is surjective because  $p_1|_{p_2^{-1}(L)}$ is so by Claim 1.
	We have only to demonstrate that  $p_1|_{p_2^{-1}(K)}$ is definably proper.
	For that purpose, we want to show that $d_m(x) \leq 3\rho(y)$ whenever $(y,x) \in p_2^{-1}(K)$.
	We first consider the case in which $x \in L$, namely $(y,x) \in p_2^{-1}(L)$.
	We have $x \in L(y)$ by Claim 2.
	The inequality $d_m(x) \leq 3\rho(y)$ is obvious by the definitions of $L(y)$ and $\rho$.
	
	We consider the remaining case.
	Fix an arbitrary $(\widetilde{y},\widetilde{x}) \in p_2^{-1}(K \setminus L)$.
	We prove that $d_m(\widetilde{x}) \leq 3\rho(\widetilde{y})$.
	Fix a sufficiently small $\varepsilon'>0$ with $\varepsilon'<\rho(\widetilde{y})$.
	The definition of $\rho$ asserts that there exists $\varepsilon>0$ such that $$(*):\ \sup\{d_m(x)\;|\; \exists y \in Y, \ x \in L(y) \text{ and } d_n(\widetilde{y},y)<\varepsilon\}<\rho(\widetilde{y})+\varepsilon'$$
	By Proposition \ref{prop:curve_selection}, there exists a pseudo-curve $\beta:E \to L$ with $\myLim(\beta)=\{\widetilde{x}\}$.
	 By the proof of Proposition \ref{prop:curve_selection}, we may further assume that $$d_m(\widetilde{x},\beta(t))<\rho(\widetilde{y})$$ when $t \in E$.
	The assumption (4) asserts that there exist a subset $D$ of $E$ belonging to $\mathfrak D$ and a pseudo-curve $\alpha:D \to Z$ such that $p_2 \circ \alpha(t) =\beta(t)$ for each $t \in D$ and $(\widetilde{y},\widetilde{x}) \in \myLim(\alpha)$.
	We can take $\widetilde{t} \in D$ with $0< \widetilde{t}  <\delta$ satisfying the inequality  $d_{m+n}(\alpha(\widetilde{t} ),(\widetilde{y},\widetilde{x}))<\varepsilon$ because $(\widetilde{y},\widetilde{x}) \in \myLim(\alpha)$.
	In particular, we get $d_n(p_1(\alpha(\widetilde{t})),\widetilde{y})<\varepsilon$.
	Using the inequality $(*)$, we get $$d_m(\beta(\widetilde{t}))<\rho(\widetilde{y})+\varepsilon'$$ because $\beta(\widetilde{t})\in L(p_1(\alpha(\widetilde{t})))$.
	We have $d_m(\widetilde{x}) \leq d_m(\beta(\widetilde{t}))+d_m(\beta(\widetilde{t}),\widetilde{x})<2\rho(\widetilde{y})+\varepsilon'< 3\rho(\widetilde{y})$.
	We have demonstrated the inequality $d_m(\widetilde{x}) \leq 3\rho(\widetilde{y})$.
	
	Let $C$ be a definably compact subset of $Y$.
	Since $\rho|_C$ is locally bounded by Claim 5, it is bounded by a positive element $N$ in $F$ by Lemma \ref{lem:local_bound}.
	It implies that $d_m(x) \leq 3N$ when $(y,x) \in p_1^{-1}(C) \cap p_2^{-1}(K)$.
	We have $(p_1|_{p_2^{-1}(K)})^{-1}(C)=p_1^{-1}(C) \cap p_2^{-1}(K) \subseteq C \times [-3N,3N]^m$.
	It implies that the set $(p_1|_{p_2^{-1}(K)})^{-1}(C)$ is bounded.
	Since $K$ is closed in $X$ and $Z$ is closed in $Y \times X$ by the assumption (3), $(p_1|_{p_2^{-1}(K)})^{-1}(C)=p_1^{-1}(C) \cap p_2^{-1}(K)=(C \times K) \cap Z$ is also closed in $C \times K$.
	Since $C \times K$ is closed in $F^{m+n}$, $(p_1|_{p_2^{-1}(K)})^{-1}(C)$ is also closed in $F^{m+n}$.
	We have shown that $(p_1|_{p_2^{-1}(K)})^{-1}(C)$ is definably compact.
	It means that $p_1|_{p_2^{-1}(K)}$ is definably proper.
\end{proof}

Using the above key lemma, we obtain the equivalence condition for a definable map to be definably identifying.
\begin{lemma}\label{lem:identifying}
	Let $\mathcal F=(F,<,+,\cdot,0,1,\ldots)$ and $\mathfrak D$ be as in Lemma \ref{lem:basic}.
	Let $X$ and $Y$ be definable sets and $f:X \to Y$ be a definable continuous map.
	We further assume that $X$ is closed.
	The following are equivalent:
	\begin{enumerate}
		\item[(i)] The map $f$ is definably identifying.
		\item[(ii)] There exists a definable closed subset $K$ of $X$ such that the restriction $f|_K$ of $f$ to $K$ is surjective and definably proper.
	\end{enumerate}
\end{lemma}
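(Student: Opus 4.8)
The plan is to prove the two implications separately, pushing essentially all of the work in $(i) \Rightarrow (ii)$ onto the key Lemma \ref{lem:basic}.

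For the easy direction $(ii) \Rightarrow (i)$, I would take $K$ as in $(ii)$. Since $f|_K$ is surjective and $K \subseteq X$, the map $f$ is surjective; and since $f|_K$ is definably proper and surjective, it is definably identifying by Corollary \ref{cor:proper_identifying}. Now let $C$ be a definable subset of $Y$ with $f^{-1}(C)$ closed in $X$. Then $(f|_K)^{-1}(C) = K \cap f^{-1}(C)$ is closed in $K$ for the relative topology, so $C$ is closed in $Y$ because $f|_K$ is definably identifying. Hence $f$ is definably identifying.

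For $(i) \Rightarrow (ii)$, assume $f$ is definably identifying and set $Z = \{(y,x) \in Y \times X \mid y = f(x)\}$, the graph of $f$ read inside $Y \times X$, with $p_1, p_2$ the two coordinate projections. I would verify the four hypotheses of Lemma \ref{lem:basic}. For (1): the map $p_2 : Z \to X$ is a definable homeomorphism (its inverse is $x \mapsto (f(x),x)$) and $p_1 = f \circ p_2$; a definable homeomorphism is definably identifying by Lemma \ref{lem:via_homeo}(i) and Corollary \ref{cor:proper_identifying}, so $p_1$ is a composition of definably identifying maps and hence definably identifying by Lemma \ref{lem:identifying_basic}(1). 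For (2): one has $p_2(p_1^{-1}(y)) = f^{-1}(y)$, and distinct values of $f$ give disjoint fibres while equal values give equal fibres. For (3): $X$ is closed by hypothesis, and $Z$ is the preimage of the diagonal $\Delta(Y)$ under the continuous map $(y,x) \mapsto (y, f(x))$, hence closed in $Y \times X$ since $\Delta(Y)$ is closed in the Hausdorff space $Y \times Y$. For (4): given a pseudo-curve $\beta : E \to X$ with $\myLim(\beta) = \{x\}$ and $y = f(x)$, the map $\alpha := (f \circ \beta, \beta) : E \to Z$ is a pseudo-curve with $p_2 \circ \alpha = \beta$, and continuity of $f$ at $x$ (Lemma \ref{lem:cont}) forces $(y,x) \in \myLim(\alpha)$, so one may take $D = E$.

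Lemma \ref{lem:basic} then yields a definable closed subset $K$ of $X$ for which $p_1|_{p_2^{-1}(K)}$ is surjective and definably proper. Since $p_2^{-1}(K) = \{(f(x),x) \mid x \in K\}$ is the graph of $f|_K$ and $p_2$ restricts to a definable homeomorphism $p_2^{-1}(K) \to K$, we have $f|_K = p_1|_{p_2^{-1}(K)} \circ (p_2|_{p_2^{-1}(K)})^{-1}$, a composition of a definably proper surjection with a definable homeomorphism; hence $f|_K$ is definably proper and surjective by Lemma \ref{lem:via_homeo}(i),(ii), giving $(ii)$. I expect the only delicate points to be the verification of hypothesis (4) — which is painless here precisely because $Z$ is a graph, so the lift $\alpha$ is canonical — and the bookkeeping identifying $p_1|_{p_2^{-1}(K)}$ with $f|_K$ through $p_2$; all of the genuine difficulty has already been absorbed into Lemma \ref{lem:basic}.
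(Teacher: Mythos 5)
Your proposal is correct, and the hard direction $(i) \Rightarrow (ii)$ coincides with the paper's proof: the paper also applies Lemma \ref{lem:basic} to the permuted graph $Z=\{(f(x),x)\}$, verifies conditions (1)--(4) with exactly the justifications you give (condition (1) via the fact that $p_1$ factors as $f$ composed with the homeomorphism $p_2$, condition (3) via closedness of the graph of a continuous map, condition (4) via the canonical lift $\alpha=(f\circ\beta,\beta)$ and Lemma \ref{lem:cont}), and then transports properness and surjectivity from $p_1|_{p_2^{-1}(K)}$ to $f|_K$ — the paper does this last step by computing $(f|_K)^{-1}(C)=p_2\bigl((p_1|_{p_2^{-1}(K)})^{-1}(C)\bigr)$ and invoking Lemma \ref{lem:image}, which is equivalent to your factorization through $(p_2|_{p_2^{-1}(K)})^{-1}$. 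Where you genuinely diverge is the easy direction $(ii) \Rightarrow (i)$. The paper works inside the pseudo-curve framework: it verifies criterion (iii) of Lemma \ref{lem:identifying_eq} by using the definable choice property to lift a pseudo-curve $\beta:E\to Y$ to a map into $K$, then invokes property (c) of Definition \ref{def:filter}, Lemma \ref{lem:basic_o+2}(3), Lemma \ref{lem:proper_eq} applied to $f|_K$, and Lemma \ref{lem:cont}. You instead argue formally: $f|_K$ is definably identifying by Corollary \ref{cor:proper_identifying}, and if $f^{-1}(C)$ is closed in $X$ then $(f|_K)^{-1}(C)=K\cap f^{-1}(C)$ is closed in $K$, whence $C$ is closed in $Y$ directly from the definition. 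This is a legitimate and strictly shorter argument; it avoids pseudo-curves and definable choice entirely for this direction (beyond what is already packaged into Corollary \ref{cor:proper_identifying}), at the cost of nothing — the paper's longer derivation appears to be a matter of keeping the exposition uniformly within its pseudo-curve criteria rather than of necessity.
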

\begin{proof}
	We first prove that (ii) implies (i).
	Let $\beta:E \to Y$ be a pseudo-curve such that $\myLim(\beta)$ is a singleton and contained in $Y$.
	By Lemma \ref{lem:identifying_eq}, we have only to prove that there exists $D \in \mathfrak D$, a definable map $\alpha:D \to X$ and a point $x \in \myLim(\alpha) \cap X$ such that $f \circ \alpha(D) \subseteq \beta(E)$ and $f(x) \in \myLim(\beta)$.
	By the definable choice property, there exists a definable map $\gamma: E \to K$ such that $f \circ \gamma(t)=\beta(t)$ for each $t \in E$.
	Take an appropriate  subset $D$ of $E$ belonging to $\mathcal D$.
	The restriction $\alpha$ of $\gamma$ to $D$ is a pseudo-curve by property (c) in Definition \ref{def:filter}.
	We have $\myLim(f \circ \alpha)=\myLim(\beta)$ by Lemma \ref{lem:basic_o+2}(3) because $f \circ \gamma=\beta$ and $\myLim(\beta)$ is a singleton.
	The set $\myLim(\alpha)$ is not empty and contained in $K$ by applying Lemma \ref{lem:proper_eq} to $f|_K$.
	Take a point $x \in \myLim(\alpha)$, then we have $f(x) \in \myLim(f \circ \alpha) = \myLim(\beta)$ by Lemma \ref{lem:cont} because $f$ is continuous.
	We have proven the implication (ii) $\Rightarrow$ (i).
	
	We next consider the opposite implication.
	We want to apply Lemma \ref{lem:basic} to the definable set $Z=\{(f(x),x) \in Y \times X\}$.
	Let $p_1,p_2$ be as in Lemma \ref{lem:basic}.
	We check that conditions (1) through (4) in Lemma \ref{lem:basic} are all satisfied.
	Condition (1) immediately follows from the assumption that $f$ is definably identifying.
	Condition (2) is obviously satisfied because $p_2(p_1^{-1}(y))=f^{-1}(y)$ in this case.
	Condition (3) holds because $f$ is a continuous map defined on a closed set and $Z$ is the permuted graph of $f$.
	Condition (4) is obvious from the fact that $Z$ is the permuted image of the graph of a continuous map by Lemma \ref{lem:cont}.
	The definable map $p_1|_{p_2^{-1}(K)}$ is surjective and definably identifying for some closed definable subset $K$ of $X$ by Lemma \ref{lem:basic}.
	The image of $f|_K$ coincides with that of $p_1|_{p_2^{-1}(K)}$, and $f|_K$ is surjective.
	Let $C$ be a definably compact subset of $Y$.
	Since $p_1|_{p_2^{-1}(K)}$ is definably proper, $(p_1|_{p_2^{-1}(K)})^{-1}(C)$ is  definably compact.
	The inverse image $(f|_K)^{-1}(C)= p_2((p_1|_{p_2^{-1}(K)})^{-1}(C))$ is also definably compact by Lemma \ref{lem:image}.
	It means that $f|_K$ is definably proper.
\end{proof}
%

%
%
%

\begin{lemma}\label{lem:geo_quo}
	Let $\mathcal F=(F,<,+,\cdot,0,1,\ldots)$ and $\mathfrak D$ be as in Lemma \ref{lem:basic}.
	Let $X$ be a definable closed set and $E$ be a definable equivalence relation on $X$ which is closed in $X \times X$.
	Let $K$ be a definable closed subset of $X$ and set $E_K=E \cap (K \times K)$.
	Let $p_i:E \to X$ be the restriction of the canonical projection $X \times X \to X$ onto the $i$-th factor to $E$ for $i=1,2$.
	Assume that the restriction $p_1|_{p_2^{-1}(K)}$ is definably identifying.
	There exists a definable quotient of $X$ by $E$ if and only if a definable quotient of $K$ by $E_K$ exists.
	Furthermore, if $f:X \to Y$ is a definable quotient of $X$ by $E$, the restriction $f|_K:K \to Y$ is also a definable quotient of $K$ by $E_K$.
\end{lemma}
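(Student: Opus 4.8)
The plan is to exploit the hypothesis that $q := p_1|_{p_2^{-1}(K)}$ is definably identifying; in particular $q$ is surjective onto $X$, so $K$ meets every $E$-class. Write also $r := p_2|_{p_2^{-1}(K)}$ for the restriction of the other projection to $p_2^{-1}(K) = E \cap (X \times K)$, so that $q$ and $r$ are definable continuous maps and $r$ is surjective onto $K$. The key elementary observation is that whenever two points $k, k' \in K$ lie in a common fiber of $q$, say $(x,k), (x,k') \in E$, transitivity of $E$ gives $(k,k') \in E_K$; consequently any map on $K$ that separates $E_K$-classes is constant on the fibers of $q$ after composing with $r$. Note also the identity $f \circ q = (f|_K) \circ r$ for any $E$-invariant map $f$, since $(x,k) \in E$ forces $f(x) = f(k)$.

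I would first treat the \emph{furthermore} statement, which also gives the forward implication of the equivalence. Starting from a definable quotient $f \colon X \to Y$ of $X$ by $E$, I claim $f|_K$ is a definable quotient of $K$ by $E_K$. Surjectivity of $f|_K$ follows from surjectivity of $f$ together with the fact that $K$ meets every class, and the condition $f|_K(k) = f|_K(k') \Leftrightarrow (k,k') \in E_K$ is immediate. The only substantial point is that $f|_K$ is definably identifying. A naive argument fails, because $(f|_K)^{-1}(C) = f^{-1}(C) \cap K$ being closed in $K$ does not force $f^{-1}(C)$ to be closed in $X$. Instead I would note that $f \circ q$ is definably identifying by Lemma \ref{lem:identifying_basic}(1), rewrite it as $(f|_K) \circ r$ using the identity above, and then invoke Lemma \ref{lem:identifying_basic}(2) to conclude that $f|_K$ is definably identifying.

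For the reverse implication, given a definable quotient $g \colon K \to W$ of $K$ by $E_K$, I would construct a definable quotient $f \colon X \to W$ with $f|_K = g$. Put $\Phi := g \circ r \colon p_2^{-1}(K) \to W$; it is continuous and, by the observation above, constant on the fibers of $q$. Using the definable choice property I would pick a definable section $s \colon X \to p_2^{-1}(K)$ of $q$ and set $f := \Phi \circ s$, so that $f$ is definable and $f \circ q = \Phi$. Then $f(x) = g(k)$ for every $k \in K$ with $(x,k) \in E$; from this one reads off $f|_K = g$, surjectivity of $f$ (since $f(K) = g(K) = W$), and the equivalence $f(x) = f(x') \Leftrightarrow (x,x') \in E$ using transitivity and symmetry of $E$ and that $g$ separates $E_K$-classes.

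The main obstacle is the continuity and the identifying property of this constructed $f$: the section $s$ need not be continuous, so $f = \Phi \circ s$ is not manifestly continuous. I would resolve this by using that $q$ is definably identifying: for a closed definable $C \subseteq W$, the set $q^{-1}(f^{-1}(C)) = \Phi^{-1}(C)$ is closed because $\Phi$ is continuous, whence $f^{-1}(C)$ is closed in $X$, giving continuity of $f$. Finally, to see $f$ is definably identifying, suppose $f^{-1}(C)$ is closed in $X$ for a definable $C \subseteq W$; since $K$ is closed in $X$, the set $f^{-1}(C) \cap K = g^{-1}(C)$ is closed in $K$, and as $g$ is definably identifying, $C$ is closed in $W$. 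Hence $f$ is a definable quotient of $X$ by $E$, which completes the equivalence.
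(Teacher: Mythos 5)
Your proof is correct. The \emph{furthermore} part and the forward implication coincide with the paper's argument: the paper likewise composes $f$ with $q=p_1|_{p_2^{-1}(K)}$, rewrites $f\circ q=(f|_K)\circ p_2|_{p_2^{-1}(K)}$ on the commutative square, and applies Lemma \ref{lem:identifying_basic}(1),(2). For the converse you construct the same map $f$ (via $f(x)=g(k)$ for $k\in K$ with $(x,k)\in E$), but your verification takes a genuinely different and more elementary route. The paper proves continuity of $f$ with the pseudo-curve criterion of Lemma \ref{lem:cont}, invoking property (b) of Definition \ref{def:filter}, Lemma \ref{lem:identifying_eq} applied to $q$, and Lemma \ref{lem:basic_o+2}; it then shows as an intermediate step that $p_2|_{p_2^{-1}(K)}$ is definably identifying and closes the argument with the diagram and Lemma \ref{lem:identifying_basic}. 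You bypass all of this by using the identifying property of $q$ itself as a point-set tool: since $f\circ q=g\circ p_2|_{p_2^{-1}(K)}$ is continuous, $q^{-1}(f^{-1}(C))$ is closed for every definable closed $C\subseteq W$, hence $f^{-1}(C)$ is closed in $X$; and the identifying property of $f$ falls out directly from $f^{-1}(C)\cap K=g^{-1}(C)$ together with $g$ being definably identifying, with no need for $q_2$ or any pseudo-curve. Two small points should be made explicit: (i) the inference from ``$f^{-1}(C)$ is closed for every definable closed $C$'' to continuity of $f$ is valid precisely because the ambient topology has a basis of definable open boxes, so complements of basic open sets are definable closed sets; (ii) $f$ is definable either by your appeal to definable choice or simply because its graph is definable. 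What your route buys is that the converse direction becomes purely formal, using neither the good $(0+)$-pseudo-filter nor pseudo-curves; what the paper's route buys is uniformity with the rest of its toolkit, in which every topological property is checked against the pseudo-curve criteria. The mathematical content is the same, and your version is shorter.
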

\begin{proof}
	Set $q_i=p_i|_{p_2^{-1}(K)}$.
	We first assume that there exists a definable quotient of $X$ by $E$.
	Let $f:X \to Y$ be the definable quotient.
	Consider the following commutative diagram:
	\[
	\begin{CD}
		p_2^{-1}(K) @>{q_2}>> K\\
		@V{q_1}VV @VV{f|_K}V\\
		X @>{f}>> Y
	\end{CD}
	\]
	Both $q_1$ and $f$ are definably identifying by the assumption.
	The restriction $f|_K$ is also definably identifying by Lemma \ref{lem:identifying_basic}.
	It implies that $f|_K$ is a definable quotient of $K$ by $E_K$.
	
	We next consider the case in which a definable quotient of $K$ by $E_K$ is given.
	Let $g:K \to Y$ be the definable quotient.
	Let $f:X \to Y$ be the definable map given by $f(x)=g(\widetilde{x})$, where $\widetilde{x}$ is an element in $K$ with $(x,\widetilde{x}) \in E$.
	Such $\widetilde{x}$ always exists because $q_1$ is surjective.
	In addition, $f$ is independent of the choice of $\widetilde{x}$ because $g$ is a definable quotient.
	We want to show that $f$ is continuous. 
	Let $\alpha:D \to X$ be a pseudo-curve and $p \in X$ with $\myLim(\alpha)=\{p\}$.
	We have only to show that $f(p) \in \myLim(f \circ \alpha)$ by Lemma \ref{lem:cont}.
	Set $U=(f \circ \alpha)^{-1}(f(p))$ and $V=D \setminus U$.
	By property (b) in Definition \ref{def:filter}, at least one of $U$ and $V$ contains an element $D'$ of $\mathfrak D$.
	If $D'$ is contained in $U$, it is obvious that $f(p) \in \myLim(f \circ \alpha)$.
	We have proven the claim in this case.
	Otherwise, we may assume that $f(p) \notin f \circ \alpha(D)$ by replacing $D$ with $D'$ by Lemma \ref{lem:basic_o+2}(3).
	Since $q_1$ is definably identifying, there exist $D'' \in \mathfrak D$, a definable map $\beta:D'' \to E \cap (X \times K)$ and a point $q \in K$ such that $p_1 \circ \beta(D'') \subseteq \alpha(D)$ and $(p,q) \in \myLim(\beta) \cap E$ by Lemma \ref{lem:identifying_eq}.
	The inclusion $p_1 \circ \beta(D'') \subseteq \alpha(D)$ implies the inclusion $g \circ p_2 \circ \beta(D'') \subseteq f \circ \alpha(D)$.
	By the definition of $f$ and $g$, we have $f(p)=g(q)$.
	Since $g \circ p_2$ is continuous, we have $g(q) \in \myLim(g \circ p_2 \circ \beta)$  by Lemma \ref{lem:cont}.
	By Lemma \ref{lem:basic_o+2}(1), we have $g \circ p_2 \circ \beta(D'') \cup \myLim(g \circ p_2 \circ \beta) = \mycl(g \circ p_2 \circ \beta(D'')) \subseteq \mycl( f \circ \alpha(D))= f \circ \alpha(D) \cup \myLim( f \circ \alpha(D))$.
	In particular, we get $f(p) \in f \circ \alpha(D) \cup \myLim( f \circ \alpha(D))$ because $f(p)=g(q) \in \myLim(g \circ p_2 \circ \beta)$.
	It implies that $f(p) \in \myLim(f(\alpha))$ because $f(p) \notin f \circ \alpha(D)$.
	We have demonstrated that $f$ is continuous.
	
	As an intermediate step, we show that the map $q_2$ is definably identifying.
	Let $\gamma:G \to K$ be a pseudo-curve and $v \in K$ such that $\myLim(\gamma)=\{v\}$.
	Since $E$ is a definable equivalence relation, we have $(\gamma(t),\gamma(t)) \in E$ for any $t \in G$ and $(v,v) \in \myLim((\gamma,\gamma)) \cap E_K$.
	It means that $q_2$ is definably identifying by Lemma \ref{lem:identifying_eq}.
	
	Consider the following commutative diagram:
	\[
	\begin{CD}
		p_2^{-1}(K) @>{q_2}>> K\\
		@V{q_1}VV @VV{g}V\\
		X @>{f}>> Y
	\end{CD}
	\]
	This diagram implies that $f$ is definably identifying by Lemma \ref{lem:identifying_basic} because $g$ and $q_2$ are definably identifying.
\end{proof}

The following theorem is the main theorem of this section:

\begin{theorem}\label{thm:quotinet}
	Consider a definably complete expansion of an ordered field $\mathcal F=(F,<,+,\cdot,0,1,\ldots)$ enjoying the definable choice property and admitting a good $(0+)$-pseudo-filter $\mathfrak D$.
	Assume further that $\mathcal F$ has a good extended rank function $\myedim$.
	Let $X$ be a locally closed definable set and $E$ be a definable equivalence relation on $X$ which is closed in $X \times X$.
	The following are equivalent:
	\begin{enumerate}
		\item[(1)] There exists a definable quotient of $X$ by $E$.
		\item[(2)] Let $p_i$ be the restriction of the canonical projection $X \times X \to X$  onto the $i$-th coordinate to $E$ for $i=1,2$.
		There exists a definable closed subset $K$ of $X$ such that the restriction $p_1|_{p_2^{-1}(K)}$ of $p_1$ to $p_2^{-1}(K)$ is definably proper and surjective.
	\end{enumerate}
\end{theorem}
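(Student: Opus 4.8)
The plan is to reduce the statement to the situation handled by Lemma~\ref{lem:geo_quo} and Theorem~\ref{thm:quotient-mae}, both of which presuppose that the ambient definable set is closed, and then to assemble the two implications from the properness/identifying dictionary already developed. Since $X$ is only locally closed, the first step is a reduction to the closed case. Writing $Z=\mycl_{F^m}(X)\setminus X$, local closedness of $X$ means that $Z$ is closed and $X$ is closed in the open set $F^m\setminus Z$. If $Z=\emptyset$ there is nothing to do; otherwise I would consider the definable map $\phi\colon F^m\setminus Z\to F^{m+1}$ given by $\phi(x)=(x,1/d_Z(x))$ with $d_Z(x)=\inf\{|x-z|\;|\;z\in Z\}$. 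A short argument via Proposition~\ref{prop:curve_selection} and Lemma~\ref{lem:basic_o+2} (a pseudo-curve approaching a point of $Z$ forces the last coordinate to blow up) shows that $\phi$ is a definable homeomorphism onto a set closed in $F^{m+1}$, so $X'=\phi(X)$ is closed in $F^{m+1}$. Transporting $E$ to $E'=(\phi\times\phi)(E)$ keeps it closed in $X'\times X'$, and since both conditions are invariant under definable homeomorphisms (using Lemma~\ref{lem:via_homeo} for condition~(2) and composition with $\phi$ for condition~(1)), I may assume henceforth that $X$ is closed. The case $X=\emptyset$ being trivial, I also assume $X\neq\emptyset$.

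For the implication (2)$\Rightarrow$(1), suppose $K\subseteq X$ is closed with $q:=p_1|_{p_2^{-1}(K)}$ definably proper and surjective. By Corollary~\ref{cor:proper_identifying} the map $q$ is definably identifying, so Lemma~\ref{lem:geo_quo} applies and reduces the existence of a definable quotient of $X$ by $E$ to the existence of a definable quotient of $K$ by $E_K:=E\cap(K\times K)$. I would produce the latter via Theorem~\ref{thm:quotient-mae} by checking that $E_K$ is definably proper over $K$. This is immediate: for a definably compact $T\subseteq K$, the fibre of the first projection $p_1^K\colon E_K\to K$ is $(p_1^K)^{-1}(T)=E\cap(T\times K)=q^{-1}(T)$, which is definably compact because $q$ is definably proper. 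Since $K\neq\emptyset$ (as $q$ is onto $X\neq\emptyset$), Theorem~\ref{thm:quotient-mae} yields a definable proper quotient of $K$ by $E_K$, hence a definable quotient, and Lemma~\ref{lem:geo_quo} returns a definable quotient of $X$ by $E$.

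For the converse (1)$\Rightarrow$(2), let $f\colon X\to Y$ be a definable quotient of $X$ by $E$; in particular $f$ is definably identifying and continuous. Because $X$ is now closed, Lemma~\ref{lem:identifying} furnishes a closed definable $K\subseteq X$ with $f|_K$ surjective and definably proper. I claim that $K$ witnesses (2). First, $q:=p_1|_{p_2^{-1}(K)}$ is surjective onto $X$: given $x\in X$, surjectivity of $f|_K$ provides $k\in K$ with $f(k)=f(x)$, whence $(x,k)\in E$ and $q(x,k)=x$. For definable properness, fix a definably compact $C\subseteq X$; then $q^{-1}(C)=E\cap(C\times K)$ is closed in the ambient affine space (as $E$, $C$ and $K$ are all closed there), so it remains to bound it. The set of second coordinates occurring is $\{x'\in K\;|\;f(x')\in f(C)\}=(f|_K)^{-1}(f(C))$, which is definably compact because $f(C)$ is definably compact by Lemma~\ref{lem:image} and $f|_K$ is definably proper. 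Thus $q^{-1}(C)\subseteq C\times(f|_K)^{-1}(f(C))$ is bounded, hence definably compact, proving that $q$ is definably proper.

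The routine parts are the two properness verifications, which in each direction amount to intersecting $E$ with a product and identifying the set of second coordinates with a fibre of a map already known to be proper. The one place demanding genuine care is the preliminary reduction from locally closed to closed: one must confirm that the graph-of-reciprocal-distance embedding is a definably closed map in the definably complete setting, which is exactly where the pseudo-curve technology of Proposition~\ref{prop:curve_selection} and Lemma~\ref{lem:basic_o+2} enters. Everything else is bookkeeping built on Lemma~\ref{lem:geo_quo}, Theorem~\ref{thm:quotient-mae}, Corollary~\ref{cor:proper_identifying} and Lemma~\ref{lem:identifying}.
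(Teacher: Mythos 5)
Your proposal is correct and follows essentially the same route as the paper: the same reduction to the closed case via the embedding $x\mapsto(x,1/d_{\partial X}(x))$, the same use of Lemma \ref{lem:identifying} plus the containment $q_1^{-1}(C)\subseteq C\times(f|_K)^{-1}(f(C))$ for (1)$\Rightarrow$(2), and the same combination of the identity $(p_1|_{E_K})^{-1}(T)=E\cap(T\times K)=q^{-1}(T)$, Theorem \ref{thm:quotient-mae}, Corollary \ref{cor:proper_identifying} and Lemma \ref{lem:geo_quo} for (2)$\Rightarrow$(1). The only differences are cosmetic (order of implications, and making explicit the appeal to Corollary \ref{cor:proper_identifying} before invoking Lemma \ref{lem:geo_quo}, which the paper leaves implicit).
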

\begin{proof}
	Let $F^m$ be the ambient space of $X$.
		We first reduce to the case in which $X$ is closed.
		We have nothing to do when $X$ is closed.
		We consider the case in which $X$ is not closed.
		Since $X$ is locally closed, its frontier $\partial X:=\mycl(X) \setminus X$ is closed.
			Let $P$ be the definable function $P:F^m \to F$ given by $P(x)=\inf\{|x'-x|\;|\; x' \in \partial X\}$.
		It is obvious that $P^{-1}(0)=\partial X$ and $P$ is continuous.
		Consider the definable map $\varphi:X \to F^{n+1}$ given by $\varphi(x)=(x,1/P(x))$.
		It is obvious that $\varphi$ is a homeomorphism onto its image and its image is closed.
		It is also obvious that a definable set $C$ is definably compact if and only if $\varphi^{-1}(C)$ is definably compact by Lemma \ref{lem:image}.
		Therefore, we can reduce to the case in which $X$ is closed.
	
	We first demonstrate the implication $(1) \Rightarrow (2)$.
	Let $f:X \to Y$ be a definable quotient of $X$ by $E$.
	Since $f$ is definably identifying, there exists a definable closed subset $K$ of $X$ such that the restriction $f|_K:K \to Y$ is definably proper and surjective by Lemma \ref{lem:identifying}.
	Set $q_i=p_i|_{p_2^{-1}(K)}$ for $i=1,2$.
	Consider the following commutative diagram:
	\[
	\begin{CD}
		p_2^{-1}(K) @>{q_2}>> K\\
		@V{q_1}VV @VV{f|_K}V\\
		X @>{f}>> Y
	\end{CD}
	\]
	The map $q_1$ is surjective.
	In fact, by the definition of $K$, for any $x \in X$, there exists $x' \in K$ with $f(x)=f(x')$.
	The point $(x,x')$ is contained in $p_2^{-1}(K)$ and $q_1(x,x')=x$.
	The remaining task is to demonstrate that $q_1$ is definably proper.
	Let $C$ be a nonempty definably compact subset of $X$.
	The image $f(C)$ is definably compact by Lemma \ref{lem:image} because $f$ is continuous.
	Since $f|_K$ is definably proper, $(f|_K)^{-1}(f(C))$ is definably compact.
	By the above commutative diagram, we get $q_1^{-1}(C) \subseteq C \times (f|_K)^{-1}(f(C))$.
	It implies that $q_1^{-1}(C)$ is bounded.
	It is obvious that $q_1^{-1}(C)$ is closed in $F^{2m}$ because $E$ is closed in $F^{2m}$.
	They imply that $q_1^{-1}(C)$ is definably compact and we have demonstrated that $q_1$ is definably proper.
	
	Our next task is to demonstrate the implication $(2) \Rightarrow (1)$.
	Consider the following commutative diagram:
	\[
	\begin{CD}
		E_K @>{\hookrightarrow}>> p_2^{-1}(K)\\
		@V{p_1|_{E_K}}VV @VV{q_1}V\\
		K @>{\hookrightarrow}>> X
	\end{CD}
	\]
	Here, $E_K$ denotes the set $E \cap (K \times K)$.
	We want to show that $p_1|_{E_K}$ is definably proper.
	Let $C$ be a definably compact subset of $K$.
	Since $q_1$ is definably proper, $q_1^{-1}(C)$ is definably compact.
	We obviously have $(p_1|_{E_K})^{-1}(C)=(C \times K) \cap E = q_1^{-1}(C)$.
	Therefore, $(p_1|_{E_K})^{-1}(C)$ is definably compact, and $p_1|_{E_K}$ is definably proper.
	By Theorem \ref{thm:quotient-mae}, a definable proper quotient of $K$ by $E_K$ exists.
	It is also a definable quotient by the the definitions of quotients and Corollary \ref{cor:proper_identifying}.
	A definable quotient of $X$ by $E$ exists by Lemma \ref{lem:geo_quo}.
\end{proof}

\begin{remark}\label{rem:locally_omin}
	A definably complete locally o-minimal expansion of an ordered field satisfies the assumptions in Theorem \ref{thm:quotinet}; that is, it enjoys the definable choice property,  admits a good $(0+)$-pseudo-filter $\mathfrak D$, and has a good extended rank function $\myedim$.
\end{remark}
\begin{proof}
	The definable choice property is given in \cite[Lemma 2.8]{Fuji_decomp}.
	The other properties follow from Example \ref{ex:local1} and Example \ref{ex:local2}.
\end{proof}

\section{Application to definably proper action}\label{sec:proper_action}

We consider an action of a definable group on a definable set.

\begin{definition}
	A definable set $G$ is a \textit{definable group}
	if $G$ is a group and the group operations $G \times G \ni (g,h) \mapsto gh \in G$ and $G  \ni g\mapsto g^{-1} \in G$ are definable continuous maps.
	
	A \textit{definable $G$-set} is a pair $(X, \phi)$ consisting of a definable set $X$ and a group action $\phi:G \times X \to X$ which is a definable continuous map. 
	We simply write $X$ instead of $(X, \phi)$ and $gx$ instead of $\phi(g,x)$.
	We say that a $G$-invariant definable subset of $X$ is a \textit{definable $G$-subset} of $X$.
\end{definition}

\begin{theorem}\label{thm:definable_action}
	Consider a definably complete expansion of an ordered field $\mathcal F=(F,<,+,\cdot,0,1,\ldots)$ enjoying the definable choice property and admitting a good $(0+)$-pseudo-filter $\mathfrak D$.
	Assume further that $\mathcal F$ has a good extended rank function $\myedim$.
	Let $G$ be a definable group and $X$ be a locally closed definable $G$-set.
	Assume further that $Z=\{(x,gx) \in X \times X\;|\; g \in G, x\in X \}$ is closed in $X \times X$.
	Then, there exists a definable quotient $X \to X/G$.
\end{theorem}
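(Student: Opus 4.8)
The plan is to observe that $Z$ is exactly the orbit equivalence relation on $X$ and then to invoke Theorem~\ref{thm:quotinet}, verifying its condition~(2) by means of Lemma~\ref{lem:basic}. Indeed $Z$ is reflexive, symmetric and transitive precisely because $G$ is a group acting on $X$, so $Z$ is a definable equivalence relation; it is closed in $X\times X$ by hypothesis, and $X$ is locally closed. Hence Theorem~\ref{thm:quotinet} guarantees a definable quotient of $X$ by $Z$ --- which is the desired map $X\to X/G$ --- as soon as we exhibit a definable closed $K\subseteq X$ with $p_1|_{p_2^{-1}(K)}$ definably proper and surjective. As in the opening reduction of the proof of Theorem~\ref{thm:quotinet}, I would first pass to the case where $X$ is closed: the homeomorphism $\varphi(x)=(x,1/P(x))$ onto a closed set transports the $G$-action to a definable continuous action on $\varphi(X)$ whose orbit relation is $(\varphi\times\varphi)(Z)$, and a definable quotient of the transported data yields one of the original by Lemma~\ref{lem:via_homeo} and Lemma~\ref{lem:identifying_basic}. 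So I may assume $X$ is closed.

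I would then apply Lemma~\ref{lem:basic} with $Y=X$ and with the role of its ``$Z$'' played by our orbit relation, checking the four hypotheses. Condition~(1), that $p_1\colon Z\to X$ is definably identifying, is immediate: the diagonal $x\mapsto(x,x)$ is a definable continuous section of $p_1$ taking values in $Z$, so whenever $p_1^{-1}(K)$ is closed, $K$ is its preimage under this section and hence closed. Condition~(2) holds because $p_2(p_1^{-1}(x))$ is the orbit $Gx$, and distinct orbits are disjoint. Condition~(3) is the closedness of $X$ and of $Z$ secured above.

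The substantive point is condition~(4), the lifting of pseudo-curves, and this is where I expect the real work. Given a pseudo-curve $\beta\colon D\to X$ with $\myLim(\beta)=\{x\}$ and a point $y$ with $(y,x)\in Z$, I would fix $g\in G$ with $x=gy$ and set $\alpha(t)=\bigl(g^{-1}\beta(t),\beta(t)\bigr)$ on $D$. Each value lies in $Z$ since $\beta(t)=g\bigl(g^{-1}\beta(t)\bigr)$, we have $p_2\circ\alpha=\beta$, and $\alpha=\Psi\circ\beta$ is continuous for the continuous map $\Psi(u)=(g^{-1}u,u)$ assembled from the action, so $\alpha$ is a pseudo-curve on $D\in\mathfrak D$. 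Applying Lemma~\ref{lem:cont} to $\Psi$ at $x$ gives $(y,x)=\Psi(x)\in\myLim(\alpha)$, which is exactly condition~(4). The delicate feature here is that $g$ depends on the pair $(y,x)$ but is held fixed along the whole lift, and it is the continuity of the single section $u\mapsto g^{-1}u$ that forces the first coordinate of $\alpha$ to converge to $y=g^{-1}x$.

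With hypotheses (1)--(4) verified, Lemma~\ref{lem:basic} produces a definable closed $K\subseteq X$ such that $p_1|_{p_2^{-1}(K)}$ is surjective and definably proper, which is precisely condition~(2) of Theorem~\ref{thm:quotinet}; that theorem then yields the definable quotient. I anticipate that the only genuine obstacle is condition~(4) together with the bookkeeping that the reduction to closed $X$ preserves all four hypotheses of Lemma~\ref{lem:basic}; both become routine once the action is transported along $\varphi$.
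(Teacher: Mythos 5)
Your proposal is correct and takes essentially the same route as the paper: reduce to closed $X$ as in the proof of Theorem \ref{thm:quotinet}, verify conditions (1)--(4) of Lemma \ref{lem:basic} for the orbit relation $Z$ (the diagonal for condition (1), the translated curve $t \mapsto (g^{-1}\beta(t),\beta(t))$ for condition (4)), and conclude via Theorem \ref{thm:quotinet}. The only cosmetic differences are that you check condition (1) directly with the continuous diagonal section instead of the pseudo-curve criterion of Lemma \ref{lem:identifying_eq}, and you make explicit, via Lemma \ref{lem:cont}, the limit computation $(y,x)\in\operatorname{Lim}(\alpha)$ that the paper leaves implicit.
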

\begin{proof}
	Let $X$ be a definable subset of $F^m$.
	We may assume that $X$ is closed in the same manner as the proof of Theorem \ref{thm:quotinet}.
	Consider the definable equivalence relation $Z$ and apply Lemma \ref{lem:basic} to this set.
	
	We first check that conditions (1) through (4) in Lemma \ref{lem:basic} are all satisfied.
	Let $p_1,p_2:Z \to X$ be the map defined in the same way as Lemma \ref{lem:basic}.
	Satisfaction of condition (1) is easily proven.
	Let $\gamma:D \to X$ be a pseudo-curve and $p$ be a point in $X$ such that $\myLim(\gamma)=\{p\}$.
	The map $\alpha:D \to Z$ given by $\alpha(t)=(\gamma(t),\gamma(t))$ satisfies condition (ii) of Lemma \ref{lem:identifying_eq}.
	It means $p_1$ is definably identifying.
	Condition (2) is satisfied because the definable set $Z$ is a definable equivalence relation defined on $X$.
	Condition (3) immediately follows from the assumptions of the theorem.
	We finally check condition (4).
	Let $\beta:D \to X$ be a pseudo-curve and $x$ be a point in $X$ such that $\myLim(\beta)=\{x\}$.
	Take an arbitrary point $y \in X$ with $(y,x) \in Z$.
	There exists $g \in G$ with $y=gx$ by the definition of $Z$.
	Consider the pseudo-curve $\alpha:D \to Z$ given by $\alpha(t)=(g \cdot \beta(t),\beta(t))$ satisfies the requirement in (4).
	
	We now apply Lemma \ref{lem:basic} to $Z$.
	There exists a definable closed set $K$ of $X$ such that $p_1|_{p_2^{-1}(K)}$ is surjective and definably proper.
	The definable quotient exists by Theorem \ref{thm:quotinet}.
\end{proof}

Recall the definition of definably proper actions.

\begin{definition}
	Consider an expansion of a dense linear order without endpoints.
	Let $G$ be a definable group and $X$ be a definable $G$-set.
	The $G$-action on $X$ is called \textit{definably proper} if the map $G \times X \ni (g,x) \mapsto (x,gx) \in X \times X$ is a definably proper map.
\end{definition}

\begin{theorem}\label{thm:definable_action2} 
	Let $\mathcal F=(F,<,+,\cdot,0,1,\ldots)$ be as in Theorem \ref{thm:definable_action}.
	Let $G$ be a definable group and $X$ be a definable $G$-set which is locally closed.
	Assume further that the $G$-action on $X$ is definably proper.
	Then, there exists a definable quotient $X \to X/G$.
\end{theorem}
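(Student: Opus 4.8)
The plan is to reduce this statement directly to Theorem \ref{thm:definable_action}, whose only additional hypothesis beyond the standing assumptions on $\mathcal F$ is that the set $Z=\{(x,gx) \in X \times X\;|\; g \in G,\ x \in X\}$ is closed in $X \times X$. Since every other hypothesis of Theorem \ref{thm:definable_action} (definable completeness, definable choice, a good $(0+)$-pseudo-filter, a good extended rank function, $G$ a definable group, $X$ a locally closed definable $G$-set) is already assumed here, the entire content of the present theorem is to extract the closedness of $Z$ from the definable properness of the action. Once $Z$ is known to be closed, I would simply invoke Theorem \ref{thm:definable_action} to conclude.

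To obtain closedness, I would consider the definable continuous map $\theta:G \times X \to X \times X$ given by $\theta(g,x)=(x,gx)$, which is continuous because the $G$-action is a definable continuous map. By hypothesis the action is definably proper, which is precisely the statement that $\theta$ is a definably proper map. The key observation is that $Z$ is exactly the image $\theta(G \times X)$. By Lemma \ref{lem:proper_closed}, a definably proper map is a definably closed map, so $\theta(C)$ is closed in $X \times X$ for every definable closed subset $C$ of the domain $G \times X$. Taking $C=G \times X$, which is trivially closed in itself, yields that $Z=\theta(G \times X)$ is closed in $X \times X$.

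With this, the hypotheses of Theorem \ref{thm:definable_action} are fully met, and the existence of a definable quotient $X \to X/G$ follows at once. I do not anticipate a genuine obstacle: the nontrivial work has already been carried out in Lemma \ref{lem:basic}, Theorem \ref{thm:quotinet}, Theorem \ref{thm:definable_action}, and especially in Lemma \ref{lem:proper_closed}, whose proof is where the pseudo-curve machinery and the definable choice property do the heavy lifting. The only point that warrants a moment's care is confirming that the definition of a definably proper action coincides with $\theta$ being a definably proper map in the sense used by Lemma \ref{lem:proper_closed}, and that $\theta$ is continuous so that that lemma applies; both are immediate from the definitions of definable $G$-set and definably proper action.
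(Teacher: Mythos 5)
Your proposal is correct and coincides with the paper's own proof: the paper likewise applies Lemma \ref{lem:proper_closed} to the definably proper map $G \times X \ni (g,x) \mapsto (x,gx) \in X \times X$ to conclude that its image $Z$ is closed in $X \times X$, and then invokes Theorem \ref{thm:definable_action}.
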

\begin{proof}
	Apply Lemma \ref{lem:proper_closed} to the definably proper map $G \times X \ni (g,x) \mapsto (x,gx) \in X \times X$.
	The equivalence relation $E=\{(x,gx) \in X \times X\;|\; g \in G,x\in X\}$ is closed in $X \times X$ because it is the image of the above definably proper map.
	The theorem follows from Theorem \ref{thm:definable_action}.
\end{proof}

\begin{proposition}\label{prop:invariant_partition}
	Consider a definably complete expansion of an ordered field $\mathcal F=(F,<,+,\cdot,0,1,\ldots)$.
	Assume further that $\mathcal F$ enjoys definable choice property and has a good extended rank function $\myedim$.
	Let $G$ be a definable group and $X$ be a definable $G$-set.
	The set $X$ is partitioned into finitely many definable $G$-subsets of $X$ which are locally closed.
\end{proposition}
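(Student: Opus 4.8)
The plan is to argue by induction on the extended rank $\myedim(X)$, splitting off at each stage the locus on which $X$ is locally closed in the ambient space and recursing on the complementary locus, which will turn out to have strictly smaller rank. Write $F^m$ for the ambient space of $X$ and set $B=X\cap\mycl_{F^m}(\partial_{F^m}X)$, the set of points at which $X$ fails to be locally closed in $F^m$, together with $X_{lc}=X\setminus B$. First I would record the two routine facts about this splitting. On the one hand $X_{lc}=X\cap\Omega$ with $\Omega=F^m\setminus\mycl_{F^m}(\partial_{F^m}X)$ open, and since $\mycl_{F^m}(X)\cap\Omega=X\cap\Omega$ (the frontier meets $\Omega$ in the empty set), $X_{lc}$ is closed in $\Omega$ and hence locally closed in $F^m$. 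On the other hand, applying Lemma \ref{lem:contained} with ambient space $F^m$ to $X$ gives $\myedim(\partial_{F^m}X)<\myedim(X)$, and applying it once more to $\partial_{F^m}X$ together with property (d) of Definition \ref{def:extended_rank} yields $\myedim(\mycl_{F^m}(\partial_{F^m}X))=\myedim(\partial_{F^m}X)$; hence $\myedim(B)<\myedim(X)$.

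The main obstacle is that both $B$ and $X_{lc}$ must be $G$-invariant, yet $B$ is defined entirely through the $F^m$-topology, on which $G$ does not act. To overcome this I would prove the intrinsic characterization: for $x\in X$, the set $X$ is locally closed in $F^m$ at $x$ if and only if $x$ has a definably compact neighborhood in $X$. For the forward direction, if $x\notin\mycl_{F^m}(\partial_{F^m}X)$ one chooses a closed box $N$ around $x$ small enough to miss $\partial_{F^m}X$, so that $X\cap N$ is closed and bounded, hence definably compact, and contains an $X$-neighborhood of $x$. Conversely, if $N\subseteq X$ is a definably compact $X$-neighborhood of $x$, then $N$ is closed in $F^m$, and choosing a box $U\ni x$ with $X\cap U\subseteq N$ gives $\mycl_{F^m}(X\cap U)\subseteq N\subseteq X$, so $X\cap U$ is closed in $U$ and $x\notin\mycl_{F^m}(\partial_{F^m}X)$. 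Now each $g\in G$ acts on $X$ as a definable homeomorphism with inverse $x\mapsto g^{-1}x$, and definable compactness is preserved by definable continuous maps by Lemma \ref{lem:image}; therefore the property of having a definably compact neighborhood in $X$ is invariant under the $G$-action. Consequently its complement $B$ is $G$-invariant, and so is $X_{lc}$.

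With $G$-invariance established the induction closes, using the induction principle justified by property (c) of Definition \ref{def:extended_rank}. The empty case is trivial, so assume $X\neq\emptyset$. In the base case $\myedim(X)$ is the smallest element of $\mathcal E_m$; then $\myedim(\partial_{F^m}X)$ is strictly smaller, which forces $\partial_{F^m}X=\emptyset$ by property (b), so $X$ is closed, hence locally closed, and forms a single $G$-invariant piece. In the inductive step $B$ is a definable $G$-set (with the restricted action) of strictly smaller rank, so the induction hypothesis furnishes a finite partition $B=\bigsqcup_i B_i$ into $G$-invariant locally closed sets; adjoining the $G$-invariant locally closed set $X_{lc}$ produces $X=X_{lc}\sqcup\bigsqcup_i B_i$. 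Since every recursive call strictly decreases the rank and $\mathcal E_m$ admits no infinite decreasing sequence, the recursion terminates after finitely many steps and yields finitely many pieces, which completes the proof. (Note that definable choice, though assumed, is not needed for this argument, as $B$ is manifestly definable.)
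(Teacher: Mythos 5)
Your proof is correct, and it takes a genuinely different route from the paper's. The paper also inducts on $\myedim(X)$ using property (c), but it builds the invariant locally closed piece the other way around: by property (e) it picks a definable $Y \subseteq X$ open in $\mycl_{F^n}(X)$ with $\myedim(X \setminus Y) < \myedim(X)$ and takes the saturation $Z = GY$. There $G$-invariance is free, and all the work goes into local closedness of $Z$: at $z = gy$ the paper finds a closed box $V$ around $y$ with $X \cap V = Y \cap V$ definably compact, transports it to $C = g(Y \cap V)$, and proves $Z \cap U = C \cap U$ for some box $U \ni z$ by contradiction, using the definable choice property to produce a definable map $\gamma:(0,\infty) \to Z \setminus C$ with $\myLim(\gamma)=\{z\}$ and pulling it back by $g^{-1}$ into $V$. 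In your argument the division of labor is reversed: your piece $X_{\mathrm{lc}} = X \setminus \mycl_{F^m}(\partial_{F^m}X)$ is locally closed by construction, and the work goes into $G$-invariance, which you settle by the intrinsic characterization of the locally closed locus via definably compact neighborhoods together with Lemma \ref{lem:image}; both proofs then get the strict rank drop on the complementary locus from Lemma \ref{lem:contained} and property (d). Your route buys something concrete: it never invokes the definable choice property or the limit-set machinery, so it establishes the proposition under strictly weaker hypotheses (definable completeness plus a good extended rank function suffice), and the resulting partition is canonical --- the pieces are the successive loci of failure of local closedness, each preserved by arbitrary definable homeomorphisms of $X$, not just by the given $G$-action. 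The paper's route, by contrast, stays within the toolkit it has already assembled (definable choice, $\myLim$, curve selection) at the cost of carrying the extra hypothesis.
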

\begin{proof}
	Let $F^n$ be the ambient space of $X$.
	We demonstrate the proposition by induction on $d:=\myedim_n(X)$.
	When $d$ is a smallest element of $\mathcal E_n$, Lemma \ref{lem:contained} implies that $X$ is closed in $F^n$.
	The theorem is trivial in this case.
	
	We next consider the case in which $d$ is not a smallest element of $\mathcal E_n$.
	We have $\myedim_n(X)=\myedim_n(\mycl_{F^n}(X))$ by Lemma \ref{lem:contained} and property (d) in Definition \ref{def:extended_rank}.
	There is a definable subset $Y$ of $X$ such that $Y$ is open in $\mycl_{F^n}(X)$ and $\myedim_n(X \setminus Y)<d$ by property (e) in Definition \ref{def:extended_rank}.
	Set $Z=GY=\{gy\in X\;|\; g \in G, y \in Y\}$.
	It is obvious that $\myedim_n (X \setminus Z)<d$ and $Z$ is $G$-invariant.
	We prove that $Z$ is locally closed.
	Once it is proved, the proposition obviously follows from the induction hypothesis.
	
	Fix an arbitrary point $z \in Z$.
	We have only to demonstrate that there exists an open box $U$ containing the point $z$ such that $Z \cap U$ is closed in $U$.
	We get $z=gy$ for some $g \in G$ and $y \in Y$ because $Z=GY$.
	Since $Y$ is open in $\mycl_{F^n}(X)$, we can take a small open box $B$ containing the point $y$ such that $\mycl_{F^n}(X) \cap B=Y \cap B$.
	Take a bounded closed box $V$ contained $B$ which is simultaneously a neighborhood of $y$.
	We have $\mycl_{F^n}(X) \cap V = Y \cap V$.
	Therefore, $Y \cap V$ is definably compact because $\mycl_{F^n}(X) \cap V $ is so.
	We also get $X \cap V=Y \cap V$ because $Y \subseteq X \subseteq \mycl_{F^n}(X) $.
	Set $C=g(Y \cap V) \subseteq Z$.
	It is definably compact by Lemma \ref{lem:image}.
	In particular, $C \cap U$ is closed in $U$ for any open box $U$.
	Therefore, the remaining task is to show that there exists an open box $U$  containing the point $z$ such that $Z \cap U=C \cap U$.
	
	Assume for contradiction that $Z \cap U \neq C \cap U$ for any open box $U$ containing the point $z$.
	It implies that $z \in \partial_{F^n} (Z \setminus C ) $ because $z \in C$.
	By the definable choice property, we can find a definable map $\gamma:(0, \infty)\to Z \setminus C$ such that $|z-\gamma(t)|<t$ for each $t>0$.
	We have $\myLim(\gamma)=\{z\}$.
	Consider the map $g^{-1} \cdot \gamma$ given by $t \mapsto g^{-1}\gamma(t)$. 
	We have $\myLim(g^{-1} \cdot \gamma)=\{g^{-1}z\}=\{y\}$ because the multiplication by $g^{-1}$ is a definable homeomorphism.
	Since $V$ is a neighborhood of $y$ and $\myLim(g^{-1} \cdot \gamma)=\{y\}$, there exists $t >0$ with $g^{-1}\gamma(t) \in V$. 
	On the other hand, we have $g^{-1}\gamma(t) \not\in g^{-1}C=Y \cap V$ and $g^{-1}\gamma(t) \in g^{-1}X =X$ by the definition of $\gamma$.
	It implies that $g^{-1}\gamma(t) \in (X \cap V) \setminus (Y\cap V)$.
	It contradicts the equality $X \cap V=Y\cap V$.
\end{proof}

\begin{corollary}\label{cor:definable_top_locally_closed}
	Let $\mathcal F=(F,<,+,\cdot,0,1,\ldots)$ be as in Proposition \ref{prop:invariant_partition}.
	A definable group is locally closed in the ambient space.
\end{corollary}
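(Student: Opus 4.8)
The plan is to apply Proposition \ref{prop:invariant_partition} to the group $G$ itself, viewed as a definable $G$-set under the left translation action. First I would observe that the map $\phi\colon G \times G \to G$ given by $\phi(g,x)=gx$ is precisely the group multiplication, which is a definable continuous map by the definition of a definable group. Hence $(G,\phi)$ is a definable $G$-set, and it lives in whatever ambient space $F^n$ contains $G$.

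Next I would invoke Proposition \ref{prop:invariant_partition}, which applies since $\mathcal F$ is assumed to be as in that proposition: the definable $G$-set $G$ admits a partition into finitely many definable $G$-subsets $S_1,\ldots, S_k$, each of which is locally closed in $F^n$.

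The crucial point is that the left translation action is transitive. Indeed, for any $x,y \in G$ the element $g=yx^{-1}$ satisfies $gx=y$. Consequently, any nonempty $G$-invariant subset $S$ of $G$ must equal $G$: picking $x \in S$, every $y \in G$ is of the form $gx$ for some $g \in G$, and $G$-invariance forces $y=gx \in S$. Therefore, among the pieces $S_1,\ldots,S_k$ exactly one is nonempty, and it equals $G$. This piece is locally closed by the proposition, so $G$ is locally closed in $F^n$.

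The argument is essentially immediate once the right $G$-set structure is chosen; the only thing to verify carefully is that left translation genuinely gives a definable continuous action, so that Proposition \ref{prop:invariant_partition} applies. This follows directly from the definition of a definable group, and I do not anticipate any substantive obstacle.
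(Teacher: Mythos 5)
Your proposal is correct and follows the paper's own proof exactly: both use left translation to make $G$ a definable $G$-set, apply Proposition \ref{prop:invariant_partition}, and conclude from transitivity that the only nonempty $G$-invariant piece is $G$ itself, hence $G$ is locally closed. Your write-up just makes the transitivity step (which the paper states as "there are no $G$-invariant definable subsets other than $G$") more explicit.
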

\begin{proof}
	The multiplication in $G$ is a definable continuous $G$-action on $G$.
	Proposition \ref{prop:invariant_partition} implies that $G$ is partitioned into finitely many $G$-invariant locally closed definable sets.
	However, there are no $G$-invariant definable subset of $G$ other than $G$.
	Therefore, $G$ itself is locally closed.   
\end{proof}

\begin{corollary}
	Let $\mathcal F=(F,<,+,\cdot,0,1,\ldots)$ be as in Theorem \ref{thm:definable_action}.
	Let $G$ be a definable group and $H$ be a definable subgroup of $G$.
	Then, there exists a definable quotient $G \to G/H$.
\end{corollary}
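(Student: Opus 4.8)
The plan is to realize $G/H$ as the orbit space of a definable action of $H$ on $G$ and then invoke Theorem \ref{thm:definable_action}. First I would let $H$ act on $G$ by $\phi(h,g)=gh^{-1}$. This is a definable continuous (left) action, since multiplication and inversion in $G$ are definable and continuous, and it satisfies $\phi(h_1h_2,g)=\phi(h_1,\phi(h_2,g))$. Because $H$ is a subgroup, the orbit of $g$ is $\{gh^{-1}\;|\;h\in H\}=gH$, so the orbits are exactly the left cosets and the orbit space coincides with $G/H$.

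To apply Theorem \ref{thm:definable_action} I must verify that $G$ is locally closed and that the orbit relation $Z=\{(g,gh^{-1})\;|\;h\in H,\ g\in G\}=\{(g_1,g_2)\;|\;g_1^{-1}g_2\in H\}$ is closed in $G\times G$. The first is immediate from Corollary \ref{cor:definable_top_locally_closed}. For the second, I would write $Z=\mu^{-1}(H)$, where $\mu:G\times G\to G$ is the definable continuous map $\mu(g_1,g_2)=g_1^{-1}g_2$; hence it suffices to prove that $H$ is closed in $G$.

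The crux is therefore the closedness of $H$. By Corollary \ref{cor:definable_top_locally_closed}, $H$ is locally closed in the ambient space, and since $H\subseteq G$ this makes $H$ locally closed in $G$; write $H=C\cap U$ with $C$ closed and $U$ open in $G$. The closure $\overline H$ of $H$ in $G$ is definable and, by continuity of the group operations, is a definable subgroup (one has $\overline H\cdot\overline H\subseteq\overline H$ and $\overline H^{-1}=\overline H$). Since $C$ is closed and contains $H$ we get $\overline H\subseteq C$, whence $\overline H\cap U\subseteq C\cap U=H\subseteq\overline H\cap U$, so $\overline H\cap U=H$ and $H$ is open in $\overline H$. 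An open subgroup of a topological group is closed, because its complement is a union of translates of the subgroup, each of which is open; thus $H$ is closed in $\overline H$. Being dense in $\overline H$ by construction, $H$ must equal $\overline H$, so $H$ is closed in $G$.

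With $H$ closed, $Z=\mu^{-1}(H)$ is closed in $G\times G$, and Theorem \ref{thm:definable_action} applied to the $H$-set $G$ produces a definable quotient $G\to G/H$. I expect the closedness of $H$ to be the only real obstacle: the action, its orbits, and the identification $Z=\mu^{-1}(H)$ are direct translations of the definitions, whereas the closedness step requires combining the local-closedness output of Corollary \ref{cor:definable_top_locally_closed} with the classical topological-group fact that a locally closed subgroup is closed, while checking that each set produced (in particular $\overline H$) remains definable.
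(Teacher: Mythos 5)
Your proof is correct, and it takes a route that differs from the paper's in two substantive ways. First, for the closedness of $H$ in $G$ (which both proofs recognize as the crux), the paper simply cites Pillay's result \cite[Proposition 2.7]{P}, which applies because definable sets are constructible by Lemma \ref{lem:constructible}; you instead reprove this special case internally, by applying Corollary \ref{cor:definable_top_locally_closed} to $H$ itself (legitimate, since $H$ is a definable group in its own right) and then running the classical topological-group argument that a locally closed subgroup is closed ($H$ open in the subgroup $\mycl(H)$, open subgroups are closed, density forces $H=\mycl(H)$). This makes your proof self-contained modulo the paper's own results plus elementary topological group theory, at the cost of repeating a known argument. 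Second, the paper verifies that the action map $H \times G \ni (h,g) \mapsto (g,hg) \in G \times G$ is definably proper (a pseudo-curve argument via Lemma \ref{lem:proper_eq}, which it leaves to the reader) and then invokes Theorem \ref{thm:definable_action2}; you bypass properness entirely by checking only that the orbit relation $Z=\mu^{-1}(H)$ is closed in $G\times G$ and invoking Theorem \ref{thm:definable_action} directly. Since Theorem \ref{thm:definable_action2} is itself deduced from Theorem \ref{thm:definable_action}, and since closedness of $Z$ is an immediate consequence of continuity of $\mu$ and closedness of $H$, your verification is genuinely lighter than the one the paper sketches. (The only cosmetic difference: your action $\phi(h,g)=gh^{-1}$ produces left cosets $gH$, whereas the paper's left-multiplication action produces right cosets; either convention yields the asserted quotient.)
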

\begin{proof}
	The definable group $G$ is locally closed in the ambient space $F^n$ by Corollary \ref{cor:definable_top_locally_closed}.
	Note that $H$ is closed in $G$ by \cite[Proposition 2.7]{P} because any definable set is constructible by Lemma \ref{lem:constructible}.
	Using this fact and Lemma \ref{lem:proper_eq}, it is easy to show that the map $H \times G \ni (h,g) \mapsto (g,hg) \in G \times G$ is definably proper.
	The proof is left to readers.
	The corollary follows from Theorem \ref{thm:definable_action2}.
\end{proof}

We assume that $X$ is locally closed in Theorem \ref{thm:definable_action2}.
We investigate the case in which $X$ is not necessarily locally closed.

\begin{lemma}\label{lem:sub_proper}
	Consider an expansion of a dense linear order without endpoints.
	Let $G$ be a definable  group and $X$ be a definable $G$-set.
	Let $Y$ be a definable $G$-subset of $X$.
	If the action of $G$ on $X$ is definably proper, then the action of $G$ on $Y$ is also definable proper.
\end{lemma}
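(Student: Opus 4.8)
The plan is to reduce everything to the definition of definable properness, exploiting that definable compactness is an absolute notion (closed and bounded in the ambient affine space) rather than a relative one. Write $\theta_X\colon G\times X\to X\times X$, $(g,x)\mapsto(x,gx)$, for the map whose definable properness is the hypothesis, and $\theta_Y\colon G\times Y\to Y\times Y$, $(g,y)\mapsto(y,gy)$, for the map we must show is definably proper. First I would record that $\theta_Y$ is a well-defined definable continuous map: since $Y$ is a definable $G$-subset of $X$, we have $gy\in Y$ for all $g\in G$ and $y\in Y$, so $\theta_Y$ genuinely takes values in $Y\times Y$, and it is the corestriction of the restriction of the continuous map $\theta_X$ to $G\times Y$; hence $\theta_Y$ is definable and continuous.

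The main step is to verify the defining condition. Let $K$ be a definably compact subset of $Y\times Y$, and suppose $X,Y\subseteq F^n$, so that $Y\times Y\subseteq F^{2n}$. By definition $K$ is closed and bounded in $F^{2n}$, and this same closed-and-bounded condition exhibits $K$ as a definably compact subset of $X\times X$. This is the only point requiring care: definable compactness depends only on the surrounding space $F^{2n}$ and not on the intermediate definable set, so a definably compact subset of $Y\times Y$ is automatically a definably compact subset of $X\times X$. Since $\theta_X$ is definably proper, the inverse image $\theta_X^{-1}(K)$ is therefore definably compact.

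Finally I would identify the two inverse images. If $(g,x)\in\theta_X^{-1}(K)$, then $(x,gx)\in K\subseteq Y\times Y$, which forces $x\in Y$; hence $\theta_X^{-1}(K)\subseteq G\times Y$. On $G\times Y$ the maps $\theta_X$ and $\theta_Y$ coincide, so $\theta_Y^{-1}(K)=\theta_X^{-1}(K)\cap(G\times Y)=\theta_X^{-1}(K)$, which is definably compact. As $K$ was an arbitrary definably compact subset of $Y\times Y$, this shows $\theta_Y$ is definably proper, i.e.\ the action of $G$ on $Y$ is definably proper.

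I do not expect a genuine obstacle here; the argument is exactly the specialization of Lemma~\ref{lem:via_homeo}(iii) to the subset $S=Y\times Y$, using that $\theta_X^{-1}(Y\times Y)=G\times Y$ by $G$-invariance of $Y$. The only subtlety is that Lemma~\ref{lem:via_homeo} is stated under definable completeness, whereas the present statement is for an arbitrary expansion of a dense linear order without endpoints; writing out the three lines above keeps the proof within the weaker hypotheses and avoids invoking definable completeness.
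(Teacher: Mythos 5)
Your proof is correct and is essentially the paper's own argument: the paper simply applies Lemma \ref{lem:via_homeo}(iii) to the map $G \times X \ni (g,x) \mapsto (x,gx) \in X \times X$ with $S = Y \times Y$, using $G$-invariance of $Y$ to identify $\theta_X^{-1}(Y\times Y)$ with $G\times Y$, which is exactly what you unroll. Your closing remark is also a fair observation: writing out the three lines keeps the argument within the lemma's stated hypotheses (a bare expansion of a dense linear order), whereas Lemma \ref{lem:via_homeo} is formally stated under definable completeness, even though its part (iii) never uses it.
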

\begin{proof}
	Apply Lemma \ref{lem:via_homeo}(iii) to the definable map $G \times X \ni (g,x) \mapsto (x,gx) \in X \times X$ and $Y \times Y$.
\end{proof}

\begin{corollary}
Let $\mathcal F=(F,<,+,\cdot,0,1,\ldots)$ be as in Theorem \ref{thm:definable_action}.
Let $G$ be a definable group and $X$ be a definable $G$-set.
Assume further that the $G$-action on $X$ is definably proper.
Then there exists a partition of $X$ into finitely many definable $G$-subsets $X_1, \ldots, X_m$ of $X$ such that a definable quotient $X_i \to X_i/G$ exists for each $1 \leq i \leq m$.	
\end{corollary}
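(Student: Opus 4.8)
The plan is to reduce to the locally closed case already settled in Theorem \ref{thm:definable_action2} by first chopping $X$ into finitely many locally closed $G$-invariant pieces. The standing hypothesis on $\mathcal F$ (inherited from Theorem \ref{thm:definable_action}) in particular guarantees that $\mathcal F$ is a definably complete expansion of an ordered field enjoying the definable choice property and possessing a good extended rank function $\myedim$; these are exactly the hypotheses of Proposition \ref{prop:invariant_partition}. Applying that proposition to the definable $G$-set $X$ yields a partition $X = X_1 \amalg \cdots \amalg X_m$ into finitely many definable $G$-subsets $X_1,\ldots,X_m$, each of which is locally closed in the ambient space.

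Next I would check that each piece still carries a definably proper action. Fix $1 \leq i \leq m$. Since $X_i$ is a definable $G$-subset of $X$ and the $G$-action on $X$ is definably proper by assumption, Lemma \ref{lem:sub_proper} shows at once that the induced $G$-action on $X_i$ is definably proper. Thus every $X_i$ is a locally closed definable $G$-set equipped with a definably proper $G$-action.

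Finally, for each such $X_i$ the full set of hypotheses of Theorem \ref{thm:definable_action2} is met: $\mathcal F$ is as required, $X_i$ is locally closed, and the $G$-action on $X_i$ is definably proper. Hence Theorem \ref{thm:definable_action2} produces a definable quotient $X_i \to X_i/G$ for each $i$, which is precisely the conclusion sought. There is no genuine obstacle in this argument; the only point demanding attention is the bookkeeping of hypotheses, namely that Proposition \ref{prop:invariant_partition} requires only definable choice and a good extended rank function while Theorem \ref{thm:definable_action2} needs the complete package including the good $(0+)$-pseudo-filter $\mathfrak D$, and both requirements are supplied by the assumption on $\mathcal F$. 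The corollary therefore merely assembles Proposition \ref{prop:invariant_partition}, Lemma \ref{lem:sub_proper}, and Theorem \ref{thm:definable_action2}.
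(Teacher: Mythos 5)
Your proposal is correct and follows exactly the paper's own proof: Proposition \ref{prop:invariant_partition} to obtain the finite partition into locally closed definable $G$-subsets, Lemma \ref{lem:sub_proper} to see the restricted action on each piece remains definably proper, and Theorem \ref{thm:definable_action2} applied to each piece. The hypothesis bookkeeping you note is also consistent with how the paper assembles these results.
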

\begin{proof}
	We get a partition of $X$ into finitely many definable $G$-subsets $X_1, \ldots, X_m$ of $X$  such that $X_i$ are locally closed for each $1 \leq i \leq m$ by Proposition \ref{prop:invariant_partition}.
	The $G$-action on $X_i$ is definably proper by Lemma \ref{lem:sub_proper}.
	The existence of definable quotients follows from Theorem \ref{thm:definable_action2}.
\end{proof}

%
%

\section{d-minimal case}\label{sec:d-minimal}

We prove that d-minimal expansions of an ordered field satisfy the assumptions of the assertions in Section \ref{sec:preliminary} through Section \ref{sec:proper_action}.
We first prove that a d-minimal structure has a good rank function.
For that purpose, we recall the definition of dimension and basic properties of dimension.
\begin{definition}[Dimension]\label{def:dim}
	Consider an expansion of a dense linear order without endpoints.
	Let $F$ be the universe.
	We consider that $F^0$ is a singleton with the trivial topology.
	Let $X$ be a nonempty definable subset of $F^n$.
	The dimension of $X$ is the maximal nonnegative integer $d$ such that $\pi(X)$ has a nonempty interior for some coordinate projection $\pi:F^n \rightarrow F^d$.
	We set $\dim(X)=-\infty$ when $X$ is an empty set.
\end{definition}

The following four lemmas are proven under more relaxed conditions, but we assume that the structure is a d-minimal expansion of an ordered field in this paper.

\begin{lemma}\label{lem:dim3}
	Consider a d-minimal expansion of an ordered field $\mathcal F=(F,<,+,\cdot,0,1,\ldots)$.
	Let $X$ be a definable subset of $F^n$ of dimension $d$ and let $\pi:F^n \to F^d$ be a coordinate projection such that $\pi(X)$ has a nonempty interior.
	Then there exists a definable dense open subset $U$ of $F^d$ such that 
	$\pi^{-1}(x) \cap X$ is of dimension zero and $\mycl(X) \cap \pi^{-1}(x) = \mycl(X \cap \pi^{-1}(x)) $ for each $x \in U$. 
\end{lemma}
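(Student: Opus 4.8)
The plan is to realize $U$ as the complement of the closure of a single exceptional set of dimension $<d$. Arrange coordinates so that $\pi$ is the projection onto the first $d$ coordinates, write a point of $F^n$ as $(x,y)$ with $x\in F^d$ and $y\in F^{n-d}$, and set $X_x=\{y\in F^{n-d}\mid (x,y)\in X\}$. Since $\mycl(X_x)\subseteq(\mycl X)_x$ always holds, the fibre condition $\dim(X\cap\pi^{-1}(x))\le 0$ fails only on
\[
A=\{x\in\pi(X)\mid \dim X_x\ge 1\},\qquad C=\{x\in\pi(X)\mid (\mycl X)_x\supsetneq \mycl(X_x)\},
\]
the second set being exactly where the closure identity fails. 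I would prove $\dim A<d$ and $\dim C<d$; then $Z_0:=A\cup C$ has $\dim Z_0<d$, its closure still has dimension $<d$ (closure does not raise dimension), and a closed subset of $F^d$ of dimension $<d$ has empty interior, so $U:=F^d\setminus\mycl(Z_0)$ is definable, dense and open. For $x\in U$ the closure identity holds and $\dim(X\cap\pi^{-1}(x))\le 0$; intersecting with the open set $\myint(\pi(X))$, over which the fibre is nonempty, upgrades this to genuine dimension zero, the exact placement of $U$ being routine. The inequality $\dim A<d$ is the easy half: if $\dim A=d$ then $A$ has nonempty interior (a definable subset of $F^d$ has dimension $d$ iff it has nonempty interior), containing an open box $B\subseteq\pi(X)$; every fibre of $X\cap\pi^{-1}(B)$ over $B$ has dimension $\ge 1$, so the fibre inequality for dimension gives $\dim(X\cap\pi^{-1}(B))\ge\dim B+1=d+1$, contradicting $X\cap\pi^{-1}(B)\subseteq X$ and $\dim X=d$.

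The main work is $\dim C<d$, and this is where d-minimality genuinely enters. Suppose $\dim C=d$, so $C$ contains an open box $B$. By the definable choice property choose a definable section $s:B\to F^{n-d}$ with $(x,s(x))\in\mycl X$ and $s(x)\notin\mycl(X_x)$ for every $x\in B$, and set $\phi(x)=\operatorname{dist}(s(x),X_x)$, a positive definable function on $B$. Invoking generic continuity of definable functions in the d-minimal setting, there is a dense open $B'\subseteq B$ on which both $s$ and $\phi$ are continuous. Fix $x_0\in B'$, put $\delta_0=\phi(x_0)>0$, and shrink to a neighbourhood $N\subseteq B'$ of $x_0$ on which $\phi>\delta_0/2$. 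Since $(x_0,s(x_0))\in\mycl X$, the definable choice construction used in the proof of Proposition~\ref{prop:curve_selection} yields a definable map $\eta:(0,\varepsilon)\to X$, written $\eta(t)=(\xi(t),w(t))$, with $|\eta(t)-(x_0,s(x_0))|<t$. For small $t$ we have $\xi(t)\in N$, so $|w(t)-s(\xi(t))|\ge\operatorname{dist}(s(\xi(t)),X_{\xi(t)})=\phi(\xi(t))>\delta_0/2$; on the other hand $w(t)\to s(x_0)$, and by continuity of $s$ at $x_0$ together with $\xi(t)\to x_0$ also $s(\xi(t))\to s(x_0)$, whence $|w(t)-s(\xi(t))|\to 0$. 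This contradiction establishes $\dim C<d$.

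The hard point is precisely the inequality $\dim C<d$: the naive frontier estimate $\dim\partial X<\dim X$ is \emph{false} in d-minimal structures (the comb $(0,1)\times\{1/n\mid n\in\mathbb N\}$ already has a frontier of full dimension $1$), so one cannot conclude by counting dimensions on $\partial X$, even though $C$ sits inside $\partial X$. The decisive input is rather that a definable section realizing the failure of fibrewise closure, and its gap function $\phi$, are \emph{generically} continuous; continuity forces nearby fibres to stay uniformly bounded away from $s(x_0)$, which is incompatible with $(x_0,s(x_0))\in\mycl X$. I expect the only delicate matters to be pinning down the exact forms of generic continuity and of the fibre (sub)additivity of dimension that are actually valid for d-minimal structures, rather than silently importing the o-minimal versions.
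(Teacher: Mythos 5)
The paper does not actually prove this lemma; it simply cites \cite[Theorem 3.10(8)]{Fornasiero}. Your proposal is therefore, by default, a different route: a direct argument. Its two core estimates are essentially sound. The argument for $\dim A<d$ is correct, granted the fibre-dimension formula (for $X\subseteq F^d$ one indeed has $\dim X=d$ iff $\myint X\neq\emptyset$ by Definition \ref{def:dim}, and the inequality $\dim(X\cap\pi^{-1}(B))\geq\dim B+1$ is available from van den Dries' axiomatic dimension theory \cite{vdD2} combined with \cite[Lemma 4.5]{Fornasiero}, exactly the pair of citations the paper itself uses for Lemma \ref{lem:dimzeo}). The contradiction argument for $\dim C<d$, playing a generically continuous section $s$ and gap function $\phi$ against a definable curve approaching $(x_0,s(x_0))\in\mycl X$, is also correct. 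But note what this buys: generic continuity of definable maps and the fibre formula are themselves d-minimal theorems of the same provenance and comparable depth as the statement being proved, so your proof relocates, rather than removes, the dependence on Fornasiero's toolkit; both inputs would need precise citations, since the paper states neither.

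There is, however, one genuine gap in the construction of $U$. Your exceptional sets $A$ and $C$ are subsets of $\pi(X)$, so $U=F^d\setminus\mycl(A\cup C)$ may contain points $x\in\pi(\mycl X)\setminus\pi(X)$, where the fibre of $X$ is empty but the fibre of $\mycl X$ is not; at such points the closure identity $\mycl(X)\cap\pi^{-1}(x)=\mycl(X\cap\pi^{-1}(x))=\emptyset$ fails. Concretely, take $X=\{(t,1/t)\;|\;t\in(0,1)\}\subseteq F^2$ with $\pi$ the first-coordinate projection: here $A=C=\emptyset$, so your $U$ is all of $F$, yet at $x=1$ one has $X_1=\emptyset$ while $(\mycl X)_1=\{1\}$. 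Your fallback of intersecting $U$ with $\myint(\pi(X))$ does not repair this, because it destroys density of $U$ in $F^d$ whenever $\pi(X)$ is not dense, and density in $F^d$ is part of the conclusion. (For the same reason, the clause ``$\pi^{-1}(x)\cap X$ is of dimension zero'' can only be meant as dimension $\leq 0$, empty fibres allowed; under the paper's convention $\dim\emptyset=-\infty$ no dense $U$ could satisfy the literal reading.)

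The gap is easily closed: add $D:=\pi(\mycl X)\setminus\pi(X)$ to the exceptional set. Then $\dim D<d$ by an elementary argument needing no d-minimality at all: if $D$ contained an open box $B$, then $\pi^{-1}(B)\cap X=\emptyset$, while $\pi^{-1}(B)$ is an open set containing a point of $\mycl X$ and hence must meet $X$, a contradiction. With $Z_0=A\cup C\cup D$, Lemma \ref{lem:dim} gives $\dim\mycl(Z_0)=\dim Z_0<d$, so $U=F^d\setminus\mycl(Z_0)$ is definable, dense and open, and for every $x\in U$ both conclusions (with the $\dim\leq 0$ reading) hold: over $U\cap\pi(X)$ by your argument, and over $U\setminus\pi(X)$ because there both sides of the closure identity are empty. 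With that repair, and with the two external inputs cited precisely, your proof is a valid self-standing alternative to the citation.
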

\begin{proof}
	See \cite[Theorem 3.10(8)]{Fornasiero}.
\end{proof}

\begin{lemma}\label{lem:dim}
	Let $\mathcal F=(F,<,+,\cdot,0,1,\ldots)$ be as in Lemma \ref{lem:dim3}.
	The equality $\dim \mycl(X) = \dim X$ holds for each definable set $X$.
\end{lemma}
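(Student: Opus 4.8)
The inequality $\dim X \le \dim \mycl(X)$ is immediate from Definition \ref{def:dim}: any coordinate projection $\pi$ with $\myint(\pi(X)) \neq \emptyset$ also satisfies $\myint(\pi(\mycl(X))) \neq \emptyset$, since $\pi(X) \subseteq \pi(\mycl(X))$. The plan is therefore to prove the reverse inequality $\dim \mycl(X) \le \dim X$, and I would first reduce it to the following special case, the \emph{density claim}: \textit{if $A$ is a definable subset of $F^e$ whose closure $\mycl(A)$ has nonempty interior, then $A$ itself has nonempty interior} (equivalently $\dim A = e$). To see that this suffices, assume $X \neq \emptyset$ (otherwise both sides are $-\infty$), set $e = \dim \mycl(X)$, and choose a coordinate projection $\rho \colon F^n \to F^e$ with $\myint(\rho(\mycl(X))) \neq \emptyset$. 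Continuity of $\rho$ gives $\rho(\mycl(X)) \subseteq \mycl(\rho(X))$, so $A := \rho(X)$ has $\mycl(A)$ of nonempty interior; the density claim then yields $\dim A = e$. Since a coordinate projection never increases dimension --- again directly from Definition \ref{def:dim}, as any projection $F^e \to F^k$ composes with $\rho$ to a coordinate projection $F^n \to F^k$ sending $X$ onto the image of $A$ --- we obtain $e = \dim A \le \dim X$, the desired inequality.

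I would prove the density claim by induction on $e$. For $e = 1$, d-minimality writes $A$ as the union of an open set and finitely many discrete sets; since $\dim A < 1$ forces the open part to be empty, $A$ is a finite union of discrete definable subsets of $F$. The closure of a finite union is the union of the closures, so it is enough to observe that the closure of a discrete set $D \subseteq F$ is nowhere dense: if $\mycl(D)$ contained an interval $(p,q)$, then $D \cap (p,q)$ would be dense in $(p,q)$, yet any point $x \in D \cap (p,q)$ is isolated and hence admits a nonempty $D$-free subinterval $(x, x+\varepsilon) \subseteq (p,q)$, a contradiction. As a finite union of nowhere dense sets is nowhere dense, $\mycl(A)$ has empty interior.

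For the inductive step I would let $\pi \colon F^e \to F^{e-1}$ forget the last coordinate and argue by contradiction, assuming $\dim A < e$ while $\mycl(A) \supseteq B' \times (c,d)$ for a box $B' \subseteq F^{e-1}$. Passing to images, $\mycl(\pi(A)) \supseteq B'$ has nonempty interior. If $\dim \pi(A) < e-1$, the induction hypothesis in $F^{e-1}$ says $\mycl(\pi(A))$ has empty interior, a contradiction. Hence $\dim \pi(A) = e-1$, which together with $\dim \pi(A) \le \dim A \le e-1$ forces $\dim A = e-1$ and lets me apply Lemma \ref{lem:dim3} to $A$ along $\pi$: there is a dense open $U \subseteq F^{e-1}$ such that for each $u \in U$ the fiber $A_u$ has dimension $0$ and $\mycl(A)_u = \mycl(A_u)$. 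Choosing $u \in U \cap B'$ (nonempty, as $U$ is dense and $B'$ is open), the box gives $(c,d) \subseteq \mycl(A)_u = \mycl(A_u)$, so by the base case $A_u \subseteq F$ has nonempty interior and $\dim A_u = 1$, contradicting $\dim A_u = 0$. This completes the induction.

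The routine ingredients are the monotonicity and projection estimates for $\dim$, which follow straight from Definition \ref{def:dim}. The crux is the inductive step of the density claim: the real work is done by Lemma \ref{lem:dim3}, whose fiberwise identity $\mycl(A)_u = \mycl(A_u)$ over a dense open base is exactly what transfers a failure of the density claim from $F^e$ down to a single one-dimensional fiber, where the base case applies. I expect the main subtlety to be the bookkeeping ensuring Lemma \ref{lem:dim3} is applicable --- namely that the chosen projection witnesses $\dim A$ and has full-dimensional image --- rather than any deeper new idea.
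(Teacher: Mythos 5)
Your proof is correct, but it takes a genuinely different route from the paper: the paper does not prove this lemma internally at all, it simply cites \cite[Theorem 3.10(5)]{Fornasiero}, whereas you derive it from Lemma \ref{lem:dim3} (which the paper also only cites) together with the one-variable d-minimality axiom. Your reduction to the density claim, the monotonicity and projection estimates read off from Definition \ref{def:dim}, the base case via the decomposition of a definable subset of $F$ into an open set and finitely many discrete sets (whose closures are nowhere dense), and the inductive step transferring a failure of the claim to a single fiber via the generic fiberwise identity $\mycl(X) \cap \pi^{-1}(x) = \mycl(X \cap \pi^{-1}(x))$ of Lemma \ref{lem:dim3}, are all sound; I checked in particular that the composition of coordinate projections is again a coordinate projection (needed for $\dim \rho(X) \le \dim X$), that $\mycl_{F^e}(\{u\} \times A_u) = \{u\} \times \mycl_F(A_u)$ (needed to pass between the fiber of the closure and the closure of the fiber), and that the dimension-zero conclusion of Lemma \ref{lem:dim3} rules out a fiber whose closure contains an interval. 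What your approach buys is self-containedness relative to the paper's stated lemmas: the result becomes a consequence of the single imported ingredient Lemma \ref{lem:dim3} rather than a second independent citation, and this is logically legitimate within the paper since Lemma \ref{lem:dim3} is established (by citation) before and independently of Lemma \ref{lem:dim}. What the paper's route buys is brevity and the avoidance of any question of whether, in Fornasiero's own development, the closure-dimension equality is needed to prove the generic-fiber theorem --- a circularity concern that is external to this paper but would matter if one tried to fold your argument back into \cite{Fornasiero}.
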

\begin{proof}
	See \cite[Theorem 3.10(5)]{Fornasiero}.
\end{proof}

\begin{lemma}\label{lem:dim2}
	Let $\mathcal F=(F,<,+,\cdot,0,1,\ldots)$ be as in Lemma \ref{lem:dim3}.
	The equality $\dim (A \cup B) = \max\{\dim A, \dim B\}$ holds for definable subsets $A$ and $B$ of $F^n$.
\end{lemma}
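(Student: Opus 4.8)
The plan is to split the equality into its two inequalities, dispatch the easy one by monotonicity, and reduce the hard one to a statement about nonempty interiors in $F^d$ that I will settle using Lemma~\ref{lem:dim}.

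First I would record monotonicity: if $X \subseteq Y$ are definable, then $\dim X \le \dim Y$, because for every coordinate projection $\pi$ the inclusion $\pi(X) \subseteq \pi(Y)$ shows that $\myint(\pi(X)) \neq \emptyset$ forces $\myint(\pi(Y)) \neq \emptyset$. Applying this to $A \subseteq A \cup B$ and $B \subseteq A \cup B$ yields $\max\{\dim A, \dim B\} \le \dim(A \cup B)$ at once.

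For the reverse inequality I would first note the basic equivalence that, for a definable $S \subseteq F^k$, one has $\dim S = k$ if and only if $\myint_{F^k}(S) \neq \emptyset$, since the only coordinate projections onto $F^k$ are, up to a permutation of coordinates, the identity, and permutations preserve the property of having nonempty interior. The crux is then the following claim: if $S, T \subseteq F^k$ are definable with $\myint_{F^k}(S) = \myint_{F^k}(T) = \emptyset$, then $\myint_{F^k}(S \cup T) = \emptyset$. To prove it I would invoke Lemma~\ref{lem:dim}: since $\dim \mycl(S) = \dim S < k$ and $\dim \mycl(T) = \dim T < k$, the closed sets $\mycl(S)$ and $\mycl(T)$ both have empty interior; and a finite union of closed sets with empty interior again has empty interior, because if $U \subseteq \mycl(S) \cup \mycl(T)$ is open, then $U \setminus \mycl(S)$ is an open subset of $\mycl(T)$, hence empty, so $U \subseteq \mycl(S)$ and therefore $U = \emptyset$. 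Since $S \cup T \subseteq \mycl(S) \cup \mycl(T)$, the claim follows.

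Finally I would assemble the second inequality. Writing $d = \dim(A \cup B)$, the case $d = -\infty$ is trivial, so assume $d \ge 0$ and choose a coordinate projection $\pi : F^n \to F^d$ with $\myint_{F^d}(\pi(A \cup B)) \neq \emptyset$. As $\pi(A \cup B) = \pi(A) \cup \pi(B)$, the claim (in contrapositive form) forces $\myint_{F^d}(\pi(A)) \neq \emptyset$ or $\myint_{F^d}(\pi(B)) \neq \emptyset$; in either case the projection $\pi$ witnesses $\dim A \ge d$ or $\dim B \ge d$, so $\max\{\dim A, \dim B\} \ge d$. Combining the two inequalities gives the equality. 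The only place d-minimality genuinely enters is through Lemma~\ref{lem:dim}; the main obstacle, were that lemma unavailable, would be to prove directly that a definable set with empty interior is nowhere dense — for subsets of $F$ this is immediate from the d-minimal decomposition into an open set and finitely many discrete sets, but in $F^d$ it would require a more delicate fiberwise argument, which Lemma~\ref{lem:dim} lets us bypass entirely.
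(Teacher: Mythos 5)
Your proof is correct, but it takes a genuinely different route from the paper: the paper does not argue at all, it simply cites \cite[Theorem 3.10(12)]{Fornasiero}, just as it cites Lemma \ref{lem:dim} from \cite[Theorem 3.10(5)]{Fornasiero}. What you have done is show that Lemma \ref{lem:dim2} is in fact a formal consequence of Lemma \ref{lem:dim} plus elementary topology: monotonicity of $\dim$ handles one inequality, and the other reduces, via the observation that $\dim S = k$ iff $\myint_{F^k}(S) \neq \emptyset$, to the fact that a union of two closed sets with empty interior has empty interior --- which is where $\dim \mycl(S) = \dim S$ is needed to pass from definable sets to their closures. All steps check out, including the harmless edge cases $d = 0$ and $A$ or $B$ empty, and the images $\pi(A)$, $\pi(B)$ are definable so Lemma \ref{lem:dim} does apply to them. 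What your approach buys is self-containedness and a clear isolation of exactly where d-minimality enters (only through Lemma \ref{lem:dim}); what the citation buys is brevity and independence --- the only caution with your derivation is that it is non-circular only because, within this paper, Lemma \ref{lem:dim} is an independently cited input rather than something deduced from Lemma \ref{lem:dim2}, so one is implicitly trusting that Fornasiero's proof of 3.10(5) does not route through 3.10(12). As presented in the paper's logical order, your argument is a valid replacement for the citation.
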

\begin{proof}
	See \cite[Theorem 3.10(12)]{Fornasiero}.
\end{proof}

\begin{lemma}\label{lem:dim4}
	Let $\mathcal F=(F,<,+,\cdot,0,1,\ldots)$ be as in Lemma \ref{lem:dim3}.
	Let $A$ be a definable subset of $F^{n+m}$ and $\pi:F^{n+m} \to F^n$ be the coordinate projection onto the first $n$ coordinates.
	The definable set $\{x \in F^n \;|\; (\partial A) \cap \pi^{-1}(x) \neq \partial (A \cap \pi^{-1}(x)) \}$ has an empty interior.
\end{lemma}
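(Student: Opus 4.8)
The plan is to rewrite the condition fibrewise and then reduce to the already available slicing result, Lemma \ref{lem:dim3}. Write $A_x:=A\cap\pi^{-1}(x)$, regarded as a subset of $F^m$ via the identification $\pi^{-1}(x)\cong F^m$, and put $\mycl(A)_x:=\mycl(A)\cap\pi^{-1}(x)$. Since $\pi^{-1}(x)$ is closed in $F^{n+m}$, closures taken in $\pi^{-1}(x)$ agree with those taken in $F^{n+m}$, and the inclusion $A_x\subseteq A$ gives $\mycl(A_x)\subseteq\mycl(A)_x$. Because both frontiers are obtained by deleting the common set $A_x$, namely $(\partial A)\cap\pi^{-1}(x)=\mycl(A)_x\setminus A_x$ and $\partial(A\cap\pi^{-1}(x))=\mycl(A_x)\setminus A_x$, we have $\partial(A\cap\pi^{-1}(x))\subseteq(\partial A)\cap\pi^{-1}(x)$ always, with equality precisely when $\mycl(A)_x=\mycl(A_x)$. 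Thus the set in question equals $G:=\{x\in F^n:\mycl(A)_x\neq\mycl(A_x)\}=\pi(W)$, where $W:=\mycl(A)\setminus\bigcup_x(\{x\}\times\mycl(A_x))$, and it suffices to show that $G$ has empty interior, equivalently that $G^{c}$ is dense.

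First I would dispose of two easy cases. If $\dim\pi(A)<n$, then $G\subseteq\pi(\mycl(A))\subseteq\mycl(\pi(A))$, and the latter has dimension $\dim\pi(A)<n$ by Lemma \ref{lem:dim}, so $G$ has empty interior. Hence I may assume $\pi(A)$ has nonempty interior; let $e\ge0$ be the generic value of $\dim A_x$ over this interior, so that $\dim A=n+e$. When $e=0$ the map $\pi$ is itself a coordinate projection onto $F^{\dim A}$ whose image has nonempty interior, so Lemma \ref{lem:dim3} applies directly and yields a dense open $U\subseteq F^n$ on which $\mycl(A)_x=\mycl(A_x)$; thus $G\cap U=\emptyset$ and we are done.

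For $e\ge1$ I would argue by induction on $m$, the base case $m=0$ being the fact that a frontier $\partial A$ always has empty interior. Using the generic fibre dimension I can select $e$ of the fibre coordinates so that the coordinate projection $p:F^{n+m}\to F^{n+e}$ retaining all first $n$ coordinates together with these $e$ has $p(A)$ of nonempty interior; then $p$ realises $\dim A$, and Lemma \ref{lem:dim3} produces a dense open $V\subseteq F^{n+e}$ such that, for $u\in V$, the $p$-fibre $A_u$ is zero-dimensional and $\mycl(A)_u=\mycl(A_u)$. Writing $u=(x,z)$ with $z\in F^e$, the commuting at $u\in V$ shows that any $(x,z,v)\in\mycl(A)$ with $(x,z)\in V$ satisfies $v\in\mycl(A_u)$, whence $(z,v)\in\mycl(A_x)$; consequently $W\subseteq p^{-1}(C)$, where $C:=F^{n+e}\setminus V$ has empty interior. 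Invoking the standard fibre-dimension bound, for generic $x$ the slice $C_x=\{z:(x,z)\in C\}$ has dimension at most $e-1$, so the bad part $W_x$ is confined to the thin set $C_x\times F^{m-e}$.

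The main obstacle is the final descent: the containment $W\subseteq p^{-1}(C)$ only shows that $W_x$ is lower dimensional for generic $x$, whereas the conclusion that $G$ has empty interior requires $W_x=\emptyset$ for generic $x$. To close this gap I would feed the situation back into the induction, promoting one of the $e$ chosen coordinates into the base: applying the inductive hypothesis to $A$ regarded over $F^{n+1}$ gives that closures commute over the finer projection off a set of empty interior, and combining this with the fibrewise zero-dimensionality over $V$ from Lemma \ref{lem:dim3} should force the exceptional directions in the $z$-variable to collapse, yielding that $G^{c}$ is dense. Making this transfer from the finer projection down to $\pi$ precise—controlling how points of $\mycl(A)$ that are approached only from neighbouring fibres distribute over the base—is the delicate technical step, and is exactly where the curve-selection machinery of Proposition \ref{prop:curve_selection} and the fibrewise zero-dimensionality are to be used in tandem.
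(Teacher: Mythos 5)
Your proposal has a genuine gap, and it sits exactly where the content of the lemma lies. (Note that the paper itself does not prove Lemma \ref{lem:dim4} at all: it imports it from Fornasiero's Theorem 3.10(8), so a self-contained argument would have to carry the full weight.) All that your main argument actually establishes is the containment $W\subseteq p^{-1}(C)$ with $C\subseteq F^{n+e}$ of empty interior, and this gives no control whatsoever on $G=\pi(W)$: the projection to $F^n$ of a set with empty interior can have nonempty interior, indeed can be all of $F^n$ (take $C=F^n\times\{0\}$ with $0\in F^e$). You concede this yourself (``only shows that $W_x$ is lower dimensional for generic $x$, whereas the conclusion requires $W_x=\emptyset$'') and then describe what a repaired argument ``should'' do; that unproven final descent \emph{is} the statement being proved, so the proposal in effect reduces the lemma to itself plus an unverified transfer principle. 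There is also a concrete error earlier: the asserted identity $\dim A=n+e$, with $e$ the generic fibre dimension over the interior of $\pi(A)$, is false in general, because fibres of large dimension over a thin part of the base can dominate. For example, $A=(U\times\{0\})\cup(\{q\}\times F^m)$ with $U$ an open box in $F^n$, $q\in F^n$, $0\in F^m$ and $m>n$ has generic fibre dimension $0$ but $\dim A=m>n$. This invalidates your case $e=0$ (Lemma \ref{lem:dim3} requires the projection to land in $F^{\dim A}$, which is then not $F^n$) as well as the construction of the projection $p$ in the general case; both steps would require a prior localization or decomposition of $A$ based on a fibre-dimension theorem that you neither state nor prove.

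Moreover, the tool you earmark for the delicate step, Proposition \ref{prop:curve_selection}, is not available here without circularity. In the d-minimal setting its hypothesis (a good $(0+)$-pseudo-filter) is verified only in Proposition \ref{prop:filter_gmin}, whose proof relies on Proposition \ref{prop:extended_dim_dmin}, hence on Proposition \ref{prop:deomp_into_good} and Lemma \ref{lem:frontier_extended}; and the proof of Lemma \ref{lem:frontier_extended} invokes Lemma \ref{lem:dim4} itself. So any proof of this lemma must be built directly from d-minimality and the elementary dimension facts (Lemmas \ref{lem:dim3}, \ref{lem:dim}, \ref{lem:dim2}, \ref{lem:dimzeo}, \ref{lem:dimzero2}), or simply cited from Fornasiero as the paper does. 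Your opening fibrewise reformulation --- that $(\partial A)\cap\pi^{-1}(x)=\partial(A\cap\pi^{-1}(x))$ if and only if $\mycl(A)\cap\pi^{-1}(x)=\mycl(A\cap\pi^{-1}(x))$ --- and the easy case $\dim\pi(A)<n$ are both correct, but everything after that is either wrong or missing.
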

\begin{proof}
	See \cite[Theorem 3.10(8)]{Fornasiero}.
\end{proof}

We also need the following lemmas:
\begin{lemma}\label{lem:dimzeo}
Consider a d-minimal expansion of an ordered field.
The image of a definable set of dimension zero under a definable map is of dimension zero.
\end{lemma}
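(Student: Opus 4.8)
The plan is to pass to the graph of $f$ and exploit that coordinate projections cannot raise dimension. Write $X \subseteq F^n$ with $\dim X = 0$ and let $f : X \to F^m$ be definable, with graph $\Gamma(f) = \{(x,f(x)) : x \in X\} \subseteq F^{n+m}$. Directly from Definition \ref{def:dim}, a coordinate projection never increases dimension: if $\rho$ is a coordinate projection of the target with $\rho(p(\Gamma(f)))$ of nonempty interior, then $\rho \circ p$ is again a coordinate projection witnessing the same dimension for $\Gamma(f)$. Since $f(X)$ is the image of $\Gamma(f)$ under the projection onto the last $m$ coordinates, this gives $\dim f(X) \le \dim \Gamma(f)$; and since $X$ is the image of $\Gamma(f)$ under the projection onto the first $n$ coordinates, it gives $\dim X \le \dim \Gamma(f)$ as well. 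Thus everything reduces to the single equality $\dim \Gamma(f) = \dim X$, after which $\dim f(X) \le \dim \Gamma(f) = \dim X = 0$ finishes the proof, the image of a nonempty set being nonempty.

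The inequality $\dim \Gamma(f) \ge \dim X$ is free from the previous paragraph, so the real content is $\dim \Gamma(f) \le \dim X$: a definable function graph has the same dimension as its domain. I would argue this by contradiction. Suppose $\dim \Gamma(f) = e \ge 1$ and fix a coordinate projection $\pi$ onto $F^e$ with $\pi(\Gamma(f))$ of nonempty interior. If $\pi$ used only source coordinates we would immediately obtain $\dim X \ge e$, so $\pi$ must involve at least one value coordinate; composing $\pi$ with $x \mapsto (x,f(x))$ exhibits a definable map $g : X \to F^e$ with $g(X) = \pi(\Gamma(f))$ of nonempty interior. Using the definable choice property one selects a definable section $s$ of $g$ over an open box $W \subseteq F^e$, which is injective because $g \circ s$ is the identity on $W$. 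Restricting $s$ to an axis-parallel segment and recalling $s(W) \subseteq X$, the hypothesis $\dim X = 0$ is contradicted as soon as one knows that an injective definable map from a one-dimensional set cannot have a zero-dimensional image.

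The hard part is exactly this last point, which is the behaviour of the dimension under definable maps, equivalently the fibre--additivity $\dim \Gamma(f) = \dim X + (\text{generic fibre dimension})$ applied to the singleton-fibred projection $\Gamma(f) \to X$. Lemma \ref{lem:dim3} supplies the companion fact that generic fibres of a dimension-realizing projection are zero-dimensional, but it controls only the fibres over the projection attaining the interior, so by itself it does not yield the additivity \emph{upper} bound needed here. I would therefore draw the additivity (equivalently, the monotonicity $\dim f(X) \le \dim X$) from Fornasiero's dimension theory, the same source \cite{Fornasiero} already cited for Lemmas \ref{lem:dim}, \ref{lem:dim2}, \ref{lem:dim3}, and \ref{lem:dim4}; with that input in hand, $\dim \Gamma(f) \le \dim X$ is immediate and the lemma follows. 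The main obstacle is thus isolating and citing this additivity property correctly, since all the elementary manipulations above reduce to it.
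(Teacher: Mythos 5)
Your proposal is correct and takes essentially the same route as the paper: the paper's entire proof is a citation, deriving the lemma from \cite[Lemma 4.5]{Fornasiero} (verifying van den Dries's axiomatic dimension theory in this setting), \cite[Corollary 1.5]{vdD2} (monotonicity of dimension under definable maps) and definable choice \cite{Miller-choice} --- exactly the input you say you would cite, and you are right that Lemmas \ref{lem:dim3}--\ref{lem:dim4} alone cannot supply it. The only difference is that your graph-and-section scaffolding is superfluous and in fact circular, since it reduces the zero-dimensional case of monotonicity to monotonicity in general, whereas once $\dim f(X) \le \dim X$ is cited the lemma is immediate with no further argument.
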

\begin{proof}
	It follows from \cite[Lemma 4.5]{Fornasiero}, \cite[Corollary 1.5]{vdD2} and  \cite{Miller-choice}.
\end{proof}

\begin{lemma}\label{lem:dimzero2}
Consider a d-minimal expansion of an ordered field.
Let $F$ be the universe and $X$ be a definable subset of $F^n$ of dimension zero.
For any $x \in F^{n-1}$, the fiber $X_x:=\{y \in F\;|\; (x,y) \in X\}$ is either empty or a union of finitely many discrete sets.
\end{lemma}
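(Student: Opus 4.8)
The plan is to combine the zero-dimensionality hypothesis with the defining decomposition property of d-minimal structures; the argument is short. First I would let $\pi:F^n \to F$ denote the coordinate projection onto the last coordinate, i.e. the one over which the fibers $X_x$ are taken. Since $\dim X = 0$, Definition \ref{def:dim} guarantees that $\pi(X)$ has empty interior in $F$: indeed, $\pi$ is a coordinate projection onto $F^1$, so if $\pi(X)$ had nonempty interior the dimension of $X$ would be at least $1$, contradicting $\dim X = 0$.

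Next I would fix an arbitrary $x \in F^{n-1}$ and observe that the fiber $X_x$ is contained in $\pi(X)$: if $y \in X_x$ then $(x,y) \in X$, whence $y = \pi(x,y) \in \pi(X)$. Because the interior operator is monotone under inclusion, $\myint_F(X_x) \subseteq \myint_F(\pi(X)) = \emptyset$, so $X_x$ itself has empty interior in $F$.

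Finally I would invoke d-minimality directly. The set $X_x$ is a definable subset of $F$, so it is the union of an open set $U$ and finitely many discrete sets $D_1,\ldots,D_k$. Since $U$ is an open subset of $X_x$ and $X_x$ has empty interior, we get $U \subseteq \myint_F(X_x) = \emptyset$, hence $U = \emptyset$. Therefore $X_x = D_1 \cup \cdots \cup D_k$ is a union of finitely many discrete sets whenever it is nonempty, which is exactly the claim.

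I do not expect a genuine obstacle here, as every step is an immediate consequence of a cited definition. The only point requiring a little care is the reduction from $\dim X = 0$ to the emptiness of the interior of the one-dimensional projection $\pi(X)$, together with the elementary observation that an open subset of a set with empty interior must be empty; both are routine, so the proof is essentially a two-line deduction from Definition \ref{def:dim} and the definition of d-minimality.
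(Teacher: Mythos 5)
Your proof is correct, and its overall skeleton matches the paper's: both reduce the lemma to the claim that every fiber $X_x$ has empty interior in $F$, and then conclude by d-minimality (the open part of the decomposition of $X_x$ must be empty). The difference lies in how that key claim is justified. The paper establishes it by citation, invoking Miller's definable choice note and Fornasiero's Lemma 4.5 (Dim~4), i.e.\ external fiber-dimension theory. You instead derive it in one line from the paper's own Definition~\ref{def:dim}: since $\dim X = 0$, the image of $X$ under the projection $\pi:F^n \to F$ onto the last coordinate has empty interior, and $X_x \subseteq \pi(X)$, so $\myint_F(X_x) = \emptyset$ by monotonicity of the interior operator. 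Your route is more elementary and fully self-contained within the paper's framework; the only thing the citation route buys is independence from the particular (projection-based) definition of dimension adopted in the paper, which matters if one needs compatibility with Fornasiero's dimension theory elsewhere. As a standalone proof of this lemma, your argument is complete and arguably cleaner.
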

\begin{proof}
	The set $\{x \in F^{n-1}\;|\;X_x \text{ has a nonempty interior}\}$ is empty by \cite{Miller-choice} and \cite[Lemma 4.5(Dim 4)]{Fornasiero}.
	The lemma follows from this fact and d-minimality.
\end{proof}

We next recall the definition of rank.
\begin{definition}[\cite{FM}]\label{def:lpt}
	We denote the set of isolated points in $S$ by $\myIso(S)$ for any topological space $S$.
	We set $\myLpt(S):=S \setminus \myIso(S)$.
	In other word, a point $x \in S$ belongs to $\myLpt(S)$ if and only if $x \in \mycl_S(S \setminus \{x\})$.
	
	Let $X$ be a nonempty closed subset of a topological space $S$.
	We set $X\langle 0 \rangle=X$ and, for any $m>0$, we set $X \langle m \rangle = \myLpt(X \langle m-1 \rangle)$.
	We say that $\myrank(X)=m$ if $X \langle m \rangle=\emptyset$ and $X\langle m-1 \rangle \neq \emptyset$.
	We say that $\myrank X = \infty$ when $X \langle m \rangle \neq \emptyset$ for every natural number $m$.
	We set $\myrank(Y):=\myrank(\mycl(Y))$ when $Y$ is a nonempty subset of $S$ which is not necessarily closed.
\end{definition}

\begin{lemma}\label{lem:very_basic}
	Let $\mathcal F=(F,<,\ldots)$ be an expansion of a dense linear order without endpoints.
	For a definable closed subset $A$ of $F$ with empty interior, $\myrank(A)=k$ if and only if $k$ is the least number of discrete sets whose union is $A$.
\end{lemma}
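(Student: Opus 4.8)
The plan is to show that the least number $m(A)$ of discrete sets covering $A$ coincides with $\myrank(A)$ by establishing the two inequalities $m(A)\le \myrank(A)$ and $\myrank(A)\le m(A)$ separately, working throughout with the Cantor--Bendixson derived sets $A\langle i\rangle$ and the operator $\myLpt$. Note first that each $A\langle i\rangle$ is definable, since $\myIso$ and $\myLpt$ are defined by first-order formulas in the order, and that each $A\langle i\rangle$ is closed, so that for the closed set $A$ the iterated application of $\myLpt$ computes exactly the sets $A\langle i\rangle$.

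For the upper bound $m(A)\le \myrank(A)$, I would assume $\myrank(A)=k<\infty$ and set $D_i=\myIso(A\langle i\rangle)=A\langle i\rangle\setminus A\langle i+1\rangle$ for $0\le i<k$. Each $D_i$ is definable and discrete, because its points are already isolated in the larger set $A\langle i\rangle$. Since $A\langle 0\rangle=A$ and $A\langle k\rangle=\emptyset$, telescoping gives $A=\bigcup_{i=0}^{k-1}D_i$, exhibiting $A$ as a union of $k$ discrete sets; hence $m(A)\le k=\myrank(A)$.

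The lower bound is the crux, and the plan is to prove the following auxiliary statement by induction on $j$: if a subset $S$ of a Hausdorff space is covered by $j$ discrete sets $E_1,\ldots,E_j$, then the $j$-th iterate of $\myLpt$ applied to $S$ is empty. Applying this to $A$ (closed, so the iterates are the $A\langle i\rangle$) yields $A\langle j\rangle=\emptyset$, i.e. $\myrank(A)\le j$, for every discrete cover, whence $\myrank(A)\le m(A)$. The inductive step is the delicate part: given a point $x$ in the $j$-th iterate of $S$, I would choose the color $E_{i_0}\ni x$ and use discreteness to find a neighborhood $U$ with $U\cap E_{i_0}=\{x\}$; then $(S\cap U)\setminus\{x\}$ is covered by the remaining $j-1$ colors, so its $(j-1)$-st iterate is empty by the induction hypothesis. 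Two elementary facts then glue this into a contradiction: locality of $\myLpt$, namely $\myLpt(S)\cap U=\myLpt(S\cap U)\cap U$ for open $U$; and the fact that in a $T_1$ space deleting a single point does not affect non-isolation at other points, so the iterates of $S\cap U$ and of $(S\cap U)\setminus\{x\}$ agree away from $x$. Together these force the $(j-1)$-st iterate of $S\cap U$ to be contained in $\{x\}$, hence its $j$-th iterate to be empty, and localizing back contradicts $x$ lying in the $j$-th iterate of $S$.

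The hard part will be precisely this lower-bound induction, specifically the bookkeeping behind locality and single-point deletion; both hold because the order topology is Hausdorff, so a single isolated point cannot be responsible for accumulation at another point. I would emphasize that the whole argument uses only the order topology and not d-minimality, which matches the general hypothesis of the lemma: when $\myrank(A)=\infty$ the lower bound shows $A$ admits no finite discrete cover, so the two quantities are simultaneously infinite. Combining the two inequalities gives $m(A)=\myrank(A)$, which is the assertion.
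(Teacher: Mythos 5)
Your proof is correct. Note that the paper itself gives no argument for this lemma: its ``proof'' is the citation \cite[1.3]{FM} together with the remark that the details are omitted, so what you have written supplies exactly the Cantor--Bendixson argument that citation delegates to the reader, rather than a genuinely different route. Both inequalities check out. For the upper bound, the telescoping decomposition $A=\bigcup_{i=0}^{k-1}\myIso(A\langle i\rangle)$ into $k$ discrete (and, as you note, definable) sets is correct, since each $A\langle i\rangle$ is closed and definable and each $\myIso(A\langle i\rangle)$ consists of points isolated in the larger set $A\langle i\rangle$. For the lower bound, the induction goes through with the three facts you isolate, all valid in any $T_1$ space: monotonicity of $\myLpt$; the localization identity $\myLpt(S)\cap U=\myLpt(S\cap U)\cap U$ for $U$ open, which you should iterate to $\myLpt^{j}(S)\cap U=\myLpt^{j}(S\cap U)\cap U$ by an easy induction (each step applies the one-step identity once to $\myLpt^{n}(S)$ and once to $\myLpt^{n}(S\cap U)$); and the single-point estimate $\myLpt(W\cup\{x\})\subseteq\myLpt(W)\cup\{x\}$, which by induction on $n$ gives $\myLpt^{n}(S\cap U)\subseteq\myLpt^{n}\bigl((S\cap U)\setminus\{x\}\bigr)\cup\{x\}$; with the inductive hypothesis this traps $\myLpt^{j-1}(S\cap U)$ inside $\{x\}$, kills the $j$-th iterate by monotonicity, and localization then contradicts $x\in\myLpt^{j}(S)\cap U$, exactly as you say. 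Two byproducts of your argument are worth recording: the empty-interior hypothesis is never used (if $A$ has interior, both quantities are simultaneously infinite, by your lower bound applied to an interval), and the lower bound applies to covers by arbitrary, not necessarily definable, discrete sets while the upper bound produces definable ones, so the lemma is valid under either reading of ``discrete sets.''
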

\begin{proof}
	See \cite[1.3]{FM}.
	We can prove the lemma in the same manner as it.
	We omit the details.
\end{proof}

\begin{corollary}\label{cor:rank_frontier}
	Let $\mathcal F=(F,<,\ldots)$ be as in Lemma \ref{lem:very_basic}.
	We have $\myrank(\partial A)<\myrank(A)$ for each nonempty definable subset $A$ of $F$ with $\myint_F(A)=\emptyset$.
\end{corollary}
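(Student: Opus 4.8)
The plan is to compare $\partial A$ with the first Cantor--Bendixson derivative of the closed set $C:=\mycl_F(A)$ and to exploit that passing to the derivative drops the rank by one. Recall that $\myrank(A)=\myrank(C)$ by the definition of $\myrank$ on non-closed sets, and that $C\langle 1\rangle=\myLpt(C)$, $C\langle m\rangle=\myLpt(C\langle m-1\rangle)$.

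First I would record the finiteness of the rank, since the whole scheme relies on it. As $A$ is nonempty with $\myint_F(A)=\emptyset$ we have $\dim A=0$, so $\dim C=\dim A=0$ by Lemma~\ref{lem:dim}; thus $C$ is closed with empty interior, and Lemma~\ref{lem:very_basic} gives that $k:=\myrank(A)=\myrank(C)$ is a positive integer, i.e.\ $C\langle k\rangle=\emptyset$ while $C\langle k-1\rangle\neq\emptyset$.

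The key observation is that every point of $\myIso(C)$ already lies in $A$: if $x\in C$ is isolated in $C$, choose an open $U\ni x$ with $U\cap C=\{x\}$; since $x\in\mycl_F(A)$ the neighborhood $U$ meets $A$, and because $A\subseteq C$ this forces $U\cap A=\{x\}$, whence $x\in A$. Consequently $\partial A=C\setminus A\subseteq C\setminus\myIso(C)=\myLpt(C)=C\langle 1\rangle$. As $F$ is Hausdorff, the derived set $C\langle 1\rangle$ is closed, so in fact $\mycl_F(\partial A)\subseteq C\langle 1\rangle$.

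It then remains to bound $\myrank(C\langle 1\rangle)$, and two routine facts finish the argument: (i) $\myrank$ is monotone for inclusions of closed sets, which I would prove by checking $B\langle m\rangle\subseteq B'\langle m\rangle$ by induction on $m$ whenever $B\subseteq B'$ are closed; and (ii) iterating $\myLpt$ shifts the index, $(C\langle 1\rangle)\langle m\rangle=C\langle m+1\rangle$. Together with $C\langle k\rangle=\emptyset$ these give $\myrank(\partial A)=\myrank(\mycl_F(\partial A))\le\myrank(C\langle 1\rangle)\le k-1<k=\myrank(A)$, where the degenerate case $\partial A=\emptyset$ (which occurs exactly when $k=1$) is covered by the convention $\myrank(\emptyset)=0<k$. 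I expect the only genuinely delicate point to be the finiteness of $k$: the estimate collapses if $\myrank(A)=\infty$, so the empty-interior hypothesis, funneled through Lemma~\ref{lem:very_basic} (and d-minimality via Lemma~\ref{lem:dim} to see that $\mycl_F(A)$ again has empty interior), is precisely what makes the strict inequality meaningful.
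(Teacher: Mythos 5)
Your core argument coincides with the paper's own proof. The paper's one-liner --- it is ``obvious from Lemma \ref{lem:very_basic} because $\myIso(A)$ is open in $\mycl(A)$'' --- is precisely your key observation: every isolated point of $C=\mycl_F(A)$ already belongs to $A$, hence $\partial A\subseteq\myLpt(C)=C\langle 1\rangle$, after which monotonicity of the derivative and the index shift $(C\langle 1\rangle)\langle m\rangle=C\langle m+1\rangle$ give $\myrank(\partial A)\leq\myrank(A)-1$ whenever $\myrank(A)$ is finite. That portion of your write-up is correct and fills in exactly what the paper leaves implicit.

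The genuine gap is in your finiteness step, and you were right that this is the delicate point. Two objections. First, Lemma \ref{lem:dim} is proved only for d-minimal expansions of ordered fields, whereas Corollary \ref{cor:rank_frontier} is stated for an arbitrary expansion of a dense linear order without endpoints (the setting of Lemma \ref{lem:very_basic}); in that generality $\dim\mycl_F(A)=\dim A$ is simply false (a definable dense and codense subset of an interval is a counterexample), so you may not quote it. Second, Lemma \ref{lem:very_basic} does not assert that a definable closed set with empty interior has finite rank: it only says, for each finite $k$, that having rank $k$ is equivalent to being a union of $k$ and no fewer discrete sets. A closed set with empty interior and no isolated points (a definable Cantor-like set) satisfies $\myLpt(C)=C$, hence $\myrank(C)=\infty$, and nothing in the corollary's hypotheses excludes this. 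Indeed, in this generality the strict inequality itself can fail: for a definable dense codense $A\subseteq(0,1)$ both $\myrank(A)$ and $\myrank(\partial A)$ equal $\infty$. What actually makes the corollary harmless where the paper applies it (Lemma \ref{lem:frontier_extended}) is d-minimality: the sets there are definable of dimension zero, and finiteness of rank follows from the defining property of d-minimality (every definable subset of $F$ is a union of an open set and finitely many discrete sets) together with Lemma \ref{lem:very_basic}, or from Lemma \ref{lem:dmin_countable} --- not from Lemma \ref{lem:very_basic} alone. (A further minor slip: $\partial A=\emptyset$ is equivalent to $A$ being closed, not to $k=1$; a discrete sequence together with its limit point is a closed set of rank $2$.) So: same approach as the paper, and correct whenever $\myrank(A)<\infty$, but your derivation of finiteness is invalid, and under the corollary's literal hypotheses it cannot be repaired --- a defect the paper's own terse proof silently shares.
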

\begin{proof}
	It is obvious from Lemma \ref{lem:very_basic} because $\myIso(A)$ is open in $\mycl(A)$.
\end{proof}

\begin{lemma}\label{lem:very_basic2}
	Let $S$ and $T$ be subsets of a topological space.
	We have $\myLpt(S \cup T)= \myLpt(S) \cup \myLpt(T)$.
\end{lemma}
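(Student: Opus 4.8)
The plan is to translate both sides into the limit-point condition of Definition \ref{def:lpt} and then exploit the distributivity of closure over finite unions. By that definition, for any subset $U$ of the ambient space and any $x \in U$ one has $x \in \myLpt(U)$ if and only if $x \in \mycl(U \setminus \{x\})$, the closure being taken in the ambient space; thus membership in $\myLpt(U)$ amounts to the conjunction of a membership condition $x \in U$ and an accumulation condition $x \in \mycl(U \setminus \{x\})$. I would prove the asserted equality by establishing the two inclusions separately, after first isolating the topological fact that drives the argument.

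That fact is the distributivity of closure over finite unions, which, combined with the set-theoretic identity $(S \cup T) \setminus \{x\} = (S \setminus \{x\}) \cup (T \setminus \{x\})$, gives
\[
\mycl\bigl((S \cup T) \setminus \{x\}\bigr)=\mycl(S \setminus \{x\}) \cup \mycl(T \setminus \{x\}).
\]
Consequently the accumulation condition for $S \cup T$ at a point $x$ holds exactly when the accumulation condition holds for $S$ or for $T$ at $x$.

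With this in hand I would assemble the equality. The inclusion $\myLpt(S) \cup \myLpt(T) \subseteq \myLpt(S \cup T)$ is the easy half: a point $x \in \myLpt(S)$ lies in $S \subseteq S \cup T$ and, by monotonicity of closure, in $\mycl(S \setminus \{x\}) \subseteq \mycl((S \cup T) \setminus \{x\})$, so $x \in \myLpt(S \cup T)$; the case $x \in \myLpt(T)$ is symmetric, and nothing beyond monotonicity is used. For the reverse inclusion I would take $x \in \myLpt(S \cup T)$, so that $x \in S \cup T$ and $x \in \mycl((S\cup T)\setminus\{x\})$; the displayed identity then places $x$ in $\mycl(S \setminus \{x\})$ or in $\mycl(T \setminus \{x\})$, and reading this alongside the membership $x \in S \cup T$ should deliver $x \in \myLpt(S)$ or $x \in \myLpt(T)$.

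I expect the reverse inclusion to be the only part requiring genuine care, and within it the decisive point is precisely the distributivity step above, which converts the single accumulation condition for $S \cup T$ into the disjunction of accumulation conditions for $S$ and for $T$. The remaining delicate point — and the one I would treat most carefully — is the reconciliation of the accumulation side with the membership side: the closure identity tells us on which of $S$ or $T$ the point $x$ accumulates, while we separately know only $x \in S \cup T$, and landing $x$ in $\myLpt(S)$ (rather than merely in $\myLpt(S \cup T)$) requires aligning these two pieces of information. I would carry this out by a case split according to whether $x$ belongs to $S$ or to $T$, checking that the side supplying the accumulation point can be matched with the side supplying the membership.
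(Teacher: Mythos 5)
Your easy inclusion $\myLpt(S)\cup\myLpt(T)\subseteq\myLpt(S\cup T)$ is fine and agrees with the paper's. The genuine gap is exactly at the point you flag as ``delicate'': the case split you defer to cannot be carried out, because in the configuration where the membership and the accumulation come from \emph{different} sets the conclusion is simply false. Concretely, work in $\mathbb{R}$ and take $S=\{1/n\;|\;n\in\mathbb{N},\ n\geq 1\}$ and $T=\{0\}$. Every point of $S$ is isolated in $S$ and $0$ is isolated in $T$, so under Definition \ref{def:lpt} (where $\myLpt(U)\subseteq U$) we have $\myLpt(S)=\myLpt(T)=\emptyset$; yet $0\in S\cup T$ and $0\in\mycl\bigl((S\cup T)\setminus\{0\}\bigr)=\mycl(S)$, so $0\in\myLpt(S\cup T)$. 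Here $x=0$ has membership only in $T$ but accumulates only on $S$: your case split on ``$x\in S$ or $x\in T$'' gets stuck precisely because $x\in T$ together with $x\in\mycl(S\setminus\{x\})$ yields neither $x\in\myLpt(T)$ (no accumulation on $T$) nor $x\in\myLpt(S)$ (no membership in $S$). So for arbitrary subsets, as the lemma is literally stated, no completion of your argument exists.

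What rescues the lemma is an unstated hypothesis: it is only ever applied (in Lemma \ref{lem:very_basic3}) to sets that have been replaced by their closures, and for \emph{closed} $S$ and $T$ your approach closes instantly, since $x\in\mycl(S\setminus\{x\})\subseteq\mycl(S)=S$ converts the accumulation condition into the missing membership condition (and symmetrically for $T$). You should also know that the paper's own proof has the same soft spot as yours: its first step asserts that $x\notin\myLpt(S)\cup\myLpt(T)$ produces neighborhoods $U_S$, $U_T$ of $x$ with $U_S\cap S\setminus\{x\}=\emptyset$ and $U_T\cap T\setminus\{x\}=\emptyset$, which is valid only when $x$ lies in the relevant set or the set is closed; in effect the paper silently reads $\myLpt(S)$ as the ambient derived set $\{x\;|\;x\in\mycl(S\setminus\{x\})\}$, for which the identity (and your distributivity argument) does hold for arbitrary $S,T$. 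So your diagnosis of where the difficulty sits is exactly right, but the proposal leaves a real gap; to repair both your proof and the statement, either assume $S$ and $T$ closed or replace $\myLpt$ by the derived-set operator.
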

\begin{proof}
	When $x \notin \myLpt(S) \cup \myLpt(T)$, there exist open neighborhoods $U_S$ and $U_T$ of $x$ such that $U_S \cap S \setminus \{x\} = \emptyset$ and $U_T \cap T \setminus \{x\} = \emptyset$.
	We set $U:=U_S \cap U_T$, then $U$ is an open neighborhood of $x$ and $U \cap (S \cup T) \setminus \{x\} = \emptyset$.
	It means that $x \notin \myLpt(S \cup T)$.
	We have proven the inclusion $\myLpt(S \cup T) \subseteq \myLpt(S) \cup \myLpt(T)$.
	
	We next prove the opposite inclusion.
	Let $x \in \myLpt(S) \cup \myLpt(T)$.
	We may assume that $x \in \myLpt(S)$ without loss of generality.
	It implies that $x \in \mycl(S \setminus \{x\}) \subseteq \mycl(S \cup T \setminus \{x\})$.
	It means that $x \in \myLpt(S \cup T)$.
	We have proven the inclusion $\myLpt(S) \cup \myLpt(T) \subseteq \myLpt(S \cup T)$.
\end{proof}

\begin{lemma}\label{lem:very_basic3}
	Let $\mathcal F=(F,<,\ldots)$ be as in Lemma \ref{lem:very_basic}.
	Let $A_1, \ldots, A_m$ be a finite family of definable subsets of $F^n$.
	We have $\myrank(\bigcup_{i=1}^m A_i) = \max_{1 \leq i \leq m}  \myrank(A_i)$.
\end{lemma}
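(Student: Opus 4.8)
The plan is to reduce to the case where every $A_i$ is closed and then to show that the iterated limit-point operation $\langle\,\cdot\,\rangle$ commutes with finite unions; the statement then drops out by comparing the first index at which the derived sets vanish.

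First I would reduce to closed sets. By definition $\myrank$ is computed on closures, and the closure of a finite union is the union of the closures, so $\mycl(\bigcup_{i=1}^m A_i)=\bigcup_{i=1}^m \mycl(A_i)$. Hence $\myrank(\bigcup_i A_i)=\myrank(\bigcup_i \mycl(A_i))$ while $\myrank(A_i)=\myrank(\mycl(A_i))$, and we may assume each $A_i$ is closed. Discarding the empty $A_i$ changes neither side, so we may also assume every $A_i$ is nonempty (if all are empty the claim is trivial).

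The core step is to prove that
\begin{equation*}
	\Big(\bigcup_{i=1}^m A_i\Big)\langle k \rangle = \bigcup_{i=1}^m A_i\langle k \rangle
\end{equation*}
for every natural number $k$, which I would establish by induction on $k$. The case $k=0$ is immediate from the definition $X\langle 0\rangle=X$. For the inductive step, applying $\myLpt$ to both sides of the hypothesis and using Lemma \ref{lem:very_basic2} (extended from two sets to finitely many sets by a trivial induction on $m$) gives
\begin{equation*}
	\Big(\bigcup_{i=1}^m A_i\Big)\langle k+1 \rangle = \myLpt\Big(\bigcup_{i=1}^m A_i\langle k \rangle\Big) = \bigcup_{i=1}^m \myLpt\big(A_i\langle k \rangle\big) = \bigcup_{i=1}^m A_i\langle k+1 \rangle,
\end{equation*}
completing the induction.

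Finally I would conclude by a vanishing comparison. For a nonempty closed set, $A_i\langle k\rangle=\emptyset$ holds precisely when $k \geq \myrank(A_i)$, and an infinite rank means $A_i\langle k\rangle \neq \emptyset$ for all $k$. By the displayed identity, $(\bigcup_i A_i)\langle k\rangle$ is empty if and only if every $A_i\langle k\rangle$ is empty, that is, if and only if $k \geq \myrank(A_i)$ for all $i$, i.e. $k \geq \max_{1\le i\le m}\myrank(A_i)$. Thus the least $k$ for which $(\bigcup_i A_i)\langle k\rangle=\emptyset$ equals $\max_i \myrank(A_i)$, which is by definition $\myrank(\bigcup_i A_i)$ (and both sides are $\infty$ simultaneously when some $\myrank(A_i)=\infty$). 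I do not expect a genuine obstacle here: the only delicate point is the commutation of $\langle\,\cdot\,\rangle$ with unions, which is exactly supplied by Lemma \ref{lem:very_basic2}, together with the minor bookkeeping for empty sets and the infinite-rank case.
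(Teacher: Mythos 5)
Your proposal is correct and takes essentially the same route as the paper: reduce to the case where all $A_i$ are closed, then deduce the equality from Lemma \ref{lem:very_basic2} by induction. The paper dismisses this induction as ``easy''; your write-up merely spells out the details (commuting the derived-set operation $\langle k \rangle$ with finite unions and comparing the first index at which the iterated sets vanish), including the minor bookkeeping for empty sets and infinite rank.
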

\begin{proof}
	It was proved in \cite[1.4]{FM} when $n=1$.
	The proof is almost the same for general $n$.
	We may assume that $A_i$ are closed without loss of generality.
	The equality $\myrank(\bigcup_{i=1}^m A_i) = \max_{1 \leq i \leq m}  \myrank(A_i)$ follows from Lemma \ref{lem:very_basic2} by easy induction.
\end{proof}

\begin{lemma}\label{lem:dmin_countable}
	Consider a d-minimal expansion of an ordered field.
	A nonempty definable set $X$ of dimension zero has an isolated point and its rank is finite.
\end{lemma}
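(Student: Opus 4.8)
The plan is to deduce the isolated-point claim from finiteness of the rank, and to prove finiteness of the rank by induction on the ambient dimension $n$, peeling off one coordinate at a time. For the reduction, recall that $\myrank(X)=\myrank(\mycl(X))$ and that a nonempty closed set has rank at least $1$. If $\myrank(\mycl(X))$ were finite but $\myIso(\mycl(X))=\emptyset$, then $\mycl(X)\langle 1\rangle=\myLpt(\mycl(X))=\mycl(X)$, hence $\mycl(X)\langle m\rangle=\mycl(X)\neq\emptyset$ for every $m$, contradicting finiteness of the rank. So finiteness forces $\myIso(\mycl(X))\neq\emptyset$, and any isolated point $p$ of $\mycl(X)$ lies in $X$ and is isolated in $X$: a box witnessing isolation of $p$ in $\mycl(X)$ still meets $X$, and only at $p$. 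Thus it suffices to prove $\myrank(X)<\infty$.

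To prove finiteness we induct on $n$. Replacing $X$ by $\mycl(X)$, which is still of dimension zero by Lemma \ref{lem:dim}, we may assume $X$ is closed. When $n=1$ the set has empty interior, so the open part of its d-minimal decomposition is empty and $X$ is a finite union of discrete sets; Lemma \ref{lem:very_basic} then bounds $\myrank(X)$. For the inductive step let $\pi\colon F^n\to F^{n-1}$ drop the last coordinate. The base $B=\pi(X)$ has dimension zero by Lemma \ref{lem:dimzeo}, so $\bar B:=\mycl(B)$ has finite rank $r$ by the induction hypothesis. The fibers $X_x\subseteq F$ form a definable family of closed, empty-interior subsets of $F$; by Lemma \ref{lem:dimzero2} each is a finite union of discrete sets, and the parameter-independence built into the definition of d-minimality produces a single $s$ with $\myrank(X_x)\le s$ for all $x$, via Lemma \ref{lem:very_basic}.

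The core of the argument is an inner induction on $r$ proving $\myrank(X)\le rs$. The key local observation is that if $x_0\in\myIso(\bar B)$, then $x_0\in B$ and there is an open box $U\ni x_0$ in $F^{n-1}$ with $U\cap\bar B=\{x_0\}$, so that $(U\times F)\cap X=\{x_0\}\times X_{x_0}$. Since $\myLpt$ commutes with restriction to open sets, namely $\myLpt(C)\cap W=\myLpt(C\cap W)$ and hence $C\langle m\rangle\cap W=(C\cap W)\langle m\rangle$ for open $W$, the bound $\myrank(X_{x_0})\le s$ gives $X\langle s\rangle\cap\pi^{-1}(\myIso(\bar B))=\emptyset$. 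As $\pi(X)\subseteq\bar B$, this forces $\pi(X\langle s\rangle)\subseteq\bar B\setminus\myIso(\bar B)=\bar B\langle 1\rangle$, a closed set of rank at most $r-1$. Now $X\langle s\rangle$ is again closed of dimension zero, its fibers are closed subsets of the $X_x$ and so still have rank at most $s$, and the closure of its base has rank at most $r-1$; the inner induction hypothesis yields $\myrank(X\langle s\rangle)\le(r-1)s$, that is $X\langle rs\rangle=(X\langle s\rangle)\langle(r-1)s\rangle=\emptyset$, whence $\myrank(X)\le rs$. The base case $r=1$ is immediate, since then $\bar B$ consists only of isolated points and the displayed emptiness already gives $X\langle s\rangle=\emptyset$.

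The hard part will be this core step: arranging the induction so that the fiber ranks remain \emph{uniformly} bounded by a single $s$, which is precisely where the parameter-independence in the definition of d-minimality is indispensable, and verifying the local identification $(U\times F)\cap X=\{x_0\}\times X_{x_0}$ together with the commutation of $\myLpt$ with passage to open subsets, so that the rank of $X$ drops by at most $s$ each time the rank of the base drops by one. The remaining verifications—that $\myLpt$ preserves definability and closedness, that rank is monotone under closed subsets, and that isolated points of a closure belong to the set—are routine.
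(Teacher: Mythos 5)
Your proposal is correct and, at its core, is the same argument as the paper's: induction on the ambient dimension $n$ via the projection dropping the last coordinate, a uniform bound $s$ on the ranks of the fibers, and an inner induction on the rank of the (closure of the) base --- the step the paper compresses into the single line ``we immediately get that $\myrank(X)$ is finite by induction on $r$'' --- which you carry out explicitly and make quantitative as $\myrank(X)\le rs$. The only deviations are organizational and harmless: you deduce the isolated-point claim as a corollary of finite rank (the paper proves it first and separately, by the same induction on $n$ together with its ``isolated over isolated'' Claim), and you obtain the uniform fiber bound from the parameter-independence clause in the definition of d-minimality rather than from Lemma \ref{lem:dimzero2} combined with Fornasiero's Lemma 5.10, both of which are legitimate and, if anything, more self-contained.
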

\begin{proof}
	We may assume that  $X$ is closed without loss of generality because an isolated point of $\mycl(X)$ always belongs to $X$ as an isolated point.
	Let $X$ be a definable subset of $F^n$ of dimension zero, where $F$ is the universe.
	We show the lemma by induction on $n$.
	Consider the case in which $n=1$.
	The set $X$ does not contain a nonempty interval because $X$ is of dimension zero.
	It means that $X$ is the union of finitely many discrete sets by d-minimality.
	The lemma follows from Lemma \ref{lem:very_basic} and d-minimality.
	
	We next consider the case in which $n>1$.
	Let $\pi_1:F^n \to F^{n-1}$ and $\pi_2:F^n \to F$ be the projections forgetting the last coordinate and onto the last coordinate, respectively.
	\medskip
	
	\textbf{Claim. } A point $x \in X$ is isolated in $X$ if $\pi_1(x)$ is isolated in $\pi_1(X)$ and $\pi_2(x)$ is isolated in $X_{\pi_1(x)}$, where $X_t:=\pi_2(\pi_1^{-1}(t) \cap X)$ for each $t \in F^{n-1}$. 
	\begin{proof}[Proof of Claim]
		The proof is easy. We omit it.
	\end{proof}
	
	The image $\pi_1(X)$ is of dimension zero by Lemma \ref{lem:dimzeo}.
	By the inductive hypothesis, $\myIso(\pi_1(X))$ is not empty.
	For any $t \in \myIso(\pi_1(X))$, $X_t$ is a union of finitely many discrete sets by Lemma \ref{lem:dimzero2}.
	It means that $\myIso(X_t)$ is not empty by Lemma \ref{lem:very_basic}.
	Therefore, $\myIso(X)$ is not empty by Claim.
	
	We next prove that $\myrank(X)$ is finite.
	There exists a natural number $N$ such that $\myrank(X_t) \leq N$ for any $t \in \pi_1(X)$ by Lemma \ref{lem:dimzero2} and \cite[Lemma 5.10]{Fornasiero}.
	Set $X\langle i \rangle$ as in Definition \ref{def:lpt} for each positive integer $i$.
	By Claim, we have $\pi_1^{-1}(\myIso(\pi_1(X))) \cap X \subseteq \bigcup_{i<N}\myIso(X\langle i \rangle)$.
	We immediately get that $\myrank(X)$ is finite by induction on $r=\myrank(\pi_1(X))$.
\end{proof}

We have finished the preparation.
We introduce the extended rank function $\myedim$ for d-minimal structures.

\begin{definition}[Extended rank]\label{def:extended_dim_dmin}
	Consider a d-minimal expansion of an ordered field $\mathcal F=(F,<,+,\cdot,0,1,\ldots)$.
	Let $\Pi(n,d)$ be the set of coordinate projections of $F^n$ onto $F^d$.
	Recall that $F^0$ is a singleton.
	We consider that $\Pi(n,0)$ is a singleton whose element is a trivial map onto $F^0$.
	Since $\Pi(n,d)$ is a finite set, we can define a linear order on it. 
	We denote it by $<_{\Pi(n,d)}$.
	Let $\mathcal E_n$ be the set of triples $(d,\pi,r)$ such that $d$ is a nonnegative integer not larger than $n$, $\pi \in \Pi(n,d)$ and $r$ is a positive integer.
	The linear order $<_{\mathcal E_n}$ on $\mathcal E_n$ is the lexicographic order.
	We abbreviate the subscript $\mathcal E_n$ of $<_{\mathcal E_n}$ in the rest of the paper, but it will not confuse readers.
	Let $X$ be a nonempty bounded definable subset of $F^n$.
	The triple $(d,\pi,r)$ is the \textit{extended rank} of $X$ and denoted by  $\myedim_n(X)$ if it is an element of $\mathcal E_n$ satisfying the following conditions:
	\begin{itemize}
		\item $d = \dim X$;
		\item the projection $\pi$ is a largest element in $\Pi(n,d)$ such that $\pi(X)$ has a nonempty interior;
		\item  $r$ is a largest positive integer such that there exists a definable open subset $U$ of $F^d$ contained in $\pi(X)$ such that the set $\pi^{-1}(x) \cap X$ is of dimension zero and the equality $\myrank(\pi^{-1}(x) \cap X) = r$ holds  for each $x \in U$.
	\end{itemize}
	Note that such a positive integer $r$ exists by \cite[Lemma 5.10]{Fornasiero}.
	We set $\myedim_n(\emptyset)=-\infty$ and define that $-\infty$ is smaller than any element in $\mathcal E_n$.
	
	Let us consider the case in which $X$ is an unbounded definable subset of $F^n$.
	Let $\varphi:F \to (-1,1)$ be the definable homeomorphism given by $\varphi(x)=\frac{x}{\sqrt{1+x^2}}$.
	We define $\varphi_n:F^n \to (-1,1)^n$ by $\varphi_n(x_1,\dots, x_n)=(\varphi(x_1),\ldots, \varphi(x_n))$.
	We set $\myedim_n(X)=\myedim_n(\varphi_n(X))$.
	
	Note that $\myedim$ obviously satisfies conditions (a) through (c) in Definition \ref{def:extended_rank}.
\end{definition}

\begin{lemma}\label{lem:inval_homeo}
	Consider a d-minimal expansion of an ordered field $\mathcal F=(F,<,+,\cdot,0,1,\ldots)$.
	Let $\varphi:F \to (-1,1)$ be the definable homeomorphism given in Definition \ref{def:extended_dim_dmin}.
	Let $X$ be a nonempty bounded definable subset of $F^n$.
	Then the equality $\myedim_n(X)=\myedim_n(\varphi_n(X))$ holds.
\end{lemma}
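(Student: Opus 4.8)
The plan is to exploit the fact that $\varphi_n$ acts coordinatewise, so it commutes with every coordinate projection, and that each of the three ingredients $(d,\pi,r)$ in the extended rank is read off from coordinate projections. The single structural identity driving everything is
\[
\pi \circ \varphi_n = \varphi_d \circ \pi \qquad \text{for every } \pi \in \Pi(n,d),
\]
where on the left $\pi:F^n\to F^d$ selects $d$ coordinates and on the right $\pi$ denotes the corresponding projection read inside $(-1,1)^n$; this holds because the selected coordinates of $\varphi_n(w)$ are precisely $\varphi$ applied to the selected coordinates of $w$. In particular $\pi(\varphi_n(X)) = \varphi_d(\pi(X))$.

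First I would settle the first two components. Since $\varphi_d:F^d\to(-1,1)^d$ is a definable homeomorphism onto an open subset of $F^d$, it is an open map with continuous inverse, so a definable set $A\subseteq F^d$ has nonempty interior if and only if $\varphi_d(A)$ does. Applying this to $A=\pi(X)$ and using $\varphi_d(\pi(X))=\pi(\varphi_n(X))$, the projections $\pi$ for which $\pi(X)$ has nonempty interior are exactly those for which $\pi(\varphi_n(X))$ has nonempty interior. Hence $\dim X = \dim\varphi_n(X)=:d$, and since $<_{\Pi(n,d)}$ is one fixed order on the common set $\Pi(n,d)$, the largest admissible projection $\pi$ is the same for $X$ and for $\varphi_n(X)$.

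Then I would handle the third component $r$ by transferring the defining fibered condition through $\varphi$. From injectivity of $\varphi_d$ and the commutation identity one gets the fiber correspondence
\[
\pi^{-1}(\varphi_d(x)) \cap \varphi_n(X) = \varphi_n\bigl(\pi^{-1}(x)\cap X\bigr)
\]
for each $x\in F^d$, while $U\mapsto\varphi_d(U)$ is a bijection between the definable open subsets of $F^d$ contained in $\pi(X)$ and those contained in $\pi(\varphi_n(X))$. Restricted to the free $n-d$ coordinates the map is $\varphi_{n-d}$, and the same open-map argument as above (now for projections $F^{n-d}\to F$) shows that $\pi^{-1}(x)\cap X$ has dimension zero if and only if its image does. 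It remains to see that the two fibers have equal rank. Because $X$ is bounded, the fiber $\pi^{-1}(x)\cap X$ is contained in some box $[-M,M]^{\,n-d}$ on which $\varphi_{n-d}$ restricts to a homeomorphism onto the definably compact box $[-\varphi(M),\varphi(M)]^{\,n-d}\subseteq(-1,1)^{\,n-d}$; on a definably compact set $\varphi_{n-d}$ carries closed sets to closed sets by Lemma \ref{lem:image} and hence commutes with $\mycl$, so it induces a homeomorphism $\mycl(\pi^{-1}(x)\cap X)\to\mycl(\pi^{-1}(\varphi_d(x))\cap\varphi_n(X))$. Since $\myrank$ is defined purely from the subspace topology via the iterated limit-point operator $\myLpt$, it is invariant under homeomorphism, so the two fibers have the same rank. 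Combining the fiber correspondence with the bijection $U\mapsto\varphi_d(U)$ shows that the largest $r$ realized on a definable open subset of $\pi(X)$ equals the one realized on $\pi(\varphi_n(X))$, whence the third components agree and $\myedim_n(X)=\myedim_n(\varphi_n(X))$.

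The main obstacle is this last point: $\varphi_n$ maps $F^n$ onto the \emph{open} box $(-1,1)^n$, not a closed one, so closures taken in $F^{n-d}$ could a priori pick up spurious boundary behaviour and corrupt the rank comparison. Boundedness of $X$ is exactly what neutralizes this, confining each fiber to a definably compact box where $\varphi_{n-d}$ is a proper homeomorphism that genuinely commutes with $\mycl$; this is what lets me transport $\myrank$ faithfully. Everything else is the bookkeeping of pushing the coordinate-projection data through the coordinatewise map $\varphi_n$.
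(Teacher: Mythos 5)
Your proposal is correct and takes essentially the same route as the paper: both arguments transfer the components $d$ and $\pi$ via the commutation $\pi\circ\varphi_n=\varphi_d\circ\pi$ together with preservation of nonempty interior, and both use boundedness of $X$ to conclude that $\varphi_n$ commutes with closures of the fibers $\pi^{-1}(x)\cap X$, so that isolated points (hence $\myrank$) correspond and the third component agrees. The paper's proof is simply a terser version of this argument, leaving the fiber correspondence and the compactness-based closure commutation implicit.
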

\begin{proof}
	Set $(d_1,\pi_1,r_1)=\myedim_n(X)$ and $(d_2,\pi_2,r_2)=\myedim_n(\varphi_n(X))$.
	It is obvious that $\pi(X)$ has a nonempty interior if and only if $\pi(\varphi_n(X))$ has a nonempty interior for each coordinate projection $\pi$.
	It implies that $d_1=d_2$ and $\pi_1=\pi_2$.
	For any $x \in \pi_1(X)$, we have $\mycl_{F^n}(\varphi_n(X) \cap \pi_1^{-1}(\varphi_d(x))))=\varphi_n(\mycl_{F^n}(X \cap \pi_1^{-1}(x)))$ because $X$ is bounded.
	Therefore, a point $y$ is isolated in $\mycl_{F^n}(X \cap \pi_1^{-1}(x))$ if and only if $\varphi_n(y)$ is isolated in $\mycl_{F^n}(\varphi_n(X) \cap \pi_1^{-1}(\varphi_d(x))))$.
	It implies that $r_1=r_2$.
\end{proof}

\begin{lemma}\label{lem:frontier_extended}
Consider a d-minimal expansion of an ordered field $\mathcal F=(F,<,+,\cdot,0,1,\ldots)$.
We have $\myedim_n \partial X < \myedim_n X$ for each nonempty definable subset $X$ of $F^n$.
\end{lemma}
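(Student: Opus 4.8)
The plan is to reduce to bounded sets and then compare the three coordinates of the lexicographically ordered extended rank $(d,\pi,r)$. First I would dispose of the unbounded case. Since $\varphi_n$ is a definable homeomorphism of $F^n$ onto $(-1,1)^n$ acting coordinatewise, one checks that $\varphi_n(\partial X)=\partial(\varphi_n(X))\cap(-1,1)^n\subseteq\partial(\varphi_n(X))$ and that $\varphi_n(X)$ is bounded. Using the $\varphi_n$-invariance of $\myedim$ (Lemma \ref{lem:inval_homeo}, together with the definition in the unbounded case) and the monotonicity of $\myedim_n$ on bounded sets, it suffices to prove $\myedim_n(\partial Y)<\myedim_n(Y)$ for bounded $Y$. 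Monotonicity itself I would record as a short preliminary observation straight from the definition: $\dim$ is monotone, the largest projection with full-dimensional image only grows, and $\myrank$ is monotone by Lemma \ref{lem:very_basic3}.

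So assume $X$ is bounded and write $\myedim_n(X)=(d,\pi,r)$. By Lemma \ref{lem:dim}, $\dim\partial X\le\dim\mycl(X)=d$; if this is strict we are finished by the lexicographic order. If $\dim\partial X=d$, I would first show that the largest projection $\pi'$ with $\pi'(\partial X)$ of nonempty interior satisfies $\pi'\le\pi$: since $\partial X\subseteq\mycl(X)$, any $\pi''$ with $\pi''(\partial X)$ full-dimensional gives $\pi''(\partial X)\subseteq\mycl(\pi''(X))$, whence $\dim\pi''(X)=d$ by Lemma \ref{lem:dim} and so $\pi''(X)$ has nonempty interior (a definable subset of $F^d$ of dimension $d$ has nonempty interior); this contradicts the maximality of $\pi$ unless $\pi''\le\pi$. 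If $\pi'<\pi$ we are again done lexicographically.

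The substantive case is $\dim\partial X=d$ and $\pi'=\pi$, where I must prove $r_{\partial X}<r$. Fix $\pi$. By the definition of $r$ as the largest fiber-rank attained on a full-dimensional set, the locus $\{x:\myrank(\pi^{-1}(x)\cap X)>r\}$ has empty interior, hence dimension $<d$. Combining this with Lemma \ref{lem:dim3} (generically the fiber $\pi^{-1}(x)\cap X$ is zero-dimensional and $\mycl(X)\cap\pi^{-1}(x)=\mycl(X\cap\pi^{-1}(x))$) and Lemma \ref{lem:dim4} (generically $\partial X\cap\pi^{-1}(x)=\partial(X\cap\pi^{-1}(x))$), and using that empty interior in $F^d$ is equivalent to dimension $<d$, I would produce a dense open set $U^*\subseteq\myint_{F^d}(\pi(\partial X))$ on which, for every $x$, the fibers $X\cap\pi^{-1}(x)$ and $\partial X\cap\pi^{-1}(x)$ are zero-dimensional, $\partial X\cap\pi^{-1}(x)=\partial(X\cap\pi^{-1}(x))$, and $\myrank(X\cap\pi^{-1}(x))\le r$.

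The fiber-wise rank drop is the crux. Working in $\pi^{-1}(x)\cong F^{n-d}$ and setting $A=X\cap\pi^{-1}(x)$, I would show $\mycl(\partial A)\subseteq\myLpt(\mycl(A))$: a point of $\partial A=\mycl(A)\setminus A$ cannot be isolated in $\mycl(A)$, so $\partial A\subseteq\myLpt(\mycl(A))$, and $\myLpt(\mycl(A))$ is closed since $\myIso$ is relatively open. As $\myrank(A)$ is finite by Lemma \ref{lem:dmin_countable} and $\myrank(\myLpt(\mycl(A)))=\myrank(A)-1$, monotonicity of $\myrank$ (Lemma \ref{lem:very_basic3}) gives $\myrank(\partial A)\le\myrank(A)-1$. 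Thus on $U^*$ we have $\myrank(\partial X\cap\pi^{-1}(x))\le r-1<r$. Since any open set witnessing $r_{\partial X}$ lies in $\myint_{F^d}(\pi(\partial X))$ and therefore meets the dense open $U^*$, we conclude $r_{\partial X}<r$, so $\myedim_n(\partial X)<\myedim_n(X)$ in this case as well. I expect the delicate points to be the bookkeeping in assembling $U^*$ as a \emph{dense} open set and the passage from ``the generic fiber of $\partial X$ has rank $<r$'' to ``$r_{\partial X}<r$'', since the third coordinate of $\myedim$ is defined by an existential quantifier over open sets rather than generically; density is exactly what bridges this gap.
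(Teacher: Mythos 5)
Your proposal is correct and follows the same overall route as the paper: reduce to the bounded case via Lemma \ref{lem:inval_homeo}, then compare the three lexicographic coordinates of $\myedim_n$, with the substance being a fiberwise rank drop over a generic set of base points. Two local differences are worth recording. For the middle coordinate, the paper proves $\pi'\le\pi$ by applying Lemma \ref{lem:dim4} (the bad locus $Y$ has empty interior, $\pi'(X)\supseteq\pi'(\partial X)\setminus Y$, then Lemma \ref{lem:dim2}); your argument via $\pi'(\partial X)\subseteq\pi'(\mycl(X))\subseteq\mycl(\pi'(X))$ and Lemma \ref{lem:dim} is simpler and equally valid, and it reserves Lemma \ref{lem:dim4} for the final step, where it is genuinely needed. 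For the last coordinate, your density argument is an honest expansion of what the paper compresses into the single sentence ``They imply that $r'<r$''; both versions rest on the fact you state, namely that $\{x\;|\;\myrank(\pi^{-1}(x)\cap X)>r\}$ has empty interior, and this is not literally ``by the definition of $r$'': it requires the uniform bound on fiber ranks (\cite[Lemma 5.10]{Fornasiero}, invoked in Definition \ref{def:extended_dim_dmin}) together with a partition-by-rank-value argument and Lemma \ref{lem:dim2}; the same remark applies to the monotonicity of the third coordinate of $\myedim_n$ that you describe as ``straight from the definition''. Finally, your $U^*$ can indeed be taken open rather than merely dense: replace the complements of the two empty-interior loci by the complements of their closures, which are dense open because $\dim\mycl(Z)=\dim Z$ by Lemma \ref{lem:dim} (though, as you note, density alone already suffices for the concluding step).
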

\begin{proof}
	We may assume that $X$ is bounded without loss of generality by Lemma \ref{lem:inval_homeo}.
	We have nothing to prove when $\partial X$ is empty.
	We assume that $\partial X$ is not empty.
	Set $(d, \pi,r)=\myedim X$ and $(d',\pi',r')=\myedim \partial X$.
	We have $d' \leq d$ by Lemma \ref{lem:dim} and Lemma \ref{lem:dim2}.
	The lemma is obvious when $d'<d$.
	
	We consider the case in which $d'=d$.
	We show that $\pi'(X)$ has a nonempty interior.
	Set $Y=\{x \in F^d\;|\; \partial X \cap (\pi')^{-1}(x) \neq \partial (X \cap  (\pi')^{-1}(x))\}$.
	It has an empty interior by  Lemma \ref{lem:dim4}.
	On the other hand, $ \pi'(X)$ contains $\pi'(\partial X) \setminus Y$ by the definition of $Y$.
	They imply that $\pi'(X)$ has a nonempty interior by Lemma \ref{lem:dim2}.
	We have demonstrated that $\pi' \leq \pi$.
	
	The lemma follows when $\pi'<\pi$.
	Therefore, we consider the case in which $\pi'=\pi$.
	Set $Z_1=\{x \in F^d\;|\; \partial X \cap \pi^{-1}(x) \neq \partial (X \cap  \pi^{-1}(x))\}$ and $Z_2=\{x \in F^d\;|\; \partial X \cap \pi^{-1}(x) = \partial (X \cap  \pi^{-1}(x)) \neq \emptyset\}$.
	The definable set $Z_1$ has an empty interior by Lemma \ref{lem:dim4}.
	In addition, for any point $x \in Z_2$, we have $\myrank(\partial X \cap \pi^{-1}(x) ) < \myrank(X \cap \pi^{-1}(x)) $ by Corollary \ref{cor:rank_frontier}.
	They imply that $r'<r$.
\end{proof}

We recall the definition of $\Pi$-good sets defined in \cite{Fornasiero}.
\begin{definition}
	Consider an expansion of dense linear order without endpoints $\mathcal F=(F,<,\ldots)$.
	Let $X$ be a definable subset of $F^n$ of dimension $d$, and $\pi:F^n \to F^d$ be a coordinate projection.
	We say that $X$ is \textit{$\pi$-good} if the following conditions are satisfied:
	\begin{itemize}
		\item $\pi(X)$ is open;
		\item $\pi(X \cap U)$ has a nonempty interior for each $x \in X$ and an open neighborhood $U$ of $x$;
		\item For all $x \in \pi(X)$, the equalities $\dim (X \cap \pi^{-1}(x))=0$ and $\mycl(X \cap \pi^{-1}(x))=\mycl(X) \cap \pi^{-1}(x)$ hold.
	\end{itemize}
	We say that $X$ is \textit{$\Pi$-good} if $X$ is $\pi$-good for some coordinate projection $\pi:F^n \to F^d$.
	A collection of definable sets is called \textit{$\Pi$-good} if each of its elements is.
\end{definition}

\begin{proposition}\label{prop:deomp_into_good}
Consider a d-minimal expansion of an ordered field $\mathcal F=(F,<,+,\cdot,0,1,\ldots)$.
Let $\mathcal A$ be a finite collection of definable subsets of $F^n$.
Then, there exists a finite partition $\mathcal B$ of $F^n$ into $\Pi$-good definable subsets of $F^n$ such that every element of $\mathcal A$ is a union of sets in $\mathcal B$.

Furthermore, we can choose $\mathcal B$ so that the frontier of each element of $\mathcal B$ is a union of sets in $\mathcal B$.
\end{proposition}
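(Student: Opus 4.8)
The plan is to obtain the first assertion from the $\Pi$-good decomposition already available in the d-minimal setting, and then to upgrade it to a partition satisfying the frontier condition by a separate refinement argument organised as a well-founded induction on the extended rank $\myedim$. For the first assertion I would appeal to the decomposition theory of \cite{Fornasiero} for $\Pi$-good sets (the same circle of results as Lemma \ref{lem:dim3} and Lemma \ref{lem:dim4}): any finite family of definable subsets of $F^n$ can be refined to a finite partition of $F^n$ into $\Pi$-good definable sets. Concretely one argues by induction on dimension, using Lemma \ref{lem:dim3} to find, for the top-dimensional sets, a coordinate projection $\pi$ and a dense open $V\subseteq F^d$ on which the relevant sets are $\pi$-good, the complementary locus $C\cap\pi^{-1}(F^d\setminus V)$ having strictly smaller dimension and being treated inductively. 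Replacing $\mathcal A$ by the atoms of the finite Boolean algebra it generates, I may and do assume from the outset that $\mathcal A$ is itself a finite partition of $F^n$.

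For the furthermore part I would prove, by well-founded induction on $\myedim(C)$ (legitimate by property (c) of Definition \ref{def:extended_rank} and the induction principle recorded in the Remark following it), the sharper statement: \emph{for every closed definable $C\subseteq F^n$ and every finite family $\mathcal A$ of definable subsets of $C$ there is a finite $\Pi$-good partition of $C$, compatible with $\mathcal A$, each of whose members has frontier equal to a union of members}; the Proposition is then the case $C=F^n$. In the base case $\myedim(C)$ is the smallest element of $\mathcal E_n$, so $C$ is discrete by property (f) of Definition \ref{def:extended_rank} and every frontier is empty, whence the statement is trivial.

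The inductive step rests on peeling off the top of $C$. Let $(d,\pi_0,r_0)=\myedim(C)$. Applying Lemma \ref{lem:dim3} with the extremal projection $\pi_0$, I get a dense open $V\subseteq\myint_{F^d}(\pi_0(C))$ on which the $\pi_0$-fibres of $C$ are zero-dimensional and fibrewise closure commutes with closure, so that $W:=C\cap\pi_0^{-1}(V)$ is $\pi_0$-good and open in $C$. After deleting from $W$ the closure of the (dimension $<d$) locus where the traces $A\cap W$ meet one another's boundaries, I obtain an open-in-$C$ set $U$ whose pieces $A\cap U$ are $\Pi$-good and open in $C$. Set $Z:=C\setminus U$; it is closed, and $\myedim(Z)<\myedim(C)$, because $\pi_0(Z)$ has empty interior so that either $\dim Z<d$ or $\dim Z=d$ is witnessed only by a projection strictly below $\pi_0$ in $<_{\Pi(n,d)}$ (and the deleted boundary locus has dimension $<d$), using Lemma \ref{lem:dim2}. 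Since each piece of $U$ is open in $C$, its frontier is disjoint from $U$ and hence contained in $Z$. I then apply the inductive hypothesis to $Z$ with the family consisting of the traces on $Z$ of $\mathcal A$ together with the finitely many frontiers $\partial_{F^n}(A\cap U)$, producing a frontier-condition partition $\mathcal D$ of $Z$; the union of the pieces of $U$ and of $\mathcal D$ is the desired partition. Indeed frontiers of $U$-pieces land in $Z$ and are unions of $\mathcal D$-pieces by compatibility, while frontiers of $\mathcal D$-pieces remain in the closed set $Z$ (disjoint from $U$) and are unions of $\mathcal D$-pieces by the hypothesis; the strict inequality $\myedim(\partial X)<\myedim(X)$ of Lemma \ref{lem:frontier_extended} is exactly what forces $\myedim(Z)<\myedim(C)$ and guarantees termination.

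I expect the main obstacle to be the peeling step, namely arranging that the top stratum can be taken to consist of pieces that are \emph{simultaneously} $\Pi$-good and open in $C$, so that their frontiers genuinely drop into the strictly-lower-rank remainder $Z$ rather than re-entering the top stratum. This is where the dimension calculus of \cite{Fornasiero} is indispensable: Lemma \ref{lem:dim3} confines the non-$\pi_0$-good fibres and the fibrewise-closure defect to a set disjoint from $\pi_0^{-1}(V)$, and one must check both that $\pi_0(A\cap U)$ stays open (so that openness of the projection survives intersection with $\mathcal A$, possibly after shrinking $V$ once more and discarding another dimension-$<d$ set into $Z$) and that an open-in-$C$ subset of a $\pi_0$-good set is again $\Pi$-good. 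Once this is in place the remaining verifications — compatibility with $\mathcal A$, and the point-set fact that two disjoint sets open in $C$ cannot meet each other's frontiers — are routine, and the well-foundedness of $\myedim$ from property (c) closes the induction.
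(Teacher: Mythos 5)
Your treatment of the first assertion (citing Fornasiero's decomposition, \cite[Lemma 3.17]{Fornasiero}) matches the paper, and your overall architecture for the ``furthermore'' part --- well-founded induction on $\myedim$ via property (c), peeling off a top stratum that is open in $C$ so that its frontier drops into a strictly lower-rank closed remainder $Z$, with Lemma \ref{lem:frontier_extended} driving termination --- is in the same spirit as the paper's reverse induction on extended rank. The gap is in the peeling step, precisely where you yourself locate the main obstacle, and the two claims you propose to verify there are in fact false. First, $W:=C\cap\pi_0^{-1}(V)$ need not be $\pi_0$-good: take $C=(F\times\{0\})\cup\{(0,1)\}\subseteq F^2$ with $\pi_0$ the first projection. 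Every fibre of $C$ is $0$-dimensional and satisfies $\mycl(C\cap\pi_0^{-1}(x))=\mycl(C)\cap\pi_0^{-1}(x)$, so Lemma \ref{lem:dim3} permits $V=F$ and hence $W=C$; but the local condition of $\pi_0$-goodness fails at $(0,1)$ (a small neighborhood meets $C$ only in that point, so its $\pi_0$-image is a singleton with empty interior), the other projection has image $\{0,1\}$, and your deletion of mutual boundary loci of the traces removes nothing here, so the top stratum is not $\Pi$-good. Second, an open-in-$C$ subset of a $\pi_0$-good set need not be $\Pi$-good: take $C=F\times\{0,1\}$, which is $\pi_0$-good, and $Y=(F\times\{0\})\cup((0,\infty)\times\{1\})$, which is open in $C$; then $\mycl(Y)\cap\pi_0^{-1}(0)=\{(0,0),(0,1)\}\neq\{(0,0)\}=\mycl(Y\cap\pi_0^{-1}(0))$, so the fibrewise closure condition fails, and no other projection works. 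Both examples are semialgebraic, hence definable in any d-minimal expansion of an ordered field, and the second one arises directly from your construction with $\mathcal A$ the partition generated by $Y$ and $C$: all fibres are good, so $W=C$, the mutual-boundary locus is just $\{(0,1)\}$, and the resulting top-stratum piece $Y\cap U=Y$ is not $\Pi$-good; the induction does not close.

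The repair is not a further shrinking of $V$ but a re-use of the first assertion of the proposition inside the induction, and this is exactly the paper's route: at each stage one applies \cite[Lemma 3.17]{Fornasiero} to the family consisting of $\mathcal A$ together with the frontiers of the pieces of current top extended rank, so that $\Pi$-goodness of every new piece is guaranteed by that lemma instead of being checked by hand; Lemma \ref{lem:frontier_extended} together with property (d) of Definition \ref{def:extended_rank} then shows that the pieces introduced to cover those frontiers have strictly smaller extended rank. With the top stratum of $C$ produced in this way (rather than as $C\cap\pi_0^{-1}(V)$ minus a boundary locus), the rest of your bookkeeping --- openness of the pieces in $C$, frontiers landing in $Z$, the rank drop $\myedim(Z)<\myedim(C)$, and the final assembly with the inductive partition $\mathcal D$ of $Z$ --- goes through.
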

\begin{proof}
	The proposition other than `furthermore' part is given in \cite[Lemma 3.17]{Fornasiero}  under more general assumption.
	We have only to prove the `furthermore' part.
	
	Let $(d,\pi,r)\in \mathcal E_n$.
	By reverse induction on $(d,\pi,r)$, we construct a decomposition $\mathcal B_{(d,\pi,r)}$ of $F^n$ into $\Pi$-good definable sets partitioning $\mathcal A$ such that the closures of all the $\Pi$-good definable sets in $\mathcal B_{(d,\pi,r)}$ of extended rank not smaller than $(d,\pi,r)$ are the unions of subfamilies of the decomposition. 
	Reverse induction is possible because $\myedim$ satisfies condition (c) in Definition \ref{def:extended_rank}.
	
	Let us consider the case in which $(d,\pi,r)=(n,\operatorname{id},1)$.
	The element $(n,\operatorname{id},1)$ is the largest element in $\mathcal E_n$.
	Here, $\operatorname{id}$ is the identity map on $F^n$.
	Apply the first part of the proposition to the family $\mathcal A$. 
	We obtain a finite partition $\mathcal B$ of $F^n$ into definable $\Pi$-good sets partitioning $\mathcal A$.
	Set $\mathcal C=\{B \in \mathcal B\;|\; \dim B=n\}$.
	Apply the first part of the proposition to the family $\mathcal A \cup \{\partial C\}_{C \in \mathcal C}$. 
	We get the decomposition $\mathcal B'$.
	Consider the set 
	$$
	\mathcal D=\{B' \in \mathcal B' \;|\; B' \text{ is not contained in any element }C \text{ in }\mathcal C\}\text{.}
	$$
	It is obvious that, for each $D \in \mathcal D$,  $\myrank D < (n,\operatorname{id},1)$ by Lemma \ref{lem:frontier_extended}.
	The partition $\mathcal C \cup \mathcal D$ is a partition of $F^n$ we are looking for.

	We next consider the case in which $(d,\pi,r)<(n,\operatorname{id},1)$.
	Let $\mathcal B$ be a decomposition of $F^n$ into $\Pi$-good definable sets partitioning $\mathcal A$ such that the closures of all the $\Pi$-good sets in $\mathcal B$ of extended rank greater than $(d,\pi,r)$ are the unions of subfamilies of the decomposition. 
	It exists by the induction hypothesis.
	Set $\mathcal C=\{B \in \mathcal B\;|\; \myedim B \geq (d,\pi,r)\}$ and $\mathcal C'=\{B \in \mathcal B\;|\; \myedim B = (d,\pi,r)\}$.
	Apply the first part of the proposition to the family $\mathcal A \cup \{\partial C\}_{C \in \mathcal C'}$. 
	We get the decomposition $\mathcal B'$.
	Consider the set 
	$$
	\mathcal D=\{B' \in \mathcal B' \;|\; B' \text{ is not contained in any element }C \text{ in }\mathcal C\}\text{.}
	$$
	It is obvious that, for each $D \in \mathcal D$,  $\myrank D < (d,\pi,r)$ by Lemma \ref{lem:frontier_extended}.
	The partition $\mathcal C \cup \mathcal D$ is a partition of $F^n$ we want to construct.
\end{proof}

\begin{proposition}\label{prop:extended_dim_dmin}
	Consider a d-minimal expansion of an ordered field $\mathcal F=(F,<,+,\cdot,0,1,\ldots)$.
	The extended rank function $\myedim_n$ defined in Definition \ref{def:extended_dim_dmin} possesses properties (a) through (f) in Definition \ref{def:extended_rank}.
\end{proposition}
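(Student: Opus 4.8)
Since Definition \ref{def:extended_dim_dmin} already records that $\myedim$ satisfies (a)--(c), the plan is to verify (d), (e) and (f) in that order, treating (d) as the substantial case. For (d) I would first reduce to the bounded case: because $\myedim_n(X)=\myedim_n(\varphi_n(X))$ by Lemma \ref{lem:inval_homeo} and $\varphi_n(A\cup B)=\varphi_n(A)\cup\varphi_n(B)$, it suffices to treat bounded $A,B$. Assume $\myedim A\ge\myedim B$ and write $\myedim A=(d_A,\pi_A,r_A)$. The equality $\dim(A\cup B)=\max\{\dim A,\dim B\}=d_A$ follows from Lemma \ref{lem:dim2}, and the same lemma applied to the images $\pi(A\cup B)=\pi(A)\cup\pi(B)$ in $F^{d_A}$ shows that the largest projection whose image has nonempty interior is exactly $\pi_A$: for any $\pi>\pi_A$ the image $\pi(A)$ has empty interior by maximality, and $\pi(B)$ has empty interior either because $d_B<d_A$ or because $\pi>\pi_A\ge\pi_B$.

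It remains to identify the rank component. Applying Lemma \ref{lem:dim3} to $A\cup B$ yields a dense open $V\subseteq F^{d_A}$ over which the fibers of $A\cup B$, hence those of $A$ and of $B$, are of dimension zero; on $V$ Lemma \ref{lem:very_basic3} gives $\myrank(\pi^{-1}(x)\cap(A\cup B))=\max\{\myrank(\pi^{-1}(x)\cap A),\myrank(\pi^{-1}(x)\cap B)\}$ with the convention $\myrank(\emptyset)=0$. Writing $F_A,F_B$ for these fiber-rank functions, which are definable and bounded by \cite[Lemma 5.10]{Fornasiero}, the $r$-component of any set equals the largest value $s$ whose level set $\{F=s\}$ has nonempty interior. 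The key estimate is that the generic $\pi$-fiber rank $\tilde r_B$ of $B$ satisfies $\tilde r_B\le r_A$: if $d_B<d_A$ or $\pi_B<\pi_A$ then $\pi(B)$ has empty interior so $\tilde r_B=0$, while if $d_B=d_A$ and $\pi_B=\pi_A$ then $\tilde r_B=r_B\le r_A$. A short interior argument then finishes: any finite definable partition of an open subset of $F^{d_A}$ has a piece of nonempty interior (else the union would have dimension $<d_A$ by Lemma \ref{lem:dim2}), so partitioning by the values of $F_A$, of $F_B$, and of $\max\{F_A,F_B\}$ shows that the top level set of $\max\{F_A,F_B\}$ with nonempty interior occurs at value $r_A$. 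Hence $\myedim(A\cup B)=(d_A,\pi_A,r_A)=\max\{\myedim A,\myedim B\}$.

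With (d) in hand the monotonicity $\myedim S\le\myedim T$ for $S\subseteq T$ is immediate, and (e) becomes short. Set $O=F^n\setminus\mycl(Y\setminus X)$ and $U=Y\cap O$; since $(Y\setminus X)\cap O=\emptyset$ one checks $Y\cap O=X\cap O$, so $U$ is a definable subset of $X$ that is open in $Y$. Then $X\setminus U=X\cap\mycl(Y\setminus X)=X\cap\partial(Y\setminus X)\subseteq\partial(Y\setminus X)$, because $X$ is disjoint from $Y\setminus X$. If $Y\setminus X=\emptyset$ the claim is trivial; otherwise Lemma \ref{lem:frontier_extended} and monotonicity give $\myedim(X\setminus U)\le\myedim\partial(Y\setminus X)<\myedim(Y\setminus X)\le\myedim Y$, which is exactly the content of (e). (Note this even proves a conclusion slightly stronger than the hypothesis $\myedim X=\myedim Y$ requires.)

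Finally, for (f) I would reduce to bounded $X$ via Lemma \ref{lem:inval_homeo}, discreteness being preserved by the homeomorphism $\varphi_n$. The smallest element of $\mathcal E_n$ is $(0,\pi_0,1)$, where $\pi_0$ is the trivial projection onto $F^0$; for such $X$ the fiber $\pi_0^{-1}(\mathrm{pt})\cap X$ is $X$ itself, so $\myedim X=(0,\pi_0,1)$ means $\dim X=0$ and $\myrank(X)=1$. By Definition \ref{def:lpt} the latter says $\myLpt(\mycl X)=\emptyset$, i.e. $\mycl X$ is discrete, whence $X$ is discrete. The main obstacle throughout is the rank component of (d)---in particular the estimate $\tilde r_B\le r_A$ and the passage from ``largest rank on an open set'' to generic fiber rank; once Lemma \ref{lem:frontier_extended} and the fiber-rank lemmas are available, (e) and (f) are routine.
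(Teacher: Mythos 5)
Your proposal is correct, and on property (e) --- the only item where the paper does real work --- it takes a genuinely different and much shorter route. The paper proves (e) by first establishing Proposition \ref{prop:deomp_into_good} (a partition of $F^n$ into $\Pi$-good definable sets whose frontiers are again unions of pieces), applying it to $\{X,Y,\pi^{-1}(U)\}$, and then showing by a fiberwise isolated-point/rank argument that the pieces of $X$ of maximal extended rank are open in $Y$; their union is the desired set. Your argument bypasses that machinery entirely: with $U=X\setminus\mycl_{F^n}(Y\setminus X)$ one has $X\setminus U=X\cap\mycl_{F^n}(Y\setminus X)\subseteq\partial_{F^n}(Y\setminus X)$, so Lemma \ref{lem:frontier_extended} together with the monotonicity of $\myedim_n$ (which follows from (d)) gives $\myedim_n(X\setminus U)\leq\myedim_n\partial_{F^n}(Y\setminus X)<\myedim_n(Y\setminus X)\leq\myedim_n(Y)$, and the hypothesis $\myedim_n(X)=\myedim_n(Y)$ is never used. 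I see no gap here: the degenerate cases $Y\setminus X=\emptyset$ and $Y=\emptyset$ are harmless (the latter is equally an issue for the paper's version of (e)), and all ingredients (Lemma \ref{lem:frontier_extended}, property (d)) are available at this point in the paper. What the paper's heavier route buys is the $\Pi$-good decomposition compatible with frontiers, a statement of independent interest, but it is not needed for (e) itself. Your treatments of (d) and (f) agree in substance with the paper, which disposes of (d) by citing Lemma \ref{lem:dim2} and Lemma \ref{lem:very_basic3}; your level-set argument for the rank component (partitioning by the values of the definable, uniformly bounded fiber-rank functions, and using that in a finite definable partition of an open set some piece has nonempty interior) is precisely the elaboration that citation leaves implicit --- just keep it phrased via those level sets, since the fiber rank of $B$ need not be constant on any dense open set, so a single ``generic value'' is not literally well defined.
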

\begin{proof}
	Conditions (a) through (c) are obviously satisfied by the definition.
	Satisfaction of condition (d) follows from Lemma \ref{lem:dim2} and Lemma \ref{lem:very_basic3}.
	The triple $(0,\pi_0,1)$ is a smallest element of $\mathcal E_n$, where $\pi_0$ is the unique element of $\Pi(n,0)$.
	By the definition of rank, $X$ is discrete if $\myedim_n(X)=(0,\pi_0,1)$.
	It implies that $\myedim_n$ meets condition (f).
	
	The remaining task is to show that condition (e) is satisfied.
	Let $X, Y \in \operatorname{Def}(F^n)$ with $X \subseteq Y$.
	Assume that $\myedim_n(X)=\myedim_n(Y)$.
	We show that there exists a definable subset $T$ of $X$ such that $T$ is open in $Y$ and $\myedim_n(X \setminus T)<\myedim_n(Y)$. 
	
	We may assume that $Y$ is bounded by considering $\varphi_n(Y)$ in place of $Y$ if necessary by Lemma \ref{lem:inval_homeo}.
	Let $(d,\pi,r)=\myedim_n(Y)$.
	Since $\pi(Y)$ has a nonempty interior, there exists a definable dense open subset $U_Y$ such that $\pi^{-1}(x) \cap Y$ is of dimension zero for each $x \in U_Y$ by Lemma \ref{lem:dim3}.
	By the definition of extended rank, there exist a definable open subset $U_X$ of $F^d$ contained in $\pi(X)$ such that  the set $\pi^{-1}(x) \cap X$ is of dimension zero and the inequality $\myrank(\pi^{-1}(x) \cap X) = r$ holds  for each $x \in U_X$.
	We set $U=U_X \cap U_Y$.
	It is a definable open subset of $F^d$ contained in $\pi(X)$ such that $\pi^{-1}(x) \cap Y$ is of dimension zero and the equalities $\myrank(\pi^{-1}(x) \cap X) = r$ and $\myrank(\pi^{-1}(x) \cap Y) = r$ hold  for each $x \in U$.
	
	Apply Proposition \ref{prop:deomp_into_good} to the family $\mathcal A=\{X,Y,\pi^{-1}(U)\}$.
	There is a partition $Y=\bigcup_{i=1}^mQ_i$ of $Y$ into $\Pi$-good definable sets such that  $X$, $\pi^{-1}(U)$ and $\mycl(Q_i) \setminus Q_i$ are finite unions of elements in  $\mathcal Q:= \{Q_1, \ldots, Q_m\}$ for all $1 \leq i \leq m$.
	By renumbering elements of $\mathcal Q$ if necessary, we may assume that $X=\bigcup_{i=1}^l Q_i$.
	We have $\myedim_n(Q_i)=(d,\pi,r)$ for some $1 \leq i \leq l$ by property (d) in Definition \ref{def:extended_rank}, which is already proven.
	We may assume that $\myedim_n(Q_i)=(d,\pi,r)$ for $1 \leq i \leq k$ and $\myedim_n(Q_i)<(d,\pi,r)$ for $k<i \leq l$ by renumbering them again.
	
	We prove that $Q_i$ are open in $Y$ for each $1 \leq i \leq k$.
	Assume for contradiction that $Q_i \cap \mycl_Y(Y \setminus Q_i) \neq \emptyset$.
	There exists $1 \leq j \leq m$ such that $j \neq i$ and $Q_i \cap \mycl_Y(Q_j) \neq \emptyset$ because $Y=\bigcup_{i=1}^mQ_i$.
	Therefore, $Q_i$ is contained in $\mycl_Y(Q_j) \setminus Q_j$ by the definition of $\mathcal Q$.
	Since $\mycl_Y(Q_j)$ is a finite union of elements in $\mathcal Q$ and contains $Q_i$, we have $\myedim(\mycl_Y(Q_j))=(d,\pi,r)$ by property (d) in Definition \ref{def:extended_rank}.
	There exists a definable dense open subset $V$ of $F^d$ such that $\mycl(Q_j \cap \pi^{-1}(x))=\mycl(Q_j) \cap \pi^{-1}(x)$ for each $x \in V$ by Lemma \ref{lem:dim3}.
	By the definition of extended rank, there exists a definable open set $W$ of $F^d$ contained in $\pi(Q_i)$ such that $\myrank(\pi^{-1}(x) \cap Q_i) =r$ for each $x \in W$.
	For any point $x \in V \cap W$, we obtain $Q_i \cap \pi^{-1}(x) \subseteq (\mycl(Q_j) \setminus Q_j) \cap \pi^{-1}(x) = \mycl(Q_j\cap \pi^{-1}(x)) \setminus (Q_j \cap \pi^{-1}(x))$.
	Note that each isolated point in $\mycl(Q_j \cap \pi^{-1}(x))$ belongs to $Q_j \cap \pi^{-1}(x)$.
	They imply that $\myrank(Q_j \cap \pi^{-1}(x))>r$ for each $x \in V \cap W$.
	It is a contradiction to the inequality $\myedim_n(Q_j) \leq (d,\pi,r)$.
	We have proven that $Q_i$ is open in $Y$ for each $1 \leq i \leq k$.
	
	Set $T=\bigcup_{i=1}^k Q_i$.
	It is open in $Y$ because $Q_i$ is open in $Y$ for each $1 \leq i \leq k$.
	We have $\myedim(X \setminus T)=\myedim_n(\bigcup_{i=k+1}^lQ_i) < (d,\pi,r)$ by property (d) in Definition \ref{def:extended_rank}.
\end{proof}

We next prove that a d-minimal structure admits a good $(0+)$-pseudo-filter.
\begin{definition}
	Consider a d-minimal expansion of an ordered field $\mathcal F=(F,<,+,\cdot,0,1,\ldots)$.
	A definable subset $D$ of $F$ is a \textit{$(0+)$-set} if it satisfies the following:
	\begin{itemize}
		\item It is bounded and discrete in $F$;
		\item It is contained in $\{r \in F\;|\; r>0\}$;
		\item $\mycl_{F}(D)=D \cup \{0\}$.
	\end{itemize}
\end{definition}

\begin{proposition}\label{prop:filter_gmin}
	Consider a d-minimal expansion of an ordered field $\mathcal F=(F,<,+,\cdot,0,1,\ldots)$ which is not locally o-minimal.
	The family $\mathfrak D$ of $(0+)$-sets is not empty and enjoys properties (a) through (c) in Definition \ref{def:filter}. 
\end{proposition}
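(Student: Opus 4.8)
The properties (a) and (b) are nearly immediate, and the continuity half of (c) is free: every $(0+)$-set is discrete, so every definable map out of it is automatically continuous. The real content lies in two places, showing $\mathfrak D\neq\emptyset$ (this is exactly where the failure of local o-minimality enters) and the ``singleton'' half of (c). I would organise the proof around these two points and dispatch (a), (b) quickly.

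For nonemptiness, first I would record the topological characterisation: a definable $X\subseteq F$ is a finite union of points and open intervals on a neighbourhood of $a$ if and only if $a$ is not an accumulation point of the boundary $B:=\mycl_F(X)\setminus\myint_F(X)$; the forward direction uses definable connectedness of intervals (definable completeness), and the backward direction is clear. Since $\mathcal F$ is not locally o-minimal, some definable $X$ and some $a$ violate this, so $B$ accumulates at $a$. The set $B$ is closed with empty interior, hence $\dim B=0$, so by Lemma \ref{lem:dmin_countable} the closed set $C:=\mycl_F(B)=B$ has finite rank $r$, and $a\in\myLpt(C)$ forces $r\ge 2$. The plan is then to descend to the top of the rank filtration: the derived set $C\langle r-2\rangle$ has rank $2$, its first derived set $C\langle r-1\rangle$ is closed and discrete, and any chosen point $b\in C\langle r-1\rangle=\myLpt(C\langle r-2\rangle)$ is isolated in $C\langle r-1\rangle$ while still being a limit point of $C\langle r-2\rangle$. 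After translating $b$ to $0$ and reflecting (via the definable homeomorphisms $x\mapsto x-b$ and $x\mapsto -x$) so that $b$ is a right-hand limit, I would intersect $C\langle r-2\rangle$ with a small interval $(0,\delta)$ on which $C\langle r-1\rangle$ reduces to $\{0\}$; the resulting set is bounded, discrete, contained in $(0,\infty)$, and has closure adding only $0$, i.e. a genuine $(0+)$-set.

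Property (b) is then a short argument: if $D=D_1\cup D_2$ with $D$ a $(0+)$-set, then $0\in\mycl_F(D_1)$ or $0\in\mycl_F(D_2)$; in the first case $D_1\subseteq D$ is bounded, discrete and contained in $(0,\infty)$, and because every point of the discrete set $D$ is isolated in $\mycl_F(D)=D\cup\{0\}$, no point of $D$ can be a limit of $D_1$, whence $\mycl_F(D_1)=D_1\cup\{0\}$ and $D_1\in\mathfrak D$.

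The main obstacle is the singleton half of (c): given a $(0+)$-set $D$, a definable $\gamma:D\to F^n$ and a point $y\in\myLim(\gamma)$, I must produce a $(0+)$-subset $D'\subseteq D$ with $\myLim(\gamma|_{D'})=\{y\}$, and since (c) asks for the restriction of $\gamma$ itself I cannot reparametrise as in Lemma \ref{lem:basic_o+1}(4). Writing $h(t)=|\gamma(t)-y|$, I note that $y\in\myLim(\gamma)$ means $\inf\{h(t):t\in D\cap(0,c)\}=0$ for every $c>0$. If $\{t:h(t)=0\}$ accumulates at $0$ I take it as $D'$. Otherwise, after discarding an initial segment I may assume $h>0$, and I would take the strict right-records $D'=\{t\in D:\ h(t)<h(s)\ \text{for all }s\in D,\ s>t\}$. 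The key verifications are that $D'$ accumulates at $0$ (here I would use that $D\cap[\varepsilon,1]$ is definably compact, so by Lemma \ref{lem:image} the continuous $h$ has a positive minimum there, which forces record points to appear below every $\varepsilon$) and that $h|_{D'}$ is strictly increasing in $t$ with infimum $0$, so $h\to 0$ along $D'$ as $t\to 0$; together with Lemma \ref{lem:basic_o+1}(1) this gives $\myLim(\gamma|_{D'})=\{y\}$. Finally, that $D'$ is a $(0+)$-set follows exactly as in (b), since $D'\subseteq D$.
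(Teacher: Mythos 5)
Your proposal is correct, but it reaches the proposition by a genuinely different route than the paper in the two places that carry real content. For nonemptiness, the paper simply invokes constructibility (Lemma \ref{lem:constructible} via Proposition \ref{prop:extended_dim_dmin}) together with \cite[Theorem 3.3]{Fornasiero0} to obtain an unbounded discrete definable subset of $F$, and then transports it into $(0,1)$ by $x \mapsto 1/x$; your argument is instead self-contained, extracting a $(0+)$-set from the boundary $B$ of a witness to the failure of local o-minimality via the finite-rank statement of Lemma \ref{lem:dmin_countable} and a descent along the derived-set filtration. That buys independence from the external citation, at the price of one assertion you should justify: that $B=\mycl_F(X)\setminus\myint_F(X)$ has empty interior is not a purely topological fact (boundaries of arbitrary sets can have interior) but follows from d-minimality, e.g.\ by writing $X$ as an open set union finitely many discrete sets and applying Lemmas \ref{lem:dim} and \ref{lem:dim2}. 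For property (c), the paper locates an \emph{isolated} point $y$ of $\myLim(\gamma)$ — this is exactly where it needs the dimension/rank machinery of Lemmas \ref{lem:dimzeo}, \ref{lem:dim}, \ref{lem:dim2} and \ref{lem:dmin_countable} — separates $y$ by a box $V$, and shows $\gamma^{-1}(V)$ is not closed (via Lemma \ref{lem:image}), hence a $(0+)$-set; your record-set construction $D'=\{t\in D\;|\;h(t)<h(s)\text{ for all }s\in D,\ s>t\}$ with $h(t)=|\gamma(t)-y|$ works for an \emph{arbitrary} prescribed $y\in\myLim(\gamma)$ and uses only definable completeness, discreteness of $D$ and Lemma \ref{lem:image}, so it is both more elementary and yields a strictly stronger conclusion. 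The one step you should write out is the existence of records below every $\varepsilon$ (and, by the same device, $\inf_{D'}h=0$): letting $m_\varepsilon>0$ be the attained minimum of $h$ on the definably compact set $D\cap[\varepsilon,M]$, take the supremum $u$ of the nonempty definable set $\{t\in D\cap(0,\varepsilon)\;|\;h(t)<m_\varepsilon\}$; then $u$ lies in the closure of this set, hence in $\mycl_F(D)\cap(0,\infty)=D$, hence — points of $D$ being isolated in $\mycl_F(D)$ — in the set itself, and this maximum is a record point. With that detail filled in, your proof is complete.
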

\begin{proof}
	We first prove that $\mathfrak D$ is not an empty set.
	Any definable set is constructible by Lemma \ref{lem:constructible} and Proposition \ref{prop:extended_dim_dmin}.
	There exists an unbounded discrete definable subset $X$ of $F$ by \cite[Theorem 3.3]{Fornasiero0} together with the fact every definable set is constructible.
	We may assume that $x \geq 1 $ for any $x \in X$ without loss of generality.
	The set $\{x \in F\;|\; 1/x \in X\}$ belongs to $\mathcal D$. 
	
	We next check that $\mathfrak D$ enjoys property (a) through (c) in Definition \ref{def:filter}.
	The family $\mathfrak D$ obviously possesses property (a) by the definition of $(0+)$-set.
	Let $D \in \mathfrak D$.
	We prove the following claim:
	\medskip
	
	\textbf{Claim.} A  definable subset $D'$ of $D$ is an element of $\mathfrak D$ when $D \in \mathfrak D$ and $D'$ is not closed in $F$.
	\begin{proof}[Proof of Claim]
		The set $D'$ is discrete because $D$ is discrete.
		We have $\mycl_{F}(D')=D' \cup \{0\}$ because $D'$ is not closed and  each point in $D$ is isolated in $D$. 
	\end{proof}
	Claim immediately implies property (b)  in Definition \ref{def:filter}.
	
	The final task is to demonstrate that $\mathfrak D$ enjoys property (c).
	Let $D \in \mathfrak D$ and $\gamma:D \to F^n$ be a definable map.
	Any map defined on a discrete set is continuous.
	It implies that $\gamma$ is continuous.
	Therefore, we have only to show that there exists $D' \in \mathfrak D$ such that $D' \subseteq D$ and $\myLim(\gamma|_{D'})$ is a singleton when $\myLim(\gamma)$ is not empty.
	
	Note that $\gamma(D)$ is of dimension zero by Lemma \ref{lem:dimzeo}.
	Since $\myLim(\gamma)$ is a subset of the closure $\mycl(\gamma(D))$, it is at most of dimension zero by Lemma \ref{lem:dim} and Lemma \ref{lem:dim2}.
	Therefore, $\myLim(\gamma)$ has an isolated point, say $y$, by Lemma \ref{lem:dmin_countable}.
	We first consider the case in which $E=\{t \in D\;|\;\gamma(t)=y\}$ is not closed.
	We have $E \in \mathfrak D$ by Claim.
	In this case, we have only to set $D'=E$.
	
	We next consider the other case.
	Since $E$ is bounded and closed, we have $D \setminus E \in \mathfrak D$ and $\myLim(\gamma|_{D \setminus E})=\myLim(\gamma)$.
	We may assume that $\gamma(t) \neq y$ for all $t \in D$ by replacing $D$ with $D \setminus E$.
	We can take an open box $V$ such that $\mycl_{F^n}(V) \cap \myLim(\gamma)=\{y\}$ because $y$ is an isolated point in $\myLim(\gamma)$.
	Set $D'=\gamma^{-1}(V)$.
	It is not a closed set.
	Assume for contradiction that $D'$ is closed.
	The image $\gamma(D')=V \cap \gamma(D)$ is closed by Lemma \ref{lem:image} because $\gamma$ is continuous and $D'$ is closed and bounded.
	However, $y \in \partial (V \cap \gamma(D))$ by the assumption.
	It is a contradiction as desired.
	Consequently, we have proven that $D'$ is a $(0+)$-set by Claim.
	Let $\gamma'$ be the restriction of $\gamma$ to $D'$.
	It is obvious that $y \in \myLim(\gamma')$.
	On the other hand, we have $\myLim(\gamma') \subseteq \myLim(\gamma) \cap \mycl(V)=\{y\}$.
	We have shown that $\myLim(\gamma')=\{y\}$. 
\end{proof}

\begin{theorem}\label{thm:summary}
	A d-minimal expansion of an ordered field satisfies the assumptions in Theorem \ref{thm:quotinet}; that is, it enjoys the definable choice property,  admits a good $(0+)$-pseudo-filter $\mathfrak D$, and has a good extended rank function $\myedim$.
\end{theorem}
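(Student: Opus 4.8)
The plan is to verify the three hypotheses of Theorem \ref{thm:quotinet} one at a time, the only real subtlety being that the construction of the pseudo-filter requires a case distinction. Throughout, recall that a d-minimal structure is definably complete by definition, so the ambient requirement of Theorem \ref{thm:quotinet} that $\mathcal F$ be a definably complete expansion of an ordered field is automatic, and I need only produce the three listed properties.

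First I would dispose of the definable choice property: this holds for every d-minimal expansion of an ordered group, hence \emph{a fortiori} for a d-minimal expansion of an ordered field, by \cite{Miller-choice} (as already recorded in the discussion following the definition of the definable choice property). Second, the good extended rank function is supplied directly by the work of the present section: the function $\myedim_n$ of Definition \ref{def:extended_dim_dmin} satisfies all of conditions (a) through (f) of Definition \ref{def:extended_rank} by Proposition \ref{prop:extended_dim_dmin}, which has no extra hypothesis beyond d-minimality.

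The remaining, and only genuinely delicate, step is to produce a good $(0+)$-pseudo-filter, and here I would split into two cases according to whether the structure is locally o-minimal. If $\mathcal F$ is locally o-minimal, then it is a definably complete locally o-minimal expansion of an ordered field, and the family of intervals $(0,\varepsilon]$ with $\varepsilon>0$ is a good $(0+)$-pseudo-filter by Example \ref{ex:local2}. If $\mathcal F$ is not locally o-minimal, then Proposition \ref{prop:filter_gmin} shows that the family $\mathfrak D$ of $(0+)$-sets is nonempty and enjoys properties (a) through (c) of Definition \ref{def:filter}, hence is a good $(0+)$-pseudo-filter.

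The point where one must be careful, and the reason the case split is forced, is that in the locally o-minimal case there are no $(0+)$-sets at all: a bounded discrete definable set accumulating at $0$ would violate local o-minimality at $0$, so the family of $(0+)$-sets is empty and cannot serve as a pseudo-filter. This is precisely why Proposition \ref{prop:filter_gmin} is stated under the hypothesis that $\mathcal F$ is not locally o-minimal, and why the locally o-minimal case must instead be covered by the interval construction of Example \ref{ex:local2}. Once both cases are handled, all three hypotheses of Theorem \ref{thm:quotinet} hold, which completes the proof.
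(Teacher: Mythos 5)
Your proof is correct and follows essentially the same route as the paper: the same case split on local o-minimality, with Example \ref{ex:local2} and Proposition \ref{prop:filter_gmin} supplying the good $(0+)$-pseudo-filter in the respective cases, and your observation that $(0+)$-sets cannot exist in a locally o-minimal structure (so the case split is genuinely forced) is accurate. The only (harmless) difference is that in the locally o-minimal case the paper invokes Remark \ref{rem:locally_omin} --- which rests on the dimension function of Example \ref{ex:local1} and the definable choice property from \cite{Fuji_decomp} --- whereas you apply Proposition \ref{prop:extended_dim_dmin} and \cite{Miller-choice} uniformly in both cases; both are valid, since neither result assumes anything beyond d-minimality.
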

\begin{proof}
	When the structure is locally o-minimal, the theorem follows from Remark \ref{rem:locally_omin}.
	When the structure is not locally o-minimal, the theorem follows from \cite{Miller-choice}, Proposition \ref{prop:extended_dim_dmin} and Proposition \ref{prop:filter_gmin}.
\end{proof}

\appendix
\section{Definable Tietze extension theorem for bounded functions}\label{sec:extension_thm}
We need definable Tietze extension theorem for the proof of existence of definable quotient.
We prove it in this section.

\begin{definition}
	Let $\mathcal F=(F,<,\ldots)$ be an expansion of a dense linear order without endpoints.
	A pair $(X,d)$ of a definable set $X$ and a definable function on $X$ is a \textit{definable metric space} if $d$ is a metric on $X$. 
\end{definition}

\begin{theorem}[Definable Tietze extension theorem for bounded definable functions]\label{thm:tietze_abstract}
	Let $\mathcal F=(F,<,+,\cdot,0,1,\ldots)$ be a definably complete expansion of an ordered field.
	Let $(X,d)$ be a definable metric space.
	We consider the topology induced from the metric $d$.
	Let $A$ be a nonempty definable closed subset of $X$ and $\varphi:A \to F$ be a bounded definable continuous function.
	Then there exists a definable continuous extension $\Phi:X \to F$ of $\varphi$.
\end{theorem}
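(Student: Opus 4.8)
The plan is to give an explicit Hausdorff-type extension formula and verify its properties directly, using only definable completeness of the ordered field; in particular I avoid any infinite series (which would destroy definability) and I do not appeal to pseudo-curves, since Theorem~\ref{thm:tietze_abstract} assumes neither a good $(0+)$-pseudo-filter nor definable choice. First I would dispose of the trivial case where $\varphi$ is constant (extend by that constant), and otherwise normalize: since $\varphi$ is bounded, an affine change of the target lets me assume $\varphi:A\to[0,1]$. Next I record the properties of the distance function $d(x,A):=\inf_{a\in A}d(x,a)$: it is well-defined and finite by definable completeness, it is definable, it is $1$-Lipschitz (hence continuous) because $|d(x,A)-d(y,A)|\le d(x,y)$, and $d(x,A)=0$ exactly when $x\in A$ since $A$ is closed.

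Then I define $\Phi(x)=\varphi(x)$ for $x\in A$ and $\Phi(x)=\inf_{a\in A}\bigl(\varphi(a)+d(x,a)/d(x,A)-1\bigr)$ for $x\in X\setminus A$. The infimum exists by definable completeness, and $\Phi$ is definable. For $x\notin A$ each bracket is $\ge 0$ (both $\varphi(a)\ge 0$ and $d(x,a)/d(x,A)-1\ge 0$), while choosing $a$ with $d(x,a)$ arbitrarily close to $d(x,A)$ makes the bracket arbitrarily close to $\varphi(a)\le 1$; hence $0\le\Phi\le 1$, so $\Phi$ is bounded with the same bounds as $\varphi$, and $\Phi|_A=\varphi$ by construction.

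The substance is continuity, which I would split into two regimes. On the open set $X\setminus A$, fix $x_0$ with $r=d(x_0,A)>0$ and restrict to the ball $B(x_0,r/4)$, where $d(x,A)$ stays in a fixed interval bounded away from $0$. Because $\Phi\le 1$, any $a$ with $d(x,a)>3\,d(x,A)$ contributes a bracket $>2$ and is irrelevant to the infimum, so there $\Phi(x)=\inf_{a\in A'}\bigl(\varphi(a)+d(x,a)/d(x,A)-1\bigr)$ where $A'=A\cap\overline{B}(x_0,4r)$ is a \emph{fixed} index set independent of $x$. On $A'$ the maps $x\mapsto\varphi(a)+d(x,a)/d(x,A)-1$ are uniformly Lipschitz in $x$ (numerators bounded, denominators bounded below, using the two $1$-Lipschitz estimates for $d(\cdot,a)$ and $d(\cdot,A)$), and an infimum of a uniformly $C$-Lipschitz family over a fixed index set is $C$-Lipschitz; hence $\Phi$ is continuous on $X\setminus A$. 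At a point $x_0\in A$ I would argue by $\varepsilon$-$\delta$: the upper bound $\Phi(x)\le\varphi(x_0)+\varepsilon$ comes from evaluating the bracket at a near-closest $a$, which must lie near $x_0$ (since $d(a,x_0)\le(2+\eta)\,d(x,x_0)$) so that $\varphi(a)\approx\varphi(x_0)$; the lower bound $\Phi(x)\ge\varphi(x_0)-\varepsilon$ splits the index set into $a$ near $x_0$ (where $\varphi(a)\ge\varphi(x_0)-\varepsilon$ and the ratio is $\ge 1$) and $a$ far from $x_0$ (where $d(x,a)$ is bounded below while $d(x,A)\le d(x,x_0)\to 0$, forcing the ratio, and thus the bracket, to exceed $1\ge\varphi(x_0)$).

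I expect the main obstacle to be exactly this continuity analysis, and within it the two points where boundedness of $\varphi$ is essential: the localization on $X\setminus A$ that confines the relevant $a$ to a bounded set $A'$ (so that the infimum of the continuous family is genuinely continuous, not merely upper semicontinuous), and the lower bound at boundary points, where the far-away indices are controlled only because $\varphi$ is bounded above and $d(x,A)\to 0$. This is precisely why the hypothesis that $\varphi$ be bounded cannot be dropped in the abstract metric setting, in contrast to the ambient case $X=F^n$ treated in \cite{AF}.
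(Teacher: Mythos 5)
Your proposal is correct, and its skeleton is the same as the paper's: an explicit Hausdorff-type extension defined by an infimum formula in $\varphi(a)$ and the ratio $d(x,a)/d(x,A)$, a normalization of the target made possible by boundedness, the observation that points $a$ with $d(x,a)$ large relative to $d(x,A)$ are irrelevant to the infimum, and a two-regime continuity analysis (at points of $A$ and off $A$). The genuine differences are two. First, you use the additive form $\inf_{a\in A}\bigl(\varphi(a)+d(x,a)/d(x,A)-1\bigr)$ with values normalized into $[0,1]$, while the paper, following \cite{AF}, uses the multiplicative form $\inf_{a\in A}\varphi(a)\cdot d(x,a)/d(x,A)$ with values pushed into an interval $[R_1,R_2]\subseteq(1,2)$; these are interchangeable, and your localization threshold $d(x,a)\le 3\,d(x,A)$ plays the role of the paper's set $A_x=\{a\in A\;|\;d(x,a)\le 2\,d(x,A)\}$ in its Claim 1. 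Second, and more substantively, for continuity at $x_0\notin A$ you confine the relevant indices to a single set $A'$ (the points of $A$ in a closed ball of radius $4r$ about $x_0$, where $r=d(x_0,A)$) valid for every $x$ in a small ball about $x_0$, and then invoke the elementary fact that an infimum of a uniformly $C$-Lipschitz family is $C$-Lipschitz, the infima existing by definable completeness; the paper instead compares brackets over the moving index sets $A_{x_1}\cup A_{x_2}$ (its Claim 2) and concludes by contradiction. Your variant is cleaner and yields slightly more (local Lipschitz continuity off $A$ with explicit constants), at no cost in generality; the paper's variant keeps the formula literally identical to \cite{AF}, which it cites for the case $X=F^n$. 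Your treatment of points of $A$ (a near-closest point for the upper bound; a near/far splitting of indices for the lower bound, the far indices being harmless precisely because $\varphi$ is bounded above and $d(x,A)\to 0$) matches the paper's $\varepsilon$--$\delta$ argument step for step, and your closing remark about where boundedness enters is exactly the right diagnosis.
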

\begin{proof}
	The proof is almost the same as \cite[Lemma 6.6]{AF}.
	However, the proof of \cite[Lemma 6.6]{AF} only gives how to construct $\Phi$, but does not prove that the given $\Phi$ is continuous.
	We give a proof here for readers' convenience.
	
	Set $B=(1,2)$.
	We first reduce to the case in which $\varphi(A) \subseteq [R_1,R_2]$ for some $1<R_1<R_2<2$. 
	Since $\varphi$ is bounded, there exists $S_1<S_2$ such that $\varphi(A) \subseteq [S_1,S_2]$.
	Let $\tau:F \to B$ be the definable homeomorphism defined by $$\tau(t)=\dfrac{3}{2}+\dfrac{t}{2\sqrt{1+t^2}}.$$
	Put $R_i=\tau(S_i)$ for $i=1,2$.
	By the assumption, we have $\tau \circ \varphi(A) \subseteq [R_1,R_2]$.
	If we succeed in constructing a definable continuous extension $\Psi:X \to F$ of $\tau \circ \varphi$ such that $\Psi(X) \subseteq [R_1,R_2]$, the map $\Phi:=\tau^{-1} \circ \Psi$ is a definable continuous extension of $\varphi$.
	Therefore, we may assume that $\varphi(A) \subseteq [R_1,R_2]$ by considering $\tau \circ \varphi$ instead of $\varphi$.
	We construct a definable continuous extension $\Phi:X \to F$ of $\varphi$ such that $\Phi(X) \subseteq [R_1,R_2]$.
	
	We define the function $\Phi:X \to F$ by $\Phi(x)=\varphi(x)$ when $x \in A$ and $$\Phi(x)=\inf_{a \in A}\varphi(a)\cdot\dfrac{d(x,a)}{d(x,A)}$$ elsewhere.
	Here, $d(x,A)=\inf\{d(x,a)\;|\;a \in A\}$.
	This function is definable and well-defined because $\mathcal F$ is definably complete.
	It is obvious the restriction of $\Phi$ to $A$ is $\varphi$.
	The remaining task is to show that $\Phi$ is continuous and $\Phi(X) \subseteq [R_1,R_2]$.
	
	We first prove the following claim:
	\medskip
	
	\textbf{Claim 1.} Let $x \in X \setminus A$.
	Set $A_x:=\{a \in A\;|\; d(x,a) \leq 2d(x,A)\}$.
	Then we have $d(x,A)=d(x,A_x)$ and $$\Phi(x)=\inf_{a \in A_x}\varphi(a)\cdot\dfrac{d(x,a)}{d(x,A)}.$$
	In addition, we have $R_1 \leq \Phi(x) \leq R_2$ for each $x \in X$.
	\begin{proof}[Proof of Claim 1]
		It is obvious that $d(x,A)=d(x,A_x)$ by the definition of $A_x$.
		It is also obvious that $\Phi(x) \geq R_1$ because $\varphi(a)\cdot\dfrac{d(x,a)}{d(x,A)} > R_1\cdot \dfrac{d(x,a)}{d(x,A)} \geq R_1$ for each $a \in A$.
		For any $\varepsilon >0$, there exists $a' \in A$ such that $0 \leq d(x,a')-d(x,A) \leq \varepsilon d(x,A)$ by the definition of $d(x,A)$.
		This inequality implies that $\Phi(x) \leq \varphi(a')\cdot\dfrac{d(x,a')}{d(x,A_x)} \leq R_2+R_2\varepsilon$ because $\varphi(a') \leq R_2$.
		Since $\varepsilon$ is arbitrary, we get $\Phi(x) \leq R_2<2$.
		
		On the other hand, we have $\varphi(a)\cdot\dfrac{d(x,a)}{d(x,A)}>2$ for $a \in A \setminus A_x$ because of the definition of $A_x$ and the assumption that $\varphi(a)>1$.
		It implies that $\inf_{a \in A}\varphi(a)\cdot\dfrac{d(x,a)}{d(x,A_x)}=\inf_{a \in A_x}\varphi(a)\cdot\dfrac{d(x,a)}{d(x,A_x)}$.
	\end{proof}

	We use Claim 1 without notice in the proof.
	
	We fix an arbitrary point $x_1 \in X$.
	We prove that $\Phi$ is continuous at $x_1$.
	We first consider the case in which $x_1 \in A$.
	We fix $\varepsilon>0$.
	Since $\varphi$ is continuous, there exists $\delta>0$ such that, for any $a \in A$ with $d(a,x_1)<3\delta$, the inequality $|\varphi(a)-\varphi(x_1)|<\varepsilon/4$ holds.
	We want to prove that $|\Phi(x_2)-\Phi(x_1)|=|\Phi(x_2)-\varphi(x_1)|<\varepsilon$ for each $x_2 \in X$ with $d(x_1,x_2)<\delta$.
	This inequality is obvious when $x_2 \in A$.
	Therefore, we only consider the case in which $x_2 \in X \setminus A$.
	We put $A_2=A_{x_2}$.
	
	For each $a \in A_2$, we have $d(a,x_1) \leq d(a,x_2)+d(x_1,x_2) \leq 2d(x_2,A)+d(x_1,x_2) \leq 2d(x_1,x_2)+d(x_1,x_2)<3\delta$ because $x_1 \in A$.
	By the definition of $\delta$, we have $\varphi(a)>\varphi(x_1)-\varepsilon/4$.
	We also have $d(x_2,a) \geq d(x_2,A)$ for each $a \in A$.
	These inequality imply that $\varphi(a)\cdot\dfrac{d(x_2,a)}{d(x_2,A)}>\varphi(x_1)-\varepsilon/4$ for each $a \in  A_2$.
	It implies that $$\Phi(x_2) \geq \varphi(x_1)-\varepsilon/4.$$
	
	We next take $\varepsilon'>0$ so that $\varepsilon'<\min\{1,\varepsilon/4\}$.
	We can take $a' \in A$ so that $d(x_2,a') \leq (1+\varepsilon')d(x_2,A)$ because $x_2 \not\in A$.
	We have $d(x_2,a')<2d(x_2,A)$ because $\varepsilon'<1$.
	It means that $a' \in A_2$.
	Therefore, we have $d(a',x_1)<3\delta$.
	By the definition of $\varepsilon$, we have $\varphi(a')<\varphi(x_1)+\varepsilon'/4$.
	We get 
	\begin{align*}
		\Phi(x_2) &\leq  \varphi(a')\cdot\dfrac{d(x,a')}{d(x,A)} <(\varphi(x_1)+\varepsilon/4))\cdot (1+\varepsilon')\\
		&=\varphi(x_1)+\varphi(x_1)\varepsilon'+\varepsilon/4+\varepsilon\varepsilon'/4\\
		&<\varphi(x_1)+2\varepsilon'+\varepsilon/4+\varepsilon/4\\
		&<\varphi(x_1)+\varepsilon.
	\end{align*}
	We have proven that $|\Phi(x_2) -\varphi(x_1)|<\varepsilon$.
	It means that $\Phi$ is continuous at $x_1$.
	
	We next consider the case in which $x_1 \notin A$.
	We prove the following claim:
	\medskip
	
	\textbf{Claim 2.} For any $\varepsilon>0$, there exists $\delta>0$ such that, if $x_2 \in X$ and $d(x_1,x_2)<\delta$, we have $x_2 \in X \setminus A$ and $$\left|\varphi(a)\cdot\dfrac{d(x_2,a)}{d(x_2,A)}-\varphi(a)\cdot\dfrac{d(x_1,a)}{d(x_1,A)}\right| <\dfrac{\varepsilon}{2}$$ for each $a \in A_1 \cup A_2$.
	Here, we set $A_i=A_{x_i}$ for each $i=1,2$.
	
	\begin{proof}[Proof of Claim 2]
	Let $\varepsilon>0$ be an arbitrary positive element in $F$.
	We take $\delta>0$ satisfying the following conditions:
	\begin{itemize}
		\item $0<\delta<d(x_1,A)$;
		\item $\dfrac{\delta}{d(x_1,A)-\delta}<\dfrac{\varepsilon}{8}$;
		\item The inequality $\left|\dfrac{1}{d(x_1,A)}-\dfrac{1}{d(x_2,A)}\right|<\dfrac{\varepsilon}{40 \cdot d(x_1,A)}$ holds for each $x_2 \in X$ with $d(x_1,x_2)<\delta$.
	\end{itemize}
	It is possible because the maps defined by $(-\infty,d(x_1,A)) \ni x \mapsto x/(d(x_1,A)-x) \in F$ and $X \setminus A \ni x \mapsto 1/d(x,A) \in F$ are continuous at $0$ and $x_1$, respectively.
	
	We fix $x_2 \in X$ with $d(x_1,x_2) < \delta$.
	It is obvious that $x_2 \in X \setminus A$ because $d(x_1,x_2)<d(x_1,A)$.
	We prove $d(x_1,a) < 5d(x_1,A)$ for each $a \in A_1 \cup A_2$.
	When $a \in A_1$, the definition of $A_1$ implies that $d(x_1,a) \leq 2d(x_1,A) < 5 d(x_1,A)$.
	When $a \in A_2$, we have 
	\begin{align*}
		d(x_1,a) &\leq d(x_1,x_2)+d(x_2,a) \leq d(x_1,x_2) +2d(x_2,A) \\
		&\leq d(x_1,x_2)+2(d(x_1,A)+d(x_1,x_2)) <2d(x_1,A)+3\delta\\
		&<5d(x_1,A).
	\end{align*}
	We have proven the inequality $d(x_1,a) < 5d(x_1,A)$ for each $a \in A_1 \cup A_2$.
	We also have $d(x_2,A) \geq d(x_1,A) - d(x_1,x_2) > d(x_1,A) -\delta$ and $|d(x_2,a)-d(x_1,a)| \leq d(x_1,x_2)<\delta$.
	Using these inequalities, for each $a \in A_1 \cup A_2$, we get
	\begin{align*}
		&\left|\varphi(a)\cdot\dfrac{d(x_2,a)}{d(x_2,A)}-\varphi(a)\cdot\dfrac{d(x_1,a)}{d(x_1,A)}\right| \leq |\varphi(a)| \cdot \left|\dfrac{d(x_2,a)}{d(x_2,A)}- \dfrac{d(x_1,a)}{d(x_1,A)}\right|\\
		& < 2 \left(\left|\dfrac{d(x_2,a)}{d(x_2,A)}- \dfrac{d(x_1,a)}{d(x_2,A)}\right|+\left|\dfrac{d(x_1,a)}{d(x_2,A)}- \dfrac{d(x_1,a)}{d(x_1,A)}\right|\right)\\
		&=2\left(\dfrac{|d(x_2,a)-d(x_1,a)|}{d(x_2,A)}+d(x_1,a)\left|\dfrac{1}{d(x_2,A)}- \dfrac{1}{d(x_1,A)}\right|\right)\\
		&<2\left(\dfrac{\delta}{d(x_1,A)-\delta}+5d(x_1,A) \cdot \dfrac{\varepsilon}{40 \cdot d(x_1,A)}\right)<\varepsilon/2.
	\end{align*}
We have proven Claim 2.
\end{proof}
	The preparation has been done. 
	We lead to a contradiction assuming that $\Phi$ is not continuous at $x_1$.
	There exists $\varepsilon>0$ such that, for any $\delta>0$, there exists $x _2 \in X$ satisfying the inequalities $d(x_1,x_2)<\delta$ and $|\Phi(x_1)-\Phi(x_2)| \geq \varepsilon$.
	We take $\delta>0$ so that the inequality in Claim 2 holds for each $x_2 \in X$ with $d(x_1,x_2)<\delta$.
	We consider two separate cases.
	When $\Phi(x_1)<\Phi(x_2)$, we can take $a \in A_1$ so that $$\Phi(x_1)>\varphi(a)\cdot\dfrac{d(x_1,a)}{d(x_1,A)}-\dfrac{\varepsilon}{4}.$$
	Applying the inequality in Claim 2, we get $$\Phi(x_1) > \varphi(a)\cdot\dfrac{d(x_2,a)}{d(x_2,A)}-\dfrac{3\varepsilon}{4} \geq \Phi(x_2)- 3\varepsilon/4.$$
	It implies that $\Phi(x_2)- 3\varepsilon/4 < \Phi(x_1) < \Phi(x_2)$.
	It contradicts the assumption that $|\Phi(x_1)-\Phi(x_2)| \geq \varepsilon$.
	We can treat the case in which $\Phi(x_1)>\Phi(x_2)$ similarly.
\end{proof}

\begin{remark}
	We make several comments on Theorem \ref{thm:tietze_abstract}.
	The image $\Phi(X)$ of $X$ under the function $\Phi$ constructed in the proof is not necessarily contained in $B$ when $\varphi(A)=B$.
	In fact, let $d(x,y)=\sqrt{(x_1-y_1)^2+(x_2-y_2)^2}$ for $x=(x_1,x_2), y=(y_1,y_2) \in \mathbb R^2$. 
	Let $D$ and $S$ be the closed disc and circle of radius $1/2$ centered at $(3/2,0)$, respectively.
	Set $X=D \setminus \{(1,0),(2,0)\}$ and $A=S \setminus \{(1,0),(2,0)\}$ and $\varphi(x_1,x_2)=x_1$ for each $(x_1,x_2) \in X$.
	We have $\Phi(3/2,0)=1 \notin B$ in this case.
	
	We can drop the assumption that $\varphi$ is bounded in the following cases:
	\begin{enumerate}
		\item[(1)] $X=F^n$;
		\item[(2)] $\mathcal F$ is o-minimal.
	\end{enumerate}
	In fact, the case (1) is \cite[Lemma 6.6]{AF}.
	We can find $a,a' \in A$ such that $\Phi(x)=\varphi(a)\cdot\dfrac{d(x,a)}{d(x,A)}$ and $d(x,a')=d(x,A)$ for each $x \in X \setminus A$ using Lemma \ref{lem:image} and the equality given in Claim 1.
	The inclusion $\Psi(X) \subseteq B$ immediately follows from these facts.
	The remaining proof is the same as  that of Theorem \ref{thm:tietze_abstract}.
	The case (2) is treated in \cite[Chapter 8, Corollary 3.10]{D}.
	It is proven using triangulation of a definable set.
\end{remark}

\end{document}